\newtheorem{theorem}{Theorem}[section]
\newtheorem{lemma}[theorem]{Lemma}
\newtheorem{remark}[theorem]{Remark}
\newtheorem{definition}[theorem]{Definition}
\newtheorem{corollary}[theorem]{Corollary}
\newtheorem{proposition}[theorem]{Proposition}
\newtheorem{lem-def}[theorem]{Lemma-Definition}
\newcommand{\R}{\mathbb R}
\newcommand{\Z}{\mathbb Z}
\newcommand{\Q}{\mathbb Q}
\newcommand{\F}{\mathbb F}
\newcommand{\st}[1]{\vskip 1mm \noindent\makebox[4mm][r]{\bf #1} \hspace{1mm}}
\newcommand{\stst}[1]{\vskip 1mm \noindent\makebox[4mm][r]{\bf #1} \hspace{6mm}}
\newcommand{\ststst}[1]{\vskip 1mm \noindent\makebox[4mm][r]{\bf #1} \hspace{11mm}}
\newcommand{\stststst}[1]{\vskip 1mm \noindent\makebox[4mm][r]{\bf #1} \hspace{16mm}}
\newcommand{\ststststst}[1]{\vskip 1mm \noindent\makebox[4mm][r]{\bf #1} \hspace{21mm}}
\def\op{\operatorname}
\def\as#1{\renewcommand\arraystretch{#1}}
\def\bb{{\mathcal B}}
\def\df{\operatorname{Diff}}
\def\dsc{\operatorname{Disc}}
\def\diso{\lower.4ex\hbox{$\downarrow$}\raise.4ex\hbox{\mbox{\scriptsize
$\wr$}}}
\def\iso{\ \lower.3ex\hbox{\as{.08}$\begin{array}{c}\lra\\\mbox{\tiny $\sim\,$}\end{array}$}\ }
\def\j{\mathbf{j}}
\def\kb{\overline{k}}
\def\ks{k^{\op{sep}}}
\def\la{\lambda}
\def\lg{l\raise.6ex\hbox to.2em{\hss.\hss}l}
\def\ll{\mathcal{L}}
\def\lm{\lambda_{\operatorname{max}}}
\def\lmn{\lambda_{\operatorname{min}}}
\def\ln{\lambda_{\operatorname{next}}}
\def\lra{\longrightarrow}
\def\m{{\mathfrak m}}
\def\md#1{\; \mbox{\rm(mod }{#1})}
\def\n{\mathbf{n}}
\def\nm{\mathbf{m}}
\def\nn{\mathcal{N}}
\def\om{\omega}
\def\oo{\mathcal{O}}
\def\orb{\hbox to  .3em{$\backslash$}\backslash}
\def\ord{\op{ord}}
\def\p{\mathfrak{p}}
\def\P{\mathfrak{P}}
\def\res{\operatorname{Res}}
\def\rr{\mathcal{R}}
\def\sm{S_{\operatorname{max}}}
\def\smn{S_{\operatorname{min}}}
\def\sn{S_{\operatorname{next}}}
\def\t{\theta}
\def\tcal{\mathcal{T}}
\def\ty{\mathbf{t}}
\def\vv{{\mathcal V}}
\newcounter{cs}
\newcommand{\casos}{\begin{itemize}}
\newcommand{\fcasos}{\end{itemize}\setcounter{cs}{1}}
\newfont{\tit}{cmr12 scaled \magstep3}
\title[Complexity of OM factorizations]{Complexity of OM factorizations of polynomials over local fields}
\author[Bauch]{Jens-Dietrich Bauch}
\address{Departament de Matem\`{a}tiques,
         Universitat Aut\`{o}noma de Barcelona,
         Edifici C, E-08193 Bellaterra, Barcelona, Catalonia, Spain}
\email{bauch@mat.uab.cat, nart@mat.uab.cat, hds@mat.uab.cat}
\author[Nart]{Enric Nart}
\author[Stainsby]{Hayden D. Stainsby}
\thanks{Partially supported by MTM2009-10359 from the
Spanish MEC}
\date{}
\keywords{discriminant, global field, local field, Montes algorithm, Newton polygon, Okutsu discriminant,
OM factorization, OM representation}
\subjclass[2010]{Primary 11Y40; Secondary 11Y05, 11R04, 11R27}
\begin{document}

\begin{abstract}
Let $k$ be a locally compact complete field with respect to a discrete valuation $v$. Let $\oo$ be the valuation ring, $\m$ the maximal ideal and $F(x)\in\oo[x]$ a monic separable polynomial of degree $n$. Let $\delta=v(\dsc(F))$. The Montes algorithm  computes an OM factorization of $F$. The single-factor lifting algorithm derives from this data a factorization of $F \md{\m^\nu}$, for a prescribed precision $\nu$. In this paper we find a new estimate for the complexity of the Montes algorithm, leading to an estimation of $O\left(n^{2+\epsilon}+n^{1+\epsilon}\delta^{2+\epsilon}+n^2\nu^{1+\epsilon}\right)$ word operations for the complexity of the computation of a factorization of $F \md{\m^\nu}$, assuming that the residue field of $k$ is small. 
\end{abstract}

\maketitle
\section*{Introduction}
Let $A$ be a Dedekind domain whose field of fractions $K$ is a global field. Let $L/K$ be a finite separable extension and $B$ the integral closure of $A$ in $L$. Let $\t\in L$ be a primitive element of $L/K$, with minimal polynomial $f(x)\in A[x]$.

Let $\p$ be a non-zero prime ideal of $A$, $v_\p$ the canonical $\p$-adic valuation, $K_\p$ the completion of $K$ at $\p$, and $\oo_\p$ the valuation ring of $K_\p$.

The Montes algorithm \cite{algorithm,HN} computes an \emph{OM representation} of every prime ideal $\P$ of $B$ lying over $\p$ \cite{newapp}. This algorithm carries out a program suggested by \O. Ore \cite{ore1,ore2}, and developed by S. MacLane in the context of valuation theory \cite{mcla,mclb}. An OM representation is a computational object supporting several data and operators, linked to one of the irreducible factors (say) $F(x)$ of $f(x)$ in $\oo_\p[x]$. Among these data, the OM representation contains all the \emph{Okutsu invariants} of $F$, which reveal a lot of arithmetic information about the finite extension of $K_\p$ determined by $F$ \cite{Ok,okutsu}. The initials OM stand indistinctly for Ore-MacLane or Okutsu-Montes. 

The Montes algorithm has been used as the core of several arithmetic routines to compute prime ideal decomposition, integral bases and the discriminant of $L/K$, generators of prime ideals, the $\P$-adic valuation, $v_\P\colon L^*\longrightarrow \Z$, the reduction mapping, $B\longrightarrow B/\P$, the Chinese remainder algorithm in $B$, and the $\p$-valuation of discriminants and resultants of polynomials with coefficients in $K$ \cite{algorithm,bases,newapp,Ndiff}.    

Also, if the Montes algorithm is combined with the single-factor lifting algorithm \cite{GNP}, they yield a fast factorization routine for polynomials over local fields, which turns into an acceleration of some of the above mentioned routines.   

The complexity of the Montes algorithm was analyzed by D. Ford-O. Veres \cite{FV} and S. Pauli \cite{pauli}. Assuming $\p$ small, they obtained an estimation of $O(n^{2+\epsilon}\delta^{2+\epsilon})$ word operations for the algorithm used as an irreducibi\-lity test for polynomials over local fields, where $n=[L\colon K]$ and $\delta=v_\p(\dsc(f))$. Then, by natural extrapolation arguments they concluded that this estimation is valid for the general algorithm too. 

In this paper, we present a new estimation for the complexity of the Montes algorithm. To this end, we find the least precision $\nu$ such that the polynomial $f(x) \md{\p^\nu}$ contains sufficient information to detect that $f(x)$ is irreducible over $\oo_\p$, and the least precision such that a factorization of $f(x) \md{\p^\nu}$ determines a ``sufficiently good" approximate factorization of $f(x)$ over $\oo_\p$.

In section \ref{secOkutsu} we review the role of the Okutsu invariants of the irreducible factors of $f(x)$ over $\oo_\p$, which are essential for our purposes. In section \ref{secOkDisc}, we introduce a new Okutsu invariant, the \emph{exponent of the Okutsu discriminant}, which is a key ingredient to prove that the irreducibility of $f(x)$ over $\oo_\p$ may be tested by working at precision $\nu=\lfloor2\delta/n\rfloor+1$ (Theorem \ref{bound}). In section \ref{secOMfac} we introduce the concept of \emph{OM factorization}, giving a precise sense to what we mean by a ``sufficiently good" approximate factorization. We show that the OM representations satisfying certain properties are the adequate objects to deal with OM factorizations from a computational perspective, and we prove that an OM factorization of $f(x)$ over $\oo_\p$ can be found by working at precision $\nu=\delta+1$ (Theorem \ref{precision}). In section \ref{secAlgo}, we review the Montes algorithm as a device to compute an OM factorization of $f(x)$ over $\oo_\p$. Finally, in section \ref{secComplexity} we use these results to obtain an estimation of $O(n^{2+\epsilon}+\delta^{2+\epsilon})$ word operations for the complexity of the Montes algorithm used as a polynomial irreducibility test, and an estimation of $O(n^{2+\epsilon}+n^{1+\epsilon}\delta^{2+\epsilon})$ word operations for the complexity of the general algorithm.
This estimation yields improved estimations for the complexity of all the arithmetic routines mentioned above. For instance, we deduce an estimation of $O\left(n^{2+\epsilon}+n^{1+\epsilon}\delta^{2+\epsilon}+n^2\nu^{1+\epsilon}\right)$ word operations for the complexity of the factorization of $f(x)$ over $\oo_\p[x]$, with an arbitrary prescribed precision $\nu$ (Theorem \ref{factorization}). The best known previous estimation for the factorization of polynomials over local fields had total degree $4+\epsilon$ in $n$, $\delta$ and $\nu$ \cite{GNP}. 

\section{Okutsu invariants of an irreducible polynomial over a local field}\label{secOkutsu} 
Let $k$ be a local field, i.e. a locally compact and complete field with respect to a discrete valuation $v$. Let $\oo$ be the valuation ring of $k$, $\m$ the maximal ideal, $\pi\in\m$ a generator of $\m$ and $\F=\oo/\m$ the residue field, which is a finite field. Let $p$ be the characteristic of $\F$. 

Let $\ks\subset \kb$ be the separable closure of $k$ inside a fixed algebraic closure. Let $v\colon \kb\to \Q\cup\{\infty\}$, be the canonical extension of the discrete valuation $v$ to $\kb$, normalized by $v(k)=\Z$.

Let $F(x)\in\oo[x]$ be a monic irreducible separable polynomial, $\t\in \ks$ a root of $F(x)$, and $L=k(\t)$ the finite separable extension of $k$ generated by $\t$. Denote $n:=[L\colon k]=\deg F$. Let $\oo_L$ be the ring of integers of $L$, $\m_L$ the maximal ideal and $\F_L$ the residue field. 
We indicate with a bar, $\raise.8ex\hbox{---}\colon \oo[x]\longrightarrow \F[x]$,
the canonical homomorphism of reduction of polynomials modulo $\m$. 

Let $[\phi_1,\dots,\phi_r]$ be an \emph{Okutsu frame} of $F(x)$, and let $\phi_{r+1}$ be an \emph{Okutsu approxi\-mation} to $F(x)$. That is, $\phi_1,\dots,\phi_{r+1}\in\oo[x]$ are monic separable polynomials of strictly increasing degree: $$1\le m_1:=\deg\phi_1<\cdots <m_r:=\deg\phi_r<m_{r+1}:=\deg\phi_{r+1}=n,$$ and for any monic polynomial $g(x)\in\oo[x]$ we have: 
\begin{equation}\label{frame}
m_i\le \deg g<m_{i+1}\ \Longrightarrow\ \dfrac{v(g(\t))}{\deg g}\le\dfrac{v(\phi_i(\t))}{m_i}<\dfrac{v(\phi_{i+1}(\t))}{m_{i+1}},
\end{equation}
for $0\le i\le r$, with the convention that $m_0=1$ and $\phi_0(x)=1$. It is easy to deduce from (\ref{frame}) that the polynomials $\phi_1(x),\dots,\phi_{r+1}(x)$ are all irreducible in $\oo[x]$. 

The length $r$ of the frame is called the \emph{Okutsu depth} of $F(x)$. 
Okutsu frames were introduced by K. Okutsu in \cite{Ok} as a tool to construct integral bases. Okutsu approximations were introduced in \cite{okutsu}, where it is shown that the family $\phi_1,\dots,\phi_r,\phi_{r+1}$ determines an \emph{optimal $F$-complete type of order $r+1$}:
\begin{equation}\label{OM}
\ty_F=\begin{cases}
(\psi_0;(\phi_1,\lambda_1,\psi_1);\cdots;(\phi_r,\lambda_r,\psi_r);(\phi_{r+1},\lambda_{r+1},\psi_{r+1})),
\mbox{ or}\\
(\psi_0;(\phi_1,\lambda_1,\psi_1);\cdots;(\phi_r,\lambda_r,\psi_r);(F,-\infty,\hbox{---})), 
\end{cases}
\end{equation}
according to $\phi_{r+1}\ne F$ or  $\phi_{r+1}=F$, respectively. We call $\ty_F$ an \emph{OM representation} of $F$. In the case $\phi_{r+1}=F$, we say that the OM representation is \emph{exact}.

Any OM representation of the polynomial $F$ carries (stores) several invariants and operators yielding strong arithmetic information about $F$ and the extension $L/k$.
Let us recall some of these invariants and operators.

Attached to the type $\ty_F$, there is a family of discrete valuations of the rational function field $k(x)$, the \emph{MacLane valuations}:
$$
v_i\colon k(x)\longrightarrow \Z\cup\{\infty\},\quad 1\le i\le r+1,
$$
such that $0=v_1(F)<\cdots<v_{r+1}(F)$. The $v_1$-value of a polynomial in $k[x]$ is the minimum of the $v$-values of its coefficients.

Also, $\ty_F$ determines a family of Newton polygon operators:
$$
N_i\colon k[x]\longrightarrow 2^{\R^2},\quad 1\le i\le r+1,
$$where $2^{\R^2}$ is the set of subsets of the Euclidean plane. Any non-zero polynomial $g(x)\in k[x]$ has a canonical $\phi_i$-development:
$$
g(x)=\sum\nolimits_{0\le s}a_s(x)\phi_i(x)^s,\quad \deg a_s<m_i,
$$ and the polygon $N_i(g)$ is the lower convex hull of the set of points $(s,v_i(a_s\phi_i^s))$. Usually, we are only interested in the principal polygon $N_i^-(g)\subset N_i(g)$ formed by the sides of negative slope. For all $1\le i\le r$, the Newton polygons $N_i(F)$ and $N_i(\phi_{i+1})$ are one-sided and they have the same slope, which is a negative rational number $\lambda_i\in\Q_{<0}$.  
The Newton polygon $N_{r+1}(F)$ is one-sided and it has an (extended) integer negative slope, which we denote by $\lambda_{r+1}\in\Z\cup\{-\infty\}$.  

The triple $(\phi_i,v_i,\lambda_i)$ determines the discrete valuation $v_{i+1}$ as follows: for any  non-zero polynomial $g(x)\in K[x]$,
take a line of slope $\lambda_i$ far below $N_i(g)$ and let it shift upwards till it touches the polygon for the first time;
if $u$ is the ordinate of the point of intersection of this line with the vertical axis, then $v_{i+1}(g)=e_i u$.

There is a chain of finite extensions: $\F=\F_0\subset \F_1\subset\cdots\subset\F_{r+1}=\F_L$. The type $\ty_F$ stores monic irreducible polynomials $\psi_i(y)\in\F_i[y]$ such that $\F_{i+1}\simeq \F_i[y]/(\psi_i(y))$. We have $\psi_i(y)\ne y$, for all $i>0$. Finally, for every negative rational number $\lambda$, there are \emph{residual polynomial} operators:
$$
R_{\lambda,i}\colon k[x] \longrightarrow \F_i[y],\quad 0\le i\le r+1.
$$ 
We define $R_i:=R_{\lambda_i,i}$. For all $0\le i\le r$, we have $R_i(F)\sim \psi_i^{\omega_{i+1}}$ and $R_i(\phi_{i+1})\sim\psi_i$, where the symbol $\sim$ indicates that the polynomials coincide up to a multiplicative constant in $\F_i^*$. For $i=0$ we have $R_0(F)=\overline{F}=\psi_0^{\omega_1}$ and $R_0(\phi_1)=\overline{\phi_1}=\psi_0$. The exponents $\omega_{i+1}$ are all positive and $\omega_{r+1}=1$. The operator $R_{r+1}$ is defined only when $\phi_{r+1}\ne F$; in this case, we have also $R_{r+1}(F)\sim \psi_{r+1}$, with $\psi_{r+1}(y)\in\F_{r+1}[y]$ monic of degree one such that $\psi_{r+1}(y)\ne y$. 

From these data some more numerical invariants are deduced. Initially we take:
$$m_0:=1,\quad f_0:=\deg \psi_0,\quad e_0:=1,\quad h_0:=V_0:=\mu_0:=\nu_0=0.
$$Then, we define for all $1\le i\le r+1$:
$$
\as{1.2}
\begin{array}{l}
h_i,\,e_i \ \mbox{ positive coprime integers such that }\la_i=-h_i/e_i,\\
f_i:=\deg \psi_i,\\
m_i:=\deg \phi_i=e_{i-1}f_{i-1}m_{i-1}=(e_0\,e_1\cdots e_{i-1})(f_0f_1\cdots f_{i-1}),\\
\mu_i:=\sum_{1\le j\le i}(e_jf_j\cdots e_if_i-1)h_j/(e_1\cdots e_j),\\
\nu_i:=\sum_{1\le j\le i}h_j/(e_1\cdots e_j),\\
V_i:=v_i(\phi_i)=e_{i-1}f_{i-1}(e_{i-1}V_{i-1}+h_{i-1})=(e_0\cdots e_{i-1})(\mu_{i-1}+\nu_{i-1}).
\end{array}
$$
\as{1.}

The general definition of a type may be found in \cite[Sec. 2.1]{HN}. In later sections, we shall consider types which are not necessarily optimal nor $F$-complete. So, it may be convenient to distinguish these two properties among all features of a type that we have just mentioned.

\begin{definition}\label{optimal}Let  $\ty=(\psi_0;(\phi_1,\lambda_1,\psi_1);\cdots;(\phi_i,\lambda_i,\psi_i))$ be a type of order $i$ and denote $m_{i+1}:=e_if_im_i$. Let $g(x)\in K[x]$ be a polynomial.\medskip

\noindent{$\bullet$} We say that $\ty$ is \emph{optimal} if $m_1<\cdots<m_i$.
We say that $\ty$ is \emph{strongly optimal} if $m_1<\cdots<m_i<m_{i+1}$.
\medskip

\noindent{$\bullet$} We define $\ord_\ty(g):=\ord_{\psi_{i}}R_{i}(g)$ in $\F_i[y]$.
If $\ord_\ty(g)>0$, we say that $\ty$ \emph{divides} $g(x)$, and we write $\ty\mid g(x)$. This function $\ord_\ty$ behaves well with respect to products: $\ord_\ty(gh)=\ord_\ty(g)+\ord_\ty(h)$.  
\medskip

\noindent{$\bullet$}We say that $\ty$ is \emph{$g$-complete} if $\ord_\ty(g)=1$.
\medskip

\noindent{$\bullet$} A \emph{representative} of $\ty$ is a monic polynomial $\phi(x)\in\oo[x]$ of degree $m_{i+1}$, such that $\ord_\ty(\phi)=1$. This polynomial is necessarily irreducible in $\oo[x]$. The degree $m_{i+1}$ is minimal among all polynomials satisfying this condition.
\medskip

\noindent{$\bullet$} For any $0\le j\le i$, the \emph{truncation} of $\ty$ at level $j$, $\op{Trunc}_j(\ty)$, is the type of order $j$ obtained from $\ty$ by dropping all levels  higher than $j$. We have $\ord_{\op{Trunc}_j(\ty)}(g)\ge (e_{j+1}f_{j+1})\cdots (e_if_i)\ord_\ty(g)$. 
\end{definition}
 
Thus, for a general type of order $i$ dividing $F$, we have $m_1\mid\cdots\mid m_i$ and $\omega_i>0$, but not necessarily $m_1<\cdots<m_i=\deg F$, and $\omega_i=1$. These were particular properties of our optimal and $F$-complete type $\ty_F$ of order $i=r+1$, constructed from an Okutsu frame and an Okutsu approximation to $F$. 

An irreducible polynomial $F$ admits infinitely many different OM representations. However, the numeri\-cal invariants $e_i,f_i,h_i$, for $0\le i\le r$, and the MacLane valuations 
$v_1,\dots,v_{r+1}$ attached to $\ty_F$, are canonical invariants of $F$. 

The data $\lambda_{r+1},\psi_{r+1}$ are not invariants of $F$; they depend on the choice of the Okutsu approximation $\phi_{r+1}$. The integer slope $\lambda_{r+1}=-h_{r+1}$ measures how close is $\phi_{r+1}$ to $F$. We have $\phi_{r+1}=F$ if and only if $h_{r+1}=\infty$.
 
\begin{definition}
An \emph{Okutsu invariant} of $F(x)$ is a rational number that depends only on $e_1,\dots,e_r,f_0,f_1,\dots,f_r,h_1,\dots,h_r$.
\end{definition}

We are specially interested in the following invariants of the polynomial $F(x)$:
$$
\begin{array}{l}
e(F):=e(L/k),  \mbox{ the ramification index of }L/k,\\
f(F):=f(L/k), \mbox{ the residual degree of }L/k,\\
\mu(F):=\max\{v(g(\t))\mid g(x)\in\oo[x] \ \mbox{monic of degree less than }n\},\\
\delta(F):=v(\dsc(F)).
\end{array}
$$

The different ideal of $L/k$ is $\df(L/k)=(\m_L)^{e-1+\rho}$, for some integer $\rho\ge0$, which is not an Okutsu invariant. Also, $\rho=0$ if and only if $L/k$ is tamely ramified. 

\begin{proposition}{\cite[Cor. 3.8]{HN}, \cite[Cor. 1.8]{Ndiff}}\label{okutsu}
$$\as{1.2}
\begin{array}{l}
e(F)=e_0\,e_1\cdots e_r,\quad f(F)=f_0f_1\cdots f_r,\\
\mu(F)=\mu_r=\sum_{1\le j\le r}(e_jf_j\cdots e_rf_r-1)h_j/(e_1\cdots e_j),\\
\delta(F)=n\mu(F)+f(F)\rho.
\end{array}
$$
\end{proposition}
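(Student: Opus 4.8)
The plan is to read all three formulas off the valuation $v_{r+1}$ and the residual tower $\F=\F_0\subset\cdots\subset\F_{r+1}$ carried by the $F$-complete optimal type $\ty_F$, using the standard fact that $v(g(\t))=(e_0e_1\cdots e_r)^{-1}v_{r+1}(g)$ whenever $g\in\oo[x]$ has $\deg g<n$, together with $\F_{r+1}=\F_L$. The identities for $e(F)$ and $f(F)$ then come out almost immediately: the chain $\F\subset\cdots\subset\F_{r+1}=\F_L$ with $[\F_{j+1}\colon\F_j]=\deg\psi_j=f_j$ gives $f_0f_1\cdots f_r\mid f(F)$; and from the Newton-polygon formulas $v_{r+1}(\phi_j)=(e_{j+1}\cdots e_r)(e_jV_j+h_j)$ one obtains $v(\phi_j(\t))=(e_jV_j+h_j)/(e_0\cdots e_j)=e_{j-1}f_{j-1}\,v(\phi_{j-1}(\t))+h_j/(e_0\cdots e_j)$, so (using $\gcd(h_j,e_j)=1$) an induction on $j$ shows that $\Z$ together with the $v(\phi_j(\t))$ generates $(e_0\cdots e_r)^{-1}\Z\subseteq v(L^*)$, whence $e_0e_1\cdots e_r\mid e(F)$. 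Since $n=\deg\phi_{r+1}=(e_0\cdots e_r)(f_0\cdots f_r)=e(F)f(F)$, both divisibilities must be equalities.

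For $\mu(F)$ I would argue greedily. Combining (\ref{frame}) with the Newton-polygon identity $v_{i+1}\!\left(\sum_s a_s\phi_i^s\right)=\min_s\bigl(v_{i+1}(a_s)+s\,v_{i+1}(\phi_i)\bigr)$ for $\phi_i$-developments, one shows by induction on the degree that the maximum of $v(g(\t))$ over monic $g\in\oo[x]$ of degree $<n$ is attained at $g_0=\phi_r^{e_rf_r-1}\phi_{r-1}^{e_{r-1}f_{r-1}-1}\cdots\phi_1^{e_1f_1-1}$ times a monic polynomial of degree $m_1-1$ (which contributes $0$), so that $\mu(F)=\sum_{j=1}^r(e_jf_j-1)\,v(\phi_j(\t))$. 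Substituting the closed form of $v(\phi_j(\t))$ obtained above, the resulting double sum telescopes to $\mu_r$.

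For $\delta(F)$ I would start from $\dsc(F)=\pm\res(F,F')=\pm N_{L/k}(F'(\t))$ and the identity $v(N_{L/k}(\alpha))=n\,v(\alpha)$ for $\alpha\in L^*$, which give $\delta(F)=n\,v(F'(\t))=f(F)\,v_{r+1}(F')$ (using $\deg F'<n$). Thus the statement is equivalent to the single identity $v_{r+1}(F')=(e_0\cdots e_r)\,\mu(F)+\rho$; equivalently, writing $(F'(\t))=\df(L/k)\cdot\mathfrak c$ with $\df(L/k)=(\m_L)^{e-1+\rho}$ and $\mathfrak c$ the conductor of $\oo[\t]$ in $\oo_L$, it is equivalent to $v_L(\mathfrak c)=e(F)(\mu(F)-1)+1$. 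I would establish this by a Newton-polygon analysis of $F'$ with respect to the last key polynomial $\phi_{r+1}$ of $\ty_F$: comparing the $\phi_{r+1}$-developments of $F'$ and $F$ and using the standard estimates for the $v_{r+1}$-value of the derivative of a key polynomial, one computes $v_{r+1}(F')$ and finds that its excess over $(e_0\cdots e_r)\mu(F)$ is exactly the integer $\rho$ read off at the innermost level of the type.

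The main obstacle is precisely this last computation of $v(F'(\t))$. The bound $v(F'(\t))\le\mu(F)+v(n)$ is immediate (since $n^{-1}F'$ is monic of degree $<n$), and it is an equality exactly when $L/k$ is tamely ramified, i.e.\ when $\rho=0$; but in the wild case one must control how tightly the conjugates $\t_2,\dots,\t_n$ of $\t$ cluster around $\t$ relative to the Okutsu frame, and show that the wild contribution from the innermost cluster accounts for exactly the term $f(F)\rho$. This is the content of \cite[Cor.~1.8]{Ndiff}; by comparison, the formulas for $e(F)$, $f(F)$ and $\mu(F)$ amount to routine bookkeeping with the numerical invariants of $\ty_F$ and the inequalities (\ref{frame}).
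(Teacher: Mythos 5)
You should note first that the paper does not actually prove this proposition: it is imported verbatim from \cite[Cor. 3.8]{HN} and \cite[Cor. 1.8]{Ndiff}, so there is no in-paper argument to compare against. Judged on its own terms, your sketch of the first three identities is essentially the standard proof and it checks out: the tower $\F_0\subset\cdots\subset\F_{r+1}=\F_L$ gives $f_0\cdots f_r\mid f(F)$, the values $v(\phi_j(\t))=(e_jV_j+h_j)/(e_0\cdots e_j)$ with $\gcd(h_j,e_j)=1$ give $e_0\cdots e_r\mid e(F)$ by your induction on subgroups of $v(L^*)$, and $n=m_{r+1}=(e_0\cdots e_r)(f_0\cdots f_r)$ forces both divisibilities to be equalities. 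Your telescoping $\sum_j(e_jf_j-1)v(\phi_j(\t))=\mu_r$ is also correct (write $(e_jf_j-1)P_{k,j}=P_{k,j+1}-P_{k,j}$ with $P_{k,j}:=e_kf_k\cdots e_{j-1}f_{j-1}$ and sum over $j\ge k$), and the identification of the extremal polynomial $g_0$ is Okutsu's theorem, whose proof is indeed the induction on the degree via the frame inequalities (\ref{frame}) that you indicate.

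The genuine gap is the fourth identity, and you say so yourself: your paragraph on $\delta(F)$ correctly reduces the statement to $v_L(\mathfrak{c})=e(F)(\mu(F)-1)+1$ for the conductor $\mathfrak{c}$ of $\oo[\t]$ in $\oo_L$, but then asserts that the wild contribution ``accounts for exactly the term $f(F)\rho$'', which is precisely the assertion to be proved; no argument is supplied, and you defer to \cite[Cor. 1.8]{Ndiff} --- exactly as the paper does, so nothing is lost, but nothing is gained either. One concrete warning about the step you do write down: the identity $\delta(F)=f(F)\,v_{r+1}(F')$ is not automatic, since $v(F'(\t))=v_{r+1}(F')/e(F)$ requires $\ord_{\op{Trunc}_r(\ty_F)}(F')=0$, which can fail (only the inequality $v(F'(\t))\ge v_{r+1}(F')/e(F)$ is free). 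The safe starting point is $\delta(F)=n\,v(F'(\t))$ together with $F'(\t)\oo_L=\df(L/k)\cdot\mathfrak{c}$, which is where your conductor reformulation lands anyway.
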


Thus, $e(F),\,f(F)$ and $\mu(F)$ are Okutsu invariants of $F$, but $\delta(F)$ is not. Nevertheless, the lower bound by an Okutsu invariant, $\delta(F)\ge n\mu(F)$, will be essential for our purposes.

\begin{definition}\label{length} 
The \emph{length} of a Newton polygon $N$ is the abscissa of its right end point; we denote it by $\ell(N)$. 
\end{definition}

The following lemma will be frequently used.

\begin{lemma}\cite[Prop. 2.7,Lem. 2.17,Thm. 3.1]{HN}\label{previous}
Let $\ty$ be a type of order $r$.
\begin{enumerate}
\item $v_i(a)=e_0\cdots e_{i-1}v(a)$, for all $a\in k$ and all $1\le i\le r+1$.
\item $\ell(N_{r+1}(g))=\ord_\ty(g)$, for any non-zero polynomial $g(x)\in k[x]$.
\item $v(\phi_i(\t))=(V_i+|\lambda_i|)/(e_0\cdots e_{i-1})=\mu_{i-1}+\nu_i$, for all $1\le i\le r+1$.
\item $v(\phi_i(\t))/m_i=V_{i+1}/(m_{i+1}e_0\cdots e_{i})$, for all $1\le i\le r$.
\end{enumerate}
\end{lemma}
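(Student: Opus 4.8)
The final statement is Lemma \ref{previous} (quoted from \cite[Prop. 2.7, Lem. 2.17, Thm. 3.1]{HN}). Since it packages four standard facts about a type $\ty$ of order $r$ and the associated MacLane valuations, Newton polygons, and the root $\t$, my plan is to treat each part in turn, working by induction on the level $i$.

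For part (1), the plan is to unwind the recursive construction of $v_{i+1}$ from $(\phi_i, v_i, \la_i)$. For a constant $a\in k$, its $\phi_i$-development is just $a$ itself (degree $0<m_i$), so $N_i(a)$ is the single point $(0,v_i(a))$; shifting a line of slope $\la_i$ up to meet it gives intercept $u=v_i(a)$, hence $v_{i+1}(a)=e_i u=e_i v_i(a)$. Since $v_1(a)=v(a)$ by definition of $v_1$ as the minimum valuation of coefficients, induction gives $v_i(a)=e_0\cdots e_{i-1}v(a)$ (with $e_0=1$).

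For part (2), the key point is that $v_{r+1}$ and $N_{r+1}$ were built so that the residual polynomial $R_{r+1}$ reads off the part of the $\phi_{r+1}$-adic behaviour "seen" by $\ty$; one shows $\ord_\ty(g)=\ord_{\psi_r}R_r(g)$ equals the length of the principal part of $N_{r+1}(g)$, and for order-$r$ types acting on arbitrary $g$ the relevant polygon is $N_{r+1}(g)$ itself. I would cite the combinatorial lemmas of \cite{HN} on how residual polynomials of products multiply and how slopes concatenate, reducing to the one-sided case. Parts (3) and (4) are then computations with the numerical invariants: for (3), evaluate $\phi_i$ at the root $\t$ using that $\phi_i$ is a representative of $\op{Trunc}_{i-1}(\ty)$ and that $v(\phi_i(\t))$ is governed by the slope $\la_i$ together with the accumulated value $V_i$; the displayed formulas for $V_i$, $\mu_i$, $\nu_i$ then yield $v(\phi_i(\t))=\mu_{i-1}+\nu_i$ after dividing by the ramification $e_0\cdots e_{i-1}$. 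Part (4) follows by substituting the formula from (3) and the recursion $V_{i+1}=e_if_i(e_iV_i+h_i)$ and $m_{i+1}=e_if_im_i$, a direct algebraic identity.

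The main obstacle is part (2): relating $\ord_\ty(g)$ to $\ell(N_{r+1}(g))$ genuinely requires the theory of how the Newton polygon of the $i$-th level interacts with residual polynomials at level $i$, and in particular the theorem of the product (multiplicativity of residual polynomials) which is itself a substantial result in \cite{HN}. Since the excerpt permits citing earlier material, I would simply invoke \cite[Thm. 3.1]{HN} here rather than reprove it. The other three parts are essentially bookkeeping with the recursive definitions already laid out above, and should go through without surprises.
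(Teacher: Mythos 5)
The paper gives no proof of this lemma at all: it is quoted verbatim from \cite[Prop.~2.7, Lem.~2.17, Thm.~3.1]{HN}, so the ``paper's proof'' is just the citation. Your sketch is consistent with that: the elementary unwindings you give for (1) and for the algebraic identities in (3)--(4) are correct, and you rightly identify that the substantive content --- part (2) and the first equality of (3) --- must simply be invoked from \cite{HN}, which is exactly what the authors do.
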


We end this background section by recalling the \emph{Okutsu equivalence} of irreducible separable polynomials over $\oo$, and the concept of \emph{width} of such a polynomial.

\begin{lemma}{\cite[Lem. 3.1]{GNP}}\label{representative}
Let  $\ty$ be a strongly optimal type of order $r$, and let $\phi\in\oo[x]$ be a monic polynomial of degree $m_{r+1}$. 
Let $F\in\oo[x]$ be an irreducible separable polynomial such that $\ty\mid F$, and let $\t\in\ks$ be a root of $F$. Then, the following conditions are equivalent:
\begin{enumerate}
\item[(a)] $\phi$ is a representative of $\ty$.
\item[(b)] $v(\phi(\t))>V_{r+1}/(e_0\cdots e_{r})=(m_{r+1}/m_r)v(\phi_r(\t))$.
\end{enumerate}
\end{lemma}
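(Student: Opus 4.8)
The plan is to prove the equivalence of (a) and (b) for a strongly optimal type $\ty$ of order $r$, a monic $\phi\in\oo[x]$ of degree $m_{r+1}=e_rf_rm_r$, and an irreducible separable $F$ with $\ty\mid F$. First I would recall that $\phi$ is by definition a representative of $\ty$ precisely when $\ord_\ty(\phi)=1$, i.e. $\ell(N_{r+1}(\phi))=1$ by Lemma \ref{previous}(2), so that $N_{r+1}(\phi)$ is a single side whose left endpoint sits on the horizontal axis at abscissa $0$ and whose right endpoint has abscissa $1$. The idea is to translate the valuation-theoretic inequality in (b) into a statement about this Newton polygon $N_{r+1}(\phi)$, using the standard dictionary between $v(\phi(\t))$ and the geometry of $N_{r+1}(\phi)$ relative to the principal part determined by the slope $\lambda_r$.

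The key computation is the following. Since $\deg\phi=m_{r+1}$, the $\phi_{r+1}$-adic (equivalently, the $\phi_r$-adic at the relevant level) development of $\phi$ has $\ell(N_{r+1}(\phi))\le 1$ always, and the constant term of this development contributes a point at abscissa $0$ of ordinate $v_{r+1}(\phi_r^{\,e_rf_r}\cdot(\text{unit}))$-type value; more precisely $v_{r+1}(\phi)$ is governed by $V_{r+1}=v_{r+1}(\phi_{r+1})$. I would use the principal-polygon description of $v_{r+1}$ from the triple $(\phi_r,v_r,\lambda_r)$: $v_{r+1}(\phi)=e_r u$ where $u$ is the ordinate at which the line of slope $\lambda_r$ first meets $N_r(\phi)$. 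Then, via the factorization-over-$\ks$ interpretation (each root $\t'$ of $F$ gives a contribution, and $\ty\mid F$ forces $F$ and $\phi$ to share the principal structure through level $r$), one gets the precise identity
\[
v(\phi(\t))=\frac{V_{r+1}}{e_0\cdots e_r}+\frac{\ell\bigl(N_{r+1}(\phi)\bigr)\,|\lambda_r'|}{e_0\cdots e_r}\quad\text{(schematically)},
\]
where the second term is the "extra height" picked up along the last side; this extra term is strictly positive iff $N_{r+1}(\phi)$ actually has a side of negative slope, i.e. iff $\ord_\ty(\phi)\ge 1$. Comparing with the threshold $V_{r+1}/(e_0\cdots e_r)$ in (b), one sees that $v(\phi(\t))$ exceeds it iff $\ord_\ty(\phi)\ge 1$. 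Since $\deg\phi=m_{r+1}$ forces $\ord_\ty(\phi)\le 1$ (as $\ell(N_{r+1}(\phi))\le \deg\phi/m_{r+1}=1$), the inequality $\ord_\ty(\phi)\ge1$ is equivalent to $\ord_\ty(\phi)=1$, i.e. to (a). The equality $V_{r+1}/(e_0\cdots e_r)=(m_{r+1}/m_r)v(\phi_r(\t))$ claimed in (b) is then just Lemma \ref{previous}(4) rewritten using $V_{r+1}=(e_0\cdots e_r)\cdot(v(\phi_r(\t))/m_r)\cdot m_{r+1}/\ (\cdots)$; I would cite that lemma directly rather than re-derive it.

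For the two directions concretely: (a) $\Rightarrow$ (b). If $\phi$ is a representative, $\ord_\ty(\phi)=1$, so $N_{r+1}(\phi)$ has a genuine negative side contributing strictly positive extra height beyond the baseline $V_{r+1}/(e_0\cdots e_r)$, giving the strict inequality in (b). Conversely, (b) $\Rightarrow$ (a): if $v(\phi(\t))>V_{r+1}/(e_0\cdots e_r)$ then $N_{r+1}(\phi)$ cannot be the trivial polygon reduced to the point of abscissa $0$, so $\ell(N_{r+1}(\phi))\ge 1$; combined with $\ell(N_{r+1}(\phi))\le 1$ from the degree bound we get $\ord_\ty(\phi)=\ell(N_{r+1}(\phi))=1$, which is exactly the definition of a representative — and the final parenthetical assertion that such $\phi$ is irreducible is already recorded in Definition \ref{optimal}.

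The main obstacle I anticipate is getting the valuation bookkeeping exactly right when passing between $v(\phi(\t))$ (valuation of a number in $L$) and $v_{r+1}(\phi)$ (a MacLane valuation of a polynomial), including the correct normalization factors $e_0\cdots e_r$ and the role of $\lambda_r$ versus the would-be slope $\lambda_{r+1}$; the clean way around it is to lean entirely on Lemma \ref{previous}(2)--(4) and the characterization $\ord_\ty(\phi)=\ell(N_{r+1}(\phi))$, so that the argument reduces to "$\ell(N_{r+1}(\phi))=1$ iff $v(\phi(\t))$ strictly exceeds the baseline value $V_{r+1}/(e_0\cdots e_r)$", a one-line Newton-polygon observation once the dictionary is in place. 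A secondary subtlety is checking that $\ty\mid F$ is genuinely needed (it guarantees $F$ and $\phi$ have comparable behavior through level $r$, so that the baseline $V_{r+1}/(e_0\cdots e_r)$ equals $v(\phi_r(\t))\,m_{r+1}/m_r$ evaluated at a root of $F$); I would flag where that hypothesis enters rather than treat it as automatic.
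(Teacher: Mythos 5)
The paper itself gives no proof of this lemma (it is imported from \cite[Lem.\ 3.1]{GNP}), but the mechanism it relies on is exactly the one the paper later writes out in the proof of Lemma \ref{divideG}, so I will measure your proposal against that.

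Your skeleton is right: since $\deg\phi=m_{r+1}$, the $\phi_{r+1}$-development forces $\ord_\ty(\phi)=\ell(N_{r+1}^-(\phi))\in\{0,1\}$, so (a) is equivalent to $\ord_\ty(\phi)\ge 1$, i.e.\ to $\ty\mid\phi$, and the whole lemma reduces to ``$\ty\mid\phi$ iff $v(\phi(\t))$ strictly exceeds $V_{r+1}/(e_0\cdots e_r)$''. The forward direction is also essentially fine once you replace your schematic formula by the precise statement of Proposition \ref{prop35}: if $\ty\mid\phi$ then $v(\phi(\t))/m_{r+1}\ge (V_{r+1}+\min\{|\lambda_{r+1}(F)|,|\lambda(\phi)|\})/(m_{r+1}e_0\cdots e_r)$, and the $\min$ of two nonzero quantities gives the strict inequality.

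The gap is in the converse, and it is not a bookkeeping issue. Your ``precise identity''
$v(\phi(\t))=V_{r+1}/(e_0\cdots e_r)+\ell(N_{r+1}(\phi))|\lambda'|/(e_0\cdots e_r)$ is false precisely in the case you need it, namely $\ord_\ty(\phi)=0$: take $\phi=(x-c)^{m_{r+1}}$ with $\overline{x-c}$ coprime to $\psi_0$; then $v(\phi(\t))=0$, not the baseline value your identity predicts. The parenthetical justification ``$\ty\mid F$ forces $F$ and $\phi$ to share the principal structure through level $r$'' is the root of the error: $\ty\mid F$ is a hypothesis on $F$ only, and an arbitrary monic $\phi$ of degree $m_{r+1}$ need not be a polynomial of type $\op{Trunc}_j(\ty)$ for any $j\ge 0$. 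Consequently, ``$v(\phi(\t))>$ threshold $\Rightarrow N_{r+1}(\phi)$ is nontrivial'' is not a one-line Newton-polygon observation; it is the substantive half of the lemma. The correct argument (cf.\ the proof of Lemma \ref{divideG}) is: factor $\phi=\prod_jQ_j$ into monic irreducibles (note $\ord_\ty(\phi)=\sum_j\ord_\ty(Q_j)=0$ forces $\ty\nmid Q_j$ for every $j$, and $\phi$ itself need not be irreducible, so this factorization step cannot be skipped); for each $Q_j$ locate the first level $j'\le r$ with $\op{Trunc}_{j'}(\ty)\nmid Q_j$, apply Proposition \ref{prop35} at that level to get
$v(Q_j(\t))/\deg Q_j\le (V_{j'}+|\lambda_{j'}|)/(m_{j'}e_0\cdots e_{j'-1})=v(\phi_{j'}(\t))/m_{j'}$, and then invoke the Okutsu-frame inequalities (\ref{frame}) (equivalently strong optimality) to bound this by $v(\phi_r(\t))/m_r=V_{r+1}/(m_{r+1}e_0\cdots e_r)$. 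Summing over $j$ weighted by $\deg Q_j$ yields $v(\phi(\t))\le V_{r+1}/(e_0\cdots e_r)$, i.e.\ (b) fails. Your proposal never performs this descent to lower levels and never uses the monotonicity $v(\phi_1(\t))/m_1<\cdots<v(\phi_r(\t))/m_r$, both of which are indispensable; flagging ``where the hypothesis $\ty\mid F$ enters'' does not substitute for carrying out this step.
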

 
\begin{definition}\label{okequiv}
Let $F\in\oo[x]$ be a monic irreducible separable polynomial of Okutsu depth $r$, and let $\ty_F$ be an OM representation of $F$ as in (\ref{OM}). Let $\ty:=\op{Trunc}_r(\ty_F)$. We say that a monic polynomial $G\in\oo[x]$ is an \emph{Okutsu approximation to $F$}, and we write $F\approx G$,  if $G$ is a representative of $\ty$. 

We also say that $F$ and $G$ are \emph{Okutsu equivalent} polynomials.
\end{definition}

By Lemma \ref{representative}, this definition does not depend on the choice of the OM representation of $F$. The binary relation $\approx$ is an equivalence relation on the set of all monic irreducible separable polynomials in $\oo[x]$ \cite[Lem. 4.3]{okutsu}. Okutsu equivalent polynomials have the same Okutsu invariants and the same MacLane valuations \cite[Cor. 3.7]{okutsu}.

For $F$ as above, and $1\le i\le r+1$, let $\op{Rep}_i(F)\subseteq \oo[x]$ be the set of  all representatives of $\op{Trunc}_{i-1}(\ty_F)$. Consider:  
$$\vv_i:=\left\{v(\phi(\t))\mid \phi\in \op{Rep}_i(F)\right\}\subseteq \Q\cup \{\infty\}.$$ 
By the formula (\ref{frame}), $\phi_i\in\op{Rep}_i(F)$ and $v(\phi_i(\t))=\op{Max}(\vv_i)$, for all $1\le i\le r$. By definition, $\op{Rep}_{r+1}(F)$ is the set of all Okutsu approximations to $F(x)$. The set $\vv_{r+1}$ is not finite, and it contains $ \infty$, because $F\in\op{Rep}_{r+1}(F)$.

The sets $\vv_1,\dots,\vv_r$ are finite and easy to describe \cite[Prop. 3.4]{GNP}.

\begin{proposition}\label{compwidth}
For any $\lambda\in\Q$, let $M_\lambda:=\{m\in \Z\mid 1\le m<|\lambda|\} \cup \{|\lambda|\}$. Then, 
$$
\vv_i=\left\{(V_i+m)/(e_0\cdots e_{i-1})\mid m\in M_{\lambda_i}\right\},$$
for all $1\le i\le r$. 
In particular, $\#\vv_i=\lceil|\lambda_i|\rceil=\lceil h_i/e_i\rceil$.
\end{proposition}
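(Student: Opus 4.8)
The plan is to turn the condition ``$\phi\in\op{Rep}_i(F)$'' into an inequality on $v(\phi(\t))$ by Lemma~\ref{representative}, to read off $\max\vv_i$ from~(\ref{frame}) and Lemma~\ref{previous}(3), and to obtain the remaining values by combining the ultrametric inequality with the integrality of the partial values $v(a(\t))$ for $\deg a<m_i$. Write $\bar e:=e_0\cdots e_{i-1}$ and $\ty:=\op{Trunc}_{i-1}(\ty_F)$; since $\ty_F$ is optimal we have $m_1<\cdots<m_{i-1}<m_i$, so $\ty$ is a strongly optimal type of order $i-1$, and $\ty\mid F$ because truncation preserves divisibility. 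Lemma~\ref{representative}, applied to $\ty$ and a root $\t$ of $F$, gives
$$\op{Rep}_i(F)=\bigl\{\phi\in\oo[x]\text{ monic},\ \deg\phi=m_i,\ v(\phi(\t))>V_i/\bar e\bigr\},$$
hence $\vv_i=\{v(\phi(\t))\mid\phi\text{ monic of degree }m_i,\ v(\phi(\t))>V_i/\bar e\}$. (For $i=1$, where $\ty$ has order $0$ and $V_1=0$, this is checked directly: a monic $\phi$ of degree $f_0$ has $v(\phi(\t))>0$ iff $\overline\phi(\overline\t)=0$, and as $\overline F=\psi_0^{\omega_1}$ with $\psi_0$ irreducible of degree $f_0$, this holds iff $\overline\phi=\psi_0$, i.e.\ iff $\ty\mid\phi$.)

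\textbf{Inclusion $\subseteq$.} For $\phi\in\op{Rep}_i(F)$, taking $g=\phi$ in~(\ref{frame}) (valid since $m_i\le\deg g<m_{i+1}$) gives $v(\phi(\t))\le v(\phi_i(\t))$, and $v(\phi_i(\t))=(V_i+|\lambda_i|)/\bar e$ by Lemma~\ref{previous}(3); since $\phi_i\in\op{Rep}_i(F)$ this value is attained and equals $\max\vv_i$, corresponding to $m=|\lambda_i|\in M_{\lambda_i}$. If instead $v(\phi(\t))<v(\phi_i(\t))$, set $a:=\phi-\phi_i\in\oo[x]\setminus\{0\}$, $\deg a<m_i$. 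From $\phi(\t)=\phi_i(\t)+a(\t)$: were $v(a(\t))\ge v(\phi_i(\t))$ we would get $v(\phi(\t))\ge v(\phi_i(\t))$, a contradiction; so $v(a(\t))<v(\phi_i(\t))$ and $v(\phi(\t))=v(a(\t))$. The key input is that \emph{$v(a(\t))\in\frac1{\bar e}\Z$ for every nonzero $a\in k[x]$ with $\deg a<m_i$} — a standard property of the MacLane valuation attached to $\ty$ (expand $a$ $\phi_{i-1}$-adically; each monomial $b_s(\t)\phi_{i-1}(\t)^s$ has $v$-value in $\frac1{\bar e}\Z$, hence so does the sum; cf.~\cite{HN}). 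Granting this, $v(\phi(\t))=(V_i+m)/\bar e$ with $m:=\bar e\,v(\phi(\t))-V_i\in\Z$; here $m\ge1$ since $v(\phi(\t))>V_i/\bar e$, and $m<|\lambda_i|$ since $v(\phi(\t))<(V_i+|\lambda_i|)/\bar e$, so $m\in M_{\lambda_i}$.

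\textbf{Inclusion $\supseteq$ and cardinality.} The value $|\lambda_i|$ is realized by $\phi_i$. Given an integer $m$ with $1\le m<|\lambda_i|$, choose $a\in\oo[x]$, $\deg a<m_i$, with $v(a(\t))=(V_i+m)/\bar e$; such $a$ exists because the value semigroup $\{v(a(\t))\mid a\in\oo[x]\setminus\{0\},\ \deg a<m_i\}$ contains every element of $\frac1{\bar e}\Z_{\ge0}$ in the relevant range — the elements $\pi^c\phi_1(\t)^{s_1}\cdots\phi_{i-1}(\t)^{s_{i-1}}$ with $c\ge0$, $0\le s_j\le e_jf_j-1$, correspond to polynomials of degree $\le\sum_{1\le j<i}(e_jf_j-1)m_j=m_i-m_1<m_i$ and already sweep out these values; this, together with the integrality claim above, is what \cite[Prop. 3.4]{GNP} establishes using~\cite{HN}. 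Then $\phi:=\phi_i+a$ is monic of degree $m_i$, and since $v(a(\t))<v(\phi_i(\t))$ we get $v(\phi(\t))=v(a(\t))=(V_i+m)/\bar e>V_i/\bar e$, so $\phi\in\op{Rep}_i(F)$ realizes the value. As $m\mapsto(V_i+m)/\bar e$ is injective, $\#\vv_i=\#M_{\lambda_i}$; and $M_{\lambda_i}$, consisting of the integers in $[1,|\lambda_i|)$ together with $|\lambda_i|$, has $\lceil|\lambda_i|\rceil=\lceil h_i/e_i\rceil$ elements.

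The step I expect to be the real obstacle is precisely the pair of facts about the partial values $v(a(\t))$ for $\deg a<m_i$: that they all lie in $\frac1{\bar e}\Z$, and that every element of $\frac1{\bar e}\Z$ in the open interval $(V_i/\bar e,\,v(\phi_i(\t)))$ occurs as such a value. Everything else is a short formal manipulation with Lemma~\ref{representative}, (\ref{frame}), Lemma~\ref{previous}(3), and the ultrametric inequality; a fully self-contained treatment of these two facts would amount to reproducing the relevant bookkeeping of~\cite{HN} and~\cite[\S3]{GNP}.
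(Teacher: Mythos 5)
The paper does not prove this proposition; it is quoted directly from \cite[Prop.~3.4]{GNP}, so there is no internal proof to compare against. Your reconstruction is correct and is essentially the argument of the cited source: Lemma~\ref{representative} converts membership in $\op{Rep}_i(F)$ into the inequality $v(\phi(\t))>V_i/(e_0\cdots e_{i-1})$, the frame property (\ref{frame}) together with Lemma~\ref{previous}(3) identifies the maximum $(V_i+|\lambda_i|)/(e_0\cdots e_{i-1})$, and the ultrametric reduction $\phi=\phi_i+a$ with $\deg a<m_i$ handles everything below the maximum. You have also correctly isolated the only non-formal content: the identity $v(a(\t))=v_i(a)/(e_0\cdots e_{i-1})$ for $\deg a<m_i$ (your ``sum of monomials'' parenthetical is not by itself a proof of this, since cancellation could a priori push the value into the larger group $\frac1{e(L/k)}\Z$; the actual statement is \cite[Prop.~2.9]{HN}) and the surjectivity of $v_i$ onto $\Z_{\ge0}$ on polynomials of degree less than $m_i$, which follows from $\gcd(h_j,e_j)=1$ at each level via the restricted monomials you exhibit. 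Since the paper itself delegates the entire proof to \cite{GNP}, deferring exactly these two facts to \cite{HN} is the appropriate level of detail, and the rest of your argument is complete.
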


The \emph{width} of $F(x)$ is defined to be the vector of positive integers,
$$(\#\vv_1,\dots,\#\vv_r)=(\lceil h_1/e_1\rceil,\cdots,\lceil h_r/e_r\rceil).$$ As we shall see in section \ref{secComplexity}, it is a fundamental invariant for the analysis of the complexity of the Montes algorithm.

\section{The Okutsu discriminant}\label{secOkDisc}
We keep all notation from the previous section.
In this section we introduce a new Okutsu invariant of an irreducible polynomial $F(x)\in\oo[x]$, linked to the problem of determining the least exponent $\nu$ such that all polynomials of degree $n=\deg F$, belonging to $F(x)+\m^\nu[x]$, are irreducible in $\oo[x]$.

\begin{definition}\label{okutsudisc}
Let $F(x)\in\oo[x]$ be a monic irreducible separable polynomial of degree $n$ and $\ty_F$ an OM representation of $F$ as in (\ref{OM}). If $r$ is the Okutsu depth of $F(x)$, we define the \emph{Okutsu discriminant} of $F(x)$ as the ideal $\m^{\delta_0(F)}$, where
\begin{equation}\label{delta0}
\delta_0(F):=\dfrac{V_{r+1}}{e(F)}=\mu_r+\nu_r=\sum\nolimits_{1\le i\le r}\dfrac{|\lambda_i|}{e_0\cdots e_{i-1}}\,\dfrac{n}{m_i}. 
\end{equation}
\end{definition}

The exponent $\delta_0(F)$ of the Okutsu discriminant coincides, up to a certain normalization, 
with the ordinate of the left end point of $N_r(F)$. 

\begin{lemma}\label{delta0props}
With the above notation, denote $u_i:=v_i(a_{0,i}(F))$, for $1\le i\le r$, where $a_{0,i}(F)\in\oo[x]$ is the $0$-th coefficient of the $\phi_i$-development of $F$. Then,
\begin{enumerate}
\item $u_1<u_2/e_1<\cdots<u_r/(e_0\cdots e_{r-1})=\delta_0(F)$.
\item $\delta_0(F)\le 2\delta(F)/n$, and equality holds if and only if either $r=0$, or $r=1$, $e_1f_1=2$, $p>e_1$.
\end{enumerate}
\end{lemma}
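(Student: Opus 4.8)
The plan is to prove the two parts separately, using the explicit formulas for $\delta_0(F)$ and for $\delta(F)$ together with the combinatorial description of the Newton polygons $N_i(F)$.

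For part (1), I would first identify $u_i/(e_0\cdots e_{i-1})$ with a point on the polygon $N_i(F)$. The $0$-th coefficient $a_{0,i}(F)$ of the $\phi_i$-development of $F$ gives the point $(0, v_i(a_{0,i}(F))) = (0,u_i)$, which is the left end point of $N_i(F)$. Since $N_i(F)$ is contained in $N_i^-(F)$ continued by sides of non-negative slope, and $N_i^-(F)$ has length $\ell(N_i^-(F)) = \ord_{\op{Trunc}_i(\ty_F)}(F)$ by Lemma \ref{previous}(2), the ordinate $u_i$ is determined by the slopes $\lambda_1,\dots,\lambda_i$ and the lengths involved. Concretely, $u_i$ should equal $V_i \cdot \ell_i + (\text{contribution of the slope }\lambda_i)$ where $\ell_i$ is the relevant length; normalizing by $e_0\cdots e_{i-1}$ turns this into $\mu_{i-1} + \nu_i$-type expressions. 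The chain of strict inequalities $u_1 < u_2/e_1 < \cdots$ then follows from the defining property \eqref{frame} of the Okutsu frame, exactly as in Lemma \ref{previous}(3)--(4): passing from level $i$ to level $i+1$ strictly increases the normalized valuation $v(\phi_i(\t))/m_i$, and the normalized left-endpoint ordinate is governed by the same quantities. The final equality $u_r/(e_0\cdots e_{r-1}) = \delta_0(F)$ is then just the definition $\delta_0(F) = V_{r+1}/e(F)$ rewritten via $V_{r+1} = e_rf_r(e_rV_r + h_r)$ and the recursions listed after \eqref{OM}. The main bookkeeping obstacle here is keeping the normalization factors $e_0\cdots e_{i-1}$ straight and correctly relating $v_i(a_{0,i}(F))$ to $V_i$ plus the slope contribution; this is routine polygon geometry but error-prone.

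For part (2), I would start from Proposition \ref{okutsu}, which gives $\delta(F) = n\mu(F) + f(F)\rho = n\mu_r + f(F)\rho$ with $\rho \ge 0$, and $\rho = 0$ iff $L/k$ is tamely ramified. Hence $2\delta(F)/n = 2\mu_r + 2f(F)\rho/n \ge 2\mu_r \ge \mu_r + \nu_r = \delta_0(F)$, where the last inequality uses $\mu_r \ge \nu_r$. So it remains to check that $\mu_r \ge \nu_r$ always holds and to analyze when both inequalities $2\mu_r \ge \mu_r + \nu_r$ and $n\mu(F) + f(F)\rho \le n\mu(F)$... wait, more carefully: equality $\delta_0(F) = 2\delta(F)/n$ forces simultaneously $\rho = 0$ (tame) and $\mu_r = \nu_r$. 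Now $\mu_r - \nu_r = \sum_{1\le j\le r}(e_jf_j\cdots e_rf_r - 2)h_j/(e_1\cdots e_j)$, so $\mu_r = \nu_r$ with all $h_j > 0$ forces each coefficient $e_jf_j\cdots e_rf_r - 2 \le 0$, and since these are a decreasing-in-$j$ sequence of integers, the only possibilities are $r = 0$ (empty sum, giving $\mu_0 = \nu_0 = 0$), or $r = 1$ with $e_1f_1 = 2$, or $r = 1$ with $e_1f_1 = 1$ — but $e_1f_1 = 1$ would violate strong optimality ($m_1 < m_2 = e_1f_1m_1$), so it is excluded. Finally, when $r = 1$ and $e_1f_1 = 2$, tameness $\rho = 0$ must be translated into the arithmetic condition $p > e_1$: here $e_1 \in \{1,2\}$, and $L/k$ is tamely ramified precisely when $p \nmid e(F) = e_0e_1 = e_1$, i.e. when $p > e_1$ (noting $e_1 \le 2$). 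I'd also double check the converse direction — that each listed case does yield equality — which is a direct substitution.

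The main obstacle I anticipate is part (1): correctly expressing $v_i(a_{0,i}(F))$ in closed form. One must argue that the $\phi_i$-development of $F$ has its Newton polygon $N_i(F)$ with left endpoint at height exactly $u_i$, and that this height, after normalization, equals the quantity appearing in \eqref{delta0} when $i = r$; for $i < r$ it equals an analogous partial sum, and the strict monotonicity is inherited from \eqref{frame}. This requires invoking the structure theory of types (the relation between $N_i(F)$, $N_{i+1}(F)$, and the residual polynomials) from \cite{HN}; once that dictionary is in hand the inequalities are immediate, but assembling it cleanly is the delicate part. Part (2), by contrast, is a finite case analysis on the integer sequence $\{e_jf_j\cdots e_rf_r\}$ and should go through without difficulty.
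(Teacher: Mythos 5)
Your proposal is correct and follows essentially the same route as the paper: for (1), identify $(0,u_i)$ as the left end point of the one-sided polygon $N_i(F)$, compute $u_i/(e_0\cdots e_{i-1})=n\,v(\phi_i(\t))/m_i$ via Lemma \ref{previous}(3), get strict monotonicity from \eqref{frame} and the final equality from Lemma \ref{previous}(4); for (2), combine $\nu_r\le\mu_r$ with Proposition \ref{okutsu} and reduce the equality case to $\mu_r=\nu_r$ plus tameness. Your case analysis for part (2) (each $e_jf_j\cdots e_rf_r=2$, hence $r\le 1$, and $p\nmid e_1$ iff $p>e_1$ for $e_1\le 2$) is exactly the spelling-out the paper leaves implicit.
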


\begin{proof}Denote $\omega_i=n/m_i=(e_if_i)\cdots (e_rf_r)$. The Newton polygon $N_i(F)$ is one-sided, with end points $(0,u_i)$ and $(\omega_i,v_i(F))$ \cite[Lem. 2.17]{HN}. Also, the leading term of the $\phi_i$-adic expansion of $F$ is $\phi_i^{\omega_i}$. Thus, $v_i(F)=\omega_iV_i$ and  
\begin{equation}\label{ui}
\dfrac{u_i}{e_0\cdots e_{i-1}}=\dfrac{v_i(F)+\omega_i|\lambda_i|}{e_0\cdots e_{i-1}}=\dfrac{\omega_i(V_i+|\lambda_i|)}{e_0\cdots e_{i-1}}=n\dfrac{v(\phi_i(\t))}{m_i},
\end{equation}
the last equality by Lemma \ref{previous},(3).
By the properties (\ref{frame}) of the Okutsu frame, $u_1<u_2/e_1<\cdots<u_r/(e_0\cdots e_{r-1})$. Also,  by Lemma \ref{previous},(4),
$$u_r/(e_0\cdots e_{r-1})=(n/m_r)v(\phi_r(\t))=V_{r+1}/e(F)=\delta_0(F).
$$

On the other hand, since $e_if_i>1$, for all $1\le i\le r$, we have $\nu_r\le \mu_r=\mu(F)$. Thus, $\delta_0(F)\le 2\mu(F)\le 2\delta(F)/n$, by Proposition \ref{okutsu}. Also, equality holds if and only if $\mu_r=\nu_r$ and $F$ determines a tamely ramified extension of $k$ (i.e. $\rho=0$). The formulas for $\mu_r$, $\nu_r$ in section \ref{secOkutsu} lead to the conditions of item (2).   
\end{proof}

The aim of this section is to prove the following result.

\begin{theorem}\label{bound}
Let $F(x),G(x)\in\oo[x]$ be monic separable polynomials of degree $n$, such that $F\equiv G\md{\m^\nu}$, for some positive exponent $\nu$.

\begin{enumerate}
\item If $F$ is irreducible and $\nu>\delta_0(F)$, then $G$ is irreducible and $G\approx F$.
\item If $G$ is irreducible and $\nu>2\delta(F)/n$, then $F$ is irreducible and  $F\approx G$.
\end{enumerate}
\end{theorem}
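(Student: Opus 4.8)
The plan is to deduce both statements from a single computation with Newton polygons at order $r$, where $r$ is the Okutsu depth of $F$, using the congruence $F\equiv G\md{\m^\nu}$ to compare the $\phi_i$-developments of $F$ and $G$. Fix an OM representation $\ty_F$ as in (\ref{OM}) and let $\ty:=\op{Trunc}_r(\ty_F)$, a strongly optimal type of order $r$ with $\ty\mid F$. The key observation is that if $F\equiv G\md{\m^\nu}$, then the $\phi_i$-developments of $F$ and $G$ agree modulo $\m^\nu$ coefficient-by-coefficient, for every $i\le r$; hence the points of $N_i(G)$ that lie strictly below height (roughly) $\nu e_0\cdots e_{i-1}$ coincide with those of $N_i(F)$. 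In particular, since by Lemma \ref{delta0props}(1) the left endpoint of $N_r(F)$ sits at height $u_r = \delta_0(F)\,e_0\cdots e_{r-1}$, a precision $\nu>\delta_0(F)$ guarantees that $N_r^-(G)$ has the same left endpoint, the same (one-sided) shape and the same residual polynomial $R_r(G)=R_r(F)\sim\psi_r$ as $N_r^-(F)$. This is exactly the statement that $\ty$ divides $G$ and, via Lemma \ref{representative} (condition (b), using $v(\phi_r(\t_G))$ controlled by the unchanged polygon data), that $\ty$ is $G$-complete and $G$ is a representative of $\ty$ — i.e. $G$ is irreducible and $G\approx F$, which is part (1).

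For part (2) the roles of $F$ and $G$ are not symmetric, because a priori we only know the Okutsu data of $F$, not of $G$. The strategy is to observe that the relevant bound $2\delta(F)/n$ dominates $\delta_0(F)$ by Lemma \ref{delta0props}(2), so the hypothesis $\nu>2\delta(F)/n\ge\delta_0(F)$ still lets us run the argument of part (1) in the direction that matters: the type $\ty$ attached to $F$ divides $G$ and remains $G$-complete, so $G$ is irreducible with $\ty$ as its truncated OM representation and $G\approx F$; then the relation $\approx$ being symmetric (\cite[Lem. 4.3]{okutsu}) gives $F\approx G$, and $F$ is irreducible because it shares the Okutsu data making $\ty$ $G$-complete. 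The only subtlety is why one needs the \emph{larger} threshold $2\delta(F)/n$ rather than $\delta_0(F)$: when starting from $G$ one does not know a priori that $G$ has Okutsu depth $r$, so one must also rule out that $G$ (and hence, by the congruence, $F$) could ``develop further structure'' at precision between $\delta_0(F)$ and $2\delta(F)/n$; the inequality $\delta(F)\ge n\mu(F)=n\mu_r$ from Proposition \ref{okutsu}, together with $\delta_0(F)=\mu_r+\nu_r\le 2\mu_r$, is what bridges the gap and forces $G$ to be Okutsu-equivalent to $F$ rather than a genuine refinement.

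Concretely I would organise the write-up as: (i) a lemma that $F\equiv G\md{\m^\nu}$ implies $a_{s,i}(F)\equiv a_{s,i}(G)\md{\m^\nu}$ for all $s$ and all $i\le r$ (immediate from the recursive definition of $\phi_i$-developments, since the $\phi_i$ are monic and fixed); (ii) deduce that $N_i^-(F)$ and $N_i^-(G)$ share all vertices of height $<\nu e_0\cdots e_{i-1}$ and the corresponding residual coefficients; (iii) specialise to $i=r$ to get $\ty\mid G$, $\ord_\ty(G)=1$, and the inequality in Lemma \ref{representative}(b) for $G$, concluding $G$ irreducible and $G\approx F$; (iv) for (2), invoke Lemma \ref{delta0props}(2) to see $\nu>\delta_0(F)$, then symmetry of $\approx$. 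The main obstacle I anticipate is step (iii): controlling $v(\phi_r(\t_G))$ and verifying that $\ty$ is $G$-complete (not just $\ty\mid G$) requires that the precision $\nu$ strictly exceed $\delta_0(F)$, i.e. that the relevant side of the polygon $N_r(G)$ is \emph{entirely} determined — including where it meets the vertical axis — so that its length equals $\ord_\ty(G)$ and this equals $1$ because it already did for $F$; pinning down exactly which coefficients at exactly which heights are needed, and checking the strictness of all the inequalities so that the threshold comes out as $\delta_0(F)$ and not $\delta_0(F)+1$ or similar, is the delicate bookkeeping in the proof.
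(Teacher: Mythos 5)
Your argument for part (1) is essentially sound, though more laborious than necessary: the paper simply evaluates at a root $\t$ of $F$, noting that $v(G(\t))=v((G-F)(\t))\ge\nu>\delta_0(F)=(m_{r+1}/m_r)v(\phi_r(\t))$, which by Lemma \ref{representative} says precisely that $G$ is a representative of $\op{Trunc}_r(\ty_F)$, hence irreducible and Okutsu equivalent to $F$. Your level-by-level comparison of Newton polygons would also work for (1), since by Lemma \ref{delta0props}(1) all the relevant ordinates $u_i/(e_0\cdots e_{i-1})$ are bounded by $\delta_0(F)<\nu$.

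Part (2), however, has a genuine gap: your argument is circular. You write that ``the type $\ty$ attached to $F$ divides $G$ and remains $G$-complete'' and you invoke $\delta_0(F)$, $\mu(F)$ and the Okutsu depth $r$ of $F$ — but all of these objects are only defined when $F$ is irreducible, which is exactly what part (2) must prove. In the correct setup the type $\ty$ must be taken as $\op{Trunc}_r(\ty_G)$, built from the polynomial $G$ that is known to be irreducible, and one must then control the Newton polygons $N_i(F)$ of a \emph{possibly reducible} $F$ with respect to that type. The key missing ingredient is the paper's Lemma \ref{technical}: for a monic polynomial $F$ of type $\op{Trunc}_{i-1}(\ty)$ and degree $n>m_i$ — with no irreducibility assumption — one has $\bigl(v_i(F)+\ell|\lmn|\bigr)/(e_0\cdots e_{i-1})\le 2\delta(F)/n$, where $\lmn$ is the slope of $N_i(F)$ of minimal absolute value. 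This is proved by factoring $F$ into irreducibles, applying the theorem of the product, and bounding $\delta(F)$ from below by the sum of the $\delta(F_s)$ plus twice the resultant valuations $v(\res(F_s,F_t))$ (Lemma \ref{resultant}). It explains why the threshold is $2\delta(F)/n$ and not merely ``an upper bound for $\delta_0(F)$'': for reducible $F$ the discriminant absorbs the cross-resultants, and that is what guarantees the last side of $N_i(F)$ lies below the precision line $e_0\cdots e_{i-1}\nu$. One then uses Lemma \ref{belowline} to transfer that side to $N_i(G)$, and the irreducibility of $G$ (one-sidedness of $N_i(G)$) to force the side to start at abscissa zero, whence $N_i(F)=N_i(G)$ and $R_i(F)=R_i(G)$; induction on $i$ up to $r$ then shows $\ty$ is $F$-complete, so $F$ is a representative of $\ty$ and $F\approx G$. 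Without an inequality of the type of Lemma \ref{technical} for reducible $F$, your proposal cannot get off the ground in the direction that part (2) requires.
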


\begin{corollary}\label{testprecision}
Let $F(x),G(x)\in\oo[x]$ be monic separable polynomials of degree $n$, such that $F\equiv G\md{\m^\nu}$, for $\nu>2\delta(F)/n$. Then, $F$ is irreducible if and only if $G$ is irreducible. If this is the case, the extensions of $k$ determined by $F$ and $G$ have isomorphic maximal tamely ramified subextensions. 
\end{corollary}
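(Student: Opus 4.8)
The plan is to derive Corollary~\ref{testprecision} essentially as a formal consequence of Theorem~\ref{bound}, keeping the hypothesis $\nu>2\delta(F)/n$ fixed throughout. First I would prove the equivalence of irreducibility. Suppose $F$ is irreducible. By Lemma~\ref{delta0props},(2) we have $\delta_0(F)\le 2\delta(F)/n<\nu$, so the hypothesis $\nu>\delta_0(F)$ of Theorem~\ref{bound},(1) is satisfied; hence $G$ is irreducible (and $G\approx F$). Conversely, suppose $G$ is irreducible. Then the hypothesis $\nu>2\delta(F)/n$ is exactly what Theorem~\ref{bound},(2) requires, so $F$ is irreducible (and $F\approx G$). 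This gives the biconditional. Note that the asymmetry in the roles of $F$ and $G$ in Theorem~\ref{bound} is exactly absorbed by the single bound $2\delta(F)/n$, which dominates both $\delta_0(F)$ and itself; this is the small observation that makes the corollary clean.

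Next I would treat the statement about tamely ramified subextensions, under the assumption that $F$ (equivalently $G$) is irreducible. From the previous paragraph we know $F\approx G$, i.e. $F$ and $G$ are Okutsu equivalent. By the cited fact following Definition~\ref{okequiv}, Okutsu equivalent polynomials have the same Okutsu invariants and the same MacLane valuations. In particular they share $e_0,e_1,\dots,e_r$ and $f_0,f_1,\dots,f_r$, hence the same ramification index $e(F)=e_0e_1\cdots e_r$ and residual degree $f(F)=f_0f_1\cdots f_r$ by Proposition~\ref{okutsu}, and more finely they share the full tower $\F=\F_0\subset\F_1\subset\cdots\subset\F_r$ of residue extensions together with the ramification data $h_i/e_i$ at each level. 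The maximal tamely ramified subextension of $L/k$ is determined (up to isomorphism) by the residual degree $f(L/k)$ together with the prime-to-$p$ part of the ramification index, both of which are Okutsu invariants; so the maximal tamely ramified subextensions of the two extensions are isomorphic.

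The main obstacle, and the only place requiring genuine care, is making the last claim precise: one must argue that the maximal tamely ramified subextension of a finite separable extension of a local field depends only on $f(L/k)$ and on the tame part $e_t$ of $e(L/k)$ (the prime-to-$p$ part of the ramification index), and that $e_t$ is recoverable from the Okutsu invariants $e_0,\dots,e_r,p$. The latter is immediate since $e(F)=e_0\cdots e_r$ is an Okutsu invariant and $p$ is fixed by $k$, so $e_t=e(F)/p^{v_p(e(F))}$ is determined. For the former, the maximal tamely ramified subextension $L_t/k$ inside $L$ is the unique subextension with $[L_t:L_0]=e_t$ where $L_0/k$ is the maximal unramified subextension; and $L_0$ is the unramified extension of $k$ of degree $f(L/k)$, while $L_t=L_0(\sqrt[e_t]{\pi'})$ for a suitable uniformizer, whose isomorphism class over $k$ depends only on $f(L/k)$ and $e_t$ by local class field theory (or by a direct Kummer/Hensel argument once one adjoins the $e_t$-th roots of unity available in $L_0$). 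I would phrase this as a one-line appeal to the standard structure theory of tame extensions of local fields rather than reproving it, since it is not specific to the OM setting. With that in hand the corollary follows.
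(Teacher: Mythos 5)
Your treatment of the irreducibility biconditional is correct and is exactly the paper's argument: Lemma \ref{delta0props},(2) gives $\delta_0(F)\le 2\delta(F)/n<\nu$, so Theorem \ref{bound},(1) handles one direction and Theorem \ref{bound},(2) the other, and in either case $F\approx G$.

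The second half of your argument, however, has a genuine gap. You reduce the claim about tamely ramified subextensions to the assertion that the maximal tamely ramified subextension of $L/k$ is determined up to isomorphism by the residual degree $f(L/k)$ and the prime-to-$p$ part $e_t$ of the ramification index. That assertion is false. Take $k=\Q_p$ with $p$ odd and a unit $u$ whose residue is a non-square in $\F_p$; then $\Q_p(\sqrt{p})$ and $\Q_p(\sqrt{up})$ both have $f=1$ and $e=e_t=2$, each is its own maximal tamely ramified subextension, yet they are not isomorphic over $\Q_p$ (indeed $p$ is a square in $\Q_p(\sqrt{up})$ if and only if $u$ is, which fails by Hensel since the residue field is $\F_p$). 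Adjoining the $e_t$-th roots of unity does not rescue this: in the example $\mu_2\subset\Q_p$ already. In general the totally tamely ramified extensions of $L_0$ of degree $e_t$ form several non-isomorphic classes, indexed essentially by $U_{L_0}/U_{L_0}^{e_t}$, so numerical Okutsu invariants alone cannot pin down the isomorphism class of $L_t$.

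What the paper uses instead is a strictly finer consequence of Okutsu equivalence than equality of numerical invariants: by \cite{Ok} and \cite[Prop.\ 2.7]{okutsu}, if $F\approx G$ then for adequate choices of roots $\t,\t'\in\ks$ the fields $k(\t)$ and $k(\t')$ have literally the \emph{same} maximal tamely ramified subextension inside $\ks$. This is a Krasner-type closeness statement exploiting that $v(G(\t))$ exceeds the bound of Lemma \ref{representative}, not a computation with $e$ and $f$. To repair your proof you should replace the structure-theory appeal by a citation of (or a proof of) that result.
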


\begin{proof}
By Theorem \ref{bound} and Lemma \ref{delta0props}, $F$ is irreducible if and only if $G$ is irreducible, and in this case $F\approx G$. Also, if  $F\approx G$, then
for adequate choices of roots $\t,\t'\in\ks$ of $F$ and $G$, respectively, the fields $k(\t)$ and $k(\t')$ have the same maximal tamely ramified subextension \cite{Ok}, \cite[Prop. 2.7]{okutsu}. 
\end{proof}

The first item of Theorem \ref{bound} follows immediately from Lemma \ref{delta0props}. In fact, for $\t\in\ks$ a root of $F$, the assumptions of the first item imply that $v(G(\t))>\delta_0(F)=nv(\phi_r(\t))/m_r$, and this is precisely the condition to be an Okutsu approximation to $F$ (cf. Lemma \ref{representative}). As mentioned in Definition \ref{optimal}, this implies that $G$ is irreducible.

The second item is more subtle and its proof more involved. We need some previous results.

\begin{definition}\label{type}
Let $\ty=(\psi_0;(\phi_1,\lambda_1,\psi_1);\cdots;(\phi_{i-1},\lambda_{i-1},\psi_{i-1}))$ be a type of order $i-1\ge 0$, and let $F(x)\in\oo[x]$ be a monic polynomial. We say that $F(x)$ is \emph{a polynomial of type $\ty$} if it satisfies the following conditions:
\begin{enumerate}
\item $R_0(F)=\overline{F}=\psi_0^{a_0}$, for a certain positive exponent $a_{0}$,
\item $N_j(F)$ is one-sided of slope $\lambda_j$, for all $1\le j<i$,
\item $R_j(F)\sim \psi_j^{a_{j}}$, for a certain positive exponent $a_{j}$, for all $1\le j<i$. 
\end{enumerate}
\end{definition}

For instance, if $F$ is irreducible and $\ty_F$ is an OM representation of $F$, then $F$ is of type $\ty_F$. The following properties of the polynomials of a certain type are taken from  \cite[Lem. 2.4, Cor. 2.18]{HN}.

\begin{lemma}\label{oftype}
Let $\ty$ be a type of order $i-1\ge0$, and let $F(x)\in\oo[x]$ be a monic polynomial of positive degree. Then, the following conditions are equivalent:
\begin{enumerate}
\item $F$ is of type $\ty$.  
\item $\deg F=m_i\ord_\ty(F)$.
\item All irreducible factors of $F$ in $\oo[x]$ are divisible by $\ty$.
\end{enumerate}
In this case, we have $N_i(F)=N_i^-(F)$.  
\end{lemma}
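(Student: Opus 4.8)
The plan is to induct on the order $i-1$ of $\ty$, after recording two order-independent facts that also take care of the final assertion. \emph{(i) Degree inequality:} for every nonzero $g\in k[x]$ one has $\deg g\ge m_i\ord_\ty(g)$, because $\ord_\ty(g)=\ord_{\psi_{i-1}}R_{i-1}(g)\le(\deg R_{i-1}(g))/f_{i-1}\le\ell(N_{i-1}(g))/(e_{i-1}f_{i-1})\le\deg(g)/(m_{i-1}e_{i-1}f_{i-1})=\deg(g)/m_i$, the last step since the $\phi_{i-1}$-development of $g$ involves no $\phi_{i-1}^s$ with $s>\deg(g)/m_{i-1}$. \emph{(ii) Length identity:} if $g$ is monic with $m_i\mid\deg g$, its $\phi_i$-development has leading term $\phi_i^{\deg(g)/m_i}$, so $\ell(N_i(g))=\deg(g)/m_i$; since $\ell(N_i^-(g))=\ord_\ty(g)$ always (cf. Lemma \ref{previous}(2)) and $N_i^-(g)$ is the initial, negative-slope part of $N_i(g)$, we obtain $\deg g=m_i\ord_\ty(g)\iff N_i(g)=N_i^-(g)$. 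Thus the last claim of the lemma follows from (2), and it remains to prove (1)$\Rightarrow$(2)$\Rightarrow$(3)$\Rightarrow$(1).

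When $i=1$ (so $\ty=(\psi_0)$, $m_1=\deg\psi_0$, $\ord_\ty(F)=\ord_{\psi_0}\overline F$) the statement is elementary: writing $\overline F=\psi_0^a u$ with $\gcd(\psi_0,u)=1$ shows that (2) and (1) both amount to $\overline F=\psi_0^a$ with $a\ge1$, while (3) amounts to $\overline F$ being a power of $\psi_0$ because, by Hensel's lemma, the reduction of each monic irreducible factor of $F$ is a power of a single irreducible of $\F[y]$. For the inductive step put $\ty':=\op{Trunc}_{i-2}(\ty)$, assume the lemma for $\ty'$, and write $F=\prod_j G_j$ with the $G_j$ monic irreducible of positive degree. \emph{(1)$\Rightarrow$(2):} a polynomial of type $\ty$ is of type $\ty'$ (Definition \ref{type}), so by induction $\deg F=m_{i-1}\ord_{\ty'}(F)$; the level-$(i-1)$ data of type $\ty$ make $N_{i-1}(F)$ one-sided of slope $\lambda_{i-1}$ with $R_{i-1}(F)\sim\psi_{i-1}^{a}$, $a=\ord_\ty(F)>0$, whence $\ord_{\ty'}(F)=\ell(N_{i-1}^-(F))=e_{i-1}\deg R_{i-1}(F)=e_{i-1}f_{i-1}a$ and $\deg F=m_i\ord_\ty(F)$. \emph{(2)$\Rightarrow$(3):} by additivity of $\ord_\ty$ (Definition \ref{optimal}) and (i), $\deg F=\sum_j\deg G_j\ge m_i\sum_j\ord_\ty(G_j)=m_i\ord_\ty(F)=\deg F$, forcing $\deg G_j=m_i\ord_\ty(G_j)$ for every $j$; since $\deg G_j>0$ this gives $\ord_\ty(G_j)>0$, i.e. $\ty\mid G_j$.

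The crux is \emph{(3)$\Rightarrow$(1)}, which really asks to show that an irreducible polynomial $G$ divisible by $\ty$ is \emph{of type} $\ty$. From $\ty\mid G$, the truncation property (Definition \ref{optimal}) gives $\ty'\mid G$, so by induction $G$ is of type $\ty'$ and $N_{i-1}(G)=N_{i-1}^-(G)$; then the Theorem of the Polygon of \cite{HN} forces this polygon to be one-sided (two distinct slopes would split $G$), and the Theorem of the Residual Polynomial forces $R_{i-1}(G)$ to be a power of a single monic irreducible (two coprime residual factors would split $G$); since $\ord_{\psi_{i-1}}R_{i-1}(G)>0$, that irreducible is $\psi_{i-1}$ and the slope is $\lambda_{i-1}$, so $G$ is of type $\ty$. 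To finish, with every $G_j$ of type $\ty$ one reassembles $F$: $\overline F=\prod_j\overline{G_j}$ is a positive power of $\psi_0$; additivity of principal polygons gives $N_{i-1}^-(F)=\sum_j N_{i-1}^-(G_j)$, one-sided of slope $\lambda_{i-1}$, and since $m_{i-1}\mid\deg F$ the length identity (ii) upgrades this to $N_{i-1}(F)=N_{i-1}^-(F)$; multiplicativity of residual polynomials gives $R_{i-1}(F)\sim\prod_j R_{i-1}(G_j)\sim\psi_{i-1}^{a'}$ with $a'>0$; and the level-$j$ conditions for $j<i-1$ hold because $F$, all of whose irreducible factors are divisible by $\ty'$, is of type $\ty'$ by the inductive case (3)$\Rightarrow$(1). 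Hence $F$ is of type $\ty$. I expect this last step to be the real obstacle: deploying the Theorem of the Polygon and the Theorem of the Residual Polynomial of \cite{HN} in both their factorization and their additivity/multiplicativity incarnations, and checking throughout that the polygons $N_j(F)$ never develop a non-negative-slope part; everything else is formal manipulation of the $\ord_\ty$-calculus recalled in Section \ref{secOkutsu}.
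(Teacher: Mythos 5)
Your proof is correct. Note that the paper does not actually prove Lemma \ref{oftype}: it imports it verbatim from \cite[Lem.~2.4, Cor.~2.18]{HN}, so there is no internal argument to compare against; what you have done is reconstruct the result from the same toolkit the paper (and \cite{HN}) use, and the reconstruction holds up. The skeleton is the natural one: the degree bound $\deg g\ge m_i\ord_\ty(g)$ together with additivity of $\ord_\ty$ gives (2)$\Rightarrow$(3) by forcing termwise equality; (1)$\Rightarrow$(2) is the bookkeeping $\ord_{\ty'}(F)=e_{i-1}f_{i-1}\ord_\ty(F)$ coming from one-sidedness of $N_{i-1}(F)$; and the crux (3)$\Rightarrow$(1) is handled exactly where it should be, namely by first proving that a single irreducible $G$ with $\ty\mid G$ is of type $\ty$ (truncation plus induction, then the theorems of the polygon and of the residual polynomial of \cite{HN}), and then reassembling $F$ via the theorem of the product and multiplicativity of residual polynomials. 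Two details you got right that are easy to get wrong: you use the principal-part form $\ell(N_i^-(g))=\ord_\ty(g)$ of Lemma \ref{previous},(2) (which is the correct general statement, and the form the paper itself uses in later proofs), and you secure $N_{i-1}(G)=N_{i-1}^-(G)$ from the induction hypothesis \emph{before} invoking the factorization theorems, so that a side of non-negative slope cannot hide a factor and ruin the ``two slopes/two coprime residual factors would split $G$'' argument. The identification of the unique slope with $\lambda_{i-1}$ and of the unique residual irreducible with $\psi_{i-1}$ from $\ord_{\psi_{i-1}}R_{i-1}(G)>0$ is also sound, since a slope different from $\lambda_{i-1}$ would make the $\lambda_{i-1}$-component a vertex and the corresponding residual polynomial a nonzero constant.
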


\begin{lemma}\label{resultant}
Let $\ty$ be as above and let $F,G\in\oo[x]$ be monic irreducible separable polynomials, both divisible by $\ty$. 
Let $\ell(F),\,\ell(G),\,\lambda(F),\,\lambda(G)$ be the lengths and the slopes of the Newton polygons $N_i(F)$, $N_i(G)$, respectively.
Then, 
$$
v(\res(F,G))\ge f_0\cdots f_{i-1}\ell(F)\ell(G)\left(V_i+\min\{|\lambda(F)|,|\lambda(G)|\}\right).
$$
\end{lemma}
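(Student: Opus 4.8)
\medskip
\noindent\emph{Proof proposal.}
The plan is to compute $v(\res(F,G))$ via the product formula $v(\res(F,G))=\sum_{F(\theta)=0}v(G(\theta))$ and to use the irreducibility of $F$ to reduce to a single conjugate. Since $F$ and $G$ are irreducible and both divisible by $\ty$, Lemma~\ref{oftype} shows they are of type $\ty$; hence $\deg F=m_i\ell(F)$, $\deg G=m_i\ell(G)$, and (their leading $\phi_i$-coefficients being $1$) the right end points of $N_i(F)$ and $N_i(G)$ are $(\ell(F),\ell(F)V_i)$ and $(\ell(G),\ell(G)V_i)$; moreover, $F$ and $G$ being irreducible, $N_i(F)$ and $N_i(G)$ are one-sided, of slopes $\lambda(F)$ and $\lambda(G)$. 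As $v$ is Galois-invariant and $F$ is irreducible, $v(G(\theta))$ is the same for every root $\theta$ of $F$, so $v(\res(F,G))=(\deg F)\,v(G(\theta))=m_i\,\ell(F)\,v(G(\theta))$ for a fixed such $\theta$. Since $m_i=(e_0\cdots e_{i-1})(f_0\cdots f_{i-1})$, it therefore suffices to prove
$$
v(G(\theta))\ \ge\ \frac{\ell(G)}{e_0\cdots e_{i-1}}\bigl(V_i+\min\{|\lambda(F)|,|\lambda(G)|\}\bigr).
$$

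To obtain this bound I would first record the value $v(\phi_i(\theta))$, where $\phi_i$ is the representative of $\ty$ determining $N_i$. Extend $\ty$ to a type $\ty'$ of order $i$ by setting $\lambda_i:=\lambda(F)$ and letting $\psi_i$ be an irreducible factor of the residual polynomial $R_i(F)$; this is a legitimate type dividing $F$, because $N_i(F)$ is one-sided of slope $\lambda(F)$ and $R_i(F)$ is a power of an irreducible polynomial (both facts coming from the irreducibility of $F$). Then Lemma~\ref{previous}(3), applied to $\ty'$, gives $v(\phi_i(\theta))=(V_i+|\lambda(F)|)/(e_0\cdots e_{i-1})$. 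Next, for any $a\in\oo[x]$ with $\deg a<m_i$ one has the comparison $v(a(\theta))\ge v_i(a)/(e_0\cdots e_{i-1})$. Substituting the $\phi_i$-development $G=\sum_s a_s\phi_i^{\,s}$ into the ultrametric inequality and using $v_i(a_s\phi_i^{\,s})=v_i(a_s)+sV_i$,
$$
v(G(\theta))\ \ge\ \min_s\bigl(v(a_s(\theta))+s\,v(\phi_i(\theta))\bigr)\ \ge\ \frac{1}{e_0\cdots e_{i-1}}\,\min_s\bigl(v_i(a_s\phi_i^{\,s})+s\,|\lambda(F)|\bigr).
$$

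It remains to bound the last minimum geometrically. Every point $(s,v_i(a_s\phi_i^{\,s}))$ lies on or above $N_i(G)$, which (if $G\ne\phi_i$; the case $G=\phi_i$ being immediate) is the segment joining $(0,\ell(G)(V_i+|\lambda(G)|))$ to $(\ell(G),\ell(G)V_i)$. The linear form $(x,y)\mapsto y+x\,|\lambda(F)|$ is minimized on this segment at one of its end points, taking the value $\ell(G)(V_i+|\lambda(G)|)$ at the left one and $\ell(G)(V_i+|\lambda(F)|)$ at the right one; hence $\min_s\bigl(v_i(a_s\phi_i^{\,s})+s|\lambda(F)|\bigr)\ge\ell(G)\bigl(V_i+\min\{|\lambda(F)|,|\lambda(G)|\}\bigr)$. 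Combining the three displays yields the required inequality for $v(G(\theta))$, and hence the lemma.

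The step I expect to be the main obstacle is the one in the middle paragraph: establishing the exact value $v(\phi_i(\theta))=(V_i+|\lambda(F)|)/(e_0\cdots e_{i-1})$ and the estimate $v(a(\theta))\ge v_i(a)/(e_0\cdots e_{i-1})$ for $\deg a<m_i$. These rest on the theory of MacLane valuations from \cite{HN}, and in particular on verifying that the type $\ty'$, built from the \emph{actual} slope of $N_i(F)$, really divides $F$ --- i.e. that $R_i(F)$ is a prime power --- which is precisely where the hypothesis that $F$ is irreducible enters. Everything else is routine manipulation of Newton polygons together with the product formula for resultants.
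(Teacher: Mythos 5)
Your proof is correct, but it takes a genuinely different route from the paper's. The paper's argument is essentially a one-step application of the general resultant inequality \cite[Thm. 4.10]{HN}, which bounds $v(\res(F,G))$ from below by a sum of ``partial resultants'' $\res_1(F,G)+\cdots+\res_i(F,G)$ read off from the Newton polygons of \emph{all} orders $1,\dots,i$; the only work left is the bookkeeping identity $\ell_j(F)=(e_jf_j)\cdots(e_{i-1}f_{i-1})\ell(F)$ (and likewise for $G$), after which the sum $\sum_{j<i} f_0\cdots f_{j-1}\ell_j(F)\ell_j(G)|\lambda_j|$ collapses to $f_0\cdots f_{i-1}\ell(F)\ell(G)V_i$ via the explicit formula for $V_i$. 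You instead exploit irreducibility from the start: the product formula $\res(F,G)=\prod_{F(\t)=0}G(\t)$ together with the Galois-invariance of $v$ reduces everything to a lower bound for $v(G(\t))$ at a single root of $F$, and that bound is precisely the paper's Proposition \ref{prop35} (i.e. \cite[Prop. 3.5,(5)]{HN}), which you reprove directly by combining the theorem of the polygon (your evaluation of $v(\phi_i(\t))$), the comparison $v(a(\t))\ge v_i(a)/(e_0\cdots e_{i-1})$ for $\deg a<m_i$, and the minimization of the linear form $y+x|\lambda(F)|$ over the segment $N_i(G)$; all of these steps check out, including the endpoint computation $(\ell(G),\ell(G)V_i)$ and $(0,\ell(G)(V_i+|\lambda(G)|))$. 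The trade-off is that your route is more elementary and self-contained, whereas the cited inequality \cite[Thm. 4.10]{HN} holds for arbitrary (not necessarily irreducible) polynomials of type $\ty$ and so carries more information than the lemma needs; your argument genuinely requires both $F$ and $G$ irreducible, to get a single conjugacy class of roots and one-sided polygons. The only loose end is the degenerate slope $-\infty$: you dispose of $G=\phi_i$ but not of $F=\phi_i$; that case is equally immediate (the minimum of $v_i(a_s\phi_i^s)+s\cdot\infty$ is attained at $s=0$, or simply invoke $v(\res(F,G))=v(\res(G,F))$), and is worth a sentence in a final write-up.
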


\begin{proof}
For all $0\le j< i$, denote $\ell_{j+1}(F):=\ell(N_{j+1}(F))=\ord_{\op{Trunc}_j(\ty)}(F)=\ord_{\psi_{j}}R_{j}(F)$, the last equalities by Lemma \ref{previous},(2). Since $R_{j}(F)\sim\psi_j^{\ell_{j+1}(F)}$ and $\deg R_{j}(F)$ coincides with the degree $\ell_j(F)/e_j$ of the unique side of $N_{j}(F)$, we have  
\begin{equation}\label{rec}
\ell_j(F)=e_{j}\deg R_{j}(F)=e_{j}f_{j}\ell_{j+1}(F)=(e_jf_j)\cdots (e_{i-1}f_{i-1})\ell(F),\quad 1\le j<i.
\end{equation}
We consider an analogous notation and equality for the polynomial $G$.  

We now apply an inequality concerning the $v$-value of the resultant of two polynomials in terms of their Newton polygons \cite[Thm. 4.10]{HN}:
$$
\begin{aligned}
v(\res&(F,G))\ge \res_1(F,G)+\cdots+\res_i(F,G):=\\
&\ \sum\nolimits_{1\le j< i}f_0\cdots f_{j-1}\ell_{j}(F)\ell_{j}(G)|\lambda_j|+f_0\cdots f_{i-1}\ell(F)\ell(G)\min\{|\lambda(F)|,|\lambda(G)|\}\\
&=\ f_0\cdots f_{i-1}\ell(F)\ell(G)\left(V_i+\min\{|\lambda(F)|,|\lambda(G)|\}\right).
\end{aligned}
$$
the last equality by (\ref{rec}) and the explicit formula for $V_i$ in section \ref{secOkutsu}.
\end{proof}

\begin{lemma}\label{technical}
Let $\ty=(\psi_0;(\phi_1,\lambda_1,\psi_1);\cdots;(\phi_{i-1},\lambda_{i-1},\psi_{i-1}))$ be a strongly optimal type of order $i-1\ge 0$, and $\phi(x)\in \oo[x]$ a representative of $\ty$. 
Let $F(x)\in\oo[x]$ be a monic polynomial of type $\ty$ and degree $n>m_i$. Then,
$$
\dfrac{v_i(F)+\ell|\lmn|}{e_0\cdots e_{i-1}}\le \dfrac{2\delta(F)}{n},
$$
where $\delta(F):=v(\dsc(F))$, $\ell$ is the length of the Newton polygon $N_i(F)$ with respect to the pair $(\ty,\phi)$, and $\lmn$ is the slope of $N_i(F)$ for which $|\lmn|$ is minimal.
\end{lemma}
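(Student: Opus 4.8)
The plan is to bound $\delta(F)$ from below by analysing the factorization of $F$ into irreducibles over $\oo[x]$ and combining contributions from the discriminants of the factors with the resultants between distinct factors. Write $F=F_1\cdots F_s$ with each $F_j\in\oo[x]$ monic irreducible separable; by Lemma \ref{oftype}, every $F_j$ is divisible by $\ty$, so Lemma \ref{resultant} applies to every pair $(F_j,F_{j'})$. We have the standard identity $\dsc(F)=\prod_j\dsc(F_j)\cdot\prod_{j\ne j'}\res(F_j,F_{j'})$, hence
\begin{equation}\label{planeq}
\delta(F)=\sum\nolimits_j\delta(F_j)+\sum\nolimits_{j\ne j'}v(\res(F_j,F_{j'})).
\end{equation}
Let $\ell_j$ and $\lambda_j^{\min}$ denote the length of $N_i(F_j)$ (with respect to $(\ty,\phi)$) and the slope of minimal absolute value, respectively; note $\ell=\sum_j\ell_j$ by additivity of $\ord_\ty$ (Definition \ref{optimal}), and $|\lmn|\le|\lambda_j^{\min}|$ for each $j$ since the union of the sides of the $N_i(F_j)$ makes up $N_i(F)$, so the globally minimal slope is attained by some factor and dominates none.

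First I would handle the diagonal terms. For each irreducible factor $F_j$ of degree $n_j$ and Okutsu depth $\ge i-1$, Proposition \ref{okutsu} gives $\delta(F_j)\ge n_j\mu(F_j)$. Since $\ty$ is strongly optimal of order $i-1$ and $F_j$ is of type $\ty$, the value $\mu(F_j)=\mu(L_j/k)$ is at least the quantity $(V_i+|\lambda_j^{\min}|)/(e_0\cdots e_{i-1})$ coming from the level-$i$ data: indeed, taking $\phi$ together with the side of $N_i(F_j)$ of slope $\lambda_j^{\min}$, one reads off (as in equation (\ref{ui}) in the proof of Lemma \ref{delta0props}, applied at level $i$ rather than $r+1$) that $v(\phi(\t_j))\ge(V_i+|\lambda_j^{\min}|)/(e_0\cdots e_{i-1})$ for a suitable root $\t_j$ of $F_j$, and $\mu(F_j)\ge n_j v(\phi(\t_j))/m_i\cdot(m_i/n_j)$, wait—more carefully, $\mu(F_j)\ge v(\phi(\t_j))$ since $\deg\phi=m_i<n_j$. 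The upshot is a clean bound $\delta(F_j)\ge n_j(V_i+|\lambda_j^{\min}|)/(e_0\cdots e_{i-1})$, and since $n_j=m_i\ell_j/(e_i f_i)\cdot$ — again using $\deg F_j=m_i\ell_j'$ for the order-$(i-1)$ truncation — one gets $n_j\ge m_i\ell_j$ when $\ell_j$ is the length at level $i$. This is the step I expect to require the most care: correctly relating $n_j$, the length $\ell_j$ at level $i$, and $f_0\cdots f_{i-1}$, because the factor of $f_0\cdots f_{i-1}$ that appears in Lemma \ref{resultant} must be matched by the degree bookkeeping so that everything collapses to the stated $(v_i(F)+\ell|\lmn|)/(e_0\cdots e_{i-1})\le 2\delta(F)/n$.

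For the off-diagonal terms, Lemma \ref{resultant} gives $v(\res(F_j,F_{j'}))\ge f_0\cdots f_{i-1}\,\ell_j\ell_{j'}(V_i+\min\{|\lambda_j^{\min}|,|\lambda_{j'}^{\min}|\})\ge f_0\cdots f_{i-1}\,\ell_j\ell_{j'}(V_i+|\lmn|)$. Then I would assemble (\ref{planeq}): the diagonal gives $\sum_j\delta(F_j)\ge\big(\sum_j n_j\big)(V_i+|\lmn|)/(e_0\cdots e_{i-1})\ge$ something comparable to $\tfrac12 n(V_i+|\lmn|)/(e_0\cdots e_{i-1})$ is too weak alone, so the point is that the sum of all $\ell_j$ equals $\ell$ and $\sum_{j,j'}\ell_j\ell_{j'}=\ell^2$, whence the total is at least a constant times $n(V_i+\ell|\lmn|)/(e_0\cdots e_{i-1})$ divided by $2$; recalling $v_i(F)=\ell V_i$ (from $v_i(F)=\ell_i(F)V_i$ via the one-sidedness of $N_i(F)$ and $\ell_i(F)=\ell$ at level $i$, cf. the proof of Lemma \ref{resultant}), the numerator $v_i(F)+\ell|\lmn|$ equals $\ell(V_i+|\lmn|)$, and since $\ell\le n/m_i$ and each $n_j\ge m_i\ell_j$ we conclude $n\ge m_i\ell$, which furnishes the missing factor and yields $\delta(F)\ge\tfrac{n}{2}\cdot\frac{v_i(F)+\ell|\lmn|}{n\cdot(e_0\cdots e_{i-1})}$, i.e. exactly $\frac{v_i(F)+\ell|\lmn|}{e_0\cdots e_{i-1}}\le\frac{2\delta(F)}{n}$. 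The factor $2$ traces precisely to the $\nu_r\le\mu_r$ slack already exploited in Lemma \ref{delta0props}(2); I would double-check whether in the reducible case one genuinely needs it or whether the off-diagonal resultant terms absorb the deficiency, since that determines whether the bound is tight.
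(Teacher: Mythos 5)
Your overall strategy — factor $F=F_1\cdots F_g$, write $\delta(F)=\sum_s\delta(F_s)+2\sum_{s<t}v(\res(F_s,F_t))$, bound the diagonal terms via Okutsu invariants and the off-diagonal terms via Lemma \ref{resultant}, then assemble — is exactly the paper's. But your diagonal bound is quantitatively too weak, and the assembly fails because of it. You claim $\delta(F_j)\ge n_j\,v(\phi(\t_j))=n_j(V_i+|\lambda_j^{\min}|)/(e_0\cdots e_{i-1})$ from $\mu(F_j)\ge v(\phi(\t_j))$. The bound the proof actually needs is $2\delta(F_j)\ge n_j\ell_j(V_i+|\lambda_j^{\min}|)/(e_0\cdots e_{i-1})$ — an extra factor of $\ell_j=n_j/m_i$. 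This comes not from $\mu(F_j)\ge v(\phi(\t_j))$ but from $2\delta(F_j)/n_j\ge\delta_0(F_j)\ge n_j\,v(\phi(\t_j))/m_i$ (Lemma \ref{delta0props} plus equation (\ref{ui}): the ordinate of the \emph{left} end point of $N_i(F_j)$ is $\ell_j(V_i+|\lambda_j^{\min}|)$, not $V_i+|\lambda_j^{\min}|$). With your weaker bound the target inequality already fails for $F$ irreducible (so $g=1$, no resultant terms) with $\ell\ge3$: you get $2\delta(F)\ge 2n(V_i+|\lambda|)/(e_0\cdots e_{i-1})$ while the lemma demands $\ell\, n(V_i+|\lambda|)/(e_0\cdots e_{i-1})$. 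Your "constant times ... divided by $2$" assembly step papers over exactly this deficit: the resultant terms supply $\ell^2-\sum_j\ell_j^2$ but the missing $\sum_j\ell_j^2$ must come from the discriminants, and your version only supplies $2\sum_j\ell_j$.

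The second gap is the case $\ell_j=1$, i.e.\ $\deg F_j=m_i$, which you wave at but do not handle. There $\deg\phi=n_j$, so $\phi$ is not admissible in the definition of $\mu(F_j)$ and the inequality $\mu(F_j)\ge v(\phi(\t_j))$ is unavailable; indeed $v(\phi(\t_j))=\infty$ is possible (take $F_j=\phi$), so no bound of the form $\delta(F_j)\ge n_j(V_i+|\lambda_j^{\min}|)/(\cdots)$ can hold. The paper works at level $i-1$ for these factors and obtains only $2\delta(F_j)\ge m_iV_i/(e_0\cdots e_{i-1})$, with no $|\lmn|$ contribution; the final combinatorial inequality then has to be $\sum_{s\in I}\ell_s^2+4\sum_{s<t}\ell_s\ell_t\ge\ell^2$ with the $|\lmn|$-sum restricted to the set $I$ of factors of length $>1$, and one must check this holds except precisely when $g=1$, $\ell_1=1$, which is excluded by $n>m_i$. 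Both repairs are essential, not cosmetic.
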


\begin{proof}
Let $F=F_1\cdots F_g$ be the factorization of $F$ into a product of monic irreducible polynomials in $\oo[x]$, with degrees $n_1,\dots,n_g$, respectively. By Lemma \ref{oftype}, all factors $F_s(x)$ are of type $\ty$, $N_i(F)=N_i^-(F)$, and $N_i(F_s)=N_i^-(F_s)$. 

For $1\le s \le g$ and $1\le j\le i$, we introduce the following notation (see Figure \ref{figSide}):\medskip

$\ell:=\ell(N_i(F))$, \quad$\ell_{j,s}:=\ell(N_j(F_s))$, \quad$\ell_s:=\ell_{i,s}=\ell(N_i(F_s))$, 

$u_{i,s}:=$ the ordinate of the left end point of $N_i(F_s)$,

$\mu_s:=$ the slope of $N_i(F_s)$.  
\medskip

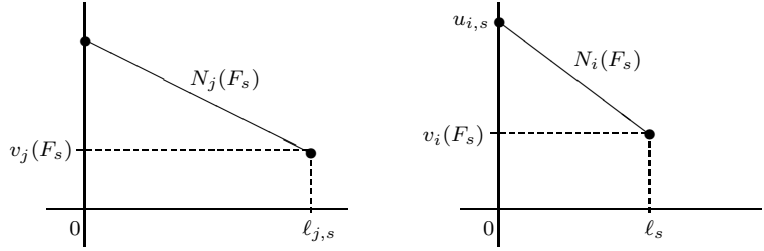
\begin{figure}\caption{Newton polygons $N_j(F_s)$, $N_i(F_s)$, for $1\le j<i$.}\label{figSide}
\setlength{\unitlength}{5mm}
\begin{center}
\begin{picture}(20,5.7)
\put(.85,4.5){$\bullet$}\put(6.85,1.5){$\bullet$}
\put(0,0.2){\line(1,0){8}}
\put(1,4.7){\line(2,-1){6}}
\put(1,-.8){\line(0,1){6.5}}
\put(.6,-.5){\begin{footnotesize}$0$\end{footnotesize}}
\put(6.8,-.5){\begin{footnotesize}$\ell_{j,s}$\end{footnotesize}}
\put(-1,1.6){\begin{footnotesize}$v_j(F_s)$\end{footnotesize}}
\put(3.8,3.5){\begin{footnotesize}$N_j(F_s)$\end{footnotesize}}
\multiput(7,.1)(0,.25){7}{\vrule height2pt}
\multiput(.8,1.75)(.25,0){25}{\hbox to 2pt{\hrulefill }}
\put(11.85,5){$\bullet$}\put(15.85,2){$\bullet$}
\put(11,0.2){\line(1,0){8}}
\put(12,5.2){\line(4,-3){4}}
\put(12,-.8){\line(0,1){6.5}}
\put(11.6,-.5){\begin{footnotesize}$0$\end{footnotesize}}
\put(15.9,-.5){\begin{footnotesize}$\ell_s$\end{footnotesize}}
\put(10,2){\begin{footnotesize}$v_i(F_s)$\end{footnotesize}}
\put(14,4){\begin{footnotesize}$N_i(F_s)$\end{footnotesize}}
\put(10.8,5.1){\begin{footnotesize}$u_{i,s}$\end{footnotesize}}
\multiput(16,.1)(0,.25){9}{\vrule height2pt}
\multiput(11.8,2.2)(.25,0){17}{\hbox to 2pt{\hrulefill }}
\end{picture}
\end{center}
\end{figure}
 
We may have $F_s(x)=\phi(x)$ for some factors. In this case, $N_i(F_s)$ is one-sided of slope $\mu_s=-\infty$ \cite[Sec. 1.1]{HN}, and $u_{i,s}=\infty$, $\ell_s=1$. 

By Lemmas \ref{previous} and \ref{oftype}, we have $n=m_i\ell$ and $n_s=m_i\ell_s$, for all $1\le s\le g$.
By the theorem of the product \cite[Thm. 2.26]{HN}, 
\begin{equation}\label{product}
N_i(F)=N_i(F_1)+\cdots+N_i(F_g),
\end{equation}
so that $\ell=\ell_1+\cdots+\ell_g$ and $|\lmn|=\min_{1\le s \le g}\{|\mu_s|\}$.
Now, we divide the factors $F_s$ into two categories, according to $\ell_s>1$ or $\ell_s=1$. 

If $\ell_s>1$, then $\deg\phi=m_i<n_s$. Let $\t_s\in\overline{k}$ be a root of $F_s$ and choose a representative $\phi_i$ of $\ty$ such that the value $v(\phi_i(\t_s))$ is maximal (cf. Proposition \ref{compwidth}). Denote by $N_i'$ the Newton polygon operator with respect to the pair $(\ty,\phi_i)$; let $\lambda_{i,s}$ be the slope of the one-sided polygon $N'_i(F_s)$, and let $\psi_{i,s}$ be the irreducible factor of the corresponding residual polynomial $R'_i(F_s)$. By \cite[Thm. 3.9]{okutsu}, the Okutsu depth of $F_s$ is greater than or equal to $i$, and the type 
$$
(\psi_0;(\phi_1,\lambda_1,\psi_1);\cdots;(\phi_{i-1},\lambda_{i-1},\psi_{i-1});(\phi_i,\lambda_{i,s},\psi_{i,s})), 
$$
is the truncation of an OM representation (\ref{OM}) of $F_s$. On the other hand, \cite[Thm. 3.1]{algorithm} shows that the Newton polygons $N_i(F_s)$, $N_i'(F_s)$ have the same right end point, and $|\mu_s|\le|\lambda_{i,s}|$. Thus, $u_{i,s}$ is less than or equal to the ordinate of the left end point of $N'_i(F_s)$, and Lemma \ref{delta0props} and (\ref{ui}) show that:
\begin{equation}\label{greater1}
 2\delta(F_s)\ge \dfrac{n_su_{i,s}}{e_0\cdots e_{i-1}}=\dfrac{n_s\ell_s(V_i+|\mu_s|)}{e_0\cdots e_{i-1}}\ge\dfrac{(\ell_s)^2m_i(V_i+|\lmn|)}{e_0\cdots e_{i-1}}.
\end{equation}

On the other hand, if $\ell_s=1$, the type $\ty$ is $F_s$-complete (cf. Definition \ref{optimal}), $\deg F_s=m_i$ and the Okutsu depth of $F_s$ is $i-1$. In this case,  the ordinate $u_{i,s}$ is not a canonical invariant of $F_s$; for instance, we may have $u_{i,s}=\infty$, if $F_s=\phi$. Nevertheless, if $i>1$, let us denote by $u_{i-1,s}$ the ordinate of the left end point of $N_{i-1}(F_s)$; by the very definition of the MacLane valuation $v_i$, we have $\ell_sV_i=v_i(F_s)=e_{i-1}u_{i-1,s}$, and Lemma \ref{delta0props} shows that: 
\begin{equation}\label{equal1}
 2\delta(F_s)\ge\dfrac{n_su_{i-1,s}}{e_0\cdots e_{i-2}}=\dfrac{n_s\ell_sV_i}{e_0\cdots e_{i-1}}=\dfrac{(\ell_s)^2m_iV_i}{e_0\cdots e_{i-1}}.
\end{equation}
If $i=1$, we have $V_1=0$, so that (\ref{equal1}) holds in this case too.  

We are ready to prove the lemma. On one hand, since $v_i(F)=\ell V_i$, we have
$$
n(v_i(F)+\ell|\lmn|)=n\ell(V_i+|\lmn|)=m_i\ell^2(V_i+|\lmn|).
$$On the other hand, since $f_0\cdots f_{i-1}=m_i/(e_0\cdots e_{i-1})$ and 
$$
\delta(F)=\sum\nolimits_{1\le s\le g}\delta(F_s)+2\sum\nolimits_{1\le s<t\le g}v(\res(F_s,F_t)),
$$
by (\ref{greater1}), (\ref{equal1}) and Lemma \ref{resultant}, we get:
\begin{align*}
2e_0\cdots e_{i-1}\delta(F)\ge&\ m_iV_i\left(\sum\nolimits_{1\le s\le g}(\ell_s)^2+4\sum\nolimits_{1\le s<t\le g}\ell_s\ell_t\right)+\\+&\ m_i|\lmn|\left(\sum\nolimits_{s\in I}(\ell_s)^2+4\sum\nolimits_{1\le s<t\le g}\ell_s\ell_t\right),
\end{align*}
where $I:=\{1\le s\le g\mid \ell_s>1\}$. Thus, in order to prove the lemma
it is sufficient to check that:
$$
\sum\nolimits_{s\in I}(\ell_s)^2+4\sum\nolimits_{1\le s<t\le g}\ell_s\ell_t\ge (\ell_1+\cdots+\ell_g)^2.
$$
It is an easy exercise to show that this is always the case, with the only exception $g=1$, $\ell_1=1$. But in this case, $\deg F=m_i$, against our assumption. 
\end{proof}

\begin{lemma}\label{belowline}
Let $\ty$ be a type of order $i-1$ and $\phi$ a representative of $\ty$. Let $F,G\in\oo[x]$ be two polynomials such that $F\equiv G\md{\m^\nu}$, for some positive integer $\nu$. Let $S$ be a side of $N_i^-(F)$ of slope $\lambda$ and right end point $(\ell,u)$, such that $u+\ell|\lambda|< e_0\cdots e_{i-1}\nu$. Then, $S$ is a side of $N_i^-(G)$ and $R_{\lambda,i}(F)=R_{\lambda,i}(G)$.
\end{lemma}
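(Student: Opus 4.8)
The plan is to argue directly with the $\phi$-developments, since $N_i$ and $R_{\lambda,i}$ are defined in terms of the points $(s,v_i(a_s\phi^s))$ and the level-$i$ residues of the $\phi$-adic coefficients. Write $F=\sum_{s\ge 0}a_s\phi^s$ and $G=\sum_{s\ge 0}b_s\phi^s$ with $\deg a_s,\deg b_s<m_i$. From $F\equiv G\md{\m^\nu}$ we get $F-G=\pi^\nu H$ with $H\in\oo[x]$, and uniqueness of the $\phi$-development forces $a_s-b_s\in\pi^\nu\oo[x]$ for all $s$. Hence, using Lemma~\ref{previous},(1) for the equality and $v_i\ge 0$ on $\oo[x]$ for the inequality,
$$v_i(a_s-b_s)\ \ge\ v_i(\pi^\nu)\ =\ e_0\cdots e_{i-1}\,\nu\ =:\ C\qquad\text{for every }s.$$

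Next I would bring in the line carrying $S$. Write $\lambda=-h/e$ and let $L(x):=u+\lambda(x-\ell)$ be the line of slope $\lambda$ through the right end point $(\ell,u)$ of $S$. Since $S$ is a side of the convex polygon $N_i(F)$, the line $L$ supports $N_i(F)$ globally: $v_i(a_s\phi^s)\ge L(s)$ for all $s$, with equality exactly for the $s$ lying over $S$. As $\lambda<0$, the function $L$ is maximal on $\{s\ge 0\}$ at $s=0$, where it equals $u+\ell|\lambda|<C$ by hypothesis; so $L(s)<C$ for all $s\ge 0$. The core step is then the following dichotomy, valid for each $s\ge 0$. If $v_i(a_s\phi^s)<C$, then (as $v_i(\phi^s)\ge 0$) $v_i(a_s)\le v_i(a_s\phi^s)<C\le v_i(a_s-b_s)$, so $v_i(b_s)=v_i(a_s)$, whence $v_i(b_s\phi^s)=v_i(a_s\phi^s)$, and since $a_s-b_s$ has strictly larger $v_i$-value than $a_s$ the level-$i$ residue of $b_s$ equals that of $a_s$. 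If instead $v_i(a_s\phi^s)\ge C$, then by the ultrametric inequality $v_i(b_s\phi^s)\ge C>L(s)$ as well. In both cases the point $(s,v_i(b_s\phi^s))$ stands relative to $L$ exactly as $(s,v_i(a_s\phi^s))$ does: either on $L$ with the same residue, or strictly above it.

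The conclusion is then bookkeeping with this dichotomy. The set $\Sigma$ of abscissas $s$ with $(s,v_i(a_s\phi^s))\in L$ is, by the dichotomy, the same for $F$ and for $G$; since $S=N_i(F)\cap L$ is a genuine side, $\min\Sigma=s_0$ (the left end point of $S$) and $\max\Sigma=\ell$, attained by coefficients $a_{s_0},a_\ell\neq 0$, hence also $b_{s_0},b_\ell\neq 0$. Therefore $L$ supports $N_i(G)$ and $N_i(G)\cap L$ is the segment from $(s_0,u_0)$ to $(\ell,u)$, i.e.\ the side $S$ itself; being of negative slope it lies in $N_i^-(G)$. Finally $R_{\lambda,i}(F)$ and $R_{\lambda,i}(G)$ are assembled from the same left-hand abscissa $s_0$, the same degree $(\ell-s_0)/e$, and, at each abscissa $s_0+je$, the same residual coefficient (nonzero and equal, or $0$ for both), so $R_{\lambda,i}(F)=R_{\lambda,i}(G)$.

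The step I expect to be the real obstacle — the only part that is not pure convex geometry — is the residue clause of the dichotomy: that at level $i$ the residue attached to a $\phi$-adic coefficient is unchanged under a perturbation of strictly larger $v_i$-value. Making this rigorous means unwinding the recursive definition of $R_{\lambda,i}$ and of the level-$i$ reduction map from \cite{HN},\cite{algorithm}, each intermediate residue being itself a residual-polynomial value that is insensitive to such perturbations; once this is in hand, the remaining Newton-polygon bookkeeping is routine.
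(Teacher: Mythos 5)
Your argument is correct, and its first half (that $S$ survives as a side of $N_i^-(G)$) is the same as the paper's: both reduce $F\equiv G\md{\m^\nu}$ to the statement that the two clouds of points agree below the ordinate $e_0\cdots e_{i-1}\nu$, and then use that the supporting line $L$ of $S$ lies entirely below that threshold. Where you diverge is the equality $R_{\lambda,i}(F)=R_{\lambda,i}(G)$: you track residues coefficient by coefficient and correctly isolate, as the remaining obstacle, the fact that the level-$i$ residual coefficient of $b_s$ equals that of $a_s$ whenever $v_i(a_s-b_s)>v_i(a_s)$. The paper discharges exactly this point in one line by working globally rather than coefficientwise: it introduces the MacLane valuation $v_{i+1}$ attached to $(\ty,\phi,\lambda)$, notes that $v_{i+1}(F-G)\ge e_0\cdots e_{i-1}e\nu>e(u+\ell|\lambda|)=v_{i+1}(F)=v_{i+1}(G)$, and invokes \cite[Prop.\ 2.8]{HN}, which states precisely that residual polynomials are unchanged by perturbations of strictly larger $v_{i+1}$-value. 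So the ``unwinding of the recursive definition'' you anticipate having to do is already packaged in that proposition; citing it (either in the global form, as the paper does, or its level-$(i-1)$ analogue applied to each $a_{s_j}$, together with the observation that the twisting exponents $t_{i-1}(s_j)$ depend only on the type data and the abscissas) closes your proof. The trade-off is that your coefficientwise dichotomy is more self-contained in spirit and makes the geometry transparent, while the paper's route via $v_{i+1}$ is shorter and avoids any discussion of how individual residues are computed.
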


\begin{proof}
Let $F(x)=\sum_{0\le s}a_s(x)\phi(x)^s$,  $G(x)=\sum_{0\le s}b_s(x)\phi(x)^s$, be the canonical $\phi$-expansions 
of $F$ and $G$, respectively. For the elements $a\in \oo$, Lemma \ref{previous} shows that $v_i(a)=e_0\cdots e_{i-1}v(a)$; thus, $v_i(F-G)\ge e_0\cdots e_{i-1}\nu$, by hypothesis. Since $F(x)-G(x)=\sum_{0\le s}(a_s(x)-b_s(x))\phi(x)^s$ is the canonical $\phi$-expansion of $F-G$, \cite[Lem. 2.17]{HN} shows that 
$$e_0\cdots e_{i-1}\nu\le v_i(F-G)=\min\{v_i((a_s-b_s)\phi^s)\mid 0\le s\}.
$$Therefore, the two clouds of points $\{(s,v_i(a_s\phi^s))\mid 0\le s\}$, $\{(s,v_i(b_s\phi^s))\mid 0\le s\}$,  
have the same points with ordinate less than $e_0\cdots e_{i-1}\nu$. Let $L$ be the line of slope $\lambda$ containing $S$. No point of the cloud of $F$ lies below the line $L$, and only the points of $S$ lie on this line. The condition  $u+\ell|\lambda|< e_0\cdots e_{i-1}\nu$ implies that the cloud of points of $G$ has the same properties. Thus, $S$ is also a side of $N_i^-(G)$.   

Let $\lambda=-h/e$, with $h,e$ positive coprime integres. Let $v_{i+1}$ be the MacLane valuation determined by  $\ty,\,\phi,\,\lambda$. By the definition of $v_{i+1}$ (cf. section \ref{secOkutsu}),
$$
v_{i+1}(F-G)\ge e_0\cdots e_{i-1}e\nu>e(u+\ell|\lambda|)=v_{i+1}(F)=v_{i+1}(G).
$$ 
Therefore,  $R_{\lambda,i}(F)=R_{\lambda,i}(G)$, by \cite[Prop. 2.8]{HN}. 
\end{proof}

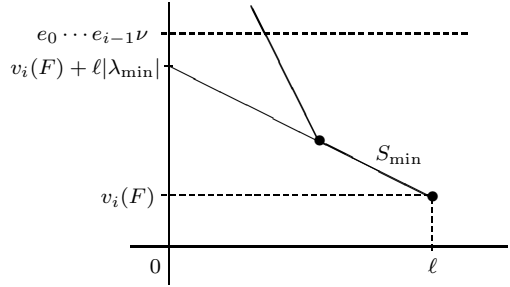
\begin{figure}\caption{Newton polygon $N_i(F)$ in the context of Lemmas \ref{technical}, \ref{belowline}.}\label{figCompare}
\setlength{\unitlength}{5mm}
\begin{center}
\begin{picture}(10,7)
\put(4.85,2.85){$\bullet$}\put(7.85,1.35){$\bullet$}
\put(0,.2){\line(1,0){10}}
\put(8,1.5){\line(-2,1){7}}\put(8,1.52){\line(-2,1){3}}
\put(5,3){\line(-1,2){1.8}}\put(5,3.02){\line(-1,2){1.8}}
\put(.9,5){\line(1,0){.2}}
\put(1,-.8){\line(0,1){7.5}}
\put(6.5,2.5){\begin{footnotesize}$\smn$\end{footnotesize}}
\put(.5,-.5){\begin{footnotesize}$0$\end{footnotesize}}
\put(7.9,-.5){\begin{footnotesize}$\ell$\end{footnotesize}}
\put(-.8,1.35){\begin{footnotesize}$v_i(F)$\end{footnotesize}}
\put(-3.2,4.8){\begin{footnotesize}$v_i(F)+\ell|\lmn|$\end{footnotesize}}
\put(-2.5,5.7){\begin{footnotesize}$e_0\cdots e_{i-1}\nu$\end{footnotesize}}
\multiput(8,.1)(0,.25){6}{\vrule height2pt}
\multiput(.8,1.55)(.25,0){29}{\hbox to 2pt{\hrulefill }}
\multiput(.8,5.85)(.25,0){33}{\hbox to 2pt{\hrulefill }}
\end{picture}
\end{center}
\end{figure}

\noindent{\sl Proof of Theorem \ref{bound}.}
The first item of Theorem \ref{bound} was proved right after Corollary \ref{testprecision}. Let us prove the second item.
Let $r$ be the Okutsu depth of $G(x)$. Consider an OM representation $\ty_G$ of $G(x)$ as in (\ref{OM}), and let
$$\ty:=\op{Trunc}_r(\ty_G)=(\psi_0;(\phi_1,\lambda_1,\psi_1);\cdots;(\phi_r,\lambda_r,\psi_r)). 
$$
Note that $\ty$ is a strongly optimal type that admits $G$ as a representative.

In order to prove the theorem, it is sufficient to show that
\begin{equation}\label{aim}
N_i(F)=N_i(G),\quad R_i(F)=R_i(G),\quad1\le i\le r.
\end{equation}
In fact, $R_r(F)=R_r(G)$ implies that $\ty$ is $F$-complete too; thus, $F$ is a representative of $\ty$, and $F\approx G$, by the definition of $\approx$. 

By hypothesis, $F\equiv G\equiv \psi_0^{a_0}\md{\m}$, for a certain positive exponent $a_0$. Let us prove (\ref{aim}) by induction on $i$. We assume that it is true for all $1\le j<i$ (thus, we make an empty assumption if $i=1$). Since $G$ is a polynomial of type $\ty$, our assumption implies that $F$ satisfies the conditions of Lemma \ref{technical}; thus, 
\begin{equation}\label{lemma}
\dfrac{v_i(F)+\ell|\lmn|}{e_0\cdots e_{i-1}}\le \dfrac{2\delta(F)}{n}<\nu,
\end{equation}
where $\ell=\ell(N_i(F))$ and $\lmn$ is the largest slope of this polygon ($|\lmn|$ is minimal). 

Let $\smn$ be the side of $N_i(F)$ of slope $\lmn$. By Lemma \ref{belowline}, $\smn$ is one of the sides of $N_i(G)$  (see Figure \ref{figCompare}). Since $G$ is irreducible, $N_i(G)$ is one-sided, so that $N_i(G)=\smn$. Thus, the left end point of $\smn$ has abscissa zero, so that $N_i(F)=\smn=N_i(G)$. Also, $R_i(F)=R_i(G)$, again by Lemma \ref{belowline}.  \qed

\begin{remark}\label{reduced}\rm
In \cite{fpr}, the \emph{reduced discriminant} $\m^{\delta^*(F)}$ of an arbitrary polynomial $F(x)\in\oo[x]$ is introduced, and it is shown that Corollary \ref{testprecision} holds with $2\delta^*(F)$ in the place of $2\delta(F)/n$. However, the reduced discriminant does not satisfy $\delta^*(F)\le \delta(F)/n$, so that Theorem \ref{bound} cannot be deduced from this result.
\end{remark}

For instance, suppose $p$ odd and consider $F(x)=x^4+a\pi x^2+b\pi^2\in\oo[x]$, with $ab(a^2-4b)\not\in\m$. This polynomial is irreducible; in fact, if we choose $\phi_1(x)=x$ as a lift of the irreducible factor of $\overline{F}$, the Newton polygon $N_1(F)$ is one-sided of slope $-1/2$ and $R_{-1/2,1}(F)(y)=y^2+\overline{a}y+\overline{b}$ is irreducible in $\F[y]$. One checks easily that 
$$\delta_0(F)=2, \quad \delta^*(F)=3, \quad \delta(F)=6. 
$$
By the first item of Theorem \ref{bound}, any monic polynomial $G(x)\in\oo[x]$ of degree four such that $F\equiv G\md{\m^3}$, is irreducible. If we did not know the irreducibility of $F$, then Corollary \ref{testprecision} shows that we can test its irreducibility by working modulo $\m^4$. However, according to the criterion of the reduced discriminant, we should work modulo $\m^7$ to test the irreducibility of $F$.   

\section{OM factorizations of polynomials}\label{secOMfac}
In this section, we deal with the problem of finding ``sufficiently good" approximations to the irreducible factors of a polynomial in $\oo[x]$.
We first extend the notion of Okutsu equivalence in section \ref{secOkutsu} to non-irreducible polynomials. 

\begin{definition}\label{okequiv2}
Let $F,G\in\oo[x]$ be monic separable polynomials, and let $F=F_1\cdots F_g$, $G=G_1\cdots G_{g'}$ be their factorization into a product of monic irreducible polynomials in $\oo[x]$. 
We say that $F$ and $G$ are \emph{Okutsu equivalent}, and we write $F\approx G$, if $g=g'$ and $F_s\approx G_s$ for all $1\le s\le g$, up to ordering.

An expression of the form, $F\approx P_1\cdots P_g$, with $P_1,\dots,P_g\in\oo[x]$ irreducible, is called an \emph{Okutsu factorization} of $F$. 
\end{definition}

Clearly, every $F\in\oo[x]$ admits a unique (up to $\approx$) Okutsu factorization. However, this concept is too weak for our purposes. For instance, if all factors of $F$ are Okutsu equivalent to $P$, then $F\approx P^g$ is an Okutsu factorization of $F$ which is unable to distinguish the true irreducible factors of $F$. 

\begin{definition}\label{OMfactorization}
Let $F\approx P_1\cdots P_g$ be an Okutsu factorization of a monic separable polynomial $F\in\oo[x]$. For each  $1\le s\le g$, let $F_s$ be the irreducible factor of $F$ which is Okutsu equivalent to $P_s$, and let $\t_s\in\ks$ be a root of $F_s$. 

We say that $F\approx P_1\cdots P_g$ is an \emph{OM factorization of }$F$ if 
\begin{equation}\label{distinguish}
 v(P_s(\t_s))>v(P_s(\t_t)),\quad \forall \, 1\le s\ne t\le g. 
\end{equation}
\end{definition}

\subsection{OM factorizations and OM representations}
This section is devoted to study basic properties of the OM factorizations and to find a characterization of the condition (\ref{distinguish}) in terms of OM representations of the factors of $F$, which facilitates the computation of these factorizations in practice.  

We denote by $\phi_i^\ty$, $\lambda_i^\ty$, $\psi_i^\ty$, $V_i^\ty$, etc. the data at the $i$-th level of a type $\ty$. Also, $\op{Rep}(\ty)$ denotes the set of representatives of the type $\ty$. 

\begin{lem-def}\label{equivTypes}
Let $\ty$, $\ty'$ be two strongly optimal types over $\oo$. The following conditions are equivalent:
\begin{enumerate}
\item[(a)] $\op{Rep}(\ty)=\op{Rep}(\ty')$.
\item[(b)] There exist representatives $\phi$, $\phi'$ of $\ty$, $\ty'$, respectively, such that $\phi\approx\phi'$.
\item[(c)] $\ord_\ty(F)=\ord_{\ty'}(F)$, for all polynomials $F\in\oo[x]$.
\end{enumerate}
When these conditions are satisfied, we say that the types $\ty$ and $\ty'$ are \emph{equivalent}.
\end{lem-def}

\begin{proof}
By Definition \ref{okequiv}, (a) and (b) are equivalent. Suppose that  $\ty$ and $\ty'$ admit a common representative $\phi$. By \cite[Thm. 3.9]{okutsu}, $[\phi_1^\ty,\dots,\phi_r^\ty]$ and $[\phi_1^{\ty'},\dots,\phi_{r'}^{\ty'}]$, are Okutsu frames of $\phi$; thus, $r=r'$ and 
the two types have the same Okutsu invariants and MacLane valuations $v_1,\dots,v_{r+1}$ \cite[Cor. 3.7]{okutsu}. Hence, the two types have the same Newton operators $N_{r+1}$, and (c) follows from Lemma \ref{previous},(2). Finally, since the representatives of $\ty$ are monic polynomials $\phi$ of degree $m_{r+1}$ such that $\ord_\ty(\phi)=1$, (c) trivially implies (a).
\end{proof}

If two strongly optimal types $\ty$, $\ty'$ of order $r$ are equivalent, then 
Lemmas \ref{previous}, \ref{representative} show that $\phi_i^\ty\approx\phi_i^{\ty'}$, for all $1\le i\le r$.  Since $\phi_i^\ty$ is a representative of $\op{Trunc}_{i-1}(\ty)$, the
 truncations of $\ty$ and $ \ty'$ of any order $0\le i\le r$ are equivalent too. 

By \cite[Thms. 3.5,3.9]{okutsu}, the mapping, $\ty\mapsto \op{Rep}(\ty)$, induces a 1-1  correspondence between equivalence classes of strongly optimal types and equi\-valence classes of monic irreducible separable polynomials in $\oo[x]$, under Okutsu equivalence. 

Let $F\in\oo[x]$ be a monic irreducible separable polynomial, and let $r$ be its Okutsu depth. We recall that an \emph{OM representation of $F$} is just an optimal type $\ty_F$ of order $r+1$, satisfying any of the following equivalent conditions:
\begin{itemize}
\item $\ty_F$ is $F$-complete; i.e. $\ord_{\ty_F}(F)=1$,
\item $\ty_F\mid F$ and $F\approx \phi_{r+1}^{\ty_F}$.
\end{itemize}
By Lemma-Definition \ref{equivTypes}, if $\ty_F$ and $\ty'_F$ are OM representations of $F$, the types $\op{Trunc}_r(\ty_F)$ and  $\op{Trunc}_r(\ty'_F)$ are equivalent.

\begin{definition}\label{icoin}
Let $F,G\in\oo[x]$ be monic irreducible separable polynomials of Okutsu depth $r_F$, $r_G$, and let $\ty_F$, $\ty_G$  be OM representations of $F$, $G$. Take $\phi_0^{\ty_F}=1=\phi_0^{\ty_G}$, by convention. The \emph{index of coincidence of $F$ and $G$} is the maximal index $0\le j\le \min\{r_F+1,r_G+1\}$, such that $\phi_j^{\ty_F}\approx \phi_j^{\ty_G}$. We denote this index by $i(F,G)$.
\end{definition}

The following properties of $i(F,G)$ are easy to check:
\begin{itemize}
\item $i(F,G)$ does not depend on the chosen OM representations $\ty_F$, $\ty_G$.
\item $i(F,G)$  depends only on the classes of $F$ and $G$ modulo $\approx$.
\item $F\approx G$ if and only if $i(F,G)=r_F+1=r_G+1$.
\end{itemize}
 
The next result is easily deduced from \cite[Prop. 3.5,(5)]{HN}.

\begin{proposition}\label{prop35}
Let $F,G\in\oo[x]$ be monic irreducible separable polynomials, and let $\t\in\ks$ be a root of $F$. Let $\ty$ be a type of order $i\ge 1$ over $\oo$, such that $\ty\mid F$ and $\op{Trunc}_{i-1}(\ty)\mid G$. Let $\lambda(G)$ be the slope of (the one-sided polygon) $N_i(G)$.Then, 
$$
v(G(\t))/\deg G\ge (V_i+\min\{|\lambda_i|,|\lambda(G)|\})/(m_ie_0\cdots e_{i-1}),
$$
and equality holds if and only if $\ty\nmid G$. 
\end{proposition}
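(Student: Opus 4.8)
The goal is to prove Proposition~\ref{prop35}: for monic irreducible separable $F,G\in\oo[x]$ with $\ty\mid F$, $\op{Trunc}_{i-1}(\ty)\mid G$, and $\t$ a root of $F$, one has $v(G(\t))/\deg G\ge(V_i+\min\{|\lambda_i|,|\lambda(G)|\})/(m_ie_0\cdots e_{i-1})$, with equality exactly when $\ty\nmid G$.

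\medskip
\noindent\textsl{Plan of proof.} The statement is asserted to be an easy consequence of \cite[Prop.~3.5,(5)]{HN}, so the plan is to reduce it to that result by a case analysis on the slope $\lambda(G)$ of the one-sided Newton polygon $N_i(G)$, using the characterization of divisibility $\ty\mid G$ in terms of $N_i(G)$. First I would recall that, since $\op{Trunc}_{i-1}(\ty)\mid G$, the polynomial $G$ is of type $\op{Trunc}_{i-1}(\ty)$ (Lemma~\ref{oftype}), so $N_i(G)=N_i^-(G)$ is one-sided; let $\lambda(G)$ be its slope and note $\op{ord}_\ty(G)=\op{ord}_{\psi_{i-1}}R_{i-1}(G)>0$ trivially, while $\ty\mid G$ means in addition that $\psi_i\mid R_{\lambda(G),i}(G)$, which forces $\lambda(G)=\lambda_i$. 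Thus $\ty\nmid G$ occurs precisely when either $\lambda(G)\ne\lambda_i$, or $\lambda(G)=\lambda_i$ but $\psi_i\nmid R_i(G)$.

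\medskip
\noindent\textsl{Key steps.} The core computation is to evaluate $v(G(\t))$ by decomposing $G$ along its $\phi_i$-expansion relative to the valuation $v_{i+1}$ determined by $(\phi_i,v_i,\lambda_i)$, or more precisely by the valuation $v_{i+1}^G$ determined by $(\phi_i,v_i,\lambda(G))$. The inequality $v_{i+1}(G)\le v_{i+1}(\phi)$ for each factor, combined with $v(\phi_i(\t))=(V_i+|\lambda_i|)/(e_0\cdots e_{i-1})$ from Lemma~\ref{previous},(3), and the fact that $\deg G=m_i\cdot\ell(N_i(G))$, yields the bound. Concretely: (i) when $|\lambda(G)|\le|\lambda_i|$, the point $\t$ is ``below or on'' the polygon $N_i(G)$ in the sense relevant to \cite[Prop.~3.5]{HN}, giving $v(G(\t))/\deg G=(V_i+|\lambda(G)|)/(m_ie_0\cdots e_{i-1})$ with equality (and here automatically $\ty\nmid G$ unless $|\lambda(G)|=|\lambda_i|$ and $\psi_i\mid R_i(G)$); (ii) when $|\lambda(G)|\ge|\lambda_i|$, one uses that each $\phi_i$-Newton-polygon side of $G$ has slope of absolute value $\ge|\lambda_i|$, so evaluating at $\t$ (whose ``$\phi_i$-slope'' is $|\lambda_i|$) gives $v(G(\t))/\deg G\ge(V_i+|\lambda_i|)/(m_ie_0\cdots e_{i-1})$, again with equality iff $\ty\nmid G$, i.e.\ iff the residual polynomial $R_i(G)$ is not divisible by $\psi_i$ (if it were, the root $\t$ would ``cancel'' and strict inequality would hold). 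Combining (i) and (ii) gives the $\min$ in the statement, and tracking when each inequality is an equality gives exactly the condition $\ty\nmid G$.

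\medskip
\noindent\textsl{Main obstacle.} The delicate point is the equality clause, specifically disentangling the two ways $\ty\nmid G$ can fail at level $i$ ($\lambda(G)\ne\lambda_i$ versus $\lambda(G)=\lambda_i$ with $\psi_i\nmid R_i(G)$) and checking that in the boundary case $|\lambda(G)|=|\lambda_i|$ the contribution of $\t$ to $v(G(\t))$ is governed precisely by whether $\psi_i$ divides $R_i(G)=R_{\lambda_i,i}(G)$ — this is exactly the content of \cite[Prop.~3.5,(5)]{HN} applied with the type $\ty$, so the work is to match hypotheses and read off the equivalence rather than to reprove it. I expect the verification that $\deg G=m_i\,\ell(N_i(G))$ and that the estimate is slope-monotone to be routine given Lemmas~\ref{previous} and~\ref{oftype}.
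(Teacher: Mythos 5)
Your proposal follows the same route as the paper, which gives no proof beyond the remark that the proposition is "easily deduced from [HN, Prop.\ 3.5,(5)]": your case analysis on $|\lambda(G)|$ versus $|\lambda_i|$, reducing everything to the valuation $v_{i+1}$ attached to $(\phi_i,v_i,\lambda_i)$ and the residual-polynomial criterion for equality, is exactly the intended deduction. Two small cautions: since $\t$ is a root of $F$, the relevant valuation is the one determined by $\lambda_i$ (the slope of $N_i(F)$), not by $\lambda(G)$ as your "more precisely'' aside suggests; and the equality asserted in your case (i) fails precisely in the boundary subcase $\lambda(G)=\lambda_i$ with $\psi_i\mid R_i(G)$ --- a point you do correctly identify afterwards as the content of the equality clause.
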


\begin{lemma}\label{divideG}
Let $F,G\in\oo[x]$ be monic irreducible separable polynomials, and let $\t\in\ks$ be a root of $F$. Let $\ty$ be a strongly optimal type of order $i$ over $\oo$, such that $\ty\mid F$. Then, the following conditions are equivalent
\begin{enumerate}
\item[(a)] $\ty\mid G$.
\item[(b)] $i(F,G) > i$.
\item[(c)] $v(G(\t))/\deg G > V_{i+1}/(m_{i+1}e_0\cdots e_i)=v(\phi_i(\t))/m_i$.
\end{enumerate}
\end{lemma}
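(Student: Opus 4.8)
\textbf{Proof plan for Lemma \ref{divideG}.}
The plan is to establish the cycle of implications (a) $\Rightarrow$ (b) $\Rightarrow$ (c) $\Rightarrow$ (a), using the already-developed machinery on Okutsu frames and the behaviour of $v(G(\t))$ under divisibility by types. First I would prove (a) $\Rightarrow$ (b). Assume $\ty\mid G$. Since $\ty$ is strongly optimal of order $i$ and divides both $F$ and $G$, a representative $\phi$ of $\ty$ satisfies $\phi\approx\phi_i^{\ty_F}$ and $\phi\approx\phi_i^{\ty_G}$ for suitable OM representations of $F$ and $G$ (here I would invoke \cite[Thm. 3.9]{okutsu}, exactly as in the proof of Lemma-Definition \ref{equivTypes}, to see that $\op{Trunc}_{i-1}(\ty)$ is a truncation of an OM representation of each of $F$, $G$, with $\phi_i^{\ty}$ playing the role of $\phi_i$). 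Hence $\phi_i^{\ty_F}\approx\phi_i^{\ty_G}$, and since both have degree $m_i = m_i^{\ty}$, the index of coincidence is at least $i$; the extra input is that $\ty\mid G$ forces the polygon $N_i(G)$ to be genuinely present, so in fact $\phi_j^{\ty_F}\approx\phi_j^{\ty_G}$ for all $j\le i$ and moreover $i(F,G) > i$ because both $F$ and $G$ have Okutsu depth $\ge i$ (as $\ty\mid F$, $\ty\mid G$ with $\ty$ strongly optimal) — so $i(F,G)\ge i+1$.

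Next, (b) $\Rightarrow$ (c). If $i(F,G) > i$, then $\phi_{i+1}^{\ty_F}\approx\phi_{i+1}^{\ty_G}$; in particular $\deg\phi_{i+1}^{\ty_F} = \deg\phi_{i+1}^{\ty_G} = m_{i+1}$. By Lemma \ref{representative} applied at level $i$ (using that $\op{Trunc}_i(\ty_G)$ is strongly optimal of order $i$, equivalent to $\ty$, and that $G$ is divisible by it), $v(\phi_{i+1}^{\ty_G}(\t_G)) > V_{i+1}/(e_0\cdots e_i)$ for a root $\t_G$ of $G$; but I would rather bound $v(G(\t))/\deg G$ directly. Since $i(F,G) > i$ means $\ty':=\op{Trunc}_i(\ty_G)$ divides $F$ (because $\phi_{i}^{\ty_F}\approx\phi_i^{\ty_G}=\phi_i^{\ty'}$ together with the fact that $\ty\mid F$ and $\ty\sim\ty'$ via Lemma-Definition \ref{equivTypes}), one can either apply Proposition \ref{prop35} at level $i+1$ or directly Lemma \ref{representative}: the upshot is $v(G(\t))/\deg G > V_{i+1}/(m_{i+1}e_0\cdots e_i)$, and the claimed equality $V_{i+1}/(m_{i+1}e_0\cdots e_i) = v(\phi_i(\t))/m_i$ is exactly Lemma \ref{previous},(4) (with $\phi_i$ a representative of $\op{Trunc}_{i-1}(\ty)$). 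The cleanest route here is to use Proposition \ref{prop35}: apply it with the type $\op{Trunc}_i(\ty_G)$ extended by one level — no, better: apply Proposition \ref{prop35} with $\ty$ replaced by a type of order $i+1$, namely $\op{Trunc}_{i+1}(\ty_G)$, which divides $F$ by (b); then the displayed inequality of Proposition \ref{prop35} gives $v(G(\t))/\deg G \ge (V_{i+1}+\min\{\dots\})/(m_{i+1}e_0\cdots e_i) > V_{i+1}/(m_{i+1}e_0\cdots e_i)$ since the slopes are strictly negative.

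Finally, (c) $\Rightarrow$ (a), the contrapositive of which is the most instructive. Suppose $\ty\nmid G$. Then $\op{Trunc}_{i-1}(\ty)\mid G$ still holds (since $\ty$ is strongly optimal, $\ty\mid F$ forces $\op{Trunc}_{i-1}(\ty)\mid F$, and I would need $\op{Trunc}_{i-1}(\ty)\mid G$ — this requires care: it need not hold a priori, so actually the correct statement is to run the argument only when it does, or to note that if even $\op{Trunc}_{i-1}(\ty)\nmid G$ then $v(G(\t))/\deg G$ is controlled by an earlier level and is strictly smaller, handled by downward induction on $i$). Granting $\op{Trunc}_{i-1}(\ty)\mid G$ and $\ty\nmid G$, Proposition \ref{prop35} applied at level $i$ gives the \emph{equality} $v(G(\t))/\deg G = (V_i+\min\{|\lambda_i|,|\lambda(G)|\})/(m_ie_0\cdots e_{i-1})$, which is $\le (V_i+|\lambda_i|)/(m_ie_0\cdots e_{i-1})$. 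Using $m_{i+1}=e_if_im_i$ and the recursion $V_{i+1} = e_if_i(e_iV_i + h_i)$ one checks $(V_i+|\lambda_i|)/(m_ie_0\cdots e_{i-1}) = V_{i+1}/(m_{i+1}e_0\cdots e_i)$, so $v(G(\t))/\deg G \le V_{i+1}/(m_{i+1}e_0\cdots e_i)$, contradicting (c). The main obstacle I anticipate is exactly the bookkeeping in this last implication: keeping straight which truncations of $\ty$ are guaranteed to divide $G$ (so that Proposition \ref{prop35} is applicable at the right level), and being sure that the "$\min$" term in Proposition \ref{prop35} and the strict-versus-non-strict inequalities line up so that (c) fails precisely when $\ty\nmid G$. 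The numerical identity relating $(V_i+|\lambda_i|)/(e_0\cdots e_{i-1})$, $V_{i+1}/(e_0\cdots e_i)$ and $v(\phi_i(\t))/m_i$ is routine given the explicit formulas for $V_i$ and Lemma \ref{previous}, so that part I would not belabour.
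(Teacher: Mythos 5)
Your plan follows essentially the same route as the paper: everything rests on Proposition \ref{prop35} together with the equivalence-of-types machinery of Lemma-Definition \ref{equivTypes}, and your (a) $\Rightarrow$ (b) step and the numerical identities are the paper's. The only structural difference is that you close a cycle (a) $\Rightarrow$ (b) $\Rightarrow$ (c) $\Rightarrow$ (a) where the paper proves (a) $\Leftrightarrow$ (b) and (a) $\Leftrightarrow$ (c) separately; that is immaterial.

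One step as written would fail. In (b) $\Rightarrow$ (c) you apply Proposition \ref{prop35} to the order-$(i+1)$ type $\op{Trunc}_{i+1}(\ty_G)$, ``which divides $F$ by (b)''. Condition (b) only says $\phi_{i+1}^{\ty_F}\approx\phi_{i+1}^{\ty_G}$, i.e.\ that the order-$i$ truncations of $\ty_F$ and $\ty_G$ are equivalent; the level-$(i+1)$ data $(\lambda_{i+1}^{\ty_G},\psi_{i+1}^{\ty_G})$ are tailored to $G$, and there is no reason for the resulting order-$(i+1)$ type to divide $F$. Since $\t$ is a root of $F$, Proposition \ref{prop35} needs the \emph{full} type to divide $F$ and only its truncation to divide $G$. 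The repair is the paper's: first get (a) from (b) via the equivalence $\ty\sim\op{Trunc}_i(\ty_G)$ and the last item of Definition \ref{optimal} (which you sketch), then feed Proposition \ref{prop35} the order-$(i+1)$ type obtained by extending $\ty$ with the level-$(i+1)$ data of $F$ itself (slope of $N_{i+1}(F)$, etc.); its order-$i$ truncation is $\ty$, which divides $G$ by (a), and the strict inequality of (c) follows since the relevant slopes are nonzero. For (c) $\Rightarrow$ (a) you correctly flag that $\op{Trunc}_{i-1}(\ty)\mid G$ may fail and gesture at a downward induction; the paper's concrete device is to take $j$ maximal with $\op{Trunc}_{j-1}(\ty)\mid G$ (disposing first of the case $\psi_0^\ty\nmid\overline{G}$, where $v(G(\t))=0$), obtain equality in Proposition \ref{prop35} at level $j$, and push the bound up to level $i$ with the Okutsu-frame inequalities (\ref{frame}) and Lemma \ref{previous}. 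With these two repairs your plan coincides with the paper's proof.
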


\begin{proof}
By \cite{HN,algorithm}, the type $\ty$ may be extended to an OM representation $\ty_F$ of $F$. If $\ty\mid G$, it may be extended to an OM representation $\ty_G$ of $G$ too; thus, $\phi_{i+1}^{\ty_F}\approx\phi_{i+1}^{\ty_G}$, because they are both a representative of $\ty$. Thus, (a) implies (b).
Conversely, let $\ty_G$ be an arbitrary OM representation of $G$, and suppose $\phi_{i+1}^{\ty_F}\approx\phi_{i+1}^{\ty_G}$. This implies that $\phi_{i+1}^{\ty_G}$ is a representative of $\ty$; thus, the types $\ty$ and $\op{Trunc}_i(\ty_G)$ are equivalent. By the last item of Definition \ref{optimal}, $0<\ord_{\ty_G}(G)\le \ord_{\op{Trunc}_i(\ty_G)}(G)=\ord_\ty(G)$. Therefore, (a) and (b) are equivalent.

Let us now show that (a) and (c) are equivalent.
If $\psi_0^\ty\nmid \overline{G}$, then $v(G(\t))=0$ and $\ty\nmid G$; thus (a) and (c) are both false in this case.
Suppose $\psi_0^\ty\mid \overline{G}$, and let $1\le j\le i+1$ be maximal such that $\op{Trunc}_{j-1}(\ty)\mid G$. 
Let $\la_{i+1}$ be the slope of $N_{i+1}(F)$. The Newton polygon $N_j^-(G)$ with respect to $\ty$ has a positive length by Lemma \ref{previous}; let $\lambda(G)\in\Q_{<0}$ be its slope. By Proposition \ref{prop35},
$$
v(G(\t))/\deg G\ge(V_j+\min\{|\lambda_j|,|\lambda(G)|\})/(m_je_0\cdots e_{j-1}),
$$   
and equality holds if $j\le i$, because $\op{Trunc}_{j}(\ty)\nmid G$. If $\ty\mid G$, then $j=i+1$,  and 
$v(G(\t))/\deg G>V_{i+1}/(m_{i+1}e_0\cdots e_i)$. If $\ty\nmid G$, then $j\le i$, and 
$$
\dfrac{v(G(\t))}{\deg G}\le\dfrac{V_j+|\lambda_j|}{m_je_0\cdots e_{j-1}}=\dfrac{v(\phi_j(\t))}{m_j}\le \dfrac{v(\phi_i(\t))}{m_i}=\dfrac{V_{i+1}}{m_{i+1}e_0\cdots e_i},
$$   
by Lemma \ref{previous} and the properties (\ref{frame}) of the Okutsu polynomials.
\end{proof}

\begin{lemma}\label{tPF}
Let $P,Q\in\oo[x]$ be monic irreducible separable polynomials such that $P\approx Q$. Let $\ty=(\psi_0;(\phi_1,\lambda_1,\psi_1);\cdots (\phi_r,\lambda_r,\psi_r))$ be a strongly optimal type admitting $P$ as a representative. 
Then, there exist unique data $(\lambda_Q,\psi_Q)$ (or $(-\infty,\hbox{---})$, if $P=Q$), such that $\ty_Q:=(\psi_0;(\phi_1,\lambda_1,\psi_1);\cdots (\phi_r,\lambda_r,\psi_r);(P,\lambda_Q,\psi_Q))$, is an OM representation of $Q$.
\end{lemma}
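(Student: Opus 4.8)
The plan is to use the fact that $P\approx Q$ to transfer an OM representation of $Q$ to one built on top of the given type $\ty$. Since $P\approx Q$, the polynomial $Q$ is a representative of $\op{Trunc}_r(\ty_Q)$ for any OM representation $\ty_Q$ of $Q$; by the $1$-$1$ correspondence between equivalence classes of strongly optimal types and Okutsu equivalence classes of irreducible polynomials (from \cite[Thms. 3.5,3.9]{okutsu}), the type $\ty'=\op{Trunc}_r(\ty_Q)$ is equivalent to $\ty$, since both admit $Q$ (respectively $P$) as a representative and $P\approx Q$. In particular, by Lemma-Definition \ref{equivTypes}, $\ty$ and $\ty'$ have the same representatives, so $Q\in\op{Rep}(\ty)$, i.e. $Q$ is a monic irreducible polynomial of degree $m_{r+1}$ with $\ord_\ty(Q)=1$, and thus $\ty\mid Q$ with $N_{r+1}(Q)$ one-sided of length $1$.

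Next I would extend $\ty$ to an OM representation of $Q$ by appending the level $(P,\lambda_Q,\psi_Q)$. The key point is that $P$ itself is a representative of $\ty$, so it is a legitimate choice of $(r+1)$-th order lift $\phi_{r+1}$. With this choice, form the $P$-Newton polygon $N_{r+1}(Q)$ computed with respect to the pair $(\ty,P)$. By Lemma \ref{previous},(2), its length is $\ord_\ty(Q)=1$, so it is one-sided and reduces to a single side from abscissa $0$ to abscissa $1$; let $\lambda_Q$ be its (integer, possibly $-\infty$) slope. If $\lambda_Q=-\infty$, this forces $P=Q$ and we are in the exact case with data $(-\infty,\hbox{---})$. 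Otherwise $\lambda_Q\in\Z_{<0}$, and the residual polynomial $R_{\lambda_Q,r+1}(Q)\in\F_{r+1}[y]$ has degree equal to the length of the side divided by $e_{r+1}$; since the side has length $1$ and $\lambda_Q$ is an integer ($e_{r+1}=1$), this residual polynomial has degree $1$. Setting $\psi_Q:=R_{\lambda_Q,r+1}(Q)$, normalized to be monic, we obtain a monic degree-one irreducible polynomial in $\F_{r+1}[y]$, necessarily distinct from $y$ since $Q\ne P$ means $0$ is not a root (equivalently, $Q$ is not divisible by the next-order type with a higher slope). Then $\ty_Q:=(\psi_0;(\phi_1,\lambda_1,\psi_1);\cdots;(\phi_r,\lambda_r,\psi_r);(P,\lambda_Q,\psi_Q))$ is by construction an optimal $Q$-complete type of order $r+1$, i.e. an OM representation of $Q$ as in (\ref{OM}).

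For uniqueness, suppose $(\lambda_Q,\psi_Q)$ and $(\lambda_Q',\psi_Q')$ both yield OM representations of $Q$ with the same first $r$ levels and the same $\phi_{r+1}=P$. The slope $\lambda_Q$ is recovered as the (unique) slope of the one-sided polygon $N_{r+1}(Q)$ computed relative to $(\ty,P)$, which does not depend on any further data, so $\lambda_Q=\lambda_Q'$. Given $\lambda_Q$, the residual polynomial $R_{\lambda_Q,r+1}(Q)$ is determined, and $\psi_Q$ is its unique monic associate; hence $\psi_Q=\psi_Q'$. In the case $P=Q$ the polygon $N_{r+1}(Q)$ has slope $-\infty$ and the data is $(-\infty,\hbox{---})$ by convention, which is the unique possibility.

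The main obstacle I anticipate is verifying carefully that $P$ is an admissible representative to append at level $r+1$ and that the resulting type is \emph{optimal} and $Q$-complete — that is, checking $\deg P=m_{r+1}>m_r$ (strong optimality of $\ty$ gives this) and $\ord_{\ty_Q}(Q)=1$. The latter follows because the side of $N_{r+1}(Q)$ relative to $(\ty,P)$ has length $1$ and its residual polynomial is $\psi_Q$ of degree one, so $\ord_{\psi_Q}R_{r+1}(Q)=1$; but pinning this down rigorously requires invoking the length formula Lemma \ref{previous},(2) together with the basic structure theory of Newton polygons and residual polynomials from \cite{HN,algorithm}. Once these standard facts are in hand, the construction and the uniqueness argument are routine.
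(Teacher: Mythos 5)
Your proposal is correct and follows essentially the same route as the paper: observe that $Q$ is itself a representative of $\ty$, so $N_{r+1}^-(Q)$ with respect to $(\ty,P)$ has length one by Lemma \ref{previous},(2), read off $\lambda_Q$ and the degree-one residual polynomial $\psi_Q$, verify $\ord_{\ty_Q}(Q)=1$, and note that these data are forced once $P$ is fixed. The only cosmetic difference is that you justify $\ord_{\ty_Q}(Q)=1$ directly from the degree of the residual polynomial, whereas the paper invokes the truncation inequality $\ord_{\ty_Q}(Q)\le\ord_\ty(Q)=1$; both are valid.
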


\begin{proof}
Since $Q$ is also a representative of $\ty$, we have $\ord_\ty(Q)=1$, and the Newton polygon $N_{r+1}^-(Q)$ with respect to $\ty$ and $P$ has length one by Lemma \ref{previous}. Let $\lambda_Q\in\Z\cup\{-\infty\}$ be the slope of this polygon. 
If $\lambda_Q\ne-\infty$ (i.e. $P\ne Q$), the residual polynomial $R_{\lambda_Q,r+1}(Q)$ has degree one; let $\psi_Q$ be the monic polynomial obtained by dividing this polynomial by its leading coefficient. By construction, $\ty_Q\mid Q$. By the last item of Definition \ref{optimal}, $\ord_{\ty_Q}(Q)\le \ord_\ty(Q)=1$; thus, $\ord_{\ty_Q}(Q)=1$, so that $\ty_Q$ is an OM representation of $Q$. Also, once we choose $P$ as a representative of $\ty$, the condition $\ty_Q\mid Q$ uniquely determines these data $(\lambda_Q,\psi_Q)$.   
\end{proof}

The computation of an Okutsu factorization $F\approx P_1\cdots P_g$ of a monic separable polynomial $F$ is equivalent to the computation of a family $\ty_{F_1},\dots,\ty_{F_g}$ of OM representations of the irreducible factors of $F$. In fact, from the Okutsu factors $P_1,\dots,P_g$ and strongly optimal types $\ty_1,\dots,\ty_g$ such that each $\ty_s$ admits $P_s$ as a representative, we may construct the OM representations of $F_1,\dots,F_g$, as shown in Lemma \ref{tPF}. Conversely, from the family  $\ty_{F_1},\dots,\ty_{F_g}$ we may take $P_s:=\phi_{r_s+1}^{\ty_{F_s}}\approx F_s$, as Okutsu factors, where $r_s$ is the Okutsu depth of $F_s$.

We now describe the property of being an OM factorization in terms of the family  $\ty_{F_1},\dots,\ty_{F_g}$ of OM representations.

\begin{proposition}\label{OMOM}
Let $F\in\oo[x]$ be a monic separable polynomial and $F_1,\dots,F_g\in\oo[x]$ its monic irreducible factors, with Okutsu depth $r_1,\dots,r_g$, respectively.
Let $\ty_{F_1},\dots,\ty_{F_g}$ be a family of OM representations of the factors, and let $P_s:=\phi_{r_s+1}^{\ty_{F_s}}$. Let $I$ be the set of ordered pairs $(s,t)$ of indices such that  $i(F_s,F_t)=r_s+1$, and for each $(s,t)\in I$, let $\lambda_{s,t}$ be the slope of $N_{r_s+1,\ty_{F_s}}(F_t)$.
Then, the Okutsu factorization $F\approx P_1\cdots P_g$ is an OM factorization of $F$ if and only if 
\begin{equation}\label{distinguishOM}
|\lambda_{s,s}|>|\lambda_{s,t}|, \ \forall\,(s,t)\in I,\ s\ne t.
\end{equation}
\end{proposition}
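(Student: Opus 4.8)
The plan is to translate the defining inequality \eqref{distinguish} of an OM factorization, namely $v(P_s(\t_s))>v(P_s(\t_t))$ for all $s\ne t$, into the combinatorial condition \eqref{distinguishOM} on the slopes of the Newton polygons $N_{r_s+1,\ty_{F_s}}(F_t)$, by evaluating both sides of the inequality explicitly. First I would fix $s\ne t$ and treat the three cases according to the value of the index of coincidence $i(F_s,F_t)$ relative to $r_s+1$. If $i(F_s,F_t)\le r_s$, then $\op{Trunc}_{r_s}(\ty_{F_s})\nmid F_t$ in the strong sense that $\phi_{r_s}^{\ty_{F_s}}\not\approx\phi_{r_s}^{\ty_{F_t}}$ already fails at some earlier level; here Lemma \ref{divideG} (applied to the truncation $\ty=\op{Trunc}_{j}(\ty_{F_s})$ at the appropriate level $j$) gives $v(P_s(\t_t))/\deg P_s \le v(\phi_j(\t_t))/m_j$, whereas $v(P_s(\t_s))=v(P_s(\t_s))$ is strictly larger because $P_s$ is an Okutsu approximation to $F_s$ (so $v(P_s(\t_s))>V_{r_s+1}/e(F_s)$ by Lemma \ref{representative}). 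Thus \eqref{distinguish} holds automatically for such pairs, and the pair $(s,t)$ does not lie in $I$, so it imposes no constraint in \eqref{distinguishOM} either; both sides of the claimed equivalence are vacuously satisfied on these pairs.

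The substance of the proof is the case $i(F_s,F_t)=r_s+1$, i.e. $(s,t)\in I$. Here $\ty:=\op{Trunc}_{r_s}(\ty_{F_s})$ divides both $F_s$ and $F_t$ (since $\phi_{r_s+1}^{\ty_{F_s}}=P_s$ is a representative of $\ty$ and also $\approx \phi_{r_s+1}^{\ty_{F_t}}$, which forces $\op{Trunc}_{r_s}(\ty_{F_t})$ to be equivalent to $\ty$ by Lemma-Definition \ref{equivTypes}). So I can apply Lemma \ref{divideG} and Proposition \ref{prop35} with $i=r_s$ and $\ty$ the order-$r_s$ truncation, using $P_s=\phi_{r_s+1}^{\ty_{F_s}}$ as the representative $\phi$ of $\ty$. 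Evaluating, on one side, $v(P_s(\t_s))$: since $\ty_{F_s}$ is an OM representation of $F_s$, Lemma \ref{previous}(3) at level $r_s+1$ gives $v(P_s(\t_s))=v(\phi_{r_s+1}^{\ty_{F_s}}(\t_s))=\mu_{r_s}+\nu_{r_s}+\text{(contribution of }|\lambda_{r_s+1}|)$; but more usefully, by Lemma \ref{previous}(3) applied to the type $\ty_{F_s}$ we get $v(P_s(\t_s))=(V_{r_s+1}+|\lambda_{r_s+1}^{\ty_{F_s}}|)/e(F_s)$ where $\lambda_{r_s+1}^{\ty_{F_s}}=-\infty$ if $P_s=F_s$; in any case $v(P_s(\t_s))\ge (V_{r_s+1}+|\lambda_{s,s}|)/e(F_s)$ with $\lambda_{s,s}$ the slope of $N_{r_s+1,\ty_{F_s}}(F_s)=N_{r_s+1,\ty_{F_s}}(P_s)$... wait, one must be careful here: $\lambda_{s,s}$ should denote the slope of $N_{r_s+1,\ty_{F_s}}(F_s)$, which by Lemma \ref{previous}(2) has length $\ord_{\ty_{F_s}}(F_s)=1$ and slope $\lambda_{r_s+1}^{\ty_{F_s}}$; so $v(P_s(\t_s))=(V_{r_s+1}+|\lambda_{s,s}|)/e(F_s)$ and $\deg P_s=n_s=m_{r_s+1}$. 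On the other side, $v(P_s(\t_t))$: since $\op{Trunc}_{r_s}(\ty_{F_s})\mid F_t$ but we need the value of $\ord$ of $F_t$ under the full $\ty_{F_s}$-data — by Proposition \ref{prop35} with $\ty=\ty_{F_s}$ (order $r_s+1$), using that $\op{Trunc}_{r_s}(\ty_{F_s})\mid F_t$, we get $v(P_s(\t_t))/\deg P_s \ge (V_{r_s+1}+\min\{|\lambda_{s,s}|,|\lambda_{s,t}|\})/(m_{r_s+1}e(F_s))$ with equality iff $\ty_{F_s}\nmid F_t$; and since $F_s\ne F_t$ are distinct irreducible factors, indeed $\ty_{F_s}\nmid F_t$ (otherwise $\ty_{F_s}\mid F_s F_t$ with $\ord\ge 2$, contradicting $\ord_{\ty_{F_s}}(F_s)=1$ and $F_s,F_t$ coprime — more precisely $\ty_{F_s}\mid F_t$ would give $F_t\approx P_s\approx F_s$, impossible for distinct irreducible factors). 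Hence $v(P_s(\t_t))=(V_{r_s+1}+\min\{|\lambda_{s,s}|,|\lambda_{s,t}|\})/e(F_s)$ exactly.

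Comparing the two evaluated quantities, the inequality $v(P_s(\t_s))>v(P_s(\t_t))$ becomes $(V_{r_s+1}+|\lambda_{s,s}|)/e(F_s) > (V_{r_s+1}+\min\{|\lambda_{s,s}|,|\lambda_{s,t}|\})/e(F_s)$, i.e. $|\lambda_{s,s}| > \min\{|\lambda_{s,s}|,|\lambda_{s,t}|\}$, which holds if and only if $|\lambda_{s,s}|>|\lambda_{s,t}|$. This is exactly \eqref{distinguishOM} for the pair $(s,t)\in I$. Running this equivalence over all ordered pairs $s\ne t$ — the pairs with $(s,t)\notin I$ contributing nothing on either side by the first paragraph — gives that $F\approx P_1\cdots P_g$ is an OM factorization if and only if \eqref{distinguishOM} holds. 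The main obstacle I anticipate is the bookkeeping of normalizations: keeping straight which Newton polygon (with respect to which $(\ty,\phi)$ pair) has which slope, correctly identifying $v(P_s(\t_s))$ from the order-$(r_s+1)$ data via Lemma \ref{previous}(3), and handling the exact-versus-inexact ($P_s=F_s$ or not) dichotomy uniformly — including the degenerate subtlety that when $P_s=F_s$ the slope $\lambda_{s,s}=-\infty$ and $v(P_s(\t_s))=\infty$, so the inequality is trivially strict and the pair still lies in $I$ with \eqref{distinguishOM} reading $\infty>|\lambda_{s,t}|$, automatically true. Verifying that the equality case in Proposition \ref{prop35} genuinely applies (i.e. $\ty_{F_s}\nmid F_t$) for all relevant pairs is the one spot requiring a short separate argument, which I would supply via the coprimality of distinct irreducible factors together with $\ord_{\ty_{F_s}}(F_s)=1$.
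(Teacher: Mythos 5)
Your overall strategy -- evaluate $v(P_s(\t_s))$ and $v(P_s(\t_t))$ explicitly and compare -- is the same as the paper's, and your treatment of the pairs with $i(F_s,F_t)\le r_s$ is essentially the paper's argument. But the central evaluation in the case $(s,t)\in I$ is wrong. You claim $v(P_s(\t_t))=\bigl(V_{r_s+1}+\min\{|\lambda_{s,s}|,|\lambda_{s,t}|\}\bigr)/e(F_s)$; the correct value is $\bigl(V_{r_s+1}+|\lambda_{s,t}|\bigr)/e(F_s)$, obtained by applying Lemma \ref{previous},(3) (equivalently, Proposition \ref{prop35} with $F=F_t$, $G=P_s$ and $\ty$ the order-$(r_s+1)$ type built from the data of $F_t$ over $\ty_s$, so that $\lambda_i=\lambda_{s,t}$ and $\lambda(G)=-\infty$). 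A one-line counterexample: $F_s=x-\pi$, $F_t=x-\pi^2$, $P_s=x$; then $V_1=0$, $e(F_s)=1$, $\lambda_{s,s}=-1$, $\lambda_{s,t}=-2$, and $v(P_s(\t_t))=v(\pi^2)=2$, not $\min\{1,2\}=1$. The source of the error is that in Proposition \ref{prop35} the root $\t$ belongs to the polynomial $F$ that the \emph{full} type $\ty$ divides; taking $\ty=\ty_{F_s}$ forces $F=F_s$, $\t=\t_s$, $G=F_t$, so your formula (with its $\min$ and its equality criterion ``$\ty_{F_s}\nmid F_t$'') computes $v(F_t(\t_s))/\deg F_t$, which is a different quantity from $v(P_s(\t_t))/\deg P_s$ (in the example above they are $1$ and $2$). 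That the boolean condition $|\lambda_{s,s}|>\min\{|\lambda_{s,s}|,|\lambda_{s,t}|\}$ happens to be equivalent to $|\lambda_{s,s}|>|\lambda_{s,t}|$ makes your final equivalence come out right, but by accident, not by a valid derivation.

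A second, independent error: your claim that $\ty_{F_s}\nmid F_t$ always holds for $s\ne t$, justified by ``$F_t\approx F_s$ is impossible for distinct irreducible factors,'' is false on both counts. Distinct irreducible factors can perfectly well be Okutsu equivalent (this is precisely why the notion of OM factorization is introduced, as opposed to Okutsu factorization), and the paper's example after Lemma \ref{construction} exhibits $\ty_{F_1}=(y;(x,-1,y+1))\mid F_2$ with $F_1\ne F_2$ irreducible. No faithfulness hypothesis is made in Proposition \ref{OMOM}, so pairs with $\ty_{F_s}\mid F_t$ must be allowed; for them your ``exact'' evaluation degenerates to a one-sided inequality and your equivalence argument collapses. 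The fix is simply to use the correct formula $v(P_s(\t_t))=\bigl(V_{r_s+1}+|\lambda_{s,t}|\bigr)/e(F_s)$, valid for all $(s,t)\in I$ with no side condition, as the paper does.
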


\begin{proof}
Denote $\ty_s:=\op{Trunc}_{r_s}(\ty_{F_s})$, and choose a root $\t_s\in\ks$ of $F_s$,  for each $1\le s\le g$. Let $(s,t)$ be an ordered pair of indices, $1\le s,t\le g$. Suppose $i(F_s,F_t)=r_s+1$. Then, Lemma \ref{divideG} shows that $\ty_s\mid F_t$, and 
$$
v(P_s(\t_t))=\left(V_{r_s+1}^{\ty_s}+|\lambda_{s,t}|\right)/e(F_s),
$$ 
by Lemma \ref{previous}. Suppose now $i:=i(F_s,F_t)\le r_s$. Since $i(P_s,F_t)=i(F_s,F_t)=i$, Lemma \ref{divideG} shows that $\op{Trunc}_i(\ty_{F_t})\nmid P_s$. By Proposition \ref{prop35},
\begin{align*}
v(P_s(\t_t))&\ = \dfrac{m^{\ty_s}_{r_s+1}}{m_i}\,\dfrac{V_i+\min\{|\lambda_i^{\ty_s}|,|\lambda_i^{\ty_t}|\}}{e_0\cdots e_{i-1}}\le\dfrac{m^{\ty_s}_{r_s+1}}{m_i}\,\dfrac{V_i+|\lambda_i^{\ty_s}|}{e_0\cdots e_{i-1}}\\&\ =\dfrac{m^{\ty_s}_{r_s+1}}{m^{\ty_s}_{i+1}}\,\dfrac{V^{\ty_s}_{i+1}}{e^{\ty_s}_0\cdots e^{\ty_s}_{i}}\le \dfrac{V_{r_s+1}^{\ty_s}}{e(F_s)},
\end{align*}
the last inequality by the explicit formulas of $V_j$ in section \ref{secOkutsu}. Hence, the condition (\ref{distinguish}) is equivalent to (\ref{distinguishOM}).
\end{proof}

\begin{definition}\label{faithful}
Let $F\in\oo[x]$ be a monic separable polynomial and $F_1,\dots,F_g\in\oo[x]$ the monic irreducible factors of $F$. We say that a family $\ty_{F_1},\dots,\ty_{F_g}$ of OM representations of the factors \emph{faithfully represents $F$} if any of the two following equivalent conditions is satisfied:
\begin{enumerate}
\item[(a)] $\ty_{F_s}\nmid F_{t}, \quad \forall\,1\le s\ne t\le g$.
\item[(b)] $\ord_{\ty_{F_s}}(F)=1, \quad \forall\,1\le s\le g$.
\end{enumerate}
\end{definition}
By construction, $\ord_{\ty_{F_s}}(F_s)=1$; hence, the conditions (a) and (b) are equivalent because $\ord_{\ty_{F_s}}(F)=\sum_{1\le t\le g}\ord_{\ty_{F_s}}(F_t)$. 

\begin{corollary}\label{faithful2}
With the notation in Proposition \ref{OMOM}, if $F\approx P_1\cdots P_g$ is an OM factorization, then the family  $\ty_{F_1},\dots,\ty_{F_g}$ faithfully represents $F$. 
\end{corollary}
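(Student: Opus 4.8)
The plan is to derive Corollary \ref{faithful2} directly from Proposition \ref{OMOM} together with Lemma \ref{divideG}, using condition (a) of Definition \ref{faithful} as the target. So I must show: if $F\approx P_1\cdots P_g$ is an OM factorization, then $\ty_{F_s}\nmid F_t$ for all $s\ne t$, where $\ty_{F_s}$ is the chosen OM representation of $F_s$ of order $r_s+1$.

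First I would fix indices $s\ne t$ and distinguish the two cases that appear in the proof of Proposition \ref{OMOM}. If $i(F_s,F_t)\le r_s$, then $\op{Trunc}_{r_s}(\ty_{F_s})\nmid F_t$ by Lemma \ref{divideG} (applied with the strongly optimal type $\ty_s=\op{Trunc}_{r_s}(\ty_{F_s})$, which divides $F_s$), and a fortiori the finer type $\ty_{F_s}$ of order $r_s+1$ does not divide $F_t$ either, since $\ord_{\ty_{F_s}}(F_t)\le\ord_{\op{Trunc}_{r_s}(\ty_{F_s})}(F_t)=0$ by the truncation inequality in Definition \ref{optimal}. If instead $i(F_s,F_t)=r_s+1$, i.e.\ $(s,t)\in I$, then $\ty_s=\op{Trunc}_{r_s}(\ty_{F_s})$ does divide $F_t$, and the Newton polygon $N_{r_s+1,\ty_{F_s}}(F_t)$ has slope $\lambda_{s,t}$ with $|\lambda_{s,s}|>|\lambda_{s,t}|$ by the characterization (\ref{distinguishOM}) of OM factorizations. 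Here $\lambda_{s,s}=\lambda_{r_s+1}^{\ty_{F_s}}$ is the slope attached to level $r_s+1$ of $\ty_{F_s}$, so $|\lambda_{s,t}|<|\lambda_{r_s+1}^{\ty_{F_s}}|$ means the slope $\lambda_{s,t}$ is \emph{strictly larger} than $\lambda_{r_s+1}^{\ty_{F_s}}$, hence $N_{r_s+1,\ty_{F_s}}^-(F_t)$ cannot contain a side of slope $\lambda_{r_s+1}^{\ty_{F_s}}$; therefore $\op{Trunc}_{r_s+1}(\ty_{F_s})\mid F_t$ fails, i.e.\ $\ty_{F_s}\nmid F_t$. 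I would phrase this via $\ord_{\ty_{F_s}}(F_t)=\ord_{\psi_{r_s+1}^{\ty_{F_s}}}R_{r_s+1,\ty_{F_s}}(F_t)$ and note that the residual polynomial at slope $\lambda_{r_s+1}^{\ty_{F_s}}$ of a polynomial whose Newton polygon lies strictly above that slope-line is a nonzero constant, so its $\psi_{r_s+1}^{\ty_{F_s}}$-order is zero.

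The main obstacle is the bookkeeping in the second case: relating the sign/size convention $|\lambda_{s,s}|>|\lambda_{s,t}|$ (larger $|\cdot|$ means steeper negative slope) to the statement that $N_{r_s+1,\ty_{F_s}}(F_t)$ has no side of slope $\lambda_{r_s+1}^{\ty_{F_s}}$, and then translating ``no side of that slope'' into ``$\ord_{\ty_{F_s}}(F_t)=0$''. One must be careful that $\ty_{F_s}$ might be an exact OM representation, in which case $\lambda_{r_s+1}^{\ty_{F_s}}=-\infty$ and $P_s=F_s$; then $i(F_s,F_t)=r_s+1$ is impossible for $t\ne s$ (as $F_s\not\approx F_t$ would be needed, but $\phi_{r_s+1}^{\ty_{F_s}}=F_s$ is not Okutsu equivalent to any $\phi_{r_t+1}^{\ty_{F_t}}$ unless $F_s\approx F_t$), so that subcase falls under $i(F_s,F_t)\le r_s$ and is already handled. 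Once these conventions are pinned down the argument is short; I would conclude by invoking the equivalence of (a) and (b) in Definition \ref{faithful} to phrase the result as $\ord_{\ty_{F_s}}(F)=1$ for all $s$ if that form is preferred.

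\begin{proof}
Let $F\approx P_1\cdots P_g$ be an OM factorization, and adopt all the notation of Proposition \ref{OMOM}; in particular write $\ty_s:=\op{Trunc}_{r_s}(\ty_{F_s})$, which is a strongly optimal type of order $r_s$ dividing $F_s$. By Definition \ref{faithful} it suffices to prove condition (a): $\ty_{F_s}\nmid F_t$ for all $1\le s\ne t\le g$. Fix such $s,t$ and set $i:=i(F_s,F_t)$.

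If $i\le r_s$, then by Lemma \ref{divideG}, applied to the strongly optimal type $\ty_s$ of order $r_s$ which divides $F_s$, we have $\ty_s\nmid F_t$, i.e.\ $\ord_{\ty_s}(F_t)=0$. Since $\ty_{F_s}$ has order $r_s+1$ and $\op{Trunc}_{r_s}(\ty_{F_s})=\ty_s$, the truncation inequality of Definition \ref{optimal} gives $\ord_{\ty_{F_s}}(F_t)\le \ord_{\ty_s}(F_t)=0$, whence $\ty_{F_s}\nmid F_t$.

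If $i=r_s+1$, then $(s,t)\in I$, so Proposition \ref{OMOM} and the hypothesis that $F\approx P_1\cdots P_g$ is an OM factorization yield $|\lambda_{r_s+1}^{\ty_{F_s}}|=|\lambda_{s,s}|>|\lambda_{s,t}|$, where $\lambda_{s,t}$ is the slope of the one-sided polygon $N_{r_s+1,\ty_{F_s}}(F_t)$ and $\lambda_{r_s+1}^{\ty_{F_s}}$ is the slope attached to level $r_s+1$ of $\ty_{F_s}$. In particular $\lambda_{r_s+1}^{\ty_{F_s}}\ne-\infty$, so $\ty_{F_s}$ is not exact and the residual operator $R_{r_s+1,\ty_{F_s}}$ is defined. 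Since $\lambda_{s,t}$ and $\lambda_{r_s+1}^{\ty_{F_s}}$ are negative with $|\lambda_{s,t}|<|\lambda_{r_s+1}^{\ty_{F_s}}|$, the polygon $N_{r_s+1,\ty_{F_s}}^-(F_t)$ has no side of slope $\lambda_{r_s+1}^{\ty_{F_s}}$; hence the residual polynomial $R_{r_s+1,\ty_{F_s}}(F_t)=R_{\lambda_{r_s+1}^{\ty_{F_s}},\,r_s+1}(F_t)$ is a nonzero constant in $\F_{r_s+1}^{\ty_{F_s}}$. Therefore $\ord_{\ty_{F_s}}(F_t)=\ord_{\psi_{r_s+1}^{\ty_{F_s}}}R_{r_s+1,\ty_{F_s}}(F_t)=0$, that is, $\ty_{F_s}\nmid F_t$.

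In both cases $\ty_{F_s}\nmid F_t$, so condition (a) of Definition \ref{faithful} holds and the family $\ty_{F_1},\dots,\ty_{F_g}$ faithfully represents $F$.
\end{proof}
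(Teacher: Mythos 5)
Your main argument is correct and rests on the same observation as the paper's (one-line) proof: divisibility of $F_t$ by the full type $\ty_{F_s}$ forces the slope of $N_{r_s+1,\ty_{F_s}}(F_t)$ to be $\lambda_{s,s}$, which (\ref{distinguishOM}) forbids. The paper runs this as a contrapositive: if $\ty_{F_s}\mid F_t$, then $F_t$ is a polynomial of type $\ty_{F_s}$ by Lemma \ref{oftype}, hence $\lambda_{s,t}=\lambda_{s,s}$ by Definition \ref{type}, contradicting (\ref{distinguishOM}). That formulation also absorbs your first case automatically, since $\ty_{F_s}\mid F_t$ already implies $(s,t)\in I$ (Lemma \ref{divideG}), so the explicit case split on $i(F_s,F_t)$ and the appeal to the truncation inequality are not needed; they are correct, just longer.

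The one genuine slip is in your handling of exact OM representations. From $|\lambda_{s,s}|>|\lambda_{s,t}|$ you cannot infer $\lambda_{s,s}\ne-\infty$: the inequality holds trivially when $\lambda_{s,s}=-\infty$ and $\lambda_{s,t}$ is finite. Nor is the claim in your preamble correct that an exact representation forces $i(F_s,F_t)\le r_s$ for $t\ne s$: distinct irreducible factors of $F$ may well be Okutsu equivalent (see the paper's example $F_1=x+\pi+\pi^2+\pi^4a$, $F_2=x+\pi+\pi^3+\pi^4b$), so $i(F_s,F_t)=r_s+1$ with $P_s=F_s$ and $\lambda_{s,s}=-\infty$ does occur. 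Your case-two argument then breaks down, because the residual operator $R_{r_s+1,\ty_{F_s}}$ at a finite slope is no longer available. The sub-case is harmless — with $\phi_{r_s+1}^{\ty_{F_s}}=F_s$, the polygon $N_{r_s+2,\ty_{F_s}}^-(F_t)$ can have a side of slope $-\infty$ only if $F_s$ divides $F_t$ in $\oo[x]$, which fails for distinct monic irreducible factors (cf.\ Remark \ref{infinity}) — but it needs to be disposed of explicitly rather than declared impossible.
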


\begin{proof}
If $\ty_{F_s}\mid F_t$, then $F_t$ is a polynomial of type $\ty_{F_s}$ (Lemma \ref{oftype}) and this implies $\lambda_{s,t}=\lambda_{s,s}$ (Definition \ref{type}).
\end{proof}

Finally, we show that any family of OM representations that faithfully represents a polynomial $F$, leads immediately to an OM factorization of $F$.

\begin{lemma}\label{construction}
Let $F\in\oo[x]$ be a monic separable polynomial and  $\ty_{F_1},\dots,\ty_{F_g}$ a family of OM representations of the irreducible factors of $F$, that faithfully represents $F$. Then, if we take arbitrary representatives $Q_1,\dots,Q_g$ of these types, we get an OM factorization, $F\approx Q_1\cdots Q_g$, of $F$. 
\end{lemma}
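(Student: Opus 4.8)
The plan is to show that the hypothesis ``$\ty_{F_1},\dots,\ty_{F_g}$ faithfully represents $F$'' forces the slope inequality \eqref{distinguishOM} of Proposition \ref{OMOM}, and then invoke that proposition. Write $\ty_s:=\op{Trunc}_{r_s}(\ty_{F_s})$, so that each $Q_s$, being a representative of $\ty_{F_s}$ by hypothesis\,---\,wait, more precisely $Q_s$ is a representative of $\ty_{F_s}$ if we read $\ty_{F_s}$ as the truncated strongly optimal type; let me instead say $Q_s$ is an arbitrary representative of the strongly optimal type $\op{Trunc}_{r_s}(\ty_{F_s})=\ty_s$, hence $Q_s\approx P_s\approx F_s$ by Definition \ref{okequiv} and Lemma-Definition \ref{equivTypes}. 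Consequently $F\approx Q_1\cdots Q_g$ is an Okutsu factorization of $F$. It remains to check that it is an \emph{OM} factorization, i.e. that \eqref{distinguish} holds for the $Q_s$.

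First I would observe that the defining inequality \eqref{distinguish} depends only on the Okutsu equivalence classes of the factors $P_s$ (equivalently $Q_s$) together with the choice of roots $\t_s$: indeed $v(Q_s(\t_t))$ depends on $Q_s$ only through its class under $\approx$ when $i(Q_s,F_t)\le r_s$ (Proposition \ref{prop35}, whose right-hand side is expressed purely in Okutsu invariants), and when $i(Q_s,F_t)=r_s+1$ it equals $(V_{r_s+1}^{\ty_s}+|\lambda_{s,t}|)/e(F_s)$ with $\lambda_{s,t}$ the slope of the one-sided polygon $N_{r_s+1,\ty_{F_s}}(F_t)$, which is determined by $\ty_s$ and $F_t$ (not by the particular representative chosen at level $r_s+1$, cf. Lemma \ref{tPF} and the discussion after Definition \ref{icoin}). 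Hence the property ``$F\approx Q_1\cdots Q_g$ is an OM factorization'' is equivalent to the very same condition \eqref{distinguishOM} that Proposition \ref{OMOM} attaches to the data $\ty_{F_1},\dots,\ty_{F_g}$, independently of the $Q_s$.

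Next I would translate faithfulness into \eqref{distinguishOM}. Fix $(s,t)\in I$ with $s\ne t$, so $i(F_s,F_t)=r_s+1$, meaning $\ty_s\mid F_t$ (Lemma \ref{divideG}). Since $\ty_{F_s}$ is obtained from $\ty_s$ by adjoining the level $(P_s,\lambda_{s,s},\psi_{s,s})$, faithfulness $\ty_{F_s}\nmid F_t$ together with $\ty_s\mid F_t$ says precisely that the side $N_{r_s+1,\ty_{F_s}}(F_t)$ does \emph{not} pass through the slope $\lambda_{s,s}$ with the residual factor $\psi_{s,s}$; since both $N_{r_s+1}(F_s)$ and $N_{r_s+1}(F_t)$ are one-sided (Lemma \ref{oftype}) and $F_t$ is of type $\ty_s$, the only way to have $\ty_{F_s}\mid F_t$ would be $\lambda_{s,t}=\lambda_{s,s}$ (and matching residual polynomial, automatic here since both have degree one) — this is exactly the contrapositive used in Corollary \ref{faithful2}. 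More carefully: if $|\lambda_{s,t}|>|\lambda_{s,s}|$ were impossible we would be done, but a priori $|\lambda_{s,t}|$ could exceed $|\lambda_{s,s}|$; I must rule this out. Here is where I would use that $\ty_{F_s}$ is an OM representation of $F_s$, so $\lambda_{s,s}$ is the slope of $N_{r_s+1,\ty_{F_s}}(F_s)$, and $P_s=\phi_{r_s+1}^{\ty_{F_s}}$ is a \emph{representative} of $\ty_s$, i.e. $P_s\approx F_s$; by Lemma \ref{representative} applied with $\ty=\ty_t$ or by the maximality built into the Okutsu frame \eqref{frame} for $F_t$, any representative of $\ty_s$ other than one Okutsu-equivalent to $F_t$ has $v(P_s(\t_t))$ strictly below the critical value, which forces $|\lambda_{s,t}|<|\lambda_{s,s}|$ unless $F_t\approx F_s$; but $F_t\approx F_s$ contradicts that $F_t$ and $F_s$ are distinct irreducible factors of the separable polynomial $F$ — no, $F_s\approx F_t$ does not contradict distinctness. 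So the honest argument is: $\ty_{F_s}\nmid F_t$ by faithfulness, $\ty_s\mid F_t$ since $(s,t)\in I$, hence $\op{ord}_{\ty_{F_s}}(F_t)=0<1=\op{ord}_{\ty_s}(F_t)$; because passing from $\ty_s$ to $\ty_{F_s}$ only adds one level with slope $\lambda_{s,s}$ and length one, $\op{ord}_{\ty_{F_s}}(F_t)=0$ holds iff the unique side $N_{r_s+1}(F_t)$ has slope $\lambda_{s,t}\ne\lambda_{s,s}$ \emph{or} residual polynomial not divisible by $\psi_{s,s}$; and by \cite[Thm. 3.1]{algorithm} (monotonicity of slopes, used already in the proof of Lemma \ref{technical}) $|\lambda_{s,t}|\le|\lambda_{s,s}|$ always, since $\lambda_{s,s}$ is the slope for the \emph{optimal} choice $P_s$ attached to $F_s$ itself — that is, $P_s$ is chosen so that $v(P_s(\t_s))$ is maximal among representatives, hence $v(P_s(\t_t))<v(P_s(\t_s))$ cannot fail merely because of a larger slope. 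Combining, $\lambda_{s,t}\ne\lambda_{s,s}$ together with $|\lambda_{s,t}|\le|\lambda_{s,s}|$ gives $|\lambda_{s,t}|<|\lambda_{s,s}|$, which is \eqref{distinguishOM}.

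The main obstacle is exactly this last point: showing $|\lambda_{s,t}|\le|\lambda_{s,s}|$, i.e. that adjoining the level coming from $F_s$ cannot produce a strictly steeper Newton slope for a \emph{different} factor $F_t$ of type $\ty_s$. I expect to dispatch it by the same device used inside the proof of Lemma \ref{technical}: choose $P_s$ (equivalently the representative $\phi_{r_s+1}^{\ty_{F_s}}$) to be one for which $v(P_s(\t_s))$ is maximal, invoke \cite[Thm. 3.1]{algorithm} to see that replacing it by an optimal representative for $F_t$ only increases the slope, whence $|\lambda_{s,t}|\le|\lambda_{s,t}^{\mathrm{opt}}|=|\lambda_{r_s+1}^{\ty_{F_t}'}|$ and then compare; alternatively, one can bypass slopes entirely and argue directly with values: by Proposition \ref{prop35} and Lemma \ref{divideG}(c), $\op{ord}_{\ty_{F_s}}(F_t)=0$ is equivalent to $v(P_s(\t_t))/\deg P_s\le V_{r_s+1}^{\ty_s}/(m_{r_s+1}^{\ty_s}e_0\cdots e_{r_s})$, while $\op{ord}_{\ty_{F_s}}(F_s)=1$ gives the strict reverse inequality for $\t_s$; multiplying through by $\deg P_s$ and by $e(F_s)/\deg P_s=1/(m_{r_s+1}^{\ty_s}e_0\cdots e_{r_s})^{-1}$... cleaner: $v(P_s(\t_t))\le V_{r_s+1}^{\ty_s}/e(F_s)<v(P_s(\t_s))$, which is literally \eqref{distinguish}. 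So in fact the slope reformulation is a detour and the cleanest proof reads: for each pair with $(s,t)\in I$, faithfulness $\Leftrightarrow\op{ord}_{\ty_{F_s}}(F_t)=0\Leftrightarrow v(P_s(\t_t))\le V_{r_s+1}^{\ty_s}/e(F_s)$ by Lemma \ref{divideG}(c) and Proposition \ref{prop35}, and $\op{ord}_{\ty_{F_s}}(F_s)=1\Leftrightarrow v(P_s(\t_s))>V_{r_s+1}^{\ty_s}/e(F_s)$; for pairs with $i(F_s,F_t)\le r_s$, the computation in the proof of Proposition \ref{OMOM} already gives $v(P_s(\t_t))\le V_{r_s+1}^{\ty_s}/e(F_s)<v(P_s(\t_s))$ unconditionally; in all cases \eqref{distinguish} holds, so $F\approx Q_1\cdots Q_g$ is an OM factorization. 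I would write the proof in this streamlined form, citing Lemma \ref{divideG}, Proposition \ref{prop35}, and the value computation from the proof of Proposition \ref{OMOM}, and noting that the $Q_s$ may be swapped for the $P_s$ since \eqref{distinguish} is insensitive to the choice of representative.
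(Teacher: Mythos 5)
Your proof has a genuine gap, rooted in a misreading of the statement. The $Q_s$ are representatives of the full OM representations $\ty_{F_s}$, which are types of order $r_s+1$; you explicitly replace this by ``$Q_s$ is an arbitrary representative of $\op{Trunc}_{r_s}(\ty_{F_s})=\ty_s$'', which is a strictly weaker condition, and under that reading the lemma is false. The paper's own example after the lemma exhibits this: $\ty_{F_1}=(y;(x+\pi,-2,y+1))$ and $\ty_{F_2}=(y;(x+\pi,-3,y+1))$ faithfully represent $F=F_1F_2$, yet the polynomial $x+\pi$, which is a representative of the truncation $\op{Trunc}_0(\ty_{F_1})$ but not of $\ty_{F_1}$, satisfies $v((x+\pi)(\t_1))=2<3=v((x+\pi)(\t_2))$, so (\ref{distinguish}) fails. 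Relatedly, your claim that (\ref{distinguish}) ``depends only on the Okutsu equivalence classes'' of the factors is false: both $v(Q_s(\t_s))$ and, when $\ty_s\mid F_t$, $v(Q_s(\t_t))$ genuinely vary with the choice of $Q_s$ inside its class (the set $\vv_{r+1}$ of values $v(Q_s(\t_s))$ is infinite). Consequently you cannot reduce the assertion for the $Q_s$ to the condition (\ref{distinguishOM}) that Proposition \ref{OMOM} attaches to the original family, since that condition governs the factorization by the $P_s$, not by the $Q_s$.

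The second, independent gap is your attempt to deduce (\ref{distinguishOM}), i.e. $|\lambda_{s,t}|<|\lambda_{s,s}|$, from faithfulness. That implication is false: faithfulness only excludes the coincidence of the level-$(r_s+1)$ data of $F_t$ with $(\lambda_{s,s},\psi_{s,s})$ (Corollary \ref{faithful2} is stated in the converse direction only), and $|\lambda_{s,t}|>|\lambda_{s,s}|$ is perfectly possible --- again the example above, with $(s,t)=(1,2)$, where $|\lambda_{1,2}|=3>2=|\lambda_{1,1}|$, which is exactly why $F\approx(x+\pi)^2$ is \emph{not} an OM factorization there. Your fallback equivalence ``$\ord_{\ty_{F_s}}(F_t)=0\Leftrightarrow v(P_s(\t_t))\le V_{r_s+1}^{\ty_s}/e(F_s)$'' is also incorrect: when $\ty_s\mid F_t$, Proposition \ref{prop35} gives $v(P_s(\t_t))=(V_{r_s+1}+\min\{|\lambda_{s,s}|,|\lambda_{s,t}|\})/e(F_s)$, which strictly exceeds $V_{r_s+1}/e(F_s)$. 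The paper's proof sidesteps all of this by computing with the $Q_s$ directly: since $Q_s$ is a representative of the order-$(r_s+1)$ type $\ty_{F_s}$ and $\ord_{\ty_{F_s}}(F_s)=1$, the polygon $N^-_{r_s+2,\ty_{F_s}}(F_s)$ has length one and slope $-h_s$ with $h_s>0$, whence $v(Q_s(\t_s))=(V_{r_s+1}+|\lambda_{s,s}|+h_s)/e(F_s)$; while faithfulness, $\ty_{F_s}\nmid F_t$, places one in the equality case of Proposition \ref{prop35}, giving $v(Q_s(\t_t))\le(V_{r_s+1}+|\lambda_{s,s}|)/e(F_s)$ when $\ty_s\mid F_t$, and $v(Q_s(\t_t))\le V_{r_s+1}/e(F_s)$ otherwise. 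The strictly positive gap $h_s$ is what yields (\ref{distinguish}); no comparison between $|\lambda_{s,t}|$ and $|\lambda_{s,s}|$ is needed or available.
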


\begin{proof}
We keep the notation from Proposition \ref{OMOM}. Consider an index $1\le s \le g$. All data $e_j,f_j,h_j,V_j$ we are going to use correspond to the type $\ty_{F_s}$.
Since $\ord_{\ty_{F_s}}(F_s)=1$, the Newton polygon $N_{r_s+2,\ty_{F_s}}^-(F_s)$ has length one and slope $-h_s\in\Z_{<0}\cup\{-\infty\}$. By \cite[Thm. 3.1]{HN},
$$
v(Q_s(\t_s))=(V_{r_s+2}+h_s)/e(F_s)=(V_{r_s+1}+|\lambda_{s,s}|+h_s)/e(F_s),
$$ 
the last equality by the recurrence $V_{r_s+2}=e_{r_s+1}f_{r_s+1}(e_{r_s+1}V_{r_s+1}+h_{r_s+1})$, in section \ref{secOkutsu}, having in mind that $e_{r_s+1}=f_{r_s+1}=1$ and $h_{r_s+1}=|\lambda_{s,s}|$.

For all $t\ne s$, we have $\ty_{F_s}\nmid F_t$. If $\ty_s\mid F_t$, then Proposition \ref{prop35} shows that
$$
v(Q_s(\t_t))=(V_{r_s+1}+\min\{|\lambda_{s,s}|,|\lambda_{s,t}|\})/e(F_s)<v(Q_s(\t_s)).
$$   
If $\ty_s\nmid F_t$, then $i:=i(F_s,F_t)=i(Q_s,F_t)\le r_s$, and $\op{Trunc}_{i}(\ty_{F_t})\nmid Q_s$, by Lemma \ref{divideG}. Thus, $v(Q_s(\t_t))\le V_{r_s+1}/e(F_s)<v(Q_s(\t_s))$, as in the proof of Proposition \ref{OMOM}. 
\end{proof}

Let us see an example. Take $a,b\in\oo$ such that $v(ab)=0$ and consider
$$
F_1=x+\pi+\pi^2+\pi^4a, \quad F_2=x+\pi+\pi^3+\pi^4b, \quad F=F_1F_2.
$$
The Okutsu factorizations, $F\approx x^2\approx x(x+\pi)$, are not OM factorizations of $F$, because they lead both to
$\ty_{F_1}=(y;(x,-1,y+1))\mid F_2$. 

The Okutsu factorization $F\approx (x+\pi)^2$ leads to a family of OM representations that faithfully represents $F$, because these Okutsu factors are sufficiently close to the true factors to distinguish them:
$$
\ty_{F_1}=(y;(x+\pi,-2,y+1))\nmid F_2,\quad
\ty_{F_2}=(y;(x+\pi,-3,y+1))\nmid F_1.$$  
Let us choose as representatives of the above types $\ty_{F_1}$, $\ty_{F_2}$, the polynomials $Q_1=x+\pi+\pi^2$, $Q_2=x+\pi+\pi^3$. By Lemma \ref{construction},
$F\approx Q_1Q_2$ is an OM factorization. The new OM representations of $F_1$, $F_2$ determined by $Q_1$, $Q_2$ are:
$$
\ty_{F_1}=\left(y;(x+\pi+\pi^2,-4,y+\overline{a})\right),\quad \ty_{F_2}=\left(y;(x+\pi+\pi^3,-4,y+\overline{b})\right).
$$  

The Montes algorithm computes a family $\ty_{F_1},\dots,\ty_{F_g}$ of OM representations faithfully representing $F$, and derives from it an OM factorization $F\approx P_1\cdots P_g$, as indicated in Lemma \ref{construction} (cf. section \ref{secAlgo}). This is the starting point for the fast computation of an approximate factorization of $F$ with a prescribed precision, by means of the single-factor algorithm \cite{GNP}.    

\subsection{Polynomials having the same OM factorizations} 
The aim of this section is to prove Theorem \ref{precision}, where we find the least precision $\nu$ such that two polynomials congruent modulo $\m^\nu$ have the same OM factorizations. To this end, we need a result similar in spirit to Lemma \ref{technical}.

\begin{lemma}\label{technical2}
Let $\ty$ be a strongly optimal type of order $i-1\ge 0$, and $\phi\in \oo[x]$ a representative of $\ty$. 
Let $F\in\oo[x]$ be a monic polynomial such that $\ell(N_i^-(F))>1$. 
Let $\sm$ be the first side (from left to right) of $N_i^-(F)$ and let $\lm$ be its slope. Let $u>u'$ be the ordinates of the end points of $\sm$. If $\ell(\sm)=1$, let $\sn$ be the second side of $N_i^-(F)$ and let $\ln$ be its slope. Then,
$$
\delta(F)\ge\begin{cases}u,&\mbox{ if }\ell(\sm)>1,\\
u'+|\ln|,&\mbox{ if }\ell(\sm)=1.
\end{cases}
$$
\end{lemma}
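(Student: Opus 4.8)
The plan is to mimic the strategy of Lemma~\ref{technical}: factor $F$ into irreducibles in $\oo[x]$, use the theorem of the product to control the Newton polygon $N_i^-(F)$, and then estimate $\delta(F)$ from below by the sum $\sum_s\delta(F_s)+2\sum_{s<t}v(\res(F_s,F_t))$, discarding the (nonnegative) contributions that are not needed. Write $F=F_1\cdots F_g$, with $\t_s\in\ks$ a root of $F_s$. By Lemma~\ref{oftype} all factors $F_s$ that are divisible by $\ty$ contribute one-sided polygons $N_i^-(F_s)=N_i(F_s)$, and by the theorem of the product $N_i^-(F)=\sum_s N_i^-(F_s)$ (factors not divisible by $\ty$, if any, contribute nothing relevant — I would first reduce to the case where every $F_s$ is divisible by $\ty$, since otherwise one can work with a truncation; but more simply, only the $F_s$ divisible by $\ty$ contribute sides of negative slope to $N_i^-(F)$, and the left end point ordinate $u'$ of $N_i^-(F)$ is bounded above by $\sum$ of the corresponding left-end ordinates). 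The left end point of $N_i^-(F)$ thus has ordinate $u' = \sum_{s} u_{i,s}$ where $u_{i,s}$ is the left-end ordinate of $N_i^-(F_s)$, and the first slope $\lm$ of $N_i^-(F)$ satisfies $|\lm|=\max_s|\mu_s|$ while $\ln$ (when $\ell(\sm)=1$) is the next-largest slope among the $\mu_s$.

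The core of the argument is then the single-factor estimate coming from (\ref{greater1}) and (\ref{equal1}) in the proof of Lemma~\ref{technical} (specialized appropriately, or rather their analogues giving $2\delta(F_s)\ge n_s u_{i,s}/(e_0\cdots e_{i-1})$ when $\ell_s>1$ and $2\delta(F_s)\ge n_s\ell_s V_i/(e_0\cdots e_{i-1})$ when $\ell_s=1$, with $n_s=m_i\ell_s$), together with Lemma~\ref{resultant} for the cross terms $v(\res(F_s,F_t))\ge f_0\cdots f_{i-1}\ell_s\ell_t(V_i+\min\{|\mu_s|,|\mu_t|\})$. First I would handle the case $\ell(\sm)>1$: here the goal $\delta(F)\ge u$, where $u$ is the right end ordinate of the first side, i.e. $u=u'+\ell(\sm)|\lm|$. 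Summing the single-factor bounds and the resultant bounds, and using $f_0\cdots f_{i-1}=m_i/(e_0\cdots e_{i-1})$, one gets $2e_0\cdots e_{i-1}\delta(F)$ bounded below by $m_i V_i(\sum_s \ell_s^2+4\sum_{s<t}\ell_s\ell_t)$ plus a $|\lm|$-weighted (or $\mu_s$-weighted) combination of the $\ell_s^2$ and $\ell_s\ell_t$; the point is to show this dominates $2 e_0\cdots e_{i-1}u = 2 e_0\cdots e_{i-1}(u'+\ell(\sm)|\lm|)$. The ordinate $u'$ itself unwinds, via the one-sided polygon geometry of each $F_s$, into $\sum_s u_{i,s}=\sum_s\ell_s(V_i+|\mu_s|)\cdot(\text{normalization})$, so this becomes an elementary inequality among the $\ell_s$ and $|\mu_s|$.

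Second I would handle the case $\ell(\sm)=1$, where the target is $\delta(F)\ge u'+|\ln|$. Now there is exactly one factor, say $F_1$, contributing the first side, with $\ell_1=1$ and slope $\mu_1=\lm$; the left end ordinate $u'$ of $N_i^-(F)$ equals the left end ordinate of the (truncated) polygon contributed by $F_2,\dots,F_g$ (plus $V_i$, carefully tracking that $F_1$ contributes $\ell_1 V_i = V_i$ to $v_i(F)$), and $\ln=\max_{s\ge 2}|\mu_s|$ (with $\mu_s\le\mu_1$ forcing this to be a genuine second slope). Here the extra room comes from the resultant terms $v(\res(F_1,F_t))\ge f_0\cdots f_{i-1}\cdot 1\cdot\ell_t(V_i+|\ln|)$ for the $F_t$ realizing the second slope, which supply precisely the missing $+|\ln|$. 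As in Lemma~\ref{technical}, once everything is assembled the remaining step is purely combinatorial: verifying an inequality of the shape $\sum_{s\in I}\ell_s^2+4\sum_{s<t}\ell_s\ell_t\ge(\sum_s\ell_s)^2$ (adapted with the appropriate index set and weights), which holds except in a degenerate case excluded by the hypothesis $\ell(N_i^-(F))>1$.

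\textbf{Main obstacle.} I expect the bookkeeping of the two cases $\ell(\sm)>1$ versus $\ell(\sm)=1$ — in particular correctly identifying $u$, $u'$, $\ln$ in terms of the end-point ordinates and slopes of the individual $N_i^-(F_s)$ after applying the theorem of the product, and verifying that the resultant contributions line up exactly with the $+|\lm|$ or $+|\ln|$ term one needs — to be the delicate part; the single-factor estimates and the final combinatorial inequality are routine adaptations of Lemma~\ref{technical}. A secondary subtlety is the possibility that some $F_s$ equals $\phi$ (so $u_{i,s}=\infty$, $\ell_s=1$), which only helps and should be dispatched with the same remark as in the proof of Lemma~\ref{technical}.
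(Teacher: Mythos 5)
Your overall strategy --- factor $F=F_1\cdots F_g$, apply the theorem of the product, and bound $\delta(F)=\sum_s\delta(F_s)+2\sum_{s<t}v(\res(F_s,F_t))$ from below term by term --- is indeed the paper's strategy, and your outline of the two cases $\ell(\sm)>1$ and $\ell(\sm)=1$ (with the extra $|\ln|$ coming from doubled resultants against the factors realizing the second slope) is correct in spirit. But there is a genuine gap in your treatment of the irreducible factors $F_s$ with $\ty\nmid F_s$. Unlike in Lemma \ref{technical}, here $F$ is \emph{not} assumed to be of type $\ty$, so such factors can occur, and they contribute to the target: $N_i^-(F_s)$ is then the single point $(0,v_i(F_s))$, and by the theorem of the product the left end ordinate of $N_i^-(F)$ is $u=\sum_s u_{i,s}$ over \emph{all} factors, where $u_{i,s}=v_i(F_s)$ can be strictly positive for a non-divisible factor (e.g.\ when $\op{Trunc}_j(\ty)\mid F_s$ for some $j<i-1$ but $\ty\nmid F_s$). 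Both of your proposed shortcuts fail: discarding these factors only yields $\delta(F)\ge\sum_{\ty\mid F_s}u_{i,s}$, which is $\le u$ rather than $\ge u$; and your assertion that $u$ is ``bounded above by'' the sum over the divisible factors has the inequality backwards. This is precisely the part of the proof that needs an idea beyond Lemma \ref{technical}.

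The paper supplies it as a Claim: $v(\res(F_s,F_{s_0}))\ge u_{i,s}$ for every $s\ne s_0$, where $F_{s_0}$ is a factor whose one-sided polygon has slope $\lm$. For $\ty\nmid F_s$ one takes the first level $j$ with $\op{Trunc}_j(\ty)\nmid F_s$, unwinds $u_{i,s}=v_i(F_s)=e_{i-1}\cdots e_j\ell_{j,s}\left(V_j+\min\{|\lambda_{j,s}|,|\lambda_j|\}\right)$ from the definition of the MacLane valuations, and applies Lemma \ref{resultant} at level $j$ (not $i$), using $\deg F_{s_0}\ge m_i$ to absorb the normalizing factors. Once the Claim is in place, the conclusion follows by matching each $u_{i,s}$ to a single term of the discriminant formula (a second factor of slope $\lm$, or $\delta(F_{s_0})$ via Lemma \ref{delta0props} when $\ell(\sm)>1$, covers $u_{i,s_0}$); in particular the final step is \emph{not} an adaptation of the quadratic inequality $\sum_{s\in I}\ell_s^2+4\sum_{s<t}\ell_s\ell_t\ge(\sum_s\ell_s)^2$ --- the target $u=\sum_su_{i,s}$ is linear in the $\ell_s$, so no such inequality is needed, and insisting on one would complicate the case analysis unnecessarily.
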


\begin{proof}
Let $F=F_1\cdots F_g$ be the factorization of $F$ into a product of monic irreducible polynomials in $\oo[x]$. For all $1\le j\le i$, $1\le s\le g$, denote:\medskip

$\ty_{j-1}:=\op{Trunc}_{j-1}(\ty)$,

$\ell_{j,s}:=\ell(N_j^-(F_s))=\ord_{\ty_{j-1}}(F_s)$, the abscissa of the right end point of $N_j^-(F_s)$,

$u_{j,s}:=$ the ordinate of the left end point of $N_j^-(F_s)$,\medskip

\noindent By \cite[Lem. 2.17]{HN}, the right end point of $N_j^-(F_s)$ is $(\ell_{j,s},v_j(F_s))$. If $\ty_{j-1}\mid F_s$, then Lemma \ref{oftype} shows that $\deg F_{s}=m_j\ell_{j,s}$. In particular, $v_j(F_s)=\ell_{j,s}V_j$ and $u_{j,s}=\ell_{j,s}(V_j+|\lambda_{j,s}|)$, where $\lambda_{j,s}$ is the slope of $N_j^-(F_s)$. If $\ty_{j-1}\nmid F_s$, then $\ell_{j,s}=0$ and $u_{j,s}=v_j(F_s)$. 

By the theorem of the product (\ref{product}), $u=u_{i,1}+\cdots+u_{i,g}$, and there exists an irreducible factor $F_{s_0}$ such that $N_i^-(F_{s_0})$ is one-sided of slope $\lm$. Since $F_{s_0}$ is a polynomial of type $\ty$, Lemma \ref{oftype} shows that $\deg F_{s_0}=m_j\ell_{j,s_0}$, for all $1\le j\le i$. \medskip

\noindent{\bf Claim. }For all $s\ne s_0$, we have $v(\res(F_s,F_{s_0}))\ge u_{i,s}$.\medskip
 
In fact, suppose first that $\ty\nmid F_s$. Let $0\le j< i$ be the first level such that $\ty_j\nmid F_s$. For all $j<k\le i$, the Newton polygon
$N_k^-(F_s)$ is the single point $(0,v_k(F_s))$. By the definition of the MacLane valuations,
$u_{i,s}=v_i(F_s)=e_{i-1}\cdots e_{j+1}v_{j+1}(F_s)$. If $j=0$, then $v_1(F_s)=0$ and we deduce that $u_{i,s}=0$. If $0<j<i$, then $\ty_{j-1}\mid F_s$, and $v_{j+1}(F_s)=e_j(v_j(F_s)+\ell_{j,s}\min\{|\lambda_{j,s}|,|\lambda_{j}|\})$, by the definition of $v_{j+1}$. Hence,
$$
u_{i,s}=e_{i-1}\cdots e_j\ell_{j,s}\left(V_j+\min\{|\lambda_{j,s}|,|\lambda_{j}|\}\right)
\le  e_{i-1}\cdots e_j\ell_{j,s}\left(V_j+|\lambda_{j,s}|\right).
$$
On the other hand, Lemma \ref{resultant} applied to the type $\ty_{j-1}$ shows that
\begin{align*}
 v(\res(F_s,F_{s_0}))&\;\ge f_0\cdots f_{j-1}\ell_{j,s}\ell_{j,s_0}\left(V_j+\min\{|\lambda_{j,s}|,|\lm|\}\right)\\&\;= m_j\ell_{j,s}\ell_{j,s_0}\dfrac{V_j+|\lambda_{j,s}|}{e_0\cdots e_{j-1}}=\deg(F_{s_0})\ell_{j,s}\dfrac{V_j+|\lambda_{j,s}|}{e_0\cdots e_{j-1}}\\
&\;\ge m_i\ell_{j,s}\dfrac{V_j+|\lambda_{j,s}|}{e_0\cdots e_{j-1}}\ge  e_{i-1}\cdots e_j\ell_{j,s}\left(V_j+|\lambda_{j,s}|\right)\ge u_{i,s}.
\end{align*}

If  $\ty\mid F_s$, we have directly 
$u_{i,s}=\ell_{i,s}(V_i+|\lambda_{i,s}|)\le v(\res(F_s,F_{s_0}))$, 
by Lemma \ref{resultant} applied to the type $\ty$.
This ends the proof of the Claim.\medskip

From now on, we denote $\rho_{s,t}:=v(\res(F_s,F_{t}))$. We are ready to deduce the lemma from the Claim and the equality:
$$
\delta(F)=\sum\nolimits_{1\le s\le g}\delta(F_s)+\sum\nolimits_{1\le s,t\le g}\rho_{s,t}.
$$
Suppose first that there is at least one $F_{s_1}\ne F_{s_0}$, such that $\ty\mid F_{s_1}$ and $\la_{i,s_1}=\lm$. In this case, the Claim shows by symmetry that 
$\rho_{s_1,s_0}\ge u_{i,s_0}$; hence,
$$
\delta(F)\ge 2\rho_{s_1,s_0}+\sum\nolimits_{s\ne s_0,s_1}\rho_{s,s_0}\ge \sum\nolimits_{1\le s\le g}u_{i,s}=u.
$$

Suppose now that for all $F_s\ne F_{s_0}$, such that $\ty\mid F_s$, we have $\lambda_{i,s}\ne\lm$. In this case, $\ell_{i,s_0}=\ell(\sm)$ and $u=u'+\ell_{i,s_0}|\lm|$. If $\ell_{i,s_0}>1$, we have $\deg F_{s_0}=m_i\ell_{i,s_0}\ge 2m_i$, so that the Okutsu depth of $F_{s_0}$ is greater than or equal to $i$. Lemma \ref{delta0props} shows that $2\delta(F_{s_0})/\deg F_{s_0}\ge u_{i,s_0}/(e_0\cdots e_{i-1})$, and we deduce that $\delta(F_{s_0})\ge m_iu_{i,s_0}/(e_0\cdots e_{i-1})\ge u_{i,s_0}$. Hence,
$$
\delta(F)\ge \delta(F_{s_0})+\sum\nolimits_{s\ne s_0}\rho_{s,s_0}\ge \sum\nolimits_{1\le s\le g}u_{i,s}=u.
$$ 
Finally, suppose that $\ell_{i,s_0}=\ell(\sm)=1$. In this case,  $\ord_\ty(F_{s_0})=\ell_{i,s_0}=1$, $v_i(F_{s_0})=\ell_{i,s_0}V_i=V_i$, and $u_{i,s_0}=V_i+|\lm|$. Since $\ell(N_i^-(F))>1$, this polygon has at least a second side $\sn$ of slope $\ln$. Let $I$ be the set of all indices $1\le t\le g$ such that $N_i^-(F_t)$ has slope $\ln$. By the Claim, for all $t\in I$, we have
$$
2\rho_{t,s_0}\ge 2u_{i,t}=\ell_{i,t}(V_i+|\ln|)+u_{i,t}
\ge v_i(F_{s_0})+|\ln|+u_{i,t},
$$
so that
\begin{align*}
\delta(F)\ge &\;2\sum\nolimits_{t\in I}\rho_{t,s_0}+\sum\nolimits_{s\not\in I\cup\{s_0\}}\rho_{s,s_0} \ge  
v_i(F_{s_0})+|\ln|+\sum\nolimits_{s\ne s_0}u_{i,s}\\=&\;|\ln|+\left(\sum\nolimits_su_{i,s}\right)-|\lm|=|\ln|+u'.
\end{align*}
\end{proof}

\begin{figure}\caption{Newton polygon $N_i^-(F)$ in the context of Lemma \ref{technical2}.}\label{figPrecision}
\setlength{\unitlength}{5mm}
\begin{center}
\begin{picture}(20,10)
\put(.85,9.15){$\bullet$}\put(1.85,5.15){$\bullet$}
\put(3.85,3.15){$\bullet$}\put(6.85,1.65){$\bullet$}
\put(0,.3){\line(1,0){9}}\put(.9,6.3){\line(1,0){.2}}
\put(1,9.3){\line(1,-4){1}}\put(1,9.32){\line(1,-4){1}}
\put(1,6.3){\line(1,-1){3}}\put(2,5.32){\line(1,-1){2}}
\put(4,3.3){\line(2,-1){3}}\put(4,3.32){\line(2,-1){3}}
\put(1,-.7){\line(0,1){10.5}}
\put(1.6,7.8){\begin{footnotesize}$\sm$\end{footnotesize}}
\put(3.2,4.3){\begin{footnotesize}$\sn$\end{footnotesize}}
\put(.3,9.3){\begin{footnotesize}$u$\end{footnotesize}}
\put(.2,5.2){\begin{footnotesize}$u'$\end{footnotesize}}
\put(-2.2,6.2){\begin{footnotesize}$u'+|\ln|$\end{footnotesize}}
\put(.5,-.4){\begin{footnotesize}$0$\end{footnotesize}}
\put(1.9,-.4){\begin{footnotesize}$1$\end{footnotesize}}
\put(6.9,-.4){\begin{footnotesize}$\ell$\end{footnotesize}}
\put(-.8,1.65){\begin{footnotesize}$v_i(F)$\end{footnotesize}}
\put(-1.5,7.3){\begin{footnotesize}$\delta(F)\!+\!1$\end{footnotesize}}
\multiput(7,.2)(0,.25){7}{\vrule height2pt}
\multiput(2,.2)(0,.25){21}{\vrule height2pt}
\multiput(.85,1.8)(.25,0){25}{\hbox to 2pt{\hrulefill}}
\multiput(.85,7.4)(.25,0){31}{\hbox to 2pt{\hrulefill}}
\multiput(.85,5.4)(.25,0){5}{\hbox to 2pt{\hrulefill}}

\put(12.85,5.15){$\bullet$}
\put(14.85,3.15){$\bullet$}\put(18.85,1.8){$\bullet$}
\put(12,.3){\line(1,0){9}}
\put(13,5.3){\line(1,-1){2}}\put(13,5.32){\line(1,-1){2}}
\put(15,3.3){\line(3,-1){4}}\put(15,3.32){\line(3,-1){4}}
\put(13,-.7){\line(0,1){10.5}}
\put(14,4.5){\begin{footnotesize}$\sm$\end{footnotesize}}
\put(12.3,5.3){\begin{footnotesize}$u$\end{footnotesize}}
\put(12.5,-.4){\begin{footnotesize}$0$\end{footnotesize}}
\put(18.9,-.4){\begin{footnotesize}$\ell$\end{footnotesize}}
\put(11.2,1.8){\begin{footnotesize}$v_i(F)$\end{footnotesize}}
\put(10.5,6.3){\begin{footnotesize}$\delta(F)\!+\!1$\end{footnotesize}}
\multiput(19,.2)(0,.25){7}{\vrule height2pt}
\multiput(12.85,2)(.25,0){25}{\hbox to 2pt{\hrulefill}}
\multiput(12.85,6.4)(.25,0){31}{\hbox to 2pt{\hrulefill}}
\end{picture}
\end{center}
\end{figure}
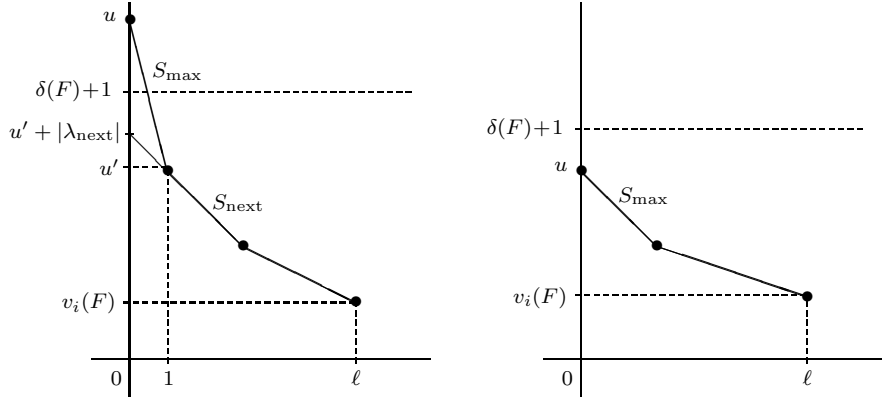

\begin{remark}\label{infinity}\rm
In Lemma \ref{technical2}, if $\phi$ divides $F$, then we understand that $\sm$ is a side of slope $\lm=-\infty$, and $u=\infty$ \cite[Sec. 1.1]{HN}. The statement  of the lemma and all arguments in the proof remain vaid in this case.
\end{remark}

It is easy to construct examples showing that the inequalities of Lemma \ref{technical2} are sharp. For instance, $F(x)=x^2+\pi^\nu$ has $u=\delta=\nu$ (if $v(2)=0$); while $F(x)=(x+\pi^\nu)(x+\pi)$ has $u'=|\ln|=1$ and $\delta=2$, if $\nu>1$.  
 
\begin{theorem}\label{precision}
Let $F,G\in\oo[x]$ be monic separable polynomials such that $F\equiv G\md{\m^{\delta(F)+1}}$. Then, $F\approx G$ and any OM factorization $F\approx P_1\cdots P_g$ of $F$ is also an OM factorization $G\approx P_1\cdots P_g$ of $G$.
\end{theorem}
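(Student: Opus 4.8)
The plan is to reduce the theorem to a purely local statement about each irreducible factor and its interaction with the others, and to exploit the two sharp inequalities of Lemma~\ref{technical2} together with Lemma~\ref{belowline}. First I would fix an OM factorization $F\approx P_1\cdots P_g$, with irreducible factors $F_1,\dots,F_g$ of $F$, Okutsu depths $r_1,\dots,r_g$, and roots $\t_s\in\ks$ of $F_s$; by Corollary~\ref{faithful2} the associated family $\ty_{F_1},\dots,\ty_{F_g}$ faithfully represents $F$. The goal is to show that this same family (or an equivalent one) faithfully represents $G$, and then invoke Lemma~\ref{construction}: once $G$ has irreducible factors Okutsu equivalent to $P_1,\dots,P_g$ and the family faithfully represents $G$, the expression $G\approx P_1\cdots P_g$ is automatically an OM factorization of $G$.

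The core is a level-by-level comparison of Newton polygons, analogous to the induction in the proof of Theorem~\ref{bound}. Fix $s$ and put $\ty:=\op{Trunc}_{r_s}(\ty_{F_s})$ with representative $P_s=\phi_{r_s+1}^{\ty_{F_s}}$. I would prove by induction on $i$, for $1\le i\le r_s+1$, that $N_i(F)$ and $N_i(G)$ share the relevant sides and that the corresponding residual polynomials agree; the point is that Lemma~\ref{technical2}, applied to $F$ with the type $\op{Trunc}_{i-1}(\ty)$ and representative $\phi_i^{\ty_{F_s}}$, bounds the height $u$ (or $u'+|\ln|$) of the part of $N_i^-(F)$ we care about by $\delta(F)$, which by hypothesis is strictly below the precision level $\m^{\delta(F)+1}$. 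Hence Lemma~\ref{belowline} transfers that portion of $N_i^-(F)$, and its residual polynomial, verbatim to $N_i^-(G)$. The subtlety is that Lemma~\ref{technical2} comes in two cases (first side of length $>1$, versus first side of length $1$): in the first case the whole side giving the factor $P_s$ sits below $\delta(F)$, and in the second case it is the second side that may be high, so one must argue that the side of $N_i^-(G)$ that determines the $s$-th factor of $G$ is precisely the one that Lemma~\ref{belowline} has already pinned down. This is where I expect the main work, and the main obstacle, to lie: one must track, on the $G$ side, that the low side of $N_i^-(F)$ really cuts out an irreducible factor $G_s$ with $G_s\approx F_s$ and that $\ty_{F_s}\nmid G_t$ for $t\ne s$, i.e. that the Newton data separating $F_s$ from the other factors of $F$ (condition~(\ref{distinguishOM}) via Proposition~\ref{OMOM}) survives the passage to $G$.

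Concretely, after the induction I would have, for each $s$, that $N_{r_s+1}^-(G)$ contains the side of slope $\lambda_{s,s}$ coming from $F_s$ and that the residual polynomial there is divisible by $\psi_{r_s+1}^{\ty_{F_s}}$; combined with $\deg F=\deg G$ and a counting of lengths (using $\ell(N_{r_s+1}^-(G))\ge\ord_{\ty_{F_s}}(G)$ and Lemma~\ref{oftype}), this forces $G$ to have exactly $g$ irreducible factors $G_1,\dots,G_g$ with $G_s$ a representative of $\op{Trunc}_{r_s}(\ty_{F_s})$, hence $G_s\approx F_s\approx P_s$; simultaneously the strict inequalities separating the factors of $F$ are inherited, giving $\ty_{F_s}\nmid G_t$ for $s\ne t$, so the family faithfully represents $G$. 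The one genuinely delicate point is the boundary case where some $F_s$ has $\ell_{i,s}=1$ at the critical level: here $u_{i,s}=V_i+|\lm|$ need not itself be below $\delta(F)$ a priori, but Lemma~\ref{technical2} precisely handles this by bounding $u'+|\ln|$ instead, and I would have to check that this is exactly the quantity Lemma~\ref{belowline} needs — i.e. that the relevant side $\smn$ of $N_i^-(F)$ has right end point $(\ell,u)$ with $u+\ell|\lambda|\le\delta(F)<e_0\cdots e_{i-1}(\delta(F)+1)$ — so that the transfer still goes through. Finally I would note that the example $F=(x+\pi^\nu)(x+\pi)$ at the end of the excerpt (with $\delta=2$, precision $\m^2$ insufficient to see that $G=x^2+\pi^{\nu+1}$ factors with the right separation) shows the bound $\delta(F)+1$ is optimal, so no slack is available and the estimates must be used tightly.
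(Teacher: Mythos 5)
Your technical engine — bounding the height of the relevant part of $N_i^-(F)$ by $\delta(F)$ via the two cases of Lemma~\ref{technical2}, then transferring those sides and their residual polynomials to $G$ via Lemma~\ref{belowline} — is exactly the paper's. But there is a genuine gap in your reduction. You propose to show that the family $\ty_{F_1},\dots,\ty_{F_g}$ (transported to $G$) \emph{faithfully represents} $G$ and then ``invoke Lemma~\ref{construction}'' to conclude that $G\approx P_1\cdots P_g$ is an OM factorization. Lemma~\ref{construction} does not yield this: it produces an OM factorization from representatives $Q_s$ of the \emph{full} order-$(r_s+1)$ types $\ty_{G_s}$, and $P_s$ is a representative only of $\op{Trunc}_{r_s}(\ty_{G_s})$; by the computation in the proof of Lemma~\ref{construction}, any admissible $Q_s$ satisfies $v(Q_s(\t_s))>v(P_s(\t_s))$, so $P_s$ itself never qualifies (unless $P_s=G_s$). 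Faithful representation is strictly weaker than condition (\ref{distinguishOM}): the paper's own example, $F=F_1F_2$ with $\ty_{F_1}=(y;(x+\pi,-2,y+1))$ and $\ty_{F_2}=(y;(x+\pi,-3,y+1))$, is faithfully represented, yet $F\approx(x+\pi)^2$ is \emph{not} an OM factorization. What must actually be verified for each $s$ is (\ref{distinguish}), equivalently (\ref{distinguishOM}) via Proposition~\ref{OMOM}: that the first side of $N_{r_s+1}^-(G)$ has length one and slope strictly larger in absolute value than the slopes $\lambda_{s,t}$, $t\ne s$. You gesture at this in your third paragraph, but your stated conclusion (``so the family faithfully represents $G$'') stops short of it. The fix is available from the very information your Newton-polygon transfer provides — $N_{r_s+1}^-(G)$ agrees with $N_{r_s+1}^-(F)$ except possibly at the ordinate of abscissa zero, so the separation inequalities pass to $G$ — but (\ref{distinguishOM}), not faithfulness, has to be the target.

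Two secondary points. First, the level-by-level induction is superfluous: neither Lemma~\ref{technical2} nor Lemma~\ref{belowline} requires any prior knowledge of the lower-order polygons of $G$, so the paper applies them directly at the top level. Second, your induction ``for $1\le i\le r_s+1$'' breaks at $i=r_s+1$ in the case $\ord_{\ty}(F)=1$ (with $\ty=\op{Trunc}_{r_s}(\ty_{F_s})$), because then $\ell(N_{r_s+1}^-(F))=1$ and the hypothesis of Lemma~\ref{technical2} fails; this case needs the separate treatment the paper gives it — apply Lemma~\ref{technical2} at level $r_s$ and deduce $\ord_\ty(G)=1$ from $R_{r_s}(G)=R_{r_s}(F)$, or use Hensel's lemma when $r_s=0$.
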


\begin{proof}
Let $F_1,\dots,F_g$ be the monic irreducible factors of $F$, ordered so that $F_s\approx P_s$, for all $1\le s\le g$. Our aim is to attach to every $P_s$ an irreducible factor $G_s$ of $G$, such that $G_s\approx P_s$ and either (\ref{distinguish}) or (\ref{distinguishOM}) are satisfied for the pair $P_s,G$. 

Let us fix an index $1\le s\le g$. Let $r$ be the Okutsu depth of $P_s$ and let
$$
\ty_{F_s}=
\left(\psi_{0};(\phi_{1},\lambda_{1},\psi_{1});\cdots;(\phi_{r},\lambda_{r},\psi_{r});(P_s,\lambda_{F_s},\psi_{F_s})\right),
$$
be the OM representation of $F_s$ determined by $P_s$, satisfying $\ty_{F_s}\nmid F_{t}$ for all $t\ne s$. We admit exact OM representations in which $\la_{F_s}=-\infty$ and $\psi_{F_s}$ is not defined.

Consider the strongly optimal type $\ty:=\op{Trunc}_r(\ty_{F_s})$. Since $F_s\approx P_s$, the polynomial $F_s$ is a representative of $\ty$ too; thus, $\ord_{\ty}(F_s)=1$. 
The proof of the theorem requires different arguments according to $\ord_\ty(F)=1$ or $\ord_\ty(F)>1$.\medskip

\noindent{\bf Case $\operatorname{\bf ord}_{\mathbf{\ty}}\mathbf{(F)=1}$.} Since $1=\ord_\ty(F)=\sum_{1\le t\le g}\ord_\ty(F_t)$, we have $\ord_\ty(F_{t})=0$, for all $t\ne s$. By Definition \ref{okequiv}, $F_{t}\not\approx F_s\approx P_s$, for all $t\ne s$.  

For monic polynomials $P,Q\in\oo[x]$ of Okutsu depth zero we have $P\approx Q$ if and only if $\overline{P}=\overline{Q}$. Thus, if $r=0$, then $\overline{P_s}=\overline{F_s}=\psi_0$ is coprime to $\overline{F_{t}}$, for all $t\ne s$. By hypothesis, $\overline{G}=\overline{F}=\overline{F_1}\cdots\overline{F_g}$; thus, by Hensel's lemma, $G$ has a unique irreducible factor (say) $G_s$, such that $\overline{G_s}=\psi_0$ is coprime to $\overline{G}/\overline{G_s}$. Hence, $P_s\approx G_s$, $v(P_s(\t_s))>0$ and $v(P_s(\t))=0$, for any choice of roots $\t_s,\t\in\ks$ of $G_s$ and $G/G_s$, respectively. Thus, (\ref{distinguish}) is satisfied for the pair $P_s,G$.  

If $r>0$, we may consider $\ty_{r-1}:=\op{Trunc}_{r-1}(\ty)$. By the last item of Definition \ref{optimal}, $\ord_{\ty_{r-1}}(F)\ge \ord_{\ty_{r-1}}(F_s)\ge e_rf_r\ord_\ty(F_s)>1$. Since $\ty\mid F_s$, the polygon $N_r^-(F_s)$ is one-sided of slope $\lambda_r$ and it has length $\ord_{\ty_{r-1}}(F_s)>1$, by Lemma \ref{previous}. By (\ref{product}), $N_r^-(F)$ has a side $S$ of slope $\lambda_r$ and length $\ell(S)>1$, where $\ell(S)$ is the length of the projection of $S$ to the horizontal axis. 

We now apply Lemma \ref{technical2} to the pair $\ty_{r-1}$, $F$. If $\ell(\sm)=1$, then $S\ne \sm$, because $\ell(S)>1$. In any case,  Lemma \ref{technical2} shows that $\delta(F)+1$ is greater than the ordinate of the point of the vertical axis lying on the line determined by $S$. By Lemma \ref{belowline}, the Newton polygon $N_r^-(G)$ has a side of slope $\lambda_r$ and $R_r(G)=R_r(F)$; thus,  $\ord_\ty(G):=\ord_{\psi_r}R_r(G)=\ord_{\psi_r}R_r(F)=:\ord_\ty(F)=1$. Hence, there is a unique irreducible factor (say) $G_s$ of $G$, such that  $\ord_\ty(G_s)=1$, and $\ord_\ty(G_0)=0$, for any other irreducible factor $G_0$ of $G$. By Lemma \ref{oftype}, $\deg G_s=m_{r_s+1}\ord_{\ty}(G_s)=m_{r_s+1}$; thus, $G_s$ is a representative of $\ty$, and $G_s\approx P_s$. Finally, the set $I$ in Proposition \ref{OMOM} contains only the pair $(s,s)$, so that (\ref{distinguishOM}) is trivially satisfied. \medskip

\noindent{\bf Case $\operatorname{\bf ord}_{\mathbf{\ty}}\mathbf{(F)>1}$. }Since $F_s\approx P_s$ is a representative of $\ty$, we have $\ord_\ty(F_s)=1$, so that $N_{r+1}(F_s)$ has length one and slope $\lambda_{s,s}$, in the notation from Proposition \ref{OMOM}.
Since $F\approx P_1\cdots P_g$ is an OM factorization, (\ref{distinguishOM}) holds; this implies that  $N_{r+1}^-(F)$ indeed has a side $\sm$ of slope $\lm=\lambda_{s,s}$ and end points $(0,u)$ and $(1,u')$, by the theorem of the product (\ref{product}). 

We now apply Lemma \ref{technical2} to the pair $\ty$, $F$. Arguing as before, $N_{r+1}^-(G)$ coincides with $N_{r+1}^-(F)$, except for, eventually, the ordinate $u$ of the point of abscissa zero (see Figure \ref{figPrecision}). Thus, $N_{r+1}^-(G)$ has also a first side $\sm(G)$ of length one and slope $\lambda_{s,s}(G)$, with  $|\lambda_{s,s}(G)|>|\lambda_{s,t}|$, for all $t$ such that $\ty\mid F_t$. The equality of the Newton polygons (up to the first side) and the theorem of the product, show that all irreducible factors $G_0\ne G_s$ of $G$, which are divisible by $\ty$, have $N_{r+1}(G_0)$ one-sided of slope $\lambda_{s,t}$ for some $t\ne s$. Hence, (\ref{distinguishOM}) is satisfied for $P_s,G$ as well.       
\end{proof}

\section{The factorization algorithm of Ore, MacLane and Montes}\label{secAlgo}
Let us go back to the global setting of the Introduction. Let $A$ be a Dedekind domain whose field of fractions $K$ is a global field. Let $L/K$ be a finite separable extension and $B$ the integral closure of $A$ in $L$. Let $\t\in L$ be a primitive element of $L/K$, with minimal polynomial $F(x)\in A[x]$.

Let $\p$ be a non-zero prime ideal of $A$, $v:=v_\p$ the canonical $\p$-adic valuation, $K_\p$ the completion of $K$ at $\p$, and $\oo_\p$ the valuation ring of $K_\p$. We denote by $\F=A/\p$ the residue field of $\p$. We fix a local generator $\pi$ of $\p$; that is, an element $\pi\in A$, whose image in the local ring $A_\p$ generates the maximal ideal. If $A$ is a principal domain, we assume moreover that $\p=\pi A$. 

The Montes algorithm was developed by J. Montes in his PhD thesis, inspired by the ideas of \O. Ore and S. MacLane. It is fully described in \cite{algorithm}, in terms of the theoretical background developed in \cite{HN}. For a short review the reader may check the survey \cite{Nsurvey}.

The algorithm is based on four routines: {\tt Factorization}, {\tt Newton}, {\tt ResidualPo\-lynomial} and {\tt Representative}. Let us briefly review them.\bigskip

\noindent{\tt Routine Factorization($\mathcal{F}$,\,$\varphi$)}\vskip 1mm

\noindent INPUT:

\noindent $-$ A finite field $\mathcal{F}$.

\noindent $-$ A monic polynomial $\varphi(y)\in \mathcal{F}[y]$.\medskip

\noindent OUTPUT:

\noindent $-$ The factorization of $\varphi(y)$ into a product of irreducible polynomials of $\mathcal{F}[y]$.\bigskip

\noindent{\tt Routine Newton($\ty$,\,$\omega$,\,$g$)}\vskip 1mm

\noindent INPUT:

\noindent $-$ A type $\ty$ over $A$, of order $i-1\ge 0$, and a representative $\phi\in A[x]$ of $\ty$.

\noindent $-$ A non-negative integer $\omega$.

\noindent $-$ A non-zero polynomial $g(x)\in K[x]$.\medskip

\noindent Compute the first $\omega+1$ coefficients $a_0(x),\dots,a_\omega(x)$ of the canonical $\phi$-expansion of $g(x)$ and the Newton polygon $N$ of the set of points $(s,v_i(a_s\phi^s))$, for $0\le s\le\omega$.\medskip

\noindent OUTPUT:

\noindent $-$ $N_i(g):=N$ is the $i$-th order Newton polygon of $g$ with respect to the pair $(\ty,\phi)$.

\begin{definition}\label{sla}
Let $\lambda\in\Q_{<0}$ and $N$ a Newton polygon. We define the \emph{$\lambda$-compo\-nent} of $N$ to be $S_{\lambda}(N):=\{(x,y)\in N\mid y-\lambda x\mbox{ is minimal}\}$. 
If $N$ has a side $S$ of slope $\lambda$, then $S_{\lambda}(N)=S$; otherwise, $S_\lambda(N)$ is a vertex of $N$ (see Figure \ref{figComponent}).  
\end{definition}

\begin{figure}\caption{$\lambda$-component of a polygon. $L_\lambda$ is the line of slope $\lambda$ having first contact with the polygon from below.}\label{figComponent}
\begin{center}
\setlength{\unitlength}{5mm}
\begin{picture}(15,5)
\put(2.85,1.45){$\bullet$}\put(1.85,2.45){$\bullet$}
\put(-1,0.1){\line(1,0){7}}\put(0,-.9){\line(0,1){5.5}}
\put(3,1.6){\line(-1,1){1}}\put(3.02,1.6){\line(-1,1){1}}
\put(3,1.6){\line(3,-1){1}}\put(3.02,1.6){\line(3,-1){1}}
\put(2,2.6){\line(-1,2){1}}\put(2.02,2.6){\line(-1,2){1}}
\put(5.4,.4){\line(-2,1){6.4}}\put(5,.8){\begin{footnotesize}$L_{\lambda}$\end{footnotesize}}
\put(13.85,1.45){$\bullet$}\put(11.85,2.45){$\bullet$}
\put(9,0.1){\line(1,0){7}}\put(10,-.9){\line(0,1){5.5}}
\put(14,1.6){\line(-2,1){2}}\put(14.02,1.6){\line(-2,1){2}}
\put(14,1.6){\line(3,-1){1}}\put(14.02,1.6){\line(3,-1){1}}
\put(12,2.6){\line(-1,2){1}}\put(12.02,2.6){\line(-1,2){1}}
\put(16,.6){\line(-2,1){7}}\put(15.6,1){\begin{footnotesize}$L_{\lambda}$\end{footnotesize}}
\put(13,2.2){\begin{footnotesize}$S$\end{footnotesize}}
\end{picture}
\end{center}
\end{figure}
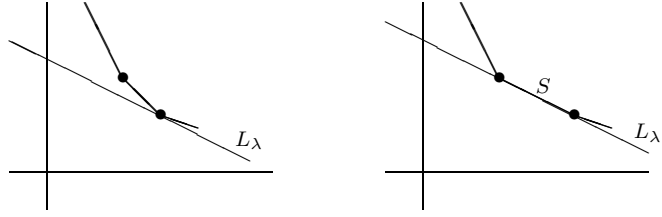

\noindent{\tt Routine ResidualPolynomial($\ty$,\,$\lambda$,\,$g$)}\vskip 1mm

\noindent INPUT

\noindent $-$ A type $\ty$ over $A$, of order $i-1\ge 0$, and a representative $\phi\in A[x]$ of $\ty$.

\noindent $-$ A slope $\lambda=-h/e\in\Q_{<0}$, with $h,e$ positive coprime integers.

\noindent $-$ A non-zero polynomial $g(x)\in K[x]$.\medskip

\noindent Let $g(x)=\sum_{0\le s}a_s(x)\phi(x)^s$ be the canonical $\phi$-adic expansion of $g(x)$. Let $S$ be the $\lambda$-component of $N_i(g)$, and let $s_0$ be the abscissa of the left end point of $S$. Let $d:=d(S)$ be the degree of $S$, so that $s_0+de$ is the rigth end point of $S$. 
The points of integer coordinates lying on $S$ have abscissa  $s_j:=s_0+je$, $0\le j\le d$. 

Compute, for each abscissa $s_j$, the residual coefficient $c_j\in\F_i$ defined as:
$$
c_j:=\begin{cases}
0,&\mbox{ if $(s_j,v_i(a_{s_j}\phi^{s_j})$ lies above }S,\\
z_{i-1}^{t_{i-1}(s_j)}R_{i-1}(a_{s_j})(z_{i-1}),&\mbox{ if $(s_j,v_i(a_{s_j}\phi^{s_j})$ lies on }S,             
\end{cases}
$$
where $t_0(s_j):=0$, $t_{i-1}(s_j)$ is described in \cite[Def. 2.19]{HN} for $i>1$, and $z_{i-1}\in\F_i$ is the image of $y$ through the isomorphism $\F_i\simeq \F_{i-1}[y]/(\psi_{i-1}(y))$.\medskip

\noindent OUTPUT:

\noindent $-$ The residual polynomial $R_{\lambda,i}(g)(y):=c_0+c_1y+\cdots+c_dy^d\in \F_i[y]$, with respect to the triple $(\ty,\phi,\lambda)$.\bigskip

The routine {\tt Construct} carries out the procedure described in \cite[Prop. 2.10]{HN}. It will only be used to construct representatives of the types. \bigskip  

\medskip

\noindent{\tt Routine Construct($\ty$,\,$\lambda$,\,$\varphi$,\,$V$)}\vskip 1mm

\noindent INPUT:

\noindent $-$ A type $\ty$ over $A$, of order $i-1\ge 0$, and a representative $\phi\in A[x]$ of $\ty$.

\noindent $-$ A slope $\lambda=-h/e\in\Q_{<0}$, with $h,e$ positive coprime integers.

\noindent $-$ A polynomial $\varphi(y)\in \F_i[y]$, of degree $d$.

\noindent $-$ A positive integer $V\ge ed(eV_i+h)$.\medskip

\noindent Let $(s,u)$ be minimal non-negative integers such that $V=ue+sh$. Our aim is to construct a polynomial $g(x)\in A[x]$, whose $i$-th order Newton polygon with respect to $(\ty,\phi)$ is contained in the segment of slope $\lambda$, degree $d$ and left end point $(s,u)$ (see Figure \ref{figConstruct}), and having moreover a prescribed residual polynomial. 

Let $\varphi(y)=a_0+a_1y+\dots+a_dy^d\in\F_i[y]$. If $i=1$, the coefficients $a_j\in\F_1=\F[y]/(\psi_0(y))$ can be expressed as polynomials in $z_0$ of degree less than $f_0$, with coefficients in $\F$. If we denote by $a_j(x)$ their arbitrary liftings to $A[x]$, we take:
$$
g(x)=\phi(x)^s\left(a_0(x)\pi^u+a_1(x)\pi^{u-h}\phi(x)^{e}+\cdots+a_d(x)\pi^{u-dh}\phi(x)^{de}\right).
$$

\noindent If $i>1$, the polynomial we are looking for is:
$$
g(x)=\phi(x)^s\left(g_0(x)+g_1(x)\phi(x)^{e}+\cdots+g_d(x)\phi(x)^{de}\right),
$$
where $g_j(x)\in A[x]$ are the output of {\tt Construct($\op{Trunc}_{i-1}(\ty)$,\,$\lambda_{i-1}$,\,$\varphi_j$,\,$w_j$)}, for adequate polynomials $\varphi_j(y)\in\F_{i-1}[y]$ with $\deg \varphi_j<f_{i-1}$, and integers $w_j\ge V_i$. \medskip

\noindent OUTPUT:

\noindent $-$ A polynomial $g(x)\in A[x]$ such that 
$v_{i+1}(g)=V$ and $y^{\ord_y(\varphi)}R_{\lambda,i}(g)(y)=\varphi(y)$.\medskip

\begin{figure}\caption{Routine \mbox{\tt Construct}.}\label{figConstruct}
\setlength{\unitlength}{5mm}
\begin{center}
\begin{picture}(10,6)
\put(2.85,3.55){$\bullet$}\put(6.85,1.5){$\bullet$}
\put(0,0.2){\line(1,0){10}}
\put(9,.7){\line(-2,1){8}}
\put(.9,4.7){\line(1,0){.2}}
\put(1,-.8){\line(0,1){6.5}}
\put(2.9,-.4){\begin{footnotesize}$s$\end{footnotesize}}
\put(6.3,-.4){\begin{footnotesize}$s+de$\end{footnotesize}}
\put(0.3,3.6){\begin{footnotesize}$u$\end{footnotesize}}
\put(-1.1,1.6){\begin{footnotesize}$u-dh$\end{footnotesize}}
\put(4.9,3){\begin{footnotesize}$N_i(g)$\end{footnotesize}}
\put(8.5,1.1){\begin{footnotesize}$\lambda$\end{footnotesize}}
\put(-.3,4.55){\begin{footnotesize}$V/e$\end{footnotesize}}
\multiput(3,.1)(0,.25){15}{\vrule height2pt}
\multiput(7,.1)(0,.25){7}{\vrule height2pt}
\multiput(.9,1.75)(.25,0){25}{\hbox to 2pt{\hrulefill }}
\multiput(.9,3.75)(.25,0){9}{\hbox to 2pt{\hrulefill }}
\end{picture}
\end{center}
\end{figure}

\noindent{\tt Routine Representative($\ty$)}\vskip 1mm

\noindent INPUT:

\noindent $-$ A type $\ty$ over $A$, of order $i\ge 1$.\medskip

\noindent Express $\psi_i(y)=y^{f_i}+\varphi(y)\in \F_i[y]$, for some polynomial $\varphi(y)$ of degree less than $f_i$. Let $g(x)$ be the output of {\tt Construct($\ty$,\,$\lambda_i$,\,$c\varphi$,\,$V_{i+1}$)}, for an adequate constant $c\in\F_i$ (cf. the proof of \cite[Thm. 2.11]{HN}).\medskip

\noindent OUTPUT:

\noindent $-$ A representative of $\ty$, constructed as: $\phi(x)=\phi_{i}(x)^{e_{i}f_{i}}+g(x)$.\bigskip
 
We now describe the Montes algorithm in pseudocode. Our design is slightly different from the original one. The output OM 
 representations are optimal and complete types of order $r+1$, as described in (\ref{OM}), where $r$ is the Okutsu depth of the corresponding $\p$-adic irreducible factor. In the original version, types of order $r+2$ were used in some ocasions (cf. \cite[Thm. 4.2]{okutsu}). The changes we introduce do not affect the complexity.   
The order of a type $\ty$ is the largest level $i$ for which all three fundamental invariants $(\phi_i,\lambda_i,\psi_i)$ are assigned. 

\noindent{\bf MONTES' ALGORITHM}\vskip 1mm

\noindent INPUT:

$-$ A monic separable polynomial $F(x)\in A[x]$.

$-$ A non-zero prime ideal $\p$ of $A$.\medskip

\st{1}Initialize an empty list {\tt OMReps}. 

\st{2}{\tt Factorization($\F$,$\overline{F}$)}.

\st{3}FOR each monic irreducible factor $\varphi$ of $\overline{F}$ DO

\stst{4}Take a monic lift, $\phi(x)\in A[x]$, of $\varphi$ and create a type $\ty$ of order zero with 
\vskip -.4mm \stst{}$\psi_0^\ty\leftarrow\varphi, \quad \omega_1^\ty\leftarrow \ord_\varphi \overline{F},\quad \phi_1^\ty\leftarrow \phi$.

\stst{5}Initialize an empty list {\tt Leaves}, and the list  {\tt Stack\,=[$\ty$]}. \vskip.1mm

\stst{}{\bf WHILE $\#${\tt Stack} $>0$ DO}

\ststst{6}Extract (and delete) the last  type $\ty_0$ from {\tt Stack}. Let $i-1$ be its order.

\ststst{7}{\tt Newton($\ty_0$,$\omega_i^{\ty_0}$,$F$)}. Let $N$ be the Newton polygon. 

\ststst{8}FOR every side $S$ of $N$ DO

\stststst{9}Set $\lambda_i^{\ty_0}\leftarrow$ slope of $S$. IF $\lambda_i^{\ty_0}=-\infty$, THEN add $\ty:=(\ty_0;(\phi_i^{\ty_0},-\infty,\hbox{--}))$
\vskip -.4mm \stststst{}to {\tt Leaves} and continue to the next side $S$. 

\stststst{10}{\tt ResidualPolynomial($\ty_0$,$\lambda_i^{\ty_0}$,$F$)}.

\stststst{11}{\tt Factorization($\F_i$,$R_i(F)$)}. 

\stststst{12}FOR every monic irreducible factor $\psi$ of $R_i(F)$ DO

\ststststst{13}Set $\ty\leftarrow\ty_0$, and extend $\ty$ to an order $i$ type
by setting  $\psi_i^{\ty}\leftarrow \psi$.

\ststststst{14}IF $\omega_i^{\ty_0}=1$, THEN add $\ty$ to {\tt Leaves} and go to {\bf 6}.  

\ststststst{15}Set $\om_{i+1}^\ty\leftarrow \ord_{\psi}R_i(F)$, and call {\tt Representative($\ty$)} to fill $\phi_{i+1}^\ty$.

\ststststst{16}IF $\deg \phi_{i+1}^\ty=\deg \phi_{i}^\ty$ THEN set 
$\phi_i^{\ty}\leftarrow \phi_{i+1}^{\ty}, \ \om_i^{\ty}\leftarrow\om_{i+1}^\ty$, and
\vskip -.4mm \ststststst{}delete all data in the $(i+1)$-th level of $\ty$. 

\ststststst{17}Add $\ty$ to {\tt Stack}.

\stst{}{\bf END WHILE}\vskip.1mm

\stst{18}Add all elements of {\tt Leaves} to the list {\tt OMReps}.\medskip

\noindent OUTPUT:\vskip .1mm

$-$ An OM factorization of $F$ over $\oo_\p[x]$, and the corresponding family  $\ty_{F_1},\dots,\ty_{F_g}$ of OM representations of the irreducible factors of $F$. The Okutsu factors are the $\phi$-polynomials at the last level of these types.\medskip

When the WHILE loop (corresponding to some irreducible factor $\varphi$ of $\overline{F}$) ends, the list {\tt Leaves} contains a tree of $F$-complete optimal types in 1-1 correspondence with all irreducible factors of $F(x)$ over $\oo_\p[x]$, which are congruent to a power of $\varphi$ modulo $\p$. The nodes of this tree (except for the root node) are labelled with a triple of fundamental invariants $(\phi_i,\lambda_i,\psi_i)$. Each leaf of the tree determines the type obtained by gathering the invariants of all nodes in the unique path joining the leaf to the root node. See Figure \ref{figForest}.  
 
\begin{figure}\caption{Connected tree of OM representations of the irreducible factors of $F$ whose reduction modulo $\p$ is a power of $\psi_0$. The leaves are represented by $\blacktriangle$.}
 \label{figForest}
\begin{center}
\setlength{\unitlength}{5.mm}
\begin{picture}(20,3.4)
\put(-.15,.6){$\bullet$}\put(1.85,1.65){$\bullet$}
\put(1.85,-.35){$\bullet$}
\put(0,.8){\line(2,1){2}}\put(0.02,.8){\line(2,1){2}}
\put(0,.8){\line(2,-1){2}}\put(0.02,.8){\line(2,-1){2}}
\put(-1,.7){\begin{footnotesize}$\psi_0$\end{footnotesize}}
\put(3.85,2.65){$\bullet$}
\put(3.85,.65){$\bullet$}
\put(2,1.8){\line(2,1){2}}\put(0.02,.8){\line(2,1){2}}
\put(2,1.8){\line(2,-1){2}}\put(0.02,.8){\line(2,-1){2}}
\put(2,-.2){\line(2,0){2}}\put(0.02,.8){\line(2,-1){2}}
\put(3.85,-.35){$\bullet$}
\put(4.5,.6){\begin{footnotesize}$\cdots$\end{footnotesize}}
\put(4.5,-.4){\begin{footnotesize}$\cdots$\end{footnotesize}}
\put(4.5,2.6){\begin{footnotesize}$\cdots$\end{footnotesize}}
\put(9.85,.65){$\bullet$}\put(11.85,1.65){$\bullet$}
\put(11.85,-.35){$\blacktriangle$}\put(11.85,.65){$\bullet$}
\put(8.6,.8){\line(1,0){3.4}}
\put(10,.8){\line(2,1){2}}\put(10.02,.8){\line(2,1){2}}
\put(10,.8){\line(2,-1){2}}\put(10.02,.8){\line(2,-1){2}}
\put(5.8,1.2){\begin{footnotesize}$(\phi_{i-1},\lambda_{i-1},\psi_{i-1})$\end{footnotesize}}
\put(10.8,2.2){\begin{footnotesize}$(\phi_i,\lambda_i,\psi_i)$\end{footnotesize}}
\put(12.6,-.4){\begin{footnotesize}$ F_{s}\leftrightsquigarrow\ty_{s} $\end{footnotesize}}
\put(12.7,.6){\begin{footnotesize}$\cdots$\end{footnotesize}}
\put(12.7,1.6){\begin{footnotesize}$\cdots$\end{footnotesize}}
\put(18.6,.6){\begin{footnotesize}$F_{t}\leftrightsquigarrow\ty_{t} $\end{footnotesize}}
\put(14.6,.8){\line(1,0){3.4}}
\put(15.85,.65){$\bullet$}\put(17.85,.65){$\blacktriangle$}
\end{picture}
\end{center}
\end{figure}
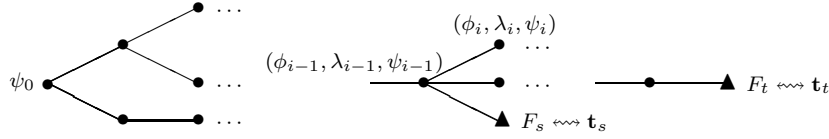

Step {\bf 16} takes care of the optimization. The list {\tt Stack} stores only strongly optimal types. If the enlarged type $\ty$ of order $i$ of step {\bf 13} has still this property, then it is sent to {\tt Stack}. Otherwise, we send again to {\tt Stack} the $(i-1)$-th order type $\ty_0$, but equipped with a different (and better) representative; this is called a \emph{refinement step} \cite[Sec. 3.2]{algorithm}. 

When the algorithm ends, the list {\tt OMReps} contains a forest (disjoint union of trees) of optimal $F$-complete types. Nevertheless, the list {\tt OMReps} is only a sequence of the leaves of all these trees, and the tree structure is not preserved.

\subsection*{The Montes algorithm as an irreducibility test}
For any level $i$, the existence of two sides of different slope in $N_i^-(F)$, or two coprime factors of $R_i(F)$ in $\F_i[y]$, implies that $F(x)$ is not irreducible \cite[Thms. 3.1,3.7]{HN}. On the other hand, if no factorization has been detected in lower levels, the Newton polygon $N_i^-(F)$ is one-sided and the corresponding residual polynomial $R_i(F)$ is irreducible in $F_i[y]$, then $F(x)$ is irreducible \cite[Cor. 3.8]{HN}.     

Therefore, we can use the following version of the 
Montes algorithm as an irreducibility test for polynomials over $\oo_\p[x]$. \medskip

\noindent{\bf IRREDUCIBILITY TEST}\vskip 1mm

\noindent INPUT:

$-$ A monic separable polynomial $F(x)\in A[x]$.

$-$ A non-zero prime ideal $\p$ of $A$.\medskip

\st{1}{\tt Factorization($\F$,$\overline{F}$)}.
IF there are at least two irreducible factors THEN
\vskip -.4mm \st{}return {\tt  false}.

\st{2}Consider a monic lift, $\phi(x)\in A[x]$, of the unique irreducible factor $\varphi$ of $\overline{F}$ and
\vskip -.4mm \st{}create a type $\ty$ of order zero with: \quad
$\psi_0^\ty\leftarrow\varphi, \quad \omega_1^\ty\leftarrow \ord_\varphi\overline{F},\quad\phi_1^\ty\leftarrow \phi$.

\st{3}Initialize the list  {\tt Stack\,=[$\ty$]}.\vskip .1mm

\st{}{\bf WHILE $\#${\tt Stack} $>0$ DO}

\stst{4}Extract (and delete) the last  type $\ty_0$ from {\tt Stack}. Let $i-1$ be its order.

\stst{5}{\tt $N$=Newton($\ty_0$,$\omega_i^{\ty_0}$,$F$)}.
IF $N$ has at least two sides THEN return {\tt  false}.

\stst{6}Set $\lambda_i^{\ty_0}\leftarrow$ slope of the unique side of $N$. IF $\lambda_i^{\ty_0}=-\infty$, THEN return {\tt true}.

\stst{7}{\tt ResidualPolynomial($\ty_0$,$\lambda_i^{\ty_0}$,$F$)}.

\stst{8}{\tt Factorization($\F_i$,$R_i(F)$)}. 
IF there are at least two irreducible factors
\vskip -.4mm \stst{}THEN return {\tt  false}, ELSE
let $\psi$ be the unique irreducible factor of $R_i(F)$.

\stst{9}Set $\ty\leftarrow\ty_0$, and extend $\ty$ to an order $i$ type by setting
$\psi_i^{\ty}\leftarrow \psi$.

\stst{10}IF $\ord_{\psi}R_i(F)=1$, THEN return {\tt  true}.  

\stst{11}Set $\om_{i+1}^\ty\leftarrow \ord_{\psi}R_i(F)$, and call {\tt Representative($\ty$)} to fill $\phi_{i+1}^\ty$.

\stst{12}IF $\deg \phi_{i+1}^\ty=\deg \phi_{i}^\ty$ THEN set 
$\phi_i^{\ty}\leftarrow \phi_{i+1}^{\ty}, \ \om_i^{\ty}\leftarrow\om_{i+1}^\ty$, and delete all data
\vskip -.4mm \stst{}in the $(i+1)$-th level of $\ty$. 

\stst{13}Add $\ty$ to {\tt Stack}.

\st{}{\bf END WHILE}\vskip .1mm

\noindent OUTPUT:\vskip .1mm

{\tt true} if $F(x)$ is irreducible over $\oo_\p[x]$ and {\tt false} otherwise.

\section{Complexity analysis of the Montes algorithm}\label{secComplexity}
All tasks we are interested in may be performed modulo $\p^\nu$, for a sufficiently high precision $\nu$. Thus, we may assume that the elements of $A$ are finite $\pi$-adic developments. In particular, the computation of the $\p$-adic valuation $v=v_\p$ has a negligible cost. 

\begin{definition}
An operation of $A$ is called \emph{$\p$-small} if it involves two elements belonging to a fixed system of representatives of $A/\p$.
\end{definition}

Working at precision $\nu$, each multiplication in $A$ costs $O(\nu^{1+\epsilon})$
$\p$-small operations if we assume the fast multiplications techniques of 
Sch\"onhage-Strassen \cite{SS}.

Let $q:=\#\F$. We assume that a $\p$-small operation is equivalent to  $O\left(\log(q)^{1+\epsilon}\right)$ word operations, the cost of an operation in the residue field $\F=A/\p$. This is the case in most of the Dedekind rings that naturally arise in practice.

\subsection{Complexity of the basic subroutines}\label{subsecSubroutines} 

\begin{lemma}{\cite[Cor. 14.30]{vzGG}}\label{Factorization}
Let $\mathcal{F}$ be a finite field with $q_{\mathcal{F}}$ elements, and $g(x)\in \mathcal{F}[x]$ a polynomial of degree $d$. The cost of the routine {\tt Factorization($\mathcal{F}$,\,$g$)} is $O\left(d^{2+\epsilon}+d^{1+\epsilon}\log(q_{\mathcal{F}})\right)$ operations in $\mathcal{F}$. 
\end{lemma}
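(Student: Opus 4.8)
Since this is the classical complexity bound for factoring a univariate polynomial over a finite field, the plan is to recall the structure of the standard argument (as in \cite[Ch.~14]{vzGG}) and track where each term of the bound comes from. The routine {\tt Factorization}$(\mathcal F,g)$ is realised by the usual three–stage pipeline: (i) \emph{squarefree factorization}, reducing to the case that $g$ is squarefree; (ii) \emph{distinct–degree factorization}, splitting a squarefree $g$ as a product $g_1\cdots g_s$ in which $g_e$ gathers the irreducible factors of $g$ of degree $e$; and (iii) \emph{equal–degree factorization}, the Cantor--Zassenhaus algorithm, which separates the $(\deg g_e)/e$ irreducible factors inside each $g_e$. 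Stage (iii) is randomized, so ``cost'' is to be read as expected cost over the internal coin tosses.

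All the steps are performed with fast polynomial arithmetic over $\mathcal F$: multiplying, or dividing with remainder, two polynomials of degree $\le d$ costs $M(d)$ operations in $\mathcal F$, and an extended gcd costs $O(M(d)\log d)$; with the fast–multiplication estimate $M(d)=O(d^{1+\epsilon})$ \cite{SS} all of these are $O(d^{1+\epsilon})$. First I would bound stage~(i): a few derivative, gcd and exact–division computations, together with the extraction of $p$–th power parts when $\mathcal F$ has positive characteristic $p$ (which needs $\log_p q_{\mathcal F}$ Frobenius inversions), giving $O(d^{1+\epsilon}\log q_{\mathcal F})$. Stage~(iii) splits each $g_e$ in an expected $O(\log d)$ rounds, each round computing a modular power with exponent of size $O(q_{\mathcal F}^{\,e})$ by repeated squaring, plus an $O(M(\deg g_e)\log d)$ gcd; summing over $e$ and using $\sum_e\deg g_e\le d$ contributes $O(d^{1+\epsilon}\log q_{\mathcal F})$ together with a term absorbed into $O(d^{2+\epsilon})$.

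The main obstacle, and the part for which I would lean on the reference, is stage~(ii). Done in the obvious way, one computes the Frobenius power $x^{q_{\mathcal F}}\bmod g$ once, by repeated squaring, at cost $O(M(d)\log q_{\mathcal F})=O(d^{1+\epsilon}\log q_{\mathcal F})$, and then iterates $x^{q_{\mathcal F}^{\,i+1}}\equiv\bigl(x^{q_{\mathcal F}^{\,i}}\bigr)^{q_{\mathcal F}}\pmod g$ while forming the gcd's $\gcd\bigl(x^{q_{\mathcal F}^{\,i}}-x,\,g\bigr)$ for $i$ up to the running degree. The delicate point is to organise this so that the $\le d$ iterations contribute only $O(d\,M(d)\log d)=O(d^{2+\epsilon})$ and carry \emph{no} extra factor of $\log q_{\mathcal F}$, and to check that the degrees of the successive cofactors sum correctly so that no stage is charged more than $O(d)$ polynomial operations of total degree $O(d)$. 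This bookkeeping is exactly what is carried out in \cite[Ch.~14]{vzGG}, and from it the stated total $O\!\left(d^{2+\epsilon}+d^{1+\epsilon}\log q_{\mathcal F}\right)$ operations in $\mathcal F$ follows.
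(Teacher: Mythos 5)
The paper offers no proof of this lemma at all; it is quoted verbatim from \cite[Cor.\ 14.30]{vzGG}, and your sketch is a correct outline of exactly the argument behind that citation (squarefree, distinct-degree, and equal-degree stages, with the single Frobenius power $x^{q_{\mathcal F}}\bmod g$ accounting for the $d^{1+\epsilon}\log q_{\mathcal F}$ term and the iterated-Frobenius bookkeeping for the $d^{2+\epsilon}$ term), deferring the one delicate point to the same reference. So your approach matches the paper's, which is simply to invoke von zur Gathen--Gerhard.
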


The following observation is easy to prove by an inductive argument.

\begin{lemma}\label{trivial}
Let $m_1,\dots,m_i$ be positive integers such that $m_1\mid \cdots \mid m_i$ and $m_1<\cdots<m_i$. Then, $m_1+\cdots +m_i\le 2m_i$. 
\end{lemma}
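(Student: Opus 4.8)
The statement to prove is Lemma \ref{trivial}: if $m_1\mid\cdots\mid m_i$ and $m_1<\cdots<m_i$, then $m_1+\cdots+m_i\le 2m_i$.

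\textbf{Approach.} The plan is to induct on $i$, peeling off the largest term $m_i$ and controlling the partial sum $m_1+\cdots+m_{i-1}$ by the second-largest term $m_{i-1}$. The divisibility chain $m_1\mid m_2\mid\cdots\mid m_{i-1}$ together with the strict inequalities $m_1<m_2<\cdots<m_{i-1}$ is inherited by the truncated sequence, so the induction hypothesis applies directly to give $m_1+\cdots+m_{i-1}\le 2m_{i-1}$. The crux is then the elementary observation that consecutive terms in such a chain at least double: since $m_{i-1}\mid m_i$ and $m_{i-1}<m_i$, the quotient $m_i/m_{i-1}$ is an integer strictly greater than $1$, hence $\ge 2$, so $2m_{i-1}\le m_i$.

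\textbf{Key steps.} First I would dispose of the base case $i=1$, where the claim reads $m_1\le 2m_1$, which is immediate. For the inductive step with $i\ge 2$, I would first note that $m_i/m_{i-1}\in\Z_{\ge 2}$ as above, giving $2m_{i-1}\le m_i$. Then I would apply the induction hypothesis to $m_1,\dots,m_{i-1}$ to obtain $m_1+\cdots+m_{i-1}\le 2m_{i-1}$. Chaining these, $m_1+\cdots+m_i\le 2m_{i-1}+m_i\le m_i+m_i=2m_i$, which closes the induction.

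\textbf{Main obstacle.} There is essentially no obstacle here; the only thing to be careful about is invoking the integrality of the quotient correctly — it is precisely the combination of $m_{i-1}\mid m_i$ (so the quotient is an integer) and $m_{i-1}<m_i$ (so the quotient exceeds $1$) that forces $m_i\ge 2m_{i-1}$, and both hypotheses of the lemma are needed for this. A purely multiplicative version of the statement (bounding $m_1\cdots m_i$) would fail, so it is worth keeping the argument additive and geometric-series-like rather than attempting anything fancier.
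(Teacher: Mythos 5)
Your proof is correct and is exactly the inductive argument the paper alludes to (the paper only remarks that the lemma "is easy to prove by an inductive argument" and omits details). The key step $m_i\ge 2m_{i-1}$, forced by $m_{i-1}\mid m_i$ and $m_{i-1}<m_i$, is precisely what makes the induction close.
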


\begin{lemma}\cite[Lem. 18]{pauli}\label{phiadic}
Let $\ty$ be a strongly optimal type of order $i-1\ge1$. 
Let $a(x)\in\oo[x]$ be a polynomial of degree less than $m_{i}$. The computation of the multiadic expansion of $a(x)$,
\begin{equation}\label{multiadic}
a(x)=\sum_{\j=(j_1,\dots,j_{i-1})} a_\j(x)\phi_1(x)^{j_1}\cdots \phi_{i-1}(x)^{j_{i-1}},\quad \deg a_\j<m_1,
\end{equation}
where $0\le j_k<e_k f_k$, for all $1\le k< i$, has a cost of $O((m_{i})^{1+\epsilon})$ operations in $A$.
\end{lemma}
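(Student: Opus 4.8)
The plan is to prove Lemma \ref{phiadic} by induction on the order $i-1$, following the recursive structure of the multiadic expansion. The key observation is that a multiadic expansion with respect to $\phi_1,\dots,\phi_{i-1}$ can be computed in two stages: first compute the ordinary $\phi_{i-1}$-adic development of $a(x)$, obtaining coefficients of degree less than $m_{i-1}$; then recursively expand each of those coefficients in the remaining variables $\phi_1,\dots,\phi_{i-2}$.

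First I would set up the base case. For $i-1=1$, the multiadic expansion \eqref{multiadic} is just the ordinary $\phi_1$-adic development of $a(x)$, where $\deg a<m_2=e_1f_1m_1$, so there are at most $e_1f_1$ coefficients of degree less than $m_1=\deg\phi_1$. Computing a $\phi$-adic development of a polynomial of degree $n$ by repeated Euclidean division (or, better, by the standard divide-and-conquer scheme) costs $O(n^{1+\epsilon})$ operations in $A$; here $n<m_2=O(m_2)$, and since $m_1<m_2$ we have $m_2^{1+\epsilon}=O(m_i^{1+\epsilon})$ trivially when $i=2$.

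For the inductive step, suppose the result holds for strongly optimal types of order $i-2$. Given $a(x)$ with $\deg a<m_i=e_{i-1}f_{i-1}m_{i-1}$, first compute its $\phi_{i-1}$-adic development $a(x)=\sum_{0\le j_{i-1}<e_{i-1}f_{i-1}} b_{j_{i-1}}(x)\phi_{i-1}(x)^{j_{i-1}}$ with $\deg b_{j_{i-1}}<m_{i-1}$; this costs $O((m_i)^{1+\epsilon})$ operations in $A$ by the fast division algorithm. Then apply the inductive hypothesis to each $b_{j_{i-1}}(x)$ (the truncation $\op{Trunc}_{i-2}(\ty)$ is again strongly optimal, and $\deg b_{j_{i-1}}<m_{i-1}$), obtaining a multiadic expansion in $\phi_1,\dots,\phi_{i-2}$ at a cost of $O((m_{i-1})^{1+\epsilon})$ operations each. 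Since there are $e_{i-1}f_{i-1}=m_i/m_{i-1}$ coefficients, the total cost of the recursive calls is $O\bigl((m_i/m_{i-1})(m_{i-1})^{1+\epsilon}\bigr)=O((m_i)^{1+\epsilon})$. Summing over the two stages gives the claimed bound $O((m_i)^{1+\epsilon})$.

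The main obstacle is making the recursion bookkeeping precise: one must verify that the costs telescope rather than accumulating a factor of $i-1$ (the depth), and this is exactly where the divisibility chain $m_1\mid\cdots\mid m_i$ with strict inequalities is used, via Lemma \ref{trivial}, to ensure $\sum_k m_k = O(m_i)$ and that no logarithmic-in-depth overhead appears. One must also be careful that "operations in $A$" here means operations on polynomial coefficients, so the fast Euclidean division bounds from \cite{vzGG} apply directly without reintroducing the precision parameter $\nu$. Since this lemma is quoted as \cite[Lem. 18]{pauli}, the cleanest route is simply to cite Pauli's argument and sketch the recursion above as confirmation; I would present the inductive sketch for completeness and refer to \cite{pauli} for the details of the fast-division cost estimates.
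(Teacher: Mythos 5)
Your proposal is correct and matches the paper's route: the paper simply cites Pauli's Lemma 18 and observes that replacing the ordinary-arithmetic division cost by the fast-division bound $O((m_i)^{1+\epsilon})$ of \cite[Thm. 9.15]{vzGG} in Pauli's recursive argument yields the stated estimate, and your inductive sketch (one $\phi_{i-1}$-expansion followed by $m_i/m_{i-1}$ recursive calls on pieces of degree less than $m_{i-1}$, with the telescoping controlled by the strict divisibility chain $m_1\mid\cdots\mid m_i$) is exactly that argument spelled out.
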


Actually, in \cite{pauli} it was proved an estimation of $O(m_i^2)$ operations in $A$, assuming ordinary arithmetic. If we assume fast multiplication, the cost of the computation of the $\phi_{i-1}$-expansion of $a(x)$ may be estimated in $O((m_{i})^{1+\epsilon})$ operations in $A$ \cite[Thm. 9.15]{vzGG}. By using this estimation, the proof of \cite[Lem. 18]{pauli} leads to Lemma \ref{phiadic}.

\begin{lemma}\label{Newton}
Let $\ty$ be a strongly optimal type of order $i-1\ge0$, with representative $\phi(x)$. Let $\omega$ be a positive integer and $g(x)\in A[x]$ a polynomial of degree $d\ge \omega m_i$. Then, the cost of the routine {\tt Newton($\ty$,$\omega$,$g$)} is $O(\omega d^{1+\epsilon})$ operations in $A$.
\end{lemma}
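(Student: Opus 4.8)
The plan is to compute the cost of {\tt Newton($\ty$,$\omega$,$g$)} by decomposing it into the two tasks the routine performs: first, extracting the coefficients $a_0(x),\dots,a_\omega(x)$ of the $\phi$-adic development of $g(x)$ up to index $\omega$; second, evaluating $v_i(a_s\phi^s)$ for $0\le s\le\omega$ and taking the lower convex hull of the resulting $\omega+1$ points. The convex hull of $\omega+1$ points in the plane costs $O(\omega^{1+\epsilon})$ (in fact $O(\omega\log\omega)$) operations with numbers whose size is controlled, so this is dominated by the first task; the bulk of the estimate comes from the $\phi$-adic expansion and the evaluation of the $v_i$-values of the coefficients.

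For the $\phi$-adic expansion, I would first reduce to the truncated polynomial: since we only need $a_0,\dots,a_\omega$, it suffices to work with $g(x) \bmod \phi(x)^{\omega+1}$, a polynomial of degree $\le (\omega+1)m_i - 1 \le d + m_i$, hence $O(d)$ (using $\omega m_i\le d$, so $m_i\le d$). Computing the $\phi$-adic expansion of a polynomial of degree $O(d)$ into roughly $\omega$ blocks of degree $<m_i$ costs $O(d^{1+\epsilon})$ operations in $A$ by the fast Euclidean-division / divide-and-conquer base-conversion algorithm (\cite[Thm. 9.15]{vzGG}), as already invoked in the discussion after Lemma \ref{phiadic}. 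So producing all $a_s(x)$, $0\le s\le\omega$, costs $O(d^{1+\epsilon})$ operations in $A$; this alone is within the claimed bound since $d^{1+\epsilon}\le \omega d^{1+\epsilon}$.

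The step I expect to be the genuine work is the evaluation of $v_i(a_s\phi^s)$ for all $s$. We have $v_i(a_s\phi^s) = v_i(a_s) + s\,v_i(\phi) = v_i(a_s) + sV_i$, so it reduces to computing $v_i(a_s)$ for each $s$. Each $a_s(x)$ has degree $<m_i$, and $v_i$ applied to such a polynomial is computed via the chain of residual-polynomial / Newton-polygon operators of the lower-order truncations of $\ty$ — equivalently, by Lemma \ref{phiadic}, one first computes the multiadic expansion (\ref{multiadic}) of $a_s$ at cost $O(m_i^{1+\epsilon})$ operations in $A$, and then reads off $v_i(a_s)$ from the exponents by a recursion through the $i-1$ levels, whose cost is lower order (it involves only manipulations of integers of bounded size, bounded by the number of monomials, which is $O(m_i)$). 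Summing over the $\omega+1$ indices $s$ gives $O(\omega m_i^{1+\epsilon})$ operations in $A$, and since $m_i \le d$ (because $m_i\le \omega m_i\le d$), this is $O(\omega d^{1+\epsilon})$, matching the claim. Adding the $O(\omega^{1+\epsilon})$ for the convex hull and the $O(d^{1+\epsilon})$ for the expansion, and using $\omega,\,m_i\le d$, the total is $O(\omega d^{1+\epsilon})$ operations in $A$, as asserted. The one subtlety to check carefully is that the integers $v_i(a_s)$ and the slopes manipulated in the hull computation stay polynomially bounded in the input size, so that each arithmetic operation on them is absorbed into the $\epsilon$; this follows from the standard bounds on $V_i$ and on the valuations of the coefficients of $g$, which are $O(\delta)$-controlled and in any case subsumed by the count of operations in $A$.
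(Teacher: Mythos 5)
Your proposal is correct and follows the same overall decomposition as the paper's proof (extract the coefficients $a_0,\dots,a_\omega$, compute the valuations $v_i(a_s\phi^s)$ via the multiadic expansion of Lemma \ref{phiadic} and formula (\ref{v1}), then take the lower convex hull), but it differs in one genuine respect: for the expansion step you first reduce $g$ modulo $\phi^{\omega+1}$ and then invoke the divide-and-conquer base-conversion of \cite[Thm.\ 9.15]{vzGG}, getting all $\omega+1$ coefficients in $O(d^{1+\epsilon})$ operations in $A$, whereas the paper performs $\omega+1$ successive divisions with remainder $g=\phi q_1+a_0$, $q_1=\phi q_2+a_1$, etc., each costing $O(d^{1+\epsilon})$, for a total of $O(\omega d^{1+\epsilon})$. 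In the paper this sequential division is the dominant term and everything else is absorbed into it; in your version the dominant terms are $O(d^{1+\epsilon})$ for the expansion plus $O(\omega m_i^{1+\epsilon})$ for the valuations, and since $\omega m_i\le d$ this actually gives the sharper bound $O(d^{1+\epsilon})$, which of course still implies the stated $O(\omega d^{1+\epsilon})$. Your treatment of the remaining points matches the paper's: the valuation of each $a_s$ costs $O(m_i^{1+\epsilon})$ via Lemma \ref{phiadic}, with the minimum in (\ref{v1}) read off at negligible extra cost, and the hull computation (the paper bounds it naively by $O(\omega^2)$ integer multiplications of size $O(\log(m_i\nu))$) is dominated because $\omega\le d/m_i$. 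Your closing caveat about the bit-size of the integers involved is exactly the point the paper also addresses, and is handled the same way.
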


\begin{proof}
The computation of the first $\omega+1$ coefficients of the $\phi$-development of $g(x)$ requires $\omega+1$ divisions with remainder:
$$
g=\phi\cdot q_1+a_0,\quad
q_1=\phi\cdot q_2+a_1,\quad
\cdots\quad,\quad 
q_\omega=\phi\cdot q_{\omega+1}+a_\omega.
$$
The number of operations in $A$ that are necessary to carry out each one of this divisions is $O(d^{1+\epsilon})$ \cite[Thm. 9.6]{vzGG}. Thus, the lemma states that the cost of this initial step dominates the whole routine.
 
The next step is the computation of $v_i(a_k)$, for $0\le k\le \omega$. Denote by $a(x)=a_k(x)$ any one of these $\omega+1$ coefficients, and consider the multiadic development (\ref{multiadic}) of $a(x)$.
By \cite[Lem. 4.2]{bases}, we have 
\begin{equation}\label{v1}
v_i(a(x))=\min_{\j=(j_1,\dots,j_{i-1})}\{v_i(a_\j)+j_1v_i(\phi_1)+\cdots+j_{i-1}v_i(\phi_{i-1})\}.
\end{equation}
By \cite[Prop. 2.15]{HN}, we may use closed formulas for the values $v_i(\phi_j)$ in terms of the Okutsu invariants, and since $\deg a_\j<m_1$, \cite[Prop. 2.7]{HN} shows that:
$$
v_i(a_\j)=e_0\cdots e_{i-1}\min\{v_\p(c)\mid c\mbox{ coefficient of }a_\j(x)\}. 
$$
Thus, the cost of computing $v_i(a_k)$ is dominated by the cost of the computation of the multiadic development of $a_k$. By Lemma \ref{phiadic}, the total cost of this step  is $(\omega+1)O\left((m_i)^{1+\epsilon}\right)$ operations in $A$.  This cost is clearly dominated by the cost of the first divisions with remainder.

Finally, the computation of the Newton polygon has a cost of $O((\omega+1)^2)$ multiplications of integers. If we work at precision $\nu$, (\ref{v1}) shows that $e_0\cdots e_{i-1}\nu$ is an upper bound of $v_i(a_k)$; hence,  each multiplication of integers of this size requires $O(\log(m_i\nu)^{1+\epsilon})$ word operations. Since $\omega\le d/m_i$, the complexity of this task is also dominated by that of the first divisions with remainder, obtained by multiplying by $(\nu\log(q))^{1+\epsilon}$ the cost in number of operations in $A$.
\end{proof}

\begin{lemma}\label{ResidualPolynomial}
Let $\ty$ be a strongly optimal type of order $i-1\ge0$, with representative $\phi(x)$, and take $\lambda\in\Q_{<0}$, $g(x)\in A[x]$. Let $S$ be the $\lambda$-component of $N_i(g)$, and let $d=d(S)$ be the degree of $S$. Then, the cost of {\tt ResidualPolynomial($\ty$,$\lambda$,$g$)} is
$O\left(d(f_0\cdots f_{i-1})(m_i)^{1+\epsilon}\log(q)\right)$ $\p$-small operations. 
\end{lemma}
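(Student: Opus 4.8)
The plan is to separate the combinatorial part of {\tt ResidualPolynomial($\ty$,$\lambda$,$g$)} from the residue-field arithmetic, and to exploit that the routine is invoked (in the Montes algorithm and in the irreducibility test) immediately after {\tt Newton($\ty$,$\omega$,$g$)} on the same pair $(\ty,\phi)$. By the proof of Lemma \ref{Newton} (which rests on Lemma \ref{phiadic}), the $\phi$-adic coefficients $a_s(x)$ of $g$, all of degree $<m_i$, their multiadic expansions (\ref{multiadic}), and the integers $v_i(a_s)$ are then already available. Given these, the $\lambda$-component $S$ of $N_i(g)$, its degree $d$, and its integer points $s_j=s_0+je$ ($0\le j\le d$) are located with $O(\ell(N_i(g)))$ integer comparisons, a negligible cost. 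Hence it suffices to bound by $O\!\left((f_0\cdots f_{i-1})(m_i)^{1+\epsilon}\log q\right)$ $\p$-small operations the cost of producing a single residual coefficient $c_j=z_{i-1}^{\,t_{i-1}(s_j)}R_{i-1}(a_{s_j})(z_{i-1})\in\F_i$, and then to multiply the result by $d+1$.

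Fix $j$ and put $a:=a_{s_j}(x)$, of degree $<m_i$. From the multiadic expansion of $a$ and formula (\ref{v1}) one reads off $v_i(a)$ and decides whether $(s_j,v_i(a\phi^{s_j}))$ lies strictly above $S$ (then $c_j=0$) or on it. In the latter case, $R_{i-1}(a)(z_{i-1})$ and the exponent $t_{i-1}(s_j)$ are assembled by the recursive scheme of \cite[Def.~2.19]{HN}, processed bottom-up along the tower $\F=\F_0\subset\F_1\subset\cdots\subset\F_i$: starting from the $m_i/m_1$ entries of $\F_1$ furnished by the (reduced) multiadic expansion, and inductively holding $m_i/m_k$ elements of $\F_k$ after levels $1,\dots,k-1$ have been handled, at level $k$ one partitions them into $m_i/m_{k+1}=(m_i/m_k)/(e_kf_k)$ groups of $e_kf_k$ entries; each group yields, via the slope $\lambda_k$, the value of the residual-polynomial operator $R_k$ (a polynomial in $y$ of degree $\le f_k$ over $\F_k$) and Horner evaluation at $z_k\in\F_{k+1}$, a single element of $\F_{k+1}$, at the cost of $O(f_k)$ multiplications in $\F_{k+1}$. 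Since $\F_{k+1}$ is an $\F$-algebra of dimension $f_0\cdots f_k$, one such multiplication costs $O\!\left((f_0\cdots f_k)^{1+\epsilon}\right)$ $\p$-small operations by fast polynomial multiplication in the tower; so level $k$ contributes $O\!\left((m_i/m_{k+1})\,f_k\,(f_0\cdots f_k)^{1+\epsilon}\right)$ $\p$-small operations.

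It remains to sum over $k=1,\dots,i-1$. Writing $m_{k+1}=(e_0\cdots e_k)(f_0\cdots f_k)$ and using $f_k\le f_0\cdots f_{i-1}$ and $(f_0\cdots f_k)^{\epsilon}\le(m_i)^{\epsilon}$, the level-$k$ contribution $O\!\left(m_i\,f_k\,(f_0\cdots f_k)^{\epsilon}/(e_0\cdots e_k)\right)$ is $O\!\left((f_0\cdots f_{i-1})(m_i)^{1+\epsilon}\right)$. Since $m_1\mid\cdots\mid m_i$ and $m_1<\cdots<m_i$ force $m_{k+1}\ge 2m_k$, there are only $i=O(\log m_i)=O((m_i)^{\epsilon})$ levels — this is, in the present setting, the content of Lemma \ref{trivial} — so the cost of one residual coefficient is $O\!\left((f_0\cdots f_{i-1})(m_i)^{1+\epsilon}\right)$ $\p$-small operations, the factor $\log q$ in the statement absorbing the word-length of the residue-field elements (and of the integers $v_i(a_{s_j})$, $t_{i-1}(s_j)$) handled along the way. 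Multiplying by the $d+1$ abscissae on $S$ yields the asserted bound. \textbf{The main obstacle} is exactly this bottom-up accounting: one must verify that the geometric decay $m_i/m_{k+1}$ in the number of groups up the tower outweighs the growth $(f_0\cdots f_k)^{1+\epsilon}$ of the cost of a multiplication in $\F_{k+1}$, so that the recursion incurs only polylogarithmic overhead in $m_i$; Lemma \ref{trivial} is precisely what guarantees this, just as in the complexity analysis of {\tt Newton}.
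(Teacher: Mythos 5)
Your overall strategy coincides with the paper's: assume the $\phi$-adic coefficients $a_{s_j}$, their multiadic expansions and the values $v_i(a_{s_j})$ are stored from the preceding call to {\tt Newton}, reduce to the cost of a single residual coefficient $c_j$ times $d+1$, and control the descent through the tower $\F_0\subset\cdots\subset\F_i$ via the divisibility and strict growth of the $m_k$ (Lemma \ref{trivial}). The paper packages this as a top-down recurrence $C_i(d)\le(d+1)\bigl(C_{i-1}(f_{i-1})+\cdots\bigr)$, while you unroll it bottom-up; that difference is cosmetic. Your count of $m_i/m_{k+1}$ groups per level even overcounts the $f_{i-1}\cdots f_{k+1}$ recursive calls actually needed (only the coefficients lying on the relevant $\lambda_k$-components enter $R_k$) by a factor $e_{i-1}\cdots e_{k+1}$, but as your own arithmetic shows this is harmless for the upper bound.

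The genuine gap is in the cost you assign to one group at level $k$. You charge only the $O(f_k)$ multiplications in $\F_{k+1}$ of the Horner evaluation $R_k(a)(z_k)$, i.e.\ $O\bigl(f_k(f_0\cdots f_k)^{1+\epsilon}\bigr)$ $\p$-small operations, and you then explain the $\log q$ in the statement as absorbing word-lengths. That explanation is inconsistent with the cost model: a $\p$-small operation is by definition an operation on representatives of $A/\p$, so the word-length of elements of $\F$ (and of $\F_{k+1}$, whose elements are $f_0\cdots f_k$-tuples over $\F$) is already the unit of account; the conversion to word operations by $\log(q)^{1+\epsilon}$ happens once, outside this lemma. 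What is missing is the computation of the twisting powers $z_k^{t_k(s)}$ occurring in every residual coefficient. In the paper's proof this is the dominant term: each such power costs $O(\log\#\F_{k+1})=O(f_0\cdots f_k\log q)$ multiplications in $\F_{k+1}$, hence $O\bigl((f_0\cdots f_k)^{2+\epsilon}\log q\bigr)$ $\p$-small operations, and it is the sole source of both the $\log q$ factor and one of the two powers of $f_0\cdots f_k$ in the recurrence. As written, your argument bounds only part of the routine's work (indeed it proves a bound \emph{without} $\log q$ for that part, which is not the full cost). Inserting this exponentiation cost into your per-group charge and redoing your level-by-level sum recovers exactly the paper's estimate.
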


\begin{proof}Let $e$ be the least positive denominator of $\lambda$. Let $s_0$ be the abscissa of the left end point of $S$, and take $s_j:=s_0+je$, for $0\le j\le d$.
We assume that in a previous call to the routine {\tt Newton}, we computed (and stored) the coefficients $a_{s_j}$ of the $\phi$-adic expansion of $g(x)$, and their $(\phi_1,\dots,\phi_{i-1})$-multiadic expansion. Also, along this computation it is easy to store the necessary data to compute the exponents $t_{i-1}(s_j)$ at zero cost \cite[Def. 2.19]{HN}.

Thus, the computation of the coefficients $c_0,\dots,c_d\in \F_i$ of the residual polynomial $R_{\lambda,i}(g)(y)$, requires two tasks:
\begin{enumerate}
\item[(a)] compute $R_{i-1}(a_{s_j})(y)\in\F_{i-1}[y]$, for each $0\le j\le d$,

\item[(b)] compute $c_j:=z_{i-1}^{t_{i-1}(s_j)}R_{i-1}(a_{s_j})(z_{i-1})\in F_i$, for each $0\le j\le d$.
\end{enumerate}

Denote by $C_i(d)$ the cost of the computation of $R_{\lambda,i}(g)$, measured in number of $\p$-small operations. Since $\deg a_{s_j}<m_i=e_{i-1}f_{i-1}m_{i-1}$, the Newton polygon $N_{i-1}(a_{s_j})$ has length less than $e_{i-1}f_{i-1}$; hence, the $\lambda_{i-1}$-component of this polygon has degree less than $f_{i-1}$. Therefore, the cost of task (a) is dominated by $C_{i-1}(f_{i-1})$.

The computation of $z_{i-1}^{t_{i-1}(s_j)}$ requires $O\left(\log(\#\F_i)\right)$ multiplications in $\F_i$. Since $\#\F_i=q^{f_0\cdots f_{i-1}}$, the cost is
$O\left((f_0\cdots f_{i-1})^{2+\epsilon}\log(q)\right)$ $\p$-small operations. 

Since $\deg R_{i-1}(a_{s_j})<f_{i-1}$, the cost of the computation of $R_{i-1}(a_{s_j})(z_{i-1})$ by Horner's rule is $O(f_{i-1})$ multiplications in $\F_i$; thus, it is dominated by the computation of a power of $z_{i-1}$.
Altogether, we get 
$$C_i(d)\le (d+1)\left(C_{i-1}(f_{i-1})+(f_0\cdots f_{i-1})^{2+\epsilon}\log(q)\right).$$
From this recurrence, it is easy to derive:
$$
C_i(d)=(d+1)O\left(f_0\cdots f_{i-1}\log(q)\left(f_0^{1+\epsilon}+(f_0f_1)^{1+\epsilon}+\cdots+(f_0\cdots f_{i-1})^{1+\epsilon}\right)\right).
$$   

Finally, we may use Lemma \ref{trivial} to estimate:
$f_0^{1+\epsilon}+\cdots+(f_0\cdots f_{i-1})^{1+\epsilon}\le
\left(m_0\right)^{1+\epsilon}+\cdots+\left(m_i\right)^{1+\epsilon}=O\left(m_i^{1+\epsilon}\right)$.
\end{proof}

\begin{lemma}\label{Construct}
Let $\ty$ be a strongly optimal type of order $i-1\ge0$, with representative $\phi(x)$. Let $\lambda=-h/e$, where  $h,e$ are positive coprime integers.
Let $\varphi(y)\in \F_i[y]$ be a polynomial of degree $d$, and $V\ge ed(eV_{i}+h)$ a positive integer.
Then, the cost of {\tt Construct($\ty$,$\lambda$,$\varphi$,$V$)} is $O\left((f_0\cdots f_{i-1}d)^{2+\epsilon}V^{1+\epsilon}\right)$ $\p$-small operations.
\end{lemma}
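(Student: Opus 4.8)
The plan is to bound the cost of \texttt{Construct} by recursion on the order $i-1$ of the input type $\ty$, in the same spirit as the analysis of \texttt{ResidualPolynomial} in Lemma \ref{ResidualPolynomial}: at each level one isolates the work that is not a recursive call, estimates it by fast multiplication, and then unrolls the recursion with the help of Lemma \ref{trivial}.

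The first step is to read off, from the hypothesis $V\ge ed(eV_i+h)$, the size bounds that control the whole computation. Since $h\ge1$, one has $ed\le V$. An easy induction on $i$ --- starting from $V_1=0$ and using the relation $V_i=e_{i-1}f_{i-1}(e_{i-1}V_{i-1}+h_{i-1})$ of section \ref{secOkutsu} --- gives $V_i\ge e_0\cdots e_{i-1}$ for all $i\ge2$, so the hypothesis also yields $e^2d\,(e_0\cdots e_{i-1})\le V$; hence $e_0\cdots e_{i-1}\le V$, $m_i=(e_0\cdots e_{i-1})(f_0\cdots f_{i-1})\le V\,(f_0\cdots f_{i-1})$, and, combining, $de\,m_i\le V\,(f_0\cdots f_{i-1})$ and $(s+de+1)\,m_i=O\big(V\,(f_0\cdots f_{i-1})\big)$ (for $i=1$ these hold at once since $m_1=f_0$). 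In particular, every polynomial produced while forming $g$ --- the powers $\phi^{e},\phi^{2e},\dots,\phi^{de}$, the summands $g_j\phi^{je}$, their sum, and the final product by $\phi^{s}$ --- has degree $O\big(V\,f_0\cdots f_{i-1}\big)$, and every coefficient that occurs lies in $A$ with $\p$-adic valuation at most $u=(V-sh)/e\le V$, so it suffices to carry the computation at precision $O(V)$, where each ring operation in $A$ costs $O(V^{1+\epsilon})$ $\p$-small operations.

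For the base case $i=1$, $g(x)=\phi^s\big(\sum_{j=0}^{d}a_j\pi^{u-jh}\phi^{je}\big)$ is assembled with $O(d+\log V)$ polynomial multiplications over $A$: a fast exponentiation for $\phi^e$, the recurrences $\phi^{(j+1)e}=\phi^e\cdot\phi^{je}$, the $d+1$ scalar multiplications by the liftings $a_j(x)\pi^{u-jh}$ (with $\deg a_j(x)<f_0$), a summation, and one multiplication by $\phi^s$ --- all on operands of the sizes just bounded; by the Sch\"onhage-Strassen multiplication over $A$ (\cite{SS}, \cite{vzGG}) this stays within $O\big((f_0\,d)^{2+\epsilon}V^{1+\epsilon}\big)$ $\p$-small operations. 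For $i>1$, writing $g(x)=\phi^s\big(g_0+g_1\phi^{e}+\cdots+g_d\phi^{de}\big)$, the $d+1$ polynomials $g_j$ are returned by calls to \texttt{Construct} at level $i-1$ with degree parameter $<f_{i-1}$ and length parameter $w_j$ satisfying $V_i\le w_j\le V$ (the bound $w_j\le V$ since $(s,u)$ is the left end point of the target side). One obtains a recurrence $D_i(d)\le(d+1)\,D_{i-1}(f_{i-1})+B_i(d)$, with the non-recursive cost $B_i(d)$ controlled by fast multiplication on operands of the sizes above (the powers $\phi^{je}$ and the product by $\phi^s$); keeping the $g_j$, and hence $g$, in $\phi$-adic form keeps this assembly modest. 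Unrolling the recurrence --- using that a level-$(j+1)$ call of degree parameter $d'$ spawns at most $f_j$ calls at level $j$, so that the number of level-$j$ calls is $O\big((d+1)f_{j+1}\cdots f_{i-1}\big)$ and the sum of their degree parameters is $O\big((d+1)f_j\cdots f_{i-1}\big)$ --- and collapsing the resulting sum over levels by Lemma \ref{trivial} ($f_0\cdots f_{j-1}\le m_j$, together with $m_1\mid\cdots\mid m_i$ and $m_1<\cdots<m_i$ since $\ty$ is strongly optimal), one arrives at the stated bound $O\big((f_0\cdots f_{i-1}\,d)^{2+\epsilon}V^{1+\epsilon}\big)$.

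The technical heart --- and the step I expect to require the most care --- is the size bookkeeping of the second paragraph: one must verify, making essential use of $V\ge ed(eV_i+h)$, that no intermediate polynomial ever has degree beyond $O\big(V\,f_0\cdots f_{i-1}\big)$ and that no $A$-operation has to be performed beyond precision $O(V)$, so that the fast-multiplication estimates apply with the advertised total cost; with these controls in hand, solving the recurrence is routine and parallels the proof of Lemma \ref{ResidualPolynomial}.
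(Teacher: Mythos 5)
Your overall strategy is the paper's: write $g=\phi^s\bigl(g_0+g_1\phi^e+\cdots+g_d\phi^{de}\bigr)$, assemble it by Horner with fast multiplication, set up the recurrence $C_i(d)=(d+1)C_{i-1}(f_{i-1})+(\mbox{non-recursive cost})$, and collapse it with Lemma \ref{trivial}. The gap is exactly in the step you flag as the technical heart: your size bookkeeping is too crude to yield the stated bound. You bound the degrees of the intermediate polynomials by $O\bigl(Vf_0\cdots f_{i-1}\bigr)$ and the working precision by $O(V)$, so a single polynomial multiplication costs $O\bigl((Vf_0\cdots f_{i-1})^{1+\epsilon}\cdot V^{1+\epsilon}\bigr)$ $\p$-small operations, and the $d+1$ Horner steps alone already cost $O\bigl(d\,(f_0\cdots f_{i-1})^{1+\epsilon}V^{2+2\epsilon}\bigr)$. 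This is \emph{not} $O\bigl((f_0\cdots f_{i-1}d)^{2+\epsilon}V^{1+\epsilon}\bigr)$ unless $V=O(df_0\cdots f_{i-1})$, which fails in general since $V\ge ed(eV_i+h)$ grows with $h$. Already in your base case $i=1$ the claimed conclusion does not follow from the bounds you state.

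The point you are missing is that the degree bound and the precision bound must be kept separate and multiplied \emph{before} absorbing constants: the operands have degree $O(edm_i)=O\bigl(de\,(e_0\cdots e_{i-1})(f_0\cdots f_{i-1})\bigr)$, and one may work at precision $\nu=\lfloor V/(e_0\cdots e_{i-1}e)\rfloor+1$ rather than $O(V)$; the product of these two quantities is $O\bigl(df_0\cdots f_{i-1}V\bigr)$, which is what produces the exponent $1+\epsilon$ on $V$. Moreover, the reduction to precision $\nu$ is not a consequence of a bound on the valuations of the coefficients of $g$ (that is a non sequitur: truncation could a priori destroy the output properties); it requires the argument that a perturbation $h$ with all coefficients of valuation $>V/(e_0\cdots e_{i-1}e)$ satisfies $v_{i+1}(h)>V=v_{i+1}(g)$ by Lemma \ref{previous}, so that $v_{i+1}(g)$ and $R_{\lambda,i}(g)$ are unchanged (via \cite[Prop.~2.8]{HN}). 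With that correction — first count $O\bigl(df_0\cdots f_{i-1}(edm_i)^{1+\epsilon}\bigr)$ operations in $A$ from the recurrence, then multiply by $\nu^{1+\epsilon}$ — the rest of your argument goes through.
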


\begin{proof}
The output polynomial is constructed as: 
$$
g(x)=\phi(x)^s\left(g_0(x)+g_1(x)\phi(x)^{e}+\cdots+g_d(x)\phi(x)^{de}\right),
$$
where $0\le s<e$, and the polynomials $g_j(x)\in A[x]$ may be taken as the output of an adequate call to {\tt Construct} at level $i-1$. In particular, $\deg g_j<m_i$, for all $j$.  

We must compute the polynomials $\phi(x)^s$, $\phi(x)^{e}$, $g_0(x),\dots,g_d(x)$, and finally compute $g(x)$ by Horner's rule. This latter task requires $d+1$ multiplications of polynomials. In each multiplication, the two factors have degree (bounded by) $$(m_i,em_i), \ ((e+1)m_i,em_i),\ ((2e+1)m_i,em_i),\ \dots,\ (((d+1)e+1)m_i,em_i),$$
respectively. The multiplication of two polynomials of degrees $m'\le m$ requires $O(m^{1+\epsilon})$ operations in $A$. Thus, if we denote $m_{i+1}:=edm_i$, the number of operations in $A$ required for the final evaluation of $g(x)$ is of the order of:
$$(em_i)^{1+\epsilon}(1^{1+\epsilon}+2^{1+\epsilon}+\cdots+d^{1+\epsilon})=O((em_i)^{1+\epsilon}d^{2+\epsilon})=O(d(m_{i+1})^{1+\epsilon}).
$$
This estimation clearly dominates the cost of the computation of $\phi(x)^s$ and $\phi(x)^{e}$. Thus, we analyze only the cost of the computation of $g_0(x),\dots,g_d(x)$. 

Denote by $C_i(d)$ the total cost of {\tt Construct}, measured in number of operations in $A$.
We have seen that $C_i(d)=d\left(C_{i-1}(f_{i-1})+O((m_{i+1})^{1+\epsilon})\right)$. By using Lemma \ref{trivial}, this recurrence leads to:
\begin{equation}\label{estfast}
\begin{aligned}
C_i(d)=&\;O\left(d\,(m_{i+1})^{1+\epsilon}+d\,f_{i-1}(m_i)^{1+\epsilon}+\cdots+d\,f_{i-1}\cdots f_0(m_0)^{1+\epsilon}\right)\\=&\;O\left(d\,f_{i-1}\cdots f_0\left((m_{i+1})^{1+\epsilon}+(m_i)^{1+\epsilon}+\cdots+(m_0)^{1+\epsilon}\right)\right)\\=&\;O\left(d\,f_{i-1}\cdots f_0\,(m_{i+1})^{1+\epsilon}\right).
\end{aligned} 
\end{equation}

Finally, we may work with precision $\nu:=\lfloor V/(e_0\cdots e_{i-1}e)\rfloor +1$, without changing the desired properties for $g(x)$:
$$v_{i+1}(g)=V,\quad y^{\ord_y\varphi}R_{\lambda,i}(g)(y)=\varphi(y),$$ 
where $v_{i+1}$ is the MacLane valuation determined by $\ty$, $\phi$ and $\lambda$. In fact, suppose  $G(x)=g(x)+h(x)$, for a polynomial $h(x)\in A[x]$, all whose coefficients $c$ satisfy $v_\p(c)>V/(e_0\cdots e_{i-1}e)$. Then, $v_{i+1}(c)=(e_0\cdots e_{i-1}e)v_\p(c)>V$, by Lemma \ref{previous}, so that $v_{i+1}(h)>v_{i+1}(g)$, and  $v_{i+1}(G)=v_{i+1}(g)$. Also, we get $R_{\lambda,i}(G)(y)=R_{\lambda,i}(g)(y)$ by \cite[Prop. 2.8]{HN}.  

Therefore, the total cost of {\tt Construct}, measured in number of $\p$-small operations, is obtained by multiplying the estimation of (\ref{estfast}) by $\nu^{1+\epsilon}$.
\end{proof}

\begin{corollary}\label{Representative}
Let $\ty'=(\psi_0;(\phi_1,\lambda_1,\psi_1);\cdots;(\phi_{i-1},\lambda_{i-1},\psi_{i-1});(\phi,\lambda,\psi))$ be an optimal type of order $i\ge1$, where
$\lambda=-h/e$ for some positive coprime integers $h,e$, and $y\ne\psi(y)\in\F_i[y]$ is a monic irreducible polynomial of degree $f$. Let $V:=ef(eV_i+h)$. The cost of the computation of a representative $\phi'$ of $\ty'$ is $O\left((f_0\cdots f_{i-1}f)^{2+\epsilon}V^{1+\epsilon}\right)$ $\p$-small operations.
\end{corollary}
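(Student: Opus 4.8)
The plan is to read off the cost of the routine \texttt{Representative($\ty'$)} from Lemma \ref{Construct}, since \texttt{Representative} does essentially nothing beyond a single call to \texttt{Construct}. Recall that \texttt{Representative($\ty'$)} (i) writes $\psi(y)=y^{f}+\varphi(y)$ with $\deg\varphi<f$ and picks a normalising constant $c\in\F_i^*$ (cf. the proof of \cite[Thm. 2.11]{HN}); (ii) calls \texttt{Construct($\op{Trunc}_{i-1}(\ty')$, $\lambda$, $c\varphi$, $V_{i+1}$)}, with $\phi=\phi_i^{\ty'}$ playing the role of the representative of the truncated type, to produce a polynomial $g(x)$; and (iii) returns $\phi'(x)=\phi(x)^{e_if_i}+g(x)$. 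So the cost of computing $\phi'$ is the cost of stage (ii) plus the (small) costs of stages (i) and (iii).

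First I would identify the parameters of the call in stage (ii). Since $\ty'$ is optimal of order $i$, the type $\op{Trunc}_{i-1}(\ty')$ is strongly optimal of order $i-1$, with residual degrees $f_0,\dots,f_{i-1}$, and $\phi$ is a representative of it. The input polynomial $c\varphi$ has degree $d:=\deg\varphi\le f-1$, the slope is $\lambda=-h/e$, and the recurrence $V_{j+1}=e_jf_j(e_jV_j+h_j)$ of section \ref{secOkutsu}, applied with $(e_i,f_i,h_i)=(e,f,h)$ for $\ty'$, gives $V_{i+1}=ef(eV_i+h)=V$. The precondition $V\ge ed(eV_i+h)$ of \texttt{Construct} holds because $d<f$. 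Hence Lemma \ref{Construct} applies and bounds the cost of stage (ii) by $O((f_0\cdots f_{i-1}d)^{2+\epsilon}V^{1+\epsilon})$ $\p$-small operations, and since $d<f$ this is $O((f_0\cdots f_{i-1}f)^{2+\epsilon}V^{1+\epsilon})$.

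Next I would show that stages (i) and (iii) are dominated by this estimate. Stage (i) costs $O(f)$ operations in $\F_i$ and is negligible. In stage (iii) the only real work is computing $\phi(x)^{e_if_i}$: this polynomial has degree $m_{i+1}:=e_if_im_i$, and one can obtain it by repeated squaring (reusing the power $\phi^e$ that \texttt{Construct} has already formed) in $O((m_{i+1})^{1+\epsilon})$ operations in $A$, hence in $O((m_{i+1}\nu)^{1+\epsilon})$ $\p$-small operations at the precision $\nu=\lfloor V/(e_0\cdots e_{i-1}e)\rfloor+1$ used inside \texttt{Construct}. Using the identity $m_{i+1}/(e_0\cdots e_{i-1}e)=f_0\cdots f_{i-1}f$ together with the bound $e_0\cdots e_{i-1}\le V$ (trivial for $i=1$ since $e_0=1$; for $i\ge2$ a straightforward induction from the recurrence for $V_j$ gives $V_j\ge e_0\cdots e_{j-1}$, and $V\ge V_i$), one gets $m_{i+1}\nu=O((f_0\cdots f_{i-1}f)V)$, so stage (iii) costs $O((f_0\cdots f_{i-1}f)^{1+\epsilon}V^{1+\epsilon})$, which is absorbed into the bound for stage (ii). Adding the three stages gives the claimed estimate.

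The point that will need the most care is precisely this last piece of bookkeeping in stage (iii): that the working precision may be lowered to $V/(e_0\cdots e_{i-1}e)$ without changing $\phi'$ (the same argument as in the proof of Lemma \ref{Construct}, via \cite[Prop. 2.8]{HN}), and that the product $m_{i+1}\nu$ then collapses to $O((f_0\cdots f_{i-1}f)V)$, with the ramification indices $e_j$ cancelling out. Everything else is routine matching of the parameters of \texttt{Representative} against those of Lemma \ref{Construct}.
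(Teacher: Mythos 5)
Your proposal is correct and follows essentially the same route as the paper: write $\phi'=\phi^{ef}+g$ with $g$ the output of \texttt{Construct}, invoke Lemma \ref{Construct} for the dominant term, and observe that the repeated-squaring cost of $\phi^{ef}$ (of order $(efm_i)^{1+\epsilon}=(m_{i+1})^{1+\epsilon}$ operations in $A$) is absorbed by the estimate (\ref{estfast}). Your extra bookkeeping converting the squaring cost to $\p$-small operations at the reduced precision is a slightly more explicit version of the paper's one-line domination claim, but not a different argument.
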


\begin{proof}
The polynomial $\phi'(x)$ is constructed as $\phi(x)^{ef}+g(x)$, where $g(x)$ is the output of the routine {\tt Construct($\ty$,$\lambda$,$\psi(y)-y^{f}$,$V$)}. The computation of $\phi^{ef}$ by repeated squarings costs $O((efm_{i})^{1+\epsilon})$ operations in $A$; this cost is dominated by the estimation (\ref{estfast}) of the cost of the computation of $g(x)$.
Thus, the corollary is an immediate consequence of Lemma \ref{Construct}.  
\end{proof}

\subsection{Complexity of the polynomial irreducibility test}\label{subsecIrrTest} 

The aim of this section is to prove a new estimation for the complexity of the polynomial irreducibili\-ty test based on the Montes algorithm. In comparison with previous estimations \cite{FV,pauli}, the total degree in $n$ and $\delta$ is reduced from $4+\epsilon$ to $2+\epsilon$.

\begin{theorem}\label{IrrEstimation}
The cost of the irreducibility test over $\oo_\p[x]$, applied to a monic se\-parable polynomial $F\in A[x]$ of degree $n$ is $O\left(n^{2+\epsilon}+n^{1+\epsilon}(1+\delta)\log(q)+\delta^{2+\epsilon}\right)$ $\p$-small operations, where $\delta:=v_\p(\dsc(F))$.
\end{theorem}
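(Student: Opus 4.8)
The plan is to follow the execution of the irreducibility test, which runs a single \texttt{WHILE} loop that builds one strongly optimal type level by level, with occasional refinement steps, and to add up the costs of its four subroutines via Lemmas \ref{Factorization}, \ref{Newton}, \ref{ResidualPolynomial} and Corollary \ref{Representative}. The opening observation is that the whole test may be run modulo $\p^\nu$ with $\nu=\lfloor 2\delta/n\rfloor+1$: by Corollary \ref{testprecision} this precision suffices for the test to return the correct answer, and $\delta=v_\p(\dsc(F))$ can be computed beforehand at negligible cost (or one uses precision doubling). Hence every operation in $A$ costs $O(\nu^{1+\epsilon})=O((1+\delta/n)^{1+\epsilon})$ $\p$-small operations, and every valuation occurring in the run is an integer of size $O(n\nu)=O(n+\delta)$.

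Next I would record the combinatorial skeleton of a run. As long as the test has not halted, at the level $i$ under examination the polygon $N_i^-(F)$ is one-sided of length $\omega_i=n/m_i$ and $R_i(F)$ is a power of a single irreducible $\psi_i$ of degree $f_i$, so the residual polynomial has degree $d_i=f_i\omega_{i+1}=\omega_i/e_i=n/(m_ie_i)$. Moreover $m_1<m_2<m_3<\cdots$ is a strictly increasing chain of divisors of $n$, hence $m_{i+1}\ge 2m_i$; so there are $O(\log n)$ levels and, by Lemma \ref{trivial}, the sums $\sum_i m_i$, $\sum_i\omega_i$ and $\sum_i d_i$ are all $O(n)$. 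Two identities carry the bookkeeping: first, $d_i\,(f_0\cdots f_{i-1})=n/(e_0\cdots e_i)\le n$; second, since $\mu_j+\nu_j$ is nondecreasing in $j$, Definition \ref{okutsudisc} and Lemma \ref{delta0props}(2) give $V_{i+1}=(e_0\cdots e_i)(\mu_i+\nu_i)\le (e_0\cdots e_i)\,\delta_0(F)\le (e_0\cdots e_i)\cdot 2\delta/n$.

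Then I would add up the costs. For \texttt{Newton}, Lemma \ref{Newton} gives $O(\omega_i\,n^{1+\epsilon})$ operations in $A$ at level $i$, hence $O(n^{2+\epsilon})$ operations in $A$ overall, i.e. $O(n^{2+\epsilon}\nu^{1+\epsilon})$ $\p$-small operations; using $\nu^{1+\epsilon}=O(1+(\delta/n)^{1+\epsilon})$ and that $n\,\delta^{1+\epsilon}\le n^{2+\epsilon}$ when $\delta\le n$ while $n\,\delta^{1+\epsilon}\le\delta^{2+\epsilon}$ when $\delta\ge n$, this is $O(n^{2+\epsilon}+\delta^{2+\epsilon})$. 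For the calls to \texttt{ResidualPolynomial} and \texttt{Factorization}, the opening \texttt{Factorization} on $\overline F$ alone costs $O(n^{2+\epsilon}+n^{1+\epsilon}\log q)$ by Lemma \ref{Factorization}, and the per-level bounds of Lemmas \ref{ResidualPolynomial} and \ref{Factorization}, combined with $d_i\,(f_0\cdots f_{i-1})\le n$, $m_i^{\epsilon}\le n^{\epsilon}$ and the precision estimate, collapse into sums totalling $O(n^{2+\epsilon}+n^{1+\epsilon}(1+\delta)\log q)$ (the $\log q$ coming from residue-field arithmetic). For \texttt{Representative} at level $i$, Corollary \ref{Representative} gives $O((f_0\cdots f_i)^{2+\epsilon}V_{i+1}^{1+\epsilon})$; since $(f_0\cdots f_i)(e_0\cdots e_i)=m_{i+1}\le n$ and $f_0\cdots f_i\le n$, the bound on $V_{i+1}$ turns this into $O(m_{i+1}^{1+\epsilon}(f_0\cdots f_i)(2\delta/n)^{1+\epsilon})=O(n^{1+\epsilon}\cdot n\cdot(\delta/n)^{1+\epsilon})=O(n\,\delta^{1+\epsilon})=O(n^{2+\epsilon}+\delta^{2+\epsilon})$ per level, hence the same over all $O(\log n)$ levels. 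Adding the three contributions gives the claimed bound.

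The point requiring genuine care, and which I expect to be the main obstacle, is the extra cost of the refinement steps (step \textbf{12} of the test), which do not advance the order but replace the current representative $\phi_i$ by a better one and re-enter the loop with it. Here one must bound the number of refinements at level $i$ by the width component $\#\vv_i=\lceil h_i/e_i\rceil$ of Proposition \ref{compwidth}, and observe that each refining pass can be carried out at the reduced precision $O(\mu_{i-1}+\nu_i)=O(\delta/n)$ (the relevant valuations being at most $v(\phi_i(\theta))=\mu_{i-1}+\nu_i$ by Lemma \ref{previous}(3)), so its cost is no larger than that of a non-refining level-$i$ pass at that precision. Using $\delta\ge n\mu(F)=n\mu_r$ together with $n/m_j-1\ge n/(2m_j)$ for $j\le r$ one gets $\sum_i\lceil h_i/e_i\rceil\,\omega_i=O(n+\delta)$, whence the aggregate refinement cost of the three expensive subroutines is $O(n\,\delta^{1+\epsilon}+\delta^{2+\epsilon})=O(n^{2+\epsilon}+\delta^{2+\epsilon})$ and is absorbed into the bounds above. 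Collecting everything yields the estimate $O\!\left(n^{2+\epsilon}+n^{1+\epsilon}(1+\delta)\log q+\delta^{2+\epsilon}\right)$ of $\p$-small operations.
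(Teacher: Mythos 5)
Your proposal follows essentially the same route as the paper's proof: work at precision $\lfloor 2\delta/n\rfloor+1$ justified by Theorem \ref{bound}, bound the number of refinement iterations at level $i$ by the width $\lceil h_i/e_i\rceil$ via Proposition \ref{compwidth}, and convert the resulting weighted sums into $O(\delta^{2+\epsilon})$ through the inequality $\delta_0(F)\le 2\delta/n$ (equivalently, the estimate $\sum_i \frac{|\lambda_i|}{e_0\cdots e_{i-1}}\frac{n^2}{m_i}=O(\delta)$, which is exactly the paper's display (\ref{sumdelta})). Your separation of the cost into ``one non-refining pass per level'' plus ``refinement passes'' is only a cosmetic repackaging of the paper's single sum $\sum_i\lceil|\lambda_i|\rceil\,C_{\mbox{\tt R},i}$, and your per-subroutine estimates and the bound $\sum_i\lceil h_i/e_i\rceil\,\omega_i=O(n+\delta)$ all check out.

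There is one omission. All of the quantitative inputs you use --- Lemma \ref{delta0props}(2), the identity $\delta\ge n\mu(F)=n\mu_r$, and the bound $V_{i+1}\le (e_0\cdots e_i)\cdot 2\delta/n$ --- are statements about an \emph{irreducible} $F$ and its Okutsu invariants, whereas the theorem concerns an arbitrary monic separable $F$. When $F$ is reducible, the test may still perform many iterations (including refinement steps at the last level reached) before detecting two sides or two residual factors, and the data $\lambda_i,e_i,f_i,V_i$ of the type being built are not Okutsu invariants of $F$; one must still relate them to $\delta(F)$. The paper handles this by Remark \ref{irreducible}, which replaces $F$ by an irreducible polynomial of the same degree whose test run dominates the given one; alternatively one can invoke Lemma \ref{technical}, which gives $\frac{v_i(F)+\ell|\lmn|}{e_0\cdots e_{i-1}}\le 2\delta(F)/n$ for any monic polynomial of type $\ty$ and thereby extends your key estimates to the reducible case. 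Without one of these two steps your argument proves the theorem only for irreducible inputs.
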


\begin{corollary}\label{IrrEstimationSmall}
If we assume $\p$ small (i.e. $\log(q)=O(1)$), we obtain an estimation of $O(n^{2+\epsilon}+\delta^{2+\epsilon})$ word operations.
\end{corollary}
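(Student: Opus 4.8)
The plan is to run the irreducibility test at the single precision $\nu:=\lfloor 2\delta/n\rfloor+1$. By Corollary \ref{testprecision} and Lemma \ref{delta0props}, item (2), this precision suffices for the test to produce the correct verdict, and it gives $n\nu\le 2\delta+n$; at precision $\nu$, one operation of $A$ costs $O(\nu^{1+\epsilon})$ $\p$-small operations, one operation of $\F_i$ costs $O((f_0\cdots f_{i-1})^{1+\epsilon})$ $\p$-small operations, and $\log(\#\F_i)=(f_0\cdots f_{i-1})\log q$. I would then bound the total cost by summing, over the whole execution, the costs of the four subroutines {\tt Factorization}, {\tt Newton}, {\tt ResidualPolynomial}, {\tt Representative} given by Lemma \ref{Factorization}, Lemma \ref{Newton}, Lemma \ref{ResidualPolynomial} and Corollary \ref{Representative} (the last already counted in $\p$-small operations, precision included, so not to be multiplied again by $\nu^{1+\epsilon}$).

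The execution is organised as a chain of nodes of non-decreasing order (only strongly optimal types survive on the stack). The order strictly increases at most $r=O(\log n)$ times, where $r$ is the Okutsu depth of $F$ if $F$ is irreducible; if $F$ splits, the test halts no later than the first level exhibiting the splitting, so the irreducible case governs the cost. At the node where the order is $i-1$ the current representative has degree $m_i$ and the relevant multiplicity is $\omega_i=n/m_i$, with $m_1\mid\cdots\mid m_{r+1}=n$, $1=\omega_{r+1}\mid\cdots\mid\omega_1$, $\sum_i m_i\le 2n$, $\sum_i\omega_i\le 2n$ (Lemma \ref{trivial}), and $m_i=(e_0\cdots e_{i-1})(f_0\cdots f_{i-1})$. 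Before the order leaves level $i$ there may be refinement steps, each replacing the current representative of $\op{Trunc}_{i-1}$ by a strictly better one; since the attached value $v(\phi(\t))$ runs through strictly increasing elements of the finite set $\vv_i$ of Proposition \ref{compwidth}, their number is less than $\#\vv_i=\lceil h_i/e_i\rceil$, so the width $(\#\vv_1,\dots,\#\vv_r)$ controls the refinement overhead.

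Now I would insert the subroutine costs and sum. {\tt Newton} at level $i$ costs $O(\omega_i n^{1+\epsilon})$ operations of $A$ (Lemma \ref{Newton}, with $d=n$); {\tt ResidualPolynomial} costs $O(d_i(f_0\cdots f_{i-1})(m_i)^{1+\epsilon}\log q)$ $\p$-small operations, $d_i$ being the degree of the relevant residual polynomial; {\tt Factorization} over $\F_i$ costs $O(d_i^{2+\epsilon}+d_i^{1+\epsilon}\log(\#\F_i))$ operations of $\F_i$; {\tt Representative} at level $i$ costs $O((f_0\cdots f_i)^{2+\epsilon}V_{i+1}^{1+\epsilon})$ $\p$-small operations. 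To collapse the resulting sums I would use the identity $m_i\omega_i=n$ (so that products like $(f_0\cdots f_{i-1})d_i$ or $(f_0\cdots f_i)(e_0\cdots e_i)$ are at most $n$), the telescoping bounds of Lemma \ref{trivial}, the width bound on the refinements together with the sharper inequality $\delta\ge n\mu(F)=\sum_i(\omega_i-1)(h_i/e_i)(f_0\cdots f_{i-1})\omega_i$ (which bounds each $\#\vv_i$), the formula $V_{i+1}=(e_0\cdots e_i)(\mu_i+\nu_i)$ and the Okutsu bound $\mu_i+\nu_i\le 2\mu(F)\le 2\delta/n$ (Proposition \ref{okutsu}, using $\delta=n\mu(F)+f(F)\rho\ge n\mu(F)$), the relation $n\nu\le 2\delta+n$, and finally the elementary fact $xy^{1+\epsilon}=O(x^{2+\epsilon}+y^{2+\epsilon})$ (split into the cases $x\ge y$ and $y\ge x$). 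This should turn the combined estimate into $O(n^{2+\epsilon})$ from the $\phi$-developments, $O(n^{1+\epsilon}(1+\delta)\log q)$ from the residual-polynomial and residue-field work (the isolated ``$1$'' being the cost of {\tt Factorization}$(\F,\overline F)$ on a degree-$n$ input), and $O(\delta^{2+\epsilon})$ from the precision factor $\nu^{1+\epsilon}$ multiplying the larger $A$-operation counts.

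The main obstacle is exactly this final summation. The delicate point is to verify that, after multiplying $A$-operation counts by $\nu^{1+\epsilon}$ and $\F_i$-operation counts by $(f_0\cdots f_{i-1})^{1+\epsilon}$, each geometric sum over the levels — and over the refinements within a level, where the residual polynomials carry the larger degree $\omega_i$ — collapses to total degree $2$, rather than $3$ or $4$, in $n$ and $\delta$. The two facts that make this possible are the identity $m_i\omega_i=n$, which lets the residue-extension factors telescope against the residual-polynomial degrees, and the lower bound $\delta\ge n\mu(F)$, which lets the precision $\nu$, the slopes $h_i/e_i$ and the integers $V_{i+1}$ all be traded for $\delta$; controlling the refinement rounds by the width $\#\vv_i=\lceil h_i/e_i\rceil$ and showing that their aggregate cost is absorbed is the place where the most care is needed.
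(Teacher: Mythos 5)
Your plan reproduces, in essentially the same form, the paper's proof of Theorem \ref{IrrEstimation} (working at precision $\lfloor 2\delta/n\rfloor+1$ justified by Theorem \ref{bound}, bounding the refinement iterations at each level by the width $\lceil h_i/e_i\rceil$ via Proposition \ref{compwidth}, inserting the subroutine costs from Lemmas \ref{Newton}, \ref{ResidualPolynomial}, \ref{Factorization} and Corollary \ref{Representative}, and collapsing the level sums through $\delta\ge n\mu(F)$ and Lemma \ref{trivial}), after which the corollary is the one-line specialization $\log q=O(1)$ combined with $n^{1+\epsilon}\delta=O(n^{2+\epsilon}+\delta^{2+\epsilon})$, exactly as you note. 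This is correct and is the same route the paper takes.
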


Before proving this theorem, we discuss some features of the flow of the algorithm.
The irreducibility test provides as a by-product an optimal type $\ty$ of order $r$, represented by a tree with unibranch nodes and a unique leaf:

\begin{center}
\setlength{\unitlength}{5.mm}
\begin{picture}(17,2.6)
\put(-.15,.85){$\bullet$}\put(1.85,.85){$\bullet$}\put(3.85,.85){$\bullet$}
\put(0,1){\line(1,0){5.8}}
\put(-1,1){\begin{footnotesize}$\psi_0$\end{footnotesize}}
\put(.8,1.5){\begin{footnotesize}$(\phi_1,\lambda_1,\psi_1)$\end{footnotesize}}
\put(3,0.2){\begin{footnotesize}$(\phi_2,\lambda_2,\psi_2)$\end{footnotesize}}
\put(6.2,.95){\begin{footnotesize}$\cdots\cdots\cdots\cdots$\end{footnotesize}}
\put(11.5,.85){$\bullet$}\put(13.5,.85){$\bullet$}
\put(9.8,1){\line(1,0){3.8}}
\put(8,1.5){\begin{footnotesize}$(\phi_{r-1},\lambda_{r-1},\psi_{r-1})$\end{footnotesize}}
\put(14,1){\begin{footnotesize}$(\phi_r,\lambda_r,\psi_r)\leftrightsquigarrow\ty $\end{footnotesize}}
\end{picture}
\end{center}\medskip

If $\ord_\ty(F)=1$, then $F$ was recognized to be irreducible. Otherwise, after eventually several refinement steps, a representative $\phi_{r+1}$ of $\ty$ was found, such that $N_{r+1}(F)$ had more than one side, or $R_{r+1}(F)$ had more than one irreducible factor; then, $F$ was recognized to be reducible. In this latter case, all irreducible factors of $F$ are of type $\ty$ (Definition \ref{type}), and they have degree a multiple of $m_{r+1}$, by Lemma \ref{oftype}. In particular, $n=fm_{r+1}$, for some integer $f\ge 2$. 

We may choose a monic irreducible polynomial $\psi\in\F_{r+1}[y]$ of degree $f$ and use {\tt Representative} to construct a representative $\phi\in A[x]$ of the type of order $r+1$:
$$\ty'=(\psi_0;(\phi_1,\lambda_1,\psi_1);\cdots;(\phi_r,\lambda_r,\psi_r);(\phi_{r+1},-1,\psi)). 
$$ 
The polynomial $\phi$ is irreducible over $\oo_\p$ and it has degree $m_{r+2}=fm_{r+1}=n$. The irreducibility test applied to $\phi$ performs the same steps at all levels $i\le r$, the same refinement steps at level $r+1$ to find $\phi_{r+1}$, and it will compute $N_{r+1}(\phi)$ and $R_{r+1}(\phi)$, to deduce the irreducibility
of $\phi$ from the property $R_{r+1}(\phi)\sim \psi$. We shall see below that the cost of reaching $\phi_{r+1}$ depends only on $n$ and $\phi_{r+1}$. By Lemmas \ref{Newton}, \ref{ResidualPolynomial}, the cost of the computation of $N_{r+1}(\phi)$, $R_{r+1}(\phi)$ is not lower than the cost of the computation of $N_{r+1}(F)$, $R_{r+1}(F)$, respectively. Hence:

\begin{remark}\label{irreducible}\rm
For the estimation of the complexity of the irreducibility test, we may assume that the input polynomial is irreducible.
\end{remark}

For the estimation of the complexity we need to estimate the cost of advancing from the $(i-1)$-th node of the tree to the $i$-th node. This step may require several iterations of the WHILE loop, because of the refinement steps
at the $i$-th level. Thus, the crucial questions are the evaluation of the cost of each iteration at the $i$-th level and to find an upper bound for the number of these iterations. 

\begin{lemma}\label{refinement}
The width of $F$ at the $i$-th level, $\lceil |\lambda_i|
\rceil$, is an upper bound for the number of iterations of the WHILE loop at the $i$-th level, that are necessary to reach the right values of  $(\phi_i,\lambda_i,\psi_i)$.
\end{lemma}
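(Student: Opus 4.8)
The plan is to track, over the successive iterations of the WHILE loop at level $i$, the value $v(\phi_i(\t))$, where $\phi_i$ is the level-$i$ representative currently stored in the type and $\t\in\ks$ is a fixed root of $F$. By Remark \ref{irreducible} we may assume $F$ irreducible; fix an OM representation $\ty_F$ of $F$, let $r$ be its Okutsu depth (so $1\le i\le r$), and set $\ty_0:=\op{Trunc}_{i-1}(\ty_F)$, a strongly optimal type of order $i-1$. First I would observe that every $\phi_i$ entering the loop at level $i$ is a representative of $\ty_0$: the first one is constructed by {\tt Representative} when level $i-1$ is abandoned, and every later one is produced by a refinement step. The polynomial $\phi'$ manufactured by such a step is, by construction, a representative of the order-$i$ type $(\ty_0;(\phi_i,\lambda_i,\psi_i))$, hence $\ord_{\ty_0}(\phi')\ge\ord_{(\ty_0;(\phi_i,\lambda_i,\psi_i))}(\phi')=1$ (Definition \ref{optimal}); being monic of degree $m_i$ it is forced to be a representative of $\ty_0$ by Lemma \ref{oftype}. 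Therefore $v(\phi_i(\t))\in\vv_i$ throughout, and $\#\vv_i=\lceil|\lambda_i|\rceil$ by Proposition \ref{compwidth}.

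The core of the argument is the claim that each refinement step at level $i$ strictly increases $v(\phi_i(\t))$. Granting it, the values assumed by $v(\phi_i(\t))$ over the iterations at level $i$ are pairwise distinct, hence at most $\#\vv_i=\lceil|\lambda_i|\rceil$ in number; since there is exactly one iteration per value, the lemma follows. So suppose the iteration with current representative $\phi:=\phi_i$ is a refinement step, producing $\phi'$. The triggering condition $\deg\phi'=\deg\phi=m_i$ forces $e_i=f_i=1$ for the data read off in this iteration; thus $\lambda_i=-h$ with $h$ a positive integer, $\psi:=\psi_i$ is monic of degree one and $\psi\ne y$, and $\phi'=\phi+g$, where $g$ is the output of {\tt Construct} and has degree $<m_i$. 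Since $F$ is irreducible, $N_i(F)$ (with respect to $(\ty_0,\phi)$) is one-sided of slope $-h$ and $R_i(F)\sim\psi^{\omega_{i+1}}$, so the order-$i$ type $\ty:=(\ty_0;(\phi,-h,\psi))$ divides $F$; it is optimal but not strongly optimal. An inspection of {\tt Construct} yields $v_i(g)=V_{i+1}^{\ty}=V_i+h$ (one has $v_{i+1}(g)=V_{i+1}^{\ty}$ by construction, and $v_{i+1}(g)=v_i(g)$ because $e_i=1$ and the $\phi$-development of $g$ is concentrated in degree zero); hence the $\phi$-development $\phi'=1\cdot\phi+g$ shows that $N_i(\phi')$, with respect to $(\ty_0,\phi)$, is the single side joining $(0,V_i+h)$ to $(1,V_i)$, of slope $-h$.

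The last step is to apply Proposition \ref{prop35} twice; since it is stated for an arbitrary type of order $i$, the lack of strong optimality of $\ty$ is harmless. Applied to the pair $(F,\phi)$: as $\phi\mid\phi$, the polygon $N_i(\phi)$ is the degenerate side of slope $-\infty$ (cf. Remark \ref{infinity} and \cite[Sec. 1.1]{HN}), so $\min\{|\lambda_i|,|\lambda(\phi)|\}=h$, and $\ty\nmid\phi$ because $R_{-h,i}(\phi)$ is a non-zero constant; the equality case of Proposition \ref{prop35} then gives $v(\phi(\t))=(V_i+h)/(e_0\cdots e_{i-1})$. Applied to the pair $(F,\phi')$: here $\op{Trunc}_{i-1}(\ty)=\ty_0$ divides $\phi'$, the slope of $N_i(\phi')$ is $-h$ so again $\min\{|\lambda_i|,|\lambda(\phi')|\}=h$, and $\ty\mid\phi'$ because $\phi'$ is a representative of $\ty$; as this last fact excludes equality, we get $v(\phi'(\t))>(V_i+h)/(e_0\cdots e_{i-1})=v(\phi(\t))$, the desired strict increase. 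The only genuinely computational point is the identity $v_i(g)=V_i+h$ for the {\tt Construct} output, which I expect to be the main obstacle; alternatively, one may quote the refinement analysis of \cite[Sec. 3.2]{algorithm}. Everything else is bookkeeping with the Newton polygons and MacLane valuations already introduced in Sections \ref{secOkutsu}--\ref{secOMfac}.
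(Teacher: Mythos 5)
Your proof is correct and follows essentially the same route as the paper's: both arguments track $v(\phi(\t))$ across the iterations at level $i$, show that each refinement step strictly increases this value, and then bound the number of iterations by $\#\vv_i=\lceil|\lambda_i|\rceil$ via Proposition \ref{compwidth}. The only difference is cosmetic: where the paper quotes \cite[Thms.~2.11, 3.1]{HN} and \cite[Thm.~3.1]{algorithm} for the strict inequality $v(\phi(\t))<v(\phi'(\t))$, you rederive it from Proposition \ref{prop35} together with the explicit computation $v_i(g)=V_i+h$ for the output of {\tt Construct}, which is a valid and somewhat more self-contained justification of the same step.
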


\begin{proof}
The first WHILE loop at the $i$-th level picks the type of order $i-1\ge0$: 
$$\ty=(\psi_0;(\phi_1,\lambda_1,\psi_1);\cdots;(\phi_{i-1},\lambda_{i-1},\psi_{i-1})),$$
and a representative $\phi$ of degree $m_i$ (a first candidate to be the polynomial $\phi_{i}$),
from the {\tt Stack}. Then, it computes the slope $\lambda=-h/e$, with $h,e$ positive coprime integers, of the one-sided Newton polygon $N_i(F)$ with respect to $(\ty,\phi)$, and the unique irreducible factor $\psi\in\F_i[y]$ of the residual polynomial $R_{\lambda,i}(F)$. Finally, it constructs a representative $\phi'$ of the type
$$\ty'=(\psi_0;(\phi_1,\lambda_1,\psi_1);\cdots;(\phi_{i-1},\lambda_{i-1},\psi_{i-1});(\phi,\lambda,\psi)).$$
Let $f=\deg\psi$, $V=ef(eV_i+h)$. By \cite[Thms. 2.11,3.1]{HN}, $\deg \phi'=efm_i$, and
$$ v(\phi(\t))=(V_i+|\lambda|)/(e_0\cdots e_{i-1})< v(\phi'(\t))=(V+|\lambda'|)/(e_0\cdots e_{i-1}e),
$$ 
where $\t$ is a root of $F$ in $\overline{K}_\p$, $v$ is the canonical extension of $v_\p$ to  $\overline{K}_\p$, and $\lambda'$ is the slope of the Newton polygon $N_{i+1}(F)$, computed with respect to $(\ty',\phi')$.

The loop is a refinement step if and only if $\deg\phi'=m_i$, or equivalently, $e=f=1$. In this case, $\phi'$ is also a representative of $\ty$, and we proceed to a new iteration of the WHILE loop at the $i$-th level, with the pair $(\ty,\phi')$ as starting data. Otherwise, \cite[Thm. 3.1]{algorithm} shows that $v(\phi(\t))$ is maximal among all other representatives of $\ty$; thus, it may be taken as an Okutsu polynomial of the $i$-th level. We take $\phi_i:=\phi$, $\lambda_i:=\lambda$, $\psi_i:=\psi$ and we proceed to a new iteration of the WHILE loop at the $(i+1)$-th level with the pair $(\ty',\phi')$ as starting data. 

Therefore, the number of iterations of the WHILE loop at the $i$-th level is bounded from above by the number of values of $v(\phi(\t))$, where $\phi$ runs on all possible representatives of $\ty$. This number of values is $\lceil|\lambda_i|\rceil$ by Proposition \ref{compwidth}.
\end{proof}

\noindent{\sl Proof of Theorem \ref{IrrEstimation}. }By Remark \ref{irreducible} we may assume that the input polynomial $F$ is irreducible over $\oo_\p[x]$. Let $r$ be the Okutsu depth of $F$ and $\ty_{F,r}=(\psi_0;(\phi_1,\lambda_1,\psi_1);\cdots;(\phi_{r},\lambda_{r},\psi_{r}))$ the strongly optimal type of order $r$ computed along the flow of the algorithm.
 
We shall frequently use an estimation that is an immediate consequence of formula (\ref{delta0}) and the inequality $\delta_0(F)\le 2\delta/n$ of Lemma \ref{delta0props}:
\begin{equation}\label{sumdelta}
 \sum\nolimits_{1\le i\le r}\dfrac{|\lambda_i|}{e_0\cdots e_{i-1}}\,\dfrac{n^2}{m_i}=O(\delta).
\end{equation}
 
The initial steps compute the pair $(\ty,\phi)$, where $\ty=(\psi_0)$ is the type of order zero determined by the unique irreducible factor of $F$ modulo $\p$ and $\phi$ is a monic lift to $A[x]$ of $\psi_0$. The cost of these operations is dominated by the factorization of $F$ modulo $\p$, which costs $O\left(n^{2+\epsilon}+n^{1+\epsilon}\log(q)\right)$ $\p$-small operations. 

Each iteration of the WHILE loop calls only once each subroutine {\tt Newton}, {\tt ResidualPolynomial}, {\tt Factorization} and {\tt Representative}. Let {\tt R} be one of these subroutines; by Lemma \ref{refinement}, the total cost of the calls to {\tt R} of all iterations of the WHILE loop is:
\begin{equation}\label{sum}
\sum\nolimits_{1\le i\le r}|\lambda_i|\,C_{\mbox{\tt R},i},
\end{equation}
where $C_{\mbox{\tt R},i}$ is an upper bound of the cost of any call to {\tt R} along the different iterations of the WHILE loop at the $i$-th level. We proceed to estimate 
$C_{\mbox{\tt R},i}$ and (\ref{sum}), for each subroutine. We keep the notation introduced in the proof of Lemma \ref{refinement} for the data $\ty$, $\phi$, $\lambda$, $\psi$, $e$, $f$, $h$, $\phi'$, $V$, $\ty'$, used in any of these iterations.\medskip

\noindent{\tt R=Newton. }By Lemma \ref{Newton}, the cost of one call to {\tt Newton}  depends only on $n=\deg F$ and $\omega:=\ell(N_i(F))=n/\deg\phi$. Since $\deg\phi=m_i=e_{i-1}f_{i-1}m_{i-1}$ does not depend on the choice of $\phi$, the cost  is constant for all the iterations at the $i$-th level. By Lemma \ref{Newton}, this cost is $O\left((n/m_i)n^{1+\epsilon}\right)$ operations in $A$. 

By Theorem \ref{bound}, we may work at any precision $\nu>2\delta/n$, so that we may take 
$$
C_{\mbox{\tt R},i}=O\left((n/m_i)n^{1+\epsilon}(\delta/n)^{1+\epsilon}\right)=
O\left((n/m_i)\delta^{1+\epsilon}\right)
$$
$\p$-small operations. By (\ref{sumdelta}), we get:
$$
\sum_{1\le i\le r}|\lambda_i|\,C_{\mbox{\tt R},i}=
\delta^{1+\epsilon}\sum_{1\le i\le r}|\lambda_i|\,\dfrac{n}{m_i}\le 
\delta^{1+\epsilon}\sum_{1\le i\le r}\dfrac{|\lambda_i|}{e_0\cdots e_{i-1}}\,\dfrac{n^2}{m_i}=O\left(\delta^{2+\epsilon}\right).
$$

\noindent{\tt R=ResidualPolynomial. }By Lemma \ref{ResidualPolynomial}, the cost of one call to {\tt ResidualPolyno\-mial}  depends only on $f_0,\dots,f_{i-1}$ and the degree of the side $d(N_i(F))=\omega/e=n/(m_ie)$. Thus, the cost is constant for all refinement steps ($e=1$) and eventually lower in the last iteration of WHILE ($ef>1$). By Lemma \ref{ResidualPolynomial},  we may take 
\begin{equation}\label{Crespol}
C_{\mbox{\tt R},i}=O\left((n/m_i)(f_0\cdots f_{i-1})m_i^{1+\epsilon}\log(q)\right)=O\left((n/e_0\cdots e_{i-1})n^{1+\epsilon}\log(q)\right)
\end{equation}
$\p$-small operations. By (\ref{sumdelta}), we get:
$$
\sum\nolimits_{1\le i\le r}|\lambda_i|\,C_{\mbox{\tt R},i}\le
n^{1+\epsilon}\log(q)\sum\nolimits_{1\le i\le r}\dfrac{|\lambda_i|\,n}{e_0\cdots e_{i-1}}=O\left(n^{1+\epsilon}\log(q)\delta\right).
$$

\noindent{\tt R=Factorization. }By Lemma \ref{Factorization}, the cost of one call to {\tt Factorization}  depends only on $\deg R_{\lambda,i}(F)=d(N_i(F))=\omega/e=n/(m_ie)$, and it is bounded from above by $O\left((n/m_i)^{2+\epsilon}+(n/m_i)^{1+\epsilon}(f_0\cdots f_{i-1})\log(q)\right)$ operations in $\F_i$. We may estimate 
\begin{align*}
C_{\mbox{\tt R},i}&\ =O\left((n/m_i)^{2+\epsilon}(f_0\cdots f_{i-1})^{1+\epsilon}+(n/m_i)^{1+\epsilon}(f_0\cdots f_{i-1})^{2+\epsilon}\log(q)\right)\\
&\ =O\left(n^{2+\epsilon}/(m_i(e_0\cdots e_{i-1})^{1+\epsilon})+(n/e_0\cdots e_{i-1})^{1+\epsilon}f_0\cdots f_{i-1}\log(q)\right)
\end{align*}
$\p$-small operations. Both summands of this expression are dominated by the estimation of (\ref{Crespol}). Thus, the total cost of {\tt Factorization} is dominated by the total cost of {\tt ResidualPolynomial}.\medskip

\noindent{\tt R=Representative. }By Corollary \ref{Representative}, the cost of one call to {\tt Representative} is $O\left((f_0\cdots f_{i-1}f)^{2+\epsilon}V^{1+\epsilon}\right)$ $\p$-small operations. Along the refinement steps, we have $f=1$, $V=V_i+h$; since the value of $h=|\lambda|$ grows at each iteration, the cost is dominated by the cost of the last iteration, where $f=f_i$, $V=V_{i+1}=e_if_i(e_iV_i+h_i)$. Thus, we may take 
$$
C_{\mbox{\tt R},i}=O\left((f_0\cdots f_{i-1}f_i)^{2+\epsilon}(V_{i+1})^{1+\epsilon}\right)\mbox{ $\p$-small operations}.
$$
By the recurrent formulas for $V_i$ in section \ref{secOkutsu}, and Lemma \ref{delta0props}, $V_{i+1}/(e_0\cdots e_i)\le V_{r+1}/(e_0\cdots e_r)\le 2\delta/n$. Hence, $f_0\cdots f_iV_{i+1}\le 2\delta$.
By (\ref{sumdelta}), we get:
\begin{align*}
\sum_{1\le i\le r}|\lambda_i|\,C_{\mbox{\tt R},i}\le&\ (2\delta)^{1+\epsilon}\sum_{1\le i\le r}|\lambda_i|\,f_0\cdots f_i\,=\, (2\delta)^{1+\epsilon}\sum_{1\le i\le r}\dfrac{|\lambda_i|}{e_0\cdots e_{i-1}}\,m_if_i\\\le&\ (2\delta)^{1+\epsilon}\sum\nolimits_{1\le i\le r}\dfrac{|\lambda_i|}{e_0\cdots e_{i-1}}\,\dfrac{n^2}{m_i}=O\left(\delta^{2+\epsilon}\right).
\end{align*}
This completes the proof of the theorem.
\qed

\subsection{Complexity of the general factorization algorithm}\label{subsecGeneral}
\mbox{\null}\medskip

Let $F_1,\dots,F_g\in\oo_\p[x]$ be the monic irreducible factors of the input polynomial $F\in A[x]$. Denote $n_s=\deg F_s$, $\delta_s=\delta(F_s)$, and let $r_s$ be the Okutsu depth of $F_s$, for all $1\le s\le g$. 

The output of the Montes algorithm is a forest $\tcal=\tcal_1\cup \cdots \cup \tcal_k$, disjoint union of $k$ connected trees, one for each irreducible factor of $\overline{F}$. 
Let $\rr\subset\tcal$ be the set of the $k$ root nodes of $\tcal$, each one labelled by an irreducible factor $\psi_{0}$ of $\overline{F}$ (see Figure \ref{figForest}). If we convene that the root nodes have level zero, the \emph{level} of a node $\n\in\tcal\setminus\rr$ is, by definition, the level of its unique previous node plus one. These nodes are labelled by a triple of fundamental invariants, $\n=(\phi_{\n},\lambda_{\n},\psi_{\n})$. \bigskip

\noindent{\bf Notation. }For each $\n\in\tcal$ of level $i$, we denote:\medskip 

$\ty_\n:=$ the type of order $i$ obtained by gathering the fundamental invariants of all nodes in the unique path joining $\n$ with its root node.\medskip
   
$F_\n:=$ the product of all irreducible factors of $F$ which are divisible by $\ty_\n$.\medskip

$\bb_\n:=$ the set of nodes of level $i+1$ whose previous node is $\n$. We say that the nodes of $\bb_\n$ are \emph{branches} of $\n$.
\bigskip

Let $\ll\subset\tcal$ be the set of all leaves of $\tcal$. These leaves are in 1-1 correspondence with the $g$ irreducible factors of $F$ over $\oo_\p$. Suppose that $\n$ is the leaf attached to $F_s$. The level of $\n$ is $r_s+1$, and we denote by $\ty_s:=\ty_\n$ the corresponding type of order $r_s+1$. By construction, $\ty_s$ is an OM representation of $F_s$, and the family of the $\phi_{r_s+1}$ polynomials of $\ty_1,\dots,\ty_g$ is an OM factorization of $F$ over $\oo_\p$. In particular, $F_{\n}=F_s$, by Corollary \ref{faithful2}.

The root nodes are determined by the factorization of $\overline{F}$ over $\F[y]$. Hence, their computation has a cost of $O(n^{2+\epsilon}+n^{1+\epsilon}\log(q))$ $\p$-small operations.
Let 
$$
\mbox{\tt Rout:=}\{{\tt Newton},\, {\tt ResidualPolynomial},\, {\tt Factorization},\, {\tt Representative}\},
$$
be the family of the four fundamental subroutines of the Montes algorithm. For each routine $\mbox{\tt R}\in\mbox{\tt Rout}$ and each node $\nm\in\tcal\setminus\ll$, let $B_{\mbox{\tt R},\nm}$ be an upper bound of the cost, measured in number of $\p$-small operations, of any call to {\tt R} along the different iterations of the WHILE loop that are necessary to compute all nodes of $\bb_\nm$. Then, the total cost of the Montes algorithm is 
\begin{align}\label{wholecomplexity}
O\left(n^{2+\epsilon}+n^{1+\epsilon}\log(q)+\sum\nolimits_{\mbox{\tt R}\in\mbox{\tt Rout}}\sum\nolimits_{\nm \in\tcal\setminus\ll}B_{\mbox{\tt R},\nm}\right).
\end{align}
Our first task is to find estimations for these upper bounds $B_{\mbox{\tt R},\nm}$.

\begin{lemma}\label{branches}
For all $\nm\in\tcal\setminus\ll$, we have 
$F_{\nm}=\prod\nolimits_{\n\in\bb_\nm}F_\n$.
\end{lemma}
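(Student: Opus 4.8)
The plan is to unwind the definitions of $F_\nm$ and of the branch set $\bb_\nm$, and to use the basic divisibility properties of types recorded in Lemma \ref{oftype} together with the behaviour of the Montes algorithm at a single node. Fix $\nm\in\tcal\setminus\ll$ of level $i$, with associated type $\ty_\nm$ of order $i$ and representative $\phi_{i+1}:=\phi_\nm$ (here I reuse the notation $\phi_i$ for the $\phi$-polynomial of $\ty_\nm$, i.e. $\deg\phi_i=m_i$; the representative constructed by the algorithm has degree $m_{i+1}$). Recall that $F_\nm$ is, by the Notation preceding the lemma, the product of those irreducible factors $F_s$ of $F$ that are divisible by $\ty_\nm$, while the branches $\n\in\bb_\nm$ correspond exactly to the monic irreducible factors $\psi$ of the residual polynomial $R_{i+1}(F_\nm)$ (equivalently $R_{\lambda,i+1}$ for each slope $\lambda$ of $N_{i+1}^-(F_\nm)$), the type $\ty_\n$ being $\ty_\nm$ extended by $(\phi_\nm,\lambda,\psi)$. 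So I must show that an irreducible factor $F_s$ with $\ty_\nm\mid F_s$ is divisible by exactly one such extension $\ty_\n$, and conversely that every $\ty_\n$ with $\n\in\bb_\nm$ divides only factors already divisible by $\ty_\nm$.

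First I would establish the inclusion $\prod_{\n\in\bb_\nm}F_\n \mid F_\nm$. If $\n\in\bb_\nm$, then $\op{Trunc}_i(\ty_\n)=\ty_\nm$, so any irreducible factor $F_s$ divisible by $\ty_\n$ satisfies $\ord_{\ty_\nm}(F_s)\ge \ord_{\ty_\n}(F_s)>0$ by the truncation inequality in Definition \ref{optimal}; hence $\ty_\nm\mid F_s$ and $F_s\mid F_\nm$. Distinct branches $\n\ne\n'$ carry either distinct slopes or distinct residual irreducible factors $\psi_\n\ne\psi_{\n'}$, and in either case no irreducible $F_s$ can be divisible by both $\ty_\n$ and $\ty_{\n'}$: being of type $\ty_\n$ forces $N_{i+1}(F_s)$ one-sided of slope $\lambda_\n$ and $R_{i+1}(F_s)\sim\psi_\n^{a}$ (Definition \ref{type}, applied via Lemma \ref{oftype}, since $F_s$ irreducible with $\ty_\n\mid F_s$ is a polynomial of type $\ty_\n$), which is incompatible with the analogous statement for $\n'$. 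Therefore the factors counted on the right-hand side are pairwise coprime and each divides $F_\nm$, giving the divisibility of products.

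For the reverse divisibility I would take an irreducible factor $F_s\mid F_\nm$, so $\ty_\nm\mid F_s$, and show $F_s\mid F_\n$ for some $\n\in\bb_\nm$. Since $F_s$ is irreducible with $\ty_\nm\mid F_s$, it is a polynomial of type $\ty_\nm$ (Lemma \ref{oftype}), so $N_{i+1}(F_s)=N_{i+1}^-(F_s)$ is one-sided, say of slope $\lambda$, and $R_{i+1}(F_s)\sim\psi^{a}$ for a single monic irreducible $\psi\in\F_{i+1}[y]$ — because a root of $F_s$ generates a field extension, the type of order $i+1$ extending $\ty_\nm$ that divides $F_s$ is unique (this is exactly what the Newton-polygon/residual-polynomial theory of \cite{HN} gives for an irreducible polynomial; cf. the discussion of OM representations in section \ref{secOkutsu}). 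Now by the theorem of the product (\ref{product}), the polygon $N_{i+1}^-(F_\nm)$ is the sum of the $N_{i+1}^-$ of its irreducible factors, and the residual polynomial $R_{\lambda,i+1}(F_\nm)$ is, up to nonzero constants, the product of the $R_{\lambda,i+1}$ of those factors having slope exactly $\lambda$; hence $\psi$ occurs among the irreducible factors of $R_{\lambda,i+1}(F_\nm)$, i.e. the extension $\ty_\nm$-by-$(\phi_\nm,\lambda,\psi)$ is precisely one of the branch types $\ty_\n$, $\n\in\bb_\nm$, and $\ty_\n\mid F_s$, so $F_s\mid F_\n$. Combining the two inclusions and the pairwise coprimality yields $F_\nm=\prod_{\n\in\bb_\nm}F_\n$.

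The only genuinely delicate point is the uniqueness in the last paragraph: that for an \emph{irreducible} $F_s$ of type $\ty_\nm$ there is exactly one order-$(i+1)$ extension of $\ty_\nm$ dividing it — equivalently that $N_{i+1}^-(F_s)$ is one-sided and $R_{i+1}(F_s)$ is a prime power. Everything else is bookkeeping with Lemma \ref{oftype}, Definition \ref{type}, and the product theorem (\ref{product}). This uniqueness is standard in the Ore–MacLane–Montes framework and is the content of the irreducibility criteria recalled in section \ref{secAlgo} (two sides or two coprime residual factors would contradict irreducibility), so I would simply cite \cite[Thms. 3.1, 3.7]{HN} for it rather than reprove it. $\qed$
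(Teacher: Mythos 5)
Your proof is correct in substance but takes a genuinely different, more first-principles route than the paper. The paper's proof is a two-line tree-combinatorial argument: for any node $\n$, the polynomial $F_\n$ equals the product of the irreducible factors attached to the leaves lying in the subtree below $\n$ (this is the correspondence between leaves and irreducible factors guaranteed by the correctness of the Montes algorithm), and the set of leaves below $\nm$ is the disjoint union of the sets of leaves below the branches $\n\in\bb_\nm$; the lemma follows at once. You instead work directly from the definition $F_\n=\prod_{\ty_\n\mid F_s}F_s$ and re-derive the key partition property --- every irreducible factor divisible by $\ty_\nm$ is divisible by exactly one branch type --- from the theorem of the product (\ref{product}) together with the fact that an irreducible polynomial of type $\ty$ has a one-sided Newton polygon and a prime-power residual polynomial. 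This buys self-containedness at the cost of length. One caveat: your identification of $\bb_\nm$ with the irreducible factors of the residual polynomials of $F_\nm$ relative to a \emph{single} representative $\phi_\nm$ ignores refinement steps. When a pair $(\lambda,\psi)$ has $e_\lambda f_\psi=1$ and $\omega>1$, the algorithm does not create a node but replaces the representative and iterates, so distinct branches of $\nm$ in the final tree may be built on distinct representatives of $\ty_\nm$. Your argument survives this, because the product-theorem partition can be applied again at each refinement stage and the resulting partitions successively refine one another, but as written the description of the branch set is an oversimplification and the uniqueness/exhaustion argument has to be iterated along the refinement chain rather than applied once.
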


\begin{proof}
For an arbitrary node $\n\in\tcal$, let $\ll_\n\subset\ll$ be the set of leaves that are connected to $\n$. By definition, $F_\n$ is the product of all irreducible factors of $F$ attached to the leaves in $\ll_\n$. On the other hand, $\ll_{\nm}$ is clearly the disjoint union of all $\ll_\n$, for $\n\in\bb_\nm$. 
\end{proof}

\begin{lemma}\label{BR}
Let $\nm\in\tcal\setminus\ll$ be a node of level $i-1\ge0$. Let $e_j,f_j,h_j$, $0\le j<i$ be the Okutsu invariants of the type $\ty_{\nm}$, and take $m_i:=e_{i-1}f_{i-1}m_{i-1}$. Denote
$$
B:=\sum\nolimits_{\n\in\bb_\nm\setminus\ll}|\lambda_{\n}|\,\dfrac{\deg F_\n}{m_i}+\sum\nolimits_{\n\in\bb_\nm\cap\ll}\dfrac{v_\p(\res(F_\n,F_t))}{f_0\cdots f_{i-1}},
$$
where, for each $\n\in\bb_\nm\cap\ll$, $F_t\ne F_\n$ is an adequate choice of an irreducible factor of $F$ such that $\ty_{\nm}\mid F_t$.
Then, for {\tt R=Newton} or {\tt Representative}, we have $B_{\mbox{\tt R},\nm}=O\left(n^{1+\epsilon}\delta^{1+\epsilon}B\right)$, whereas for {\tt R=ResidualPo\-lynomial} or {\tt Factorization}, we have
$B_{\mbox{\tt R},\nm}=O\left(n^{1+\epsilon}
f_0\cdots f_{i-1}\log(q)B\right)$.
\end{lemma}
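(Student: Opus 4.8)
The plan is to localize at the node $\nm$ the bookkeeping from the proof of Theorem~\ref{IrrEstimation}, carried out at the precision $O(\delta)$ justified by Theorems~\ref{bound} and~\ref{precision}. Write $i-1$ for the level of $\nm$, $\ty_\nm$ for its type, and $\ell_\n:=\deg F_\n/m_i$ for $\n\in\bb_\nm$; by Lemmas~\ref{oftype} and~\ref{branches}, $\ell_\n=\ell(N_i(F_\n))$ with respect to $\ty_\nm$ and $\ord_{\ty_\nm}(F)=\sum_{\n\in\bb_\nm}\ell_\n=:\omega_\nm$, so $m_i\omega_\nm=\deg F_\nm\le n$. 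Computing $\bb_\nm$ is a finite sequence of iterations of the WHILE loop, each processing $\ty_\nm$ with a candidate representative of degree $m_i$ and calling {\tt Newton} once and then {\tt ResidualPolynomial}, {\tt Factorization}, {\tt Representative} once per side of $N_i(F)$ and per residual factor. These iterations organize into a finite tree: its root is the first iteration, its internal edges are the refinement steps of~{\bf 16} (a residual factor of integer slope and degree one, sending $\ty_\nm$ back to the Stack with a better representative and with its stored multiplicity decreased to that of this residual factor), and its leaves are the nodes of $\bb_\nm$. Writing $d_\n$ for the length of the path from the root to a leaf $\n$, I would establish two facts about this tree: \textbf{(i)} the stored multiplicity at an iteration equals the sum of the $\ell_\n$ over the leaves $\n$ below it, so $\sum_{\text{iterations}}\omega=\sum_{\n\in\bb_\nm}\ell_\n d_\n$ and the number of iterations is at most $\sum_{\n\in\bb_\nm}d_\n$; \textbf{(ii)} $d_\n\le\lceil|\lambda_\n|\rceil$ for $\n\in\bb_\nm\setminus\ll$, and $d_\n\le v_\p(\res(F_\n,F_t))/(f_0\cdots f_{i-1})$ (with $\ell_\n=1$) for $\n\in\bb_\nm\cap\ll$.

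For~\textbf{(ii)}, if $\n\in\bb_\nm\setminus\ll$ choose an irreducible factor $F_s$ of $F$ with $\ty_\n\mid F_s$ and a root $\t_s$: along the path to $\n$ every iteration but the last is a refinement by an integer slope, so the representative $\phi$ used there satisfies $v(\phi(\t_s))=(V_i+h)/(e_0\cdots e_{i-1})$ with $1\le h<|\lambda_\n|$, and these values increase strictly; by Proposition~\ref{compwidth} (applied to $\op{Rep}(\ty_\nm)$ and $F_s$) there are at most $\lceil|\lambda_\n|\rceil$ such values, whence $d_\n\le\lceil|\lambda_\n|\rceil$, and in particular $d_\n=1$ whenever $|\lambda_\n|<1$ (no integer $h$ is then available). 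If $\n\in\bb_\nm\cap\ll$, so eventually $\phi_i$ becomes the irreducible factor $F_\n$ itself and $\ell_\n=1$, the path to $\n$ refines $\phi$ until $F_\n$ separates from every other irreducible factor of $F_\nm$; each refinement strictly increases $\min_t v(\phi(\t_t))$ over roots $\t_t$ of those factors, and this minimum stays below $v_\p(\res(F_\n,F_t))/\deg F_t$ for the $F_t$ attaining it, so Lemma~\ref{resultant} applied to $\ty_\nm$ gives the bound. Combining~\textbf{(i)}--\textbf{(ii)}: since $\ell_\n d_\n\le(|\lambda_\n|+1)\ell_\n$ and, for non-leaf $\n$, $\ell_\n=e_\n f_\n\ord_{\ty_\n}(F)\le n\,|\lambda_\n|\ell_\n$ (using $h_\n\ge1$ and $e_\n f_\n m_i\le n$), one gets $\sum_{\text{iterations}}\omega=O(nB)$; and, using $\ell_\n\ge1$ and $v_\p(\res(F_\n,F_t))\ge f_0\cdots f_{i-1}$ (Lemma~\ref{resultant}), the number of iterations is $O(B)$.

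Now I would add up the costs using Lemmas~\ref{Newton}, \ref{ResidualPolynomial}, \ref{Factorization} and Corollary~\ref{Representative}. One {\tt Newton} call in an iteration with stored multiplicity $\omega$ costs $O(\omega\,n^{1+\epsilon})$ operations in $A$, hence $O(\omega\,\delta^{1+\epsilon})$ $\p$-small operations at precision $O(\delta/n)$ (exactly as in Theorem~\ref{IrrEstimation}); summed over the iterations this is $O(\delta^{1+\epsilon}\sum_{\text{iter}}\omega)=O(n\,\delta^{1+\epsilon}B)=O(n^{1+\epsilon}\delta^{1+\epsilon}B)$. The {\tt ResidualPolynomial} and {\tt Factorization} calls of an iteration act on sides of total degree at most $\omega$, so by Lemmas~\ref{ResidualPolynomial} and~\ref{Factorization} they cost $O\!\bigl(\omega\,(f_0\cdots f_{i-1})\,m_i^{1+\epsilon}\log(q)\bigr)=O\!\bigl((f_0\cdots f_{i-1})\,n^{1+\epsilon}\log(q)\bigr)$ $\p$-small operations (using $\omega m_i\le n$, {\tt Factorization} being dominated by {\tt ResidualPolynomial} as in Theorem~\ref{IrrEstimation}); multiplying by the $O(B)$ iterations gives $B_{\mbox{\tt ResidualPolynomial},\nm}=B_{\mbox{\tt Factorization},\nm}=O\!\bigl(n^{1+\epsilon}(f_0\cdots f_{i-1})\log(q)\,B\bigr)$. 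For {\tt Representative} I argue per branch: writing $f_\n:=\deg\psi_i^{\ty_\n}$, the call creating $\n\in\bb_\nm\setminus\ll$ costs, by Corollary~\ref{Representative} and the estimate $f_0\cdots f_{i-1}f_\n V_{i+1}^{\ty_\n}\le2\delta$ (Lemma~\ref{delta0props} applied to an $F_s$ with $\ty_\n\mid F_s$, as in Theorem~\ref{IrrEstimation}), at most $O(\delta^{1+\epsilon}f_0\cdots f_{i-1}f_\n)=O(n^{1+\epsilon}\delta^{1+\epsilon}|\lambda_\n|\ell_\n)$ $\p$-small operations, since $|\lambda_\n|\ell_\n=h_\n f_\n\ord_{\ty_\n}(F)\ge f_\n$ and $f_0\cdots f_{i-1}\le n$; the fewer than $\lceil|\lambda_\n|\rceil$ refinement {\tt Representative} calls on the path to $\n$, and those on the path to an exact leaf (bounded via the resultant estimate for $d_\n$), add up to the same order. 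Summing over $\bb_\nm$ yields $B_{\mbox{\tt Representative},\nm}=O(n^{1+\epsilon}\delta^{1+\epsilon}B)$, which completes the proof.

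The main obstacle is to make facts~\textbf{(i)} and~\textbf{(ii)} rigorous: one has to verify that the refinement steps at $\nm$ genuinely organize into such a tree with no superfluous iterations, that the stored multiplicity is additive along it, and — the most delicate point — that for an exact leaf the number of refinements needed to separate $F_\n$ from the remaining factors of $F_\nm$ is controlled by the \emph{single} resultant $v_\p(\res(F_\n,F_t))$ rather than by a sum over all those factors; this requires combining Lemma~\ref{resultant} with the theorem of the product for Newton polygons, together with a careful treatment of the branches of small slope, entirely in the spirit of the proof of Theorem~\ref{IrrEstimation}.
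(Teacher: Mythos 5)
Your proposal follows essentially the same route as the paper's proof: organize the iterations of the WHILE loop at $\nm$ into a tree whose leaves are the nodes of $\bb_\nm$, use the additivity $\omega=\sum_{\n}\ell_\n$ to distribute the per-iteration costs of the four subroutines among the nodes, bound the number of iterations leading to a non-leaf $\n$ by $\lceil|\lambda_\n|\rceil$ via Proposition \ref{compwidth}, bound it for a leaf by the normalized resultant via Lemma \ref{resultant}, and work at precision $\delta+1$ justified by Theorem \ref{precision}. Your facts (i) and (ii) and the final summations match the paper's accounting, including the per-branch treatment of {\tt Representative}.

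The one point you flag as delicate — the iteration count for an exact leaf — is indeed where your sketch deviates slightly from what actually makes the argument work. The monotone quantity is not $\min_t v(\phi(\t_t))$ over the other factors: it is the absolute value of the refinement slope itself, which is a strictly increasing \emph{positive integer} at each refinement step (since $e_\lambda=f_\psi=1$ there), so the number of all-but-last iterations is at most $|\mu|$ for the slope $\mu$ of the penultimate iteration. One then bounds $|\mu|$ by $v_\p(\res(F_\n,F_t))/(f_0\cdots f_{i-1})$ using Lemma \ref{resultant} at that penultimate iteration, where $F_t$ must be chosen with some care: inside the same branch $(\lambda,\psi)$ if $\bb_{\lambda,\psi}$ contains another factor, and from a second branch $(\lambda',\psi')$ (which exists because $\omega>1$) otherwise, the bound then involving $\min\{|\lambda|,|\lambda'|\}$. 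With that substitution your argument closes, and the rest of your write-up is correct.
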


\begin{proof}
Denote for simplicity $\ty=\ty_{\nm}$, $\bb=\bb_\nm$. Since $\nm$ is not a leaf, the type $\ty$ is strongly optimal. Along the construction of the node $\nm$, the algorithm computes an initial representative $\phi$ of $\ty$ (of degree $m_i$) and the positive integer $\omega:=\ord_\ty(F)$. By the definition of $F_{\nm}$, and Lemmas \ref{oftype}, \ref{branches}:
\begin{equation}\label{omega}
\omega=\ord_\ty(F)=\ord_\ty(F_{\nm})=\deg F_{\nm}/m_i=\left(\sum\nolimits_{\n\in\bb}\deg F_\n\right)/m_i.
\end{equation}

Suppose {\tt R = Newton}. In the first iteration of the WHILE loop concerning $\nm$, the routine {\tt Newton($\ty$,$\omega$,$F$)} is called to compute the polygon $N_{i,\omega}(F)$ determined by the first $\omega+1$ coefficients of the $\phi$-expansion of $F$. By Lemma \ref{Newton}, this has a cost of $O(\omega n^{1+\epsilon})$ operations in $A$. By Theorem \ref{precision}, we may work with precision $\delta+1$, so that the computation requires $O(\omega n^{1+\epsilon}\delta^{1+\epsilon})$ $\p$-small operations. By (\ref{omega}), this cost may be distributed into a cost of 
$O((\deg F_\n/m_i) n^{1+\epsilon}\delta^{1+\epsilon})$  $\p$-small operations for each node $\n\in\bb$.

The WHILE loop yields a factorization, $F_{\nm}=\prod\nolimits_{\lambda,\psi}F_{\lambda,\psi}$, where $\lambda$ runs on all slopes of $N_{i,\omega}(F)$ and, for each $\lambda$, the polynomial $\psi$ runs on the monic irreducible factors of $R_{\lambda,i}(F)$. For each ``branch" $(\lambda,\psi)$, a representative $\phi_{\lambda,\psi}$ of the type $\ty_{\lambda,\psi}:=(\ty;(\phi,\lambda,\psi))$ is computed, and the positive integer $\omega_{\lambda,\psi}:=\ord_{\ty_{\lambda,\psi}}(F)$ is determined. The polynomial $F_{\lambda,\psi}$ is, by definition, the product of all irreducible factors of $F$ that are divisible by $\ty_{\lambda,\psi}$. The factorization of $F_{\nm}$ determines in turn a partition, $\bb=\coprod_{\lambda,\psi}\bb_{\lambda,\psi}$, where $\bb_{\lambda,\psi}$ contains all nodes $\n\in\bb$ such that $\ty_{\lambda,\psi}\mid F_\n$.
If $e_\lambda$ is the least positive denominator of $\lambda$ and $f_\psi= \deg \psi$, we have
\begin{equation}\label{omega2}
\deg \phi_{\lambda,\psi}=e_\lambda f_\psi m_i,\quad \omega=\sum\nolimits_{\lambda,\psi}e_\lambda f_\psi\omega_{\lambda,\psi}.
\end{equation}

In order to analyze these branches, there are four different situations to consider.\medskip

\noindent{\bf (a) $\mathbf{\lambda=-\infty}$. }Then, $\bb_{\lambda,\psi}=\{\n\}$ has a single node, which is a leaf of $\tcal$. The irreducible factor attached to this leaf is $F_s=\phi$, and we take $\n=(\phi,-\infty,\hbox{---})$.\medskip

\noindent{\bf (b) $\mathbf{\omega=1}$. }There is only one branch $(\lambda,\psi)$, with $e_\lambda=f_\psi=\omega_{\lambda,\psi}=1$. The set $\bb_{\lambda,\psi}=\{\n\}$ has a single node, which is a leaf of $\tcal$, and we take $\n=(\phi,\lambda,\psi)$.
\medskip
     
\noindent{\bf (c) $\mathbf{e_\lambda f_\psi >1}$. }Then, $\n:=(\phi,\lambda,\psi)\in\bb$ is already a node of level $i$ of $\tcal\setminus\ll$. In other words, $\bb_{\lambda,\psi}=\{\n\}$ singles out already a node of $\bb$, which is not a leaf of $\tcal$.\medskip

\noindent{\bf (d) $\mathbf{\omega>1}$, $\mathbf{e_\lambda f_\psi=1}$. }We fall in a refinement step; the slope $\lambda$ is a negative integer ($e_\lambda=1$), and $\psi$ has degree $f_\psi=1$. We consider $\phi_{\lambda,\psi}$ as a new representative of $\ty$, and $\omega_{\lambda,\psi}$ as the new future length of the Newton polygons of $i$-th order to analyze.\medskip 

In case (d), we take $(\ty,\phi_{\lambda,\psi},\omega_{\lambda,\psi})$ as the input data of a future call of the WHILE loop, yielding a further factorization of $F_{\lambda,\psi}$ and a further partition of $\bb_{\lambda,\psi}$. This loop will follow the same pattern as above, with a minor difference. In the first iteration, $N_{i,\omega}(F)=N_i^-(F)$ is the principal Newton polygon of $F$ with respect to $(\ty,\phi)$; however, after a refinement step, $N_{i,\omega_{\lambda,\psi}}(F)$ is only the part of $N_i^-(F)$ (now with respect to ($\ty,\phi_{\lambda,\psi}$)), formed by the sides of slope greater than $|\lambda|$ in absolute size \cite[Sec. 3]{algorithm}. In any case, the cost of the new call to {\tt Newton} is again $O(\omega_{\lambda,\psi} n^{1+\epsilon}\delta^{1+\epsilon})$ $\p$-small operations, and it may be distributed again into a cost of $O((\deg F_\n/m_i) n^{1+\epsilon}\delta^{1+\epsilon})$ $\p$-small operations for each node $\n\in\bb_{\lambda,\psi}$.    

Therefore, the total cost of the computation of $\bb$ is obtained by counting a cost of $O((\deg F_\n/m_i) n^{1+\epsilon}\delta^{1+\epsilon})$, for each $\n\in\bb$ and for each iteration of the WHILE loop where this node was concerned (i.e. $\n\in\bb_{\lambda,\psi}$). Let us find upper bounds for these numbers of iterations. The discussion is different for $\n$ being a leaf or not. Note that if $\n$ is a leaf then $\deg F_\n/m_i=1$.

Suppose that $\n=(\phi_\n,\lambda_\n,\psi_\n)\in\bb$ is not a leaf. Let $F_s$ be one of the irreducible factors of $F_\n$, and $\t_s\in\ks$ a root of $F_s$. Along the different iterations of the WHILE loop where this node is concerned, we consider different representatives $\phi$ of the type $\ty$ such that $v(\phi(\t_s))$ increases strictly (cf. the proof of Lemma \ref{refinement}). By Proposition \ref{compwidth}, the total number of iterations before we reach the node $\n$ is bounded from above by $\lceil |\lambda_\n|\rceil$. 

Suppose now $\n\in\bb\cap\ll$, and let $F_s$ be the irreducible factor attached to this leaf. We may assume that there are at least two iterations of the WHILE loop concerning $\n$. Let $(\ty,\phi,\omega)$ be the input data of the penultimate of these iterations. Since we do not fall in case (b), we have necessarily $\omega>1$. Let $(\lambda,\psi)$ be the branch such that $\n\in\bb_{\lambda,\psi}$. If   
$\#\bb_{\lambda,\psi}>1$, we take $F_t$ to be an irreducible factor of $F_{\lambda,\psi}$ such that $F_t\ne F_s$. If $\bb_{\lambda,\psi}=\{\n\}$, then $F_{\lambda,\psi}=F_\n$, and the formula (\ref{omega}) shows that $\omega_{\lambda,\psi}=\deg F_{\lambda,\psi}/m_i=1$. By (\ref{omega2}), there is some branch $(\lambda',\psi')\ne(\lambda,\psi)$,  because $\omega>1$ and $e_\lambda=f_\psi=1$; in this case we take $F_t$ to be one of the irreducible factors of $F_{\lambda',\psi'}$. Lemma \ref{resultant} shows in any case that
$$
v(\res(F_s,F_t))/(f_0\cdots f_{i-1})\ge\ell(F_s)\ell(F_t)(V_i+\min\{|\lambda|,|\lambda'|\})\ge \min\{|\lambda|,|\lambda'|\},
$$
where $\ell(F_s),\ell(F_t)$ are the lengths of $N_i(F_s), N_i(F_t)$, respectively. 
In all previous iterations of WHILE, the branch concerning $\n$ was a refinement step, and the absolute size of the corresponding slope was an integer that grows strictly in each iteration; thus, the total number of iterations concerning $\n$ is bounded from above by $1+|\mu|$, for every slope $\mu$ of the Newton polygon of the penultimate iteration.

Therefore, all estimations of the lemma about the contributions of the different nodes $\n\in\bb$ to the total cost of {\tt Newton} are correct. This ends the proof of the lemma in the case  {\tt R=Newton}.

Assume now  {\tt R$\ne$Newton}. In every iteration of the WHILE
 loop, with input data $(\ty,\phi,\omega)$, we compute the residual polynomials
$R_{\lambda,i}(F)$, for $\lambda$ running on all slopes of $N_{i,\omega}(F)$. Then we 
factorize these polynomials over $\F_i$, and for each monic irreducible factor $\psi$ of $R_{\lambda,i}(F)$, we compute a representative of the type $\ty_{\lambda,\psi}$. 

Let $\ell(\lambda)$, $d(\lambda)$ be the length and degree of the side of slope $\lambda$. Lemma \ref{ResidualPolynomial} shows that the cost of the computation of $R_{\lambda,i}(F)$ is $O\left(d(\lambda)(f_0\cdots f_{i-1})(m_i)^{1+\epsilon}\log(q)\right)$ $\p$-small operations. Since $\omega$ is the length of $N_{i,\omega}(F)$, we have 
$$
\omega=\sum\nolimits_{\lambda}\ell(\lambda)=\sum\nolimits_{\lambda}e_\lambda d(\lambda)\ge \sum\nolimits_{\lambda}d(\lambda).
$$ 
Therefore, the total cost of all calls to {\tt ResidualPolynomial} during this iteration is bounded from above by $O\left(\omega(f_0\cdots f_{i-1})(m_i)^{1+\epsilon}\log(q)\right)$. As in the case {\tt R=Newton}, this cost is the product of a constant part, $(f_0\cdots f_{i-1})(m_i)^{1+\epsilon}\log(q)$, times a variable part, $\omega$. As before, we can distribute $\omega$ into a cost of $\deg F_\n/m_i$, for every node of $\bb$, and the same arguments lead to an analogous estimation for $B_{\mbox{\tt R},\m}$, for {\tt R=ResidualPolynomial}, just by changing the constant part.   

Assume now  {\tt R=Factorization}. By Lemma \ref{factorization}, the cost of the factorization of $R_{\lambda,i}(F)$ over $\F_i$ is $O\left(d(\lambda)^{2+\epsilon}(f_0\cdots f_{i-1})^{1+\epsilon}+d(\lambda)^{1+\epsilon}(f_0\cdots f_{i-1})^{2+\epsilon}\log(q)\right)$  $\p$-small operations.
Since $d(\lambda)\le \omega=\deg F_\m/m_i\le n/m_i\le n/(f_0\cdots f_{i-1})$, this cost is $O\left(d(\lambda)n^{1+\epsilon}f_0\cdots f_{i-1}\log(q)\right)$.
Thus, the cost of all calls to {\tt Factorization} during this iteration is $O\left(\omega n^{1+\epsilon}f_0\cdots f_{i-1}\log(q)\right)$.
We obtain the estimation of $B_{\mbox{\tt R},\nm}$ by the same arguments of the previous cases.

Finally, let {\tt R=Representative}. Let $V_{\lambda,\psi}:=(e_\lambda)^2 f_\psi(V_i+|\lambda|)$. By Lemma \ref{Representative}, the cost of the computation of a representative of $\ty_{\lambda,\psi}$ is 
\begin{equation}\label{cost}
O\left((f_0\cdots f_{i-1}f_\psi)^{2+\epsilon}(V_{\lambda,\psi})^{1+\epsilon}\right) \mbox{ $\p$-small operations}.
\end{equation}
Instead of distributing this cost, we now attach the whole cost (\ref{cost}) to every node $\n\in\bb_{\lambda,\psi}$, so that our estimation is sharp only when $\#\bb_{\lambda,\psi}=1$. 

Let us estimate the accumulated cost of every node $\n\in\bb$. Along all refinement steps, we have $f_\psi=1$ and $V_{\lambda,\psi}=V_i+|\lambda|$, where $|\lambda|$ is a positive integer that grows strictly at each iteration; thus, the higher cost of (\ref{cost}) occurs at the last iteration. 

Suppose $\n\in\bb\setminus\ll$. After eventually some refinement steps, in the last ite\-ration, $f_\psi=f_{i,\n}$, $V_{\lambda,\psi}=V_{i+1,\n}$, are Okutsu data of the type $\ty_\n$. Let $F_s$ be any irreducible factor of $F_\n$. As in the proof of Theorem \ref{IrrEstimation}, $ f_0\cdots f_{i-1}f_{i,\n}V_{i+1,\n}\le 2\delta(F_s)\le 2\delta$.
Since there are at most $\lceil |\lambda_{\n}|\rceil$ iterations (Proposition \ref{compwidth}), the accumulated cost of the computation of $\n$ is bounded from above by
$$
\lceil |\lambda_{\n}|\rceil (f_0\cdots f_{i-1}f_{i,\n})^{2+\epsilon}(V_{i+1,\n})^{1+\epsilon}=O\left( |\lambda_{\n}| n\delta^{1+\epsilon}\right).
$$

Finally, let $\n\in\bb\cap\ll$. In the last iteration there is no call to {\tt Representative}. Let $(\ty,\phi,\omega)$ be the input data of the penultimate iteration, and let $(\lambda,\psi)$ be the branch concerning $\n$. Let $u$ be the ordinate of the left end point of the side of slope $\lambda$ of $N_{i,\omega}(F)$. Since $u\ne0$ and we work with precision $\delta+1$, we have necessarily $u\le \delta e_0\cdots e_{i-1}$. Now, $V_i+|\lambda|$ is the ordinate of the left end point of $N_i(F_\n)$; by the theorem of the product, $V_i+|\lambda|\le u\le \delta e_0\cdots e_{i-1}$.  
As we saw along the proof of the case {\tt R=Newton}, the total number of all-but-last iterations is bounded from above by $v(\res(F_\n,F_t))/(f_0\cdots f_{i-1})$; thus, the accumulated cost of all calls to {\tt Representative} along the computation of $\n$ is 
$$
O\left((f_0\cdots f_{i-1})^{1+\epsilon}(V_i+|\lambda|)^{1+\epsilon}v(\res(F_\n,F_t))\right)=O\left((m_i)^{1+\epsilon}\delta^{1+\epsilon}v(\res(F_\n,F_t))\right).
$$
This ends the proof of the lemma.
\end{proof}

\begin{theorem}\label{main}
The cost of the Montes algorithm over $\oo_\p$, applied to a monic se\-pa\-rable polynomial $F\in A[x]$ of degree $n$ is
$O\left(n^{2+\epsilon}+n^{1+\epsilon}(1+\delta)\log(q)+n^{1+\epsilon}\delta^{2+\epsilon}\right)$ $\p$-small operations, where $\delta:=v_\p(\dsc(F))$.
\end{theorem}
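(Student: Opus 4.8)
The plan is to substitute the estimates of Lemma \ref{BR} into the master formula (\ref{wholecomplexity}), and then to bound the resulting sums over the forest $\tcal$ by global invariants of $F$. First I would recall that, by Lemma \ref{BR}, for each node $\nm\in\tcal\setminus\ll$ of level $i-1$ we have $B_{\mbox{\tt R},\nm}=O(n^{1+\epsilon}\delta^{1+\epsilon}B_\nm)$ for $\mbox{\tt R}\in\{\mbox{\tt Newton},\mbox{\tt Representative}\}$ and $B_{\mbox{\tt R},\nm}=O(n^{1+\epsilon}(f_0\cdots f_{i-1})\log(q)B_\nm)$ for $\mbox{\tt R}\in\{\mbox{\tt ResidualPolynomial},\mbox{\tt Factorization}\}$, where
$$
B_\nm=\sum\nolimits_{\n\in\bb_\nm\setminus\ll}|\lambda_\n|\,\frac{\deg F_\n}{m_i}+\sum\nolimits_{\n\in\bb_\nm\cap\ll}\frac{v_\p(\res(F_\n,F_t))}{f_0\cdots f_{i-1}}.
$$
So (\ref{wholecomplexity}) reduces to $O\big(n^{2+\epsilon}+n^{1+\epsilon}\log(q)+(n^{1+\epsilon}\delta^{1+\epsilon}+n^{1+\epsilon}\log(q))\sum_{\nm\in\tcal\setminus\ll}B_\nm\big)$, using $f_0\cdots f_{i-1}\le m_i\le n$ to absorb the residual/factorization constant into $n^{1+\epsilon}\log(q)$. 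Thus everything hinges on proving the single global bound $\sum_{\nm\in\tcal\setminus\ll}B_\nm=O(\delta)$.

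To bound $\sum_\nm B_\nm$ I would split it into two pieces according to the two terms of $B_\nm$. For the leaf contributions, $\sum_\nm\sum_{\n\in\bb_\nm\cap\ll}v_\p(\res(F_\n,F_t))/(f_0\cdots f_{i-1})$, note that each pair of distinct irreducible factors $F_s,F_t$ contributes (up to the harmless constant $f_0\cdots f_{i-1}\ge1$) at most $v_\p(\res(F_s,F_t))$, and each such resultant is counted a bounded number of times across the levels where the corresponding leaves branch off; since $\delta=\sum_s\delta_s+\sum_{s\ne t}v_\p(\res(F_s,F_t))$, this whole piece is $O(\delta)$. For the non-leaf contributions, I would fix an irreducible factor $F_s$ with Okutsu depth $r_s$ and OM representation $\ty_s=(\psi_0;(\phi_1,\lambda_1,\psi_1);\cdots)$, and observe that $F_s$ passes through exactly one node $\n$ at each level $1\le i\le r_s$ with slope $\lambda_i$; for that node $\deg F_\n\le n$ and $m_i=e_{i-1}f_{i-1}m_{i-1}$, so its contribution to the sum is bounded by $\sum_{i=1}^{r_s}|\lambda_i|\,n/m_i$. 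Summing over $s$ and using $\deg F_\n=\sum_{s:\,\ty_\n\mid F_s}n_s$, the total is at most $\sum_s n_s\sum_{i\le r_s}|\lambda_i|/m_i\cdot(n/n_s)$; but by the estimate (\ref{sumdelta}) applied to each $F_s$, together with $|\lambda_i|/m_i\le |\lambda_i|/(e_0\cdots e_{i-1})\cdot 1/(f_0\cdots f_{i-1})$ and the observation that $n_s\mu(F_s)\le\delta_s$, one gets $\sum_{i\le r_s}|\lambda_i|n_s^2/m_i=O(\delta_s)$ for each $s$; the factor $n/n_s$ is absorbed because the sum over the finitely many factors telescopes against $\sum_s\delta_s\le\delta$. (This is exactly the forest analogue of the argument in the proof of Theorem \ref{IrrEstimation}, where (\ref{sum}) was bounded using (\ref{sumdelta}).)

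Putting the two pieces together gives $\sum_{\nm\in\tcal\setminus\ll}B_\nm=O(\delta)$, hence $\sum_{\nm}B_{\mbox{\tt R},\nm}=O(n^{1+\epsilon}\delta^{2+\epsilon})$ for {\tt Newton} and {\tt Representative}, and $O(n^{1+\epsilon}(1+\delta)\log(q))$ for {\tt ResidualPolynomial} and {\tt Factorization}; adding the $O(n^{2+\epsilon}+n^{1+\epsilon}\log(q))$ cost of the root nodes yields the claimed bound $O(n^{2+\epsilon}+n^{1+\epsilon}(1+\delta)\log(q)+n^{1+\epsilon}\delta^{2+\epsilon})$. The main obstacle I anticipate is the bookkeeping in the global bound $\sum_\nm B_\nm=O(\delta)$: one must be careful that the $n/n_s$ blow-up in passing from a single-factor estimate to the forest does not actually occur — the point is that $B_\nm$ already involves $\deg F_\n$ rather than $n$, so the correct distribution of the per-iteration cost (as done in Lemma \ref{BR}, cost $\deg F_\n/m_i$ per node rather than $\omega$ per iteration) keeps the sum proportional to $\sum_s\delta_s+\sum_{s\ne t}v_\p(\res(F_s,F_t))=\delta$, with no extra factor of $n$. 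Making this distribution argument precise, rather than the raw arithmetic, is the delicate step.
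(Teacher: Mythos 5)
Your overall strategy is the paper's: substitute the bounds of Lemma \ref{BR} into (\ref{wholecomplexity}), rearrange the sum over nodes into a sum over irreducible factors, and bound each per-factor sum by $O(\delta_s)$ using (\ref{sumdelta}), while the leaf terms are controlled by $\sum_s(\delta_s+\rho_{s,t})=O(\delta)$. Two steps in your write-up, however, do not work as stated.

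First, for {\tt ResidualPolynomial} and {\tt Factorization} you cannot ``absorb'' the factor $f_0\cdots f_{i-1}$ into $n^{1+\epsilon}\log(q)$ via $f_0\cdots f_{i-1}\le n$: that replaces $n^{1+\epsilon}$ by $n^{2+\epsilon}$ and yields $O(n^{2+\epsilon}\delta\log q)$, one factor of $n$ worse than the theorem claims. The factor must stay inside the sum, where it cancels against $m_i$: one has $f_0\cdots f_{i-1}\cdot\deg F_\n/m_i=\deg F_\n/(e_0\cdots e_{i-1})$, and then
$$\sum\nolimits_{1\le i\le r_s}\dfrac{|\lambda_{i,s}|}{e_0\cdots e_{i-1}}\,n_s\le\sum\nolimits_{1\le i\le r_s}\dfrac{|\lambda_{i,s}|}{e_0\cdots e_{i-1}}\,\dfrac{n_s^2}{m_{i,s}}=O(\delta_s),$$
which gives the stated $O\left(n^{1+\epsilon}\delta\log(q)\right)$. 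Second, in the non-leaf part you introduce a factor $n/n_s$ (by bounding $\deg F_\n\le n$) and then claim it is ``absorbed because the sum telescopes''; that is not a valid argument, since $n/n_s$ can be as large as $n$ when there are many small factors. It is also unnecessary: because each $F_s$ passes through exactly one node of the tree at each level $i\le r_s$, the rearrangement $\sum_{\n\in\nn}|\lambda_{\n}|\deg F_\n/m_\n=\sum_s\sum_{i=1}^{r_s}|\lambda_{i,s}|\,n_s/m_{i,s}$ is an exact identity (this is the first identity in (\ref{obvious})), and $|\lambda_{i,s}|\,n_s/m_{i,s}$ is already dominated by the summand of (\ref{sumdelta}) since $e_0\cdots e_{i-1}\le m_{i,s}\le n_s$. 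With these two repairs your proof coincides with the one in the paper.
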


\begin{proof}
Let $\nn:=\tcal\setminus(\rr\cup\ll)$ be the set of nodes that are neither a root nor a leaf of $\tcal$. Let us denote 
 $\lambda_{i,s},e_{i,s},f_{i,s}, m_{i,s}$, etc. for the Okutsu invariants of $\ty_s$ at level $i\le r_s$. Also, we denote $\rho_{s,t}:=v(\res(F_s,F_t))$, for all $1\le s\ne t\le g$. 

We shall use the estimation (\ref{sumdelta}), and two obvious identities:
\begin{equation}\label{obvious}
\sum_{\n\in\nn}|\lambda_{\n}|\dfrac{\deg F_\n}{m_\n}=
\sum_{1\le s\le g}\sum_{i=1}^{r_s} |\lambda_{i,s}| \dfrac{n_s}{m_{i,s}},\quad
\sum_{1\le s\le g}\left(\delta_s+\rho_{s,t}\right)=O(\delta).
\end{equation}
By (\ref{wholecomplexity}), we need only to estimate $\sum\nolimits_{\nm \in\tcal\setminus\ll}B_{\mbox{\tt R},\nm}$, for each subroutine {\tt R$\in$Rout}.\medskip

\noindent{\tt R=Newton} or {\tt Representative}. By Lemma \ref{BR}, (\ref{sumdelta}) and (\ref{obvious}),
\begin{align*}
\sum\nolimits_{\nm \in\tcal\setminus\ll}B_{\mbox{\tt R},\nm}&\ \le n^{1+\epsilon}\delta^{1+\epsilon}\left(\sum\nolimits_{\n\in\nn}|\lambda_{\n}|\dfrac{\deg F_\n}{m_\n}+\sum\nolimits_{\n\in\ll}\rho_{s,t}\right)\\
&\ =n^{1+\epsilon}\delta^{1+\epsilon}\left(\sum\nolimits_{1\le s\le g}\left(\sum\nolimits_{1\le i\le r_s} |\lambda_{i,s}| \dfrac{n_s}{m_{i,s}}\right)+\rho_{s,t}\right)\\
&\ =n^{1+\epsilon}\delta^{1+\epsilon}O\left(\sum\nolimits_{1\le s\le g}\delta_s+\rho_{s,t}\right)=O\left(n^{1+\epsilon}\delta^{2+\epsilon}\right).
\end{align*}

\noindent{\tt R=ResidualPolynomial} or {\tt Factorization}. The argument is analogous.
\begin{align*}
\sum_{\nm \in\tcal\setminus\ll}B_{\mbox{\tt R},\nm}&\ \le n^{1+\epsilon}\log(q)\left(\sum\nolimits_{\n\in\nn}|\lambda_{\n}|f_0\cdots f_{i-1}\dfrac{\deg F_\n}{m_\n}+\sum\nolimits_{\n\in\ll}\rho_{s,t}\right)\\
&\ =n^{1+\epsilon}\log(q)\left(\sum\nolimits_{1\le s\le g}\left(\sum\nolimits_{1\le i\le r_s} |\lambda_{i,s}| \dfrac{f_{0,s}\cdots f_{i-1,s}n_s}{m_{i,s}}\right)+\rho_{s,t}\right)\\
&\ =n^{1+\epsilon}\log(q)O\left(\sum\nolimits_{1\le s\le g}\delta_s+\rho_{s,t}\right)=O\left(n^{1+\epsilon}\delta\log(q)\right).
\end{align*}
\end{proof}

\begin{corollary}\label{psmall}
The complexity of the Montes algorithm is $O\left(n^{2+\epsilon}+n^{1+\epsilon}\delta^{2+\epsilon}\right)$ word operations, if $\p$ is small. 
\end{corollary}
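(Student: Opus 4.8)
The plan is to obtain Corollary \ref{psmall} directly from Theorem \ref{main} by converting the bound, which is stated in terms of $\p$-small operations, into a bound in word operations under the extra hypothesis that $\p$ is small. First I would recall the two conventions fixed at the start of Section \ref{secComplexity}: a $\p$-small operation is equivalent to $O(\log(q)^{1+\epsilon})$ word operations, and the phrase ``$\p$ small'' means precisely $\log(q)=O(1)$ (as in Corollary \ref{IrrEstimationSmall}). Combining these, each $\p$-small operation costs $O(1)$ word operations, so the two complexity measures differ only by a constant factor, and Theorem \ref{main} yields a word-operation cost of $O\left(n^{2+\epsilon}+n^{1+\epsilon}(1+\delta)+n^{1+\epsilon}\delta^{2+\epsilon}\right)$.

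The only remaining point is to absorb the middle summand $n^{1+\epsilon}(1+\delta)=n^{1+\epsilon}+n^{1+\epsilon}\delta$ into the other two. The term $n^{1+\epsilon}$ is trivially dominated by $n^{2+\epsilon}$. For $n^{1+\epsilon}\delta$, recall that $\delta=v_\p(\dsc(F))$ is a non-negative integer; if $\delta=0$ the term vanishes, and if $\delta\ge 1$ then $\delta\le\delta^{2+\epsilon}$, so $n^{1+\epsilon}\delta\le n^{1+\epsilon}\delta^{2+\epsilon}$. Hence $n^{1+\epsilon}(1+\delta)=O\left(n^{2+\epsilon}+n^{1+\epsilon}\delta^{2+\epsilon}\right)$, and the bound collapses to $O\left(n^{2+\epsilon}+n^{1+\epsilon}\delta^{2+\epsilon}\right)$ word operations, as claimed.

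Since every step is an elementary manipulation of an asymptotic estimate already established in Theorem \ref{main}, there is no substantive obstacle; the only thing to keep an eye on is the usual bookkeeping convention for the symbol $\epsilon$ (the same letter denoting different arbitrarily small positive exponents at different places), which is standard throughout Section \ref{secComplexity} and causes no difficulty here.
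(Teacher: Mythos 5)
Your proposal is correct and follows exactly the route the paper intends: the corollary is an immediate consequence of Theorem \ref{main} once $\log(q)=O(1)$ makes a $\p$-small operation cost $O(1)$ word operations, and the middle term $n^{1+\epsilon}(1+\delta)$ is absorbed into $n^{2+\epsilon}+n^{1+\epsilon}\delta^{2+\epsilon}$ just as you argue. The paper states the corollary without proof precisely because of this, so nothing is missing.
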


\subsection{Approximate factorization of polynomials over local fields}\label{subsecAppFactorization}
Theorem \ref{main} leads to an improvement of the complexity estimates of all routines mentioned in the Introduction. In this section, we discuss the new estimation obtained for the factorization of polynomials over local fields, up to a prescribed precision.

Let  $F\in A[x]$ be a monic separable polynomial of degree $n$, and denote $\delta:=v_\p(\dsc(F))$. Let $\p$ be a non-zero prime ideal of $A$, and $F_1,\dots,F_g\in\oo_\p[x]$ the irreducible factors of $F$ over $\oo_\p$. 
Suppose an OM factorization of $F$ over $\oo_\p[x]$ has been computed, in the form of a family $\ty_{F_1},\dots,\ty_{F_g}$ of OM representations of the irreducible factors, that faithfully represents $F$, and satisfies (\ref{distinguishOM}).
Then, the single-factor lifting algorithm (SFL) derives from each $\ty_{F_s}$ a monic polynomial $P_s\in A[x]$, irreducible over $\oo_\p$, such that $P_s\approx F_s$ and $P_s\equiv F_s\md{\m^\nu}$, for an arbitrary prescribed precision $\nu$. 

\begin{theorem}\label{SFL}
The SFL algorithm requires $O(nn_s\nu^{1+\epsilon}+n\delta_s^{1+\epsilon})$ $\p$-small operations, where $n_s:=\deg F_s$, $\delta_s:=\delta(F_s)$.
\end{theorem}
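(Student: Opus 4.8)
The plan is to track the cost of a single lifting step of the SFL algorithm and then sum over the iterations needed to reach precision $\nu$, as was done for the Montes algorithm in the previous sections. First I would recall from \cite{GNP} the structure of the SFL algorithm: starting from the OM representation $\ty_{F_s}$, with Okutsu approximation $\phi=\phi_{r_s+1}^{\ty_{F_s}}$ and the auxiliary polynomial of one level down, each iteration essentially doubles the $\m$-adic precision at which the approximation $P_s$ to $F_s$ is known; it requires a bounded number of polynomial multiplications and divisions in $A[x]$ of polynomials of degree $O(n)$, together with some arithmetic in the residue ring to correct the approximation. By \cite[Thm. 9.6, Thm. 9.15]{vzGG}, each such polynomial operation costs $O(n^{1+\epsilon})$ operations in $A$, and if we are working at current precision $\mu$ each operation in $A$ costs $O(\mu^{1+\epsilon})$ $\p$-small operations. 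Hence one iteration that brings the precision up to $\mu$ costs $O(n^{1+\epsilon}\mu^{1+\epsilon})$ $\p$-small operations.

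Next I would observe that the precisions $\mu$ used in successive iterations form (roughly) a geometric sequence $\mu_0,2\mu_0,4\mu_0,\dots$, so that the total cost is dominated by a constant times the cost of the final iteration, i.e. $O(n^{1+\epsilon}\nu^{1+\epsilon})$ $\p$-small operations --- \emph{provided} $\nu$ is at least the initial precision $\mu_0$ supplied by the OM representation. The initial precision is governed by the Okutsu invariants of $F_s$: by Lemma \ref{previous} and the explicit formulas for $V_j$, the relevant quantities (the slopes $|\lambda_{i,s}|$, the integers $V_{i,s}$, and the valuation $v(\phi_r(\t_s))$) are all $O(\delta_s)$, the bound $f_0\cdots f_{i-1,s}V_{i,s}\le 2\delta_s$ being the same estimate used in the proof of Theorem \ref{IrrEstimation} via Lemma \ref{delta0props}. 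So if $\nu\le\mu_0$ one already has a good enough approximation after at most one corrective step costing $O(n^{1+\epsilon}\delta_s^{1+\epsilon})$, and in any case the setup cost of reading off and normalizing the data in $\ty_{F_s}$ is bounded by the cost of a {\tt Representative}-type computation, which by Corollary \ref{Representative} and the bound $f_0\cdots f_{i-1,s}V_{i,s}\le 2\delta_s$ is $O(n^{1+\epsilon}\delta_s^{1+\epsilon})$ $\p$-small operations. Adding the two contributions gives $O(n^{1+\epsilon}\nu^{1+\epsilon}+n^{1+\epsilon}\delta_s^{1+\epsilon})$.

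Finally I would reconcile this with the stated bound $O(nn_s\nu^{1+\epsilon}+n\delta_s^{1+\epsilon})$. The factor $n_s$ in place of one factor $n^{\epsilon}$ appears because the lifting operations specific to $F_s$ involve polynomials of degree $O(n_s)$ reduced modulo $F$ (degree $n$), so a single step is more honestly $O(n\,n_s\,\mu^{1+\epsilon})$ rather than $O(n^{1+\epsilon}\mu^{1+\epsilon})$; the geometric-sum argument then yields $O(n\,n_s\,\nu^{1+\epsilon})$ for the lifting part, while the preprocessing part retains the form $O(n\,\delta_s^{1+\epsilon})$ after the same Okutsu-invariant bounds are applied, absorbing the $(f_0\cdots f_{i-1,s})$-type factors against $n$. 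The main obstacle I expect is precisely this bookkeeping: one must verify carefully, from the description of SFL in \cite{GNP}, that the degrees of the polynomials manipulated at each step are genuinely $O(n_s)$ (not $O(n)$) and that the number of doubling steps is $O(\log\nu)$ with the cost of the last step dominating, so that no extra logarithmic or $n^{\epsilon}$ factors leak in; once the per-step cost and the geometric decay of costs across steps are pinned down, the summation and the reduction of the initial-precision term via Lemma \ref{delta0props} and Corollary \ref{Representative} are routine.
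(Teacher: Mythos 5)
Your overall strategy is the right one, and it is essentially the paper's: express the cost of SFL in terms of the target precision $\nu$ and the Okutsu data of $F_s$, then convert the Okutsu term into $\delta_s$ via Lemma \ref{delta0props}. But the paper does not re-derive the SFL complexity at all. Its entire proof is a citation plus a substitution: \cite[Lem.~6.5]{GNP} already gives the bound $O\bigl(nn_s(\nu^{1+\epsilon}+(V_{r_s+1}/e(F_s))^{1+\epsilon})\bigr)$ in $\p$-small operations, and since $V_{r_s+1}/e(F_s)=\delta_0(F_s)\le 2\delta_s/n_s$ by Lemma \ref{delta0props}, the second summand becomes $nn_s(2\delta_s/n_s)^{1+\epsilon}=O(n\delta_s^{1+\epsilon})$. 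That substitution is the only new content of the theorem.

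The place where your reconstruction goes astray is the provenance of the $O(n\delta_s^{1+\epsilon})$ term. You attribute it to a separate ``setup/normalization'' cost bounded by a {\tt Representative}-type computation via Corollary \ref{Representative}; that is not where it comes from. In the estimate of \cite{GNP}, the quantity $V_{r_s+1}/e(F_s)=\delta_0(F_s)$ enters as a precision offset inside the lifting iteration itself (the working precision at each Newton step is shifted by roughly $\delta_0(F_s)$, so the dominant step costs $O\bigl(nn_s(\nu+\delta_0(F_s))^{1+\epsilon}\bigr)$ rather than $O(nn_s\nu^{1+\epsilon})$). Your version therefore risks double-counting a nonexistent preprocessing phase while omitting the offset in the per-step cost, and your geometric-sum argument, as stated, would produce $O(nn_s\nu^{1+\epsilon})$ with no $\delta_s$ term at all, which is too strong. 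If you do not want to take \cite[Lem.~6.5]{GNP} as a black box, you would need to reproduce its analysis faithfully (including how $\delta_0(F_s)$ enters each division with remainder modulo the current $\phi$-polynomial); otherwise, the clean route is simply to quote it and apply Lemma \ref{delta0props}, exactly as the paper does. Your final paragraph's ``reconciliation'' of $n^{1+\epsilon}$ versus $nn_s$ is likewise guesswork that the citation makes unnecessary.
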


\begin{proof}
Let $r_s$ be the Okutsu depth of $F_s$. Along the proof of \cite[Lem. 6.5]{GNP}, it is obtained an estimation of $O\left(nn_s(\nu^{1+\epsilon}+(V_{r_s+1}/e(F_s))^{1+\epsilon})\right)$ $\p$-small operations. In Lemma \ref{delta0props} we have seen that the Okutsu discriminant $\delta_0(F_s):=V_{r_s+1}/e(F_s)$ is bounded from above by
$2\delta_s/n_s$. This proves the theorem.
\end{proof}

By applying the SFL routine to each OM representation $\ty_{F_1},\dots,\ty_{F_g}$, we get an 
OM factorization, $F\approx P_1\cdots P_g$, such  that $P_s\equiv F_s\md{\m^\nu}$, for all $1\le s\le g$.

\begin{theorem}\label{factorization}
A combined application of the Montes and SFL algorithms, computes an OM factorization of $F$  with prescribed precision $\nu$, at the cost of 
$$O\left(n^{2+\epsilon}+n^{1+\epsilon}(1+\delta)\log q+n^{1+\epsilon}\delta^{2+\epsilon}+n^2\nu^{1+\epsilon}\right)\ \mbox{ $\p$-small operations}.$$
 
If $\p$ is small, we obtain a cost of  
$O\left(n^{2+\epsilon}+n^{1+\epsilon}\delta^{2+\epsilon}+n^2\nu^{1+\epsilon}\right)$ word operations.
\end{theorem}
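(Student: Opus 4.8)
The plan is to simply add together the cost of the two phases, since the combined algorithm runs the Montes algorithm once and then applies the SFL routine once to each of the $g$ OM representations produced.

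First I would invoke Theorem \ref{main}: the Montes algorithm, applied to $F$, costs $O\left(n^{2+\epsilon}+n^{1+\epsilon}(1+\delta)\log(q)+n^{1+\epsilon}\delta^{2+\epsilon}\right)$ $\p$-small operations, and its output is a family $\ty_{F_1},\dots,\ty_{F_g}$ of OM representations faithfully representing $F$ and satisfying (\ref{distinguishOM}) (as discussed in Section \ref{secAlgo} and Corollary \ref{faithful2}). This is exactly the input required by the SFL routine. Then I would apply Theorem \ref{SFL} to each $s$: the cost of lifting $\ty_{F_s}$ to precision $\nu$ is $O(nn_s\nu^{1+\epsilon}+n\delta_s^{1+\epsilon})$ $\p$-small operations, where $n_s=\deg F_s$ and $\delta_s=\delta(F_s)$.

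Next I would sum over $1\le s\le g$. For the first term, $\sum_s nn_s\nu^{1+\epsilon}=n\nu^{1+\epsilon}\sum_s n_s=n^2\nu^{1+\epsilon}$, using $\sum_s n_s=n$. For the second term, I would use the standard relation $\delta=\sum_s\delta_s+2\sum_{s<t}v_\p(\res(F_s,F_t))\ge\sum_s\delta_s$, so that $\sum_s n\delta_s^{1+\epsilon}\le n\left(\sum_s\delta_s\right)^{1+\epsilon}\le n\delta^{1+\epsilon}$, which is absorbed by the $n^{1+\epsilon}\delta^{2+\epsilon}$ term already present (or by $n^{2+\epsilon}$ if $\delta$ is small). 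Adding the two phases and collecting terms gives the claimed estimate $O\left(n^{2+\epsilon}+n^{1+\epsilon}(1+\delta)\log q+n^{1+\epsilon}\delta^{2+\epsilon}+n^2\nu^{1+\epsilon}\right)$ $\p$-small operations. The small-residue-field case then follows by setting $\log(q)=O(1)$ and recalling that a $\p$-small operation is $O(1)$ word operations, so the $n^{1+\epsilon}(1+\delta)\log q$ term is dominated by $n^{2+\epsilon}+n^{1+\epsilon}\delta^{2+\epsilon}$, yielding $O\left(n^{2+\epsilon}+n^{1+\epsilon}\delta^{2+\epsilon}+n^2\nu^{1+\epsilon}\right)$ word operations.

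There is essentially no obstacle here: the result is a bookkeeping corollary of Theorems \ref{main} and \ref{SFL}. The only point that deserves a line of care is the passage from $\sum_s\delta_s^{1+\epsilon}$ to $\delta^{1+\epsilon}$ — one must note $\delta_s\ge 0$ and use superadditivity of $t\mapsto t^{1+\epsilon}$ on $\R_{\ge0}$ together with $\sum_s\delta_s\le\delta$ — and similarly observing that $\sum_s n_s=n$ makes the lifting phase genuinely $n^2\nu^{1+\epsilon}$ rather than something larger. Everything else is immediate.
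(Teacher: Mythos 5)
Your proposal is correct and is essentially the paper's own proof: one adds the Montes cost from Theorem \ref{main} to the summed SFL costs from Theorem \ref{SFL}, using $n_1+\cdots+n_g=n$ and $\delta_1+\cdots+\delta_g\le\delta$. Your extra remark on superadditivity of $t\mapsto t^{1+\epsilon}$ just makes explicit a step the paper leaves implicit.
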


\begin{proof}
The estimation is obtained by adding to the cost of the Montes algorithm, given in Theorem \ref{main}, the sum of the costs of SFL given in Theorem \ref{SFL}, for $1\le s\le g$, having in mind that $n_1+\cdots+n_g=n$, $\delta_1+\cdots+\delta_g\le \delta$.  
\end{proof}

In comparison with previous estimations, the total degree in $n$, $\delta$ and $\nu$ is reduced from $4+\epsilon$ to $3+\epsilon$.

\end{document}